\tikzset{
    partial ellipse/.style args={#1:#2:#3}{
        insert path={+ (#1:#3) arc (#1:#2:#3)}
    }
}
\newtheorem{theorem}{Theorem}[section]
\newtheorem{proposition}[theorem]{Proposition}
\newtheorem{proposition/definition}[theorem]{Proposition/Definition}
\newtheorem{lemma}[theorem]{Lemma}
\newtheorem{corollary}[theorem]{Corollary}
\theoremstyle{definition}
\newtheorem{definition}[theorem]{Definition}
\newtheorem{example}[theorem]{Example}
\theoremstyle{remark}
\DeclareMathOperator{\W}{W}
\DeclareMathOperator{\GHZ}{GHZ}
\DeclareMathOperator{\Gr}{Gr}
\DeclareMathOperator{\GL}{GL}
\DeclareMathOperator{\Sub}{Sub}
\DeclareMathOperator{\Seg}{Seg}
\DeclareMathOperator{\sgn}{\varepsilon}
\DeclareMathOperator{\In}{\textsc{in}}
\DeclareMathOperator{\Out}{\textsc{out}}
\DeclareMathOperator{\tns}{\textsc{tns}}
\DeclareMathOperator{\rank}{rank}
\DeclareMathOperator{\brank}{\overline{\rank}}
\DeclareMathOperator{\mrank}{\mu rank}
\newcommand{\tp}{{\scriptscriptstyle\mathsf{T}}}
\newcommand{\blue}[1]{{\color{blue}#1}}
\tikzset{->-/.style={decoration={
  markings,
  mark=at position #1 with {\arrow [ultra thick]{>}}},postaction={decorate}}}
\tikzset{every node/.style={font=\scriptsize}}
\tikzset{->>-/.style={decoration={
  markings,
  mark=at position #1 with {\arrow [thin]{>}}},postaction={decorate}}}
\tikzset{every node/.style={font=\scriptsize}}
\begin{document}
\title{Tensor network ranks}
\author[K.~Ye]{Ke~Ye}
\address{Computational and Applied Mathematics Initiative, Department of Statistics, University of Chicago, Chicago, IL 60637-1514.}
\email{kye@galton.uchicago.edu}
\author[L.-H.~Lim]{Lek-Heng~Lim}
\address{Computational and Applied Mathematics Initiative, Department of Statistics,
University of Chicago, Chicago, IL 60637-1514.}
\email{lekheng@galton.uchicago.edu}

\begin{abstract}
In problems involving approximation, completion, denoising, dimension reduction, estimation, interpolation, modeling, order reduction, regression, etc, we argue that the near-universal practice of assuming that a function, matrix, or tensor (which we will see are all the same object in this context) has \emph{low rank} may be ill-justified. There are many natural instances where the object in question has high rank with respect to the classical notions of rank: matrix rank, tensor rank, multilinear rank --- the latter two being the most straightforward generalizations of the former. To remedy this, we show that one may vastly expand these classical notions of ranks: Given any undirected graph $G$, there is a notion of $G$-rank associated with $G$, which provides us with as many different kinds of ranks as there are undirected graphs. In particular, the popular tensor network states in physics (e.g., \textsc{mps}, \textsc{ttns}, \textsc{peps}) may be regarded as functions of a specific $G$-rank for various choices of $G$. Among other things, we will see that a function, matrix, or tensor may have very high matrix, tensor, or multilinear rank and yet very low $G$-rank for some $G$. In fact the difference is in the orders of magnitudes and the gaps between $G$-ranks and these classical ranks are arbitrarily large for some important objects in computer science, mathematics, and physics. Furthermore, we show that there is a $G$ such that almost every tensor has $G$-rank exponentially lower than its rank or the dimension of its ambient space.
\end{abstract}

\keywords{Tensor networks, tensor ranks, nonlinear approximations, best $k$-term approximations, dimension reduction}

\subjclass[2010]{15A69, 41A30, 41A46, 41A65, 45L05, 65C60, 81P50, 81Q05}

\maketitle

\section{Introduction}\label{sec:intro}

A universal problem in science and engineering is to find a function from some given data. The function may be a solution to a PDE with given boundary/initial data or a target function to be learned from a training set of data. In modern applications, one frequently encounters situations where the function lives in some state space or hypothesis space of prohibitively high dimension --- a consequence of requiring very high accuracy solutions or having very large training sets. A common remedy with newfound popularity is to assume that the function has low rank, i.e., may be expressed as a sum of a small number of separable terms. But such a low-rank assumption often has weak or no justification; rank is chosen only because there is no other standard alternative. Taking a leaf from the enormously successful idea of \emph{tensor networks} in physics \cite{CMV, FNW, HND, MBRTV, Orus, OR, SDV, VC, VWPC, V1, V2, WH1993, White1992}, we define a notion of $G$-rank for any undirected graph $G$.  Like tensor rank and multilinear rank, which are extensions of matrix rank to higher order, $G$-ranks contain matrix rank as a special case.

Our definition of $G$-ranks shows that every tensor network ---  tensor trains, matrix product states, tree tensor network states, star tensor network states, complete graph tensor network states,  projected entangled pair states, multiscale entanglement renormalization ansatz, etc --- is nothing more than a set of functions/tensors of some $G$-rank for some undirected graph $G$. It becomes straightforward to explain the effectiveness of tensor networks: They serve as a set of `low $G$-rank functions' that can be used for various purposes (as an ansatz, a regression function, etc). The flexibility of choosing $G$ based on the underlying problem can provide a substantial computational advantage --- a function with high rank or high $H$-rank for a graph $H$ can have much lower $G$-rank for another suitably chosen graph $G$. We will elaborate on these in the rest of this introduction, starting with an informal discussion of tensor networks and $G$-ranks, followed by an outline of our main results.

The best known low-rank decomposition is  the  \emph{matrix rank decomposition}
\begin{equation}\label{eq:matd}
f(x,y) = \sum_{i=1}^r \varphi_i(x) \psi_i(y)
\end{equation}
that arises in common matrix decompositions such as \textsc{lu}, \textsc{qr}, \textsc{evd}, \textsc{svd}, Cholesky,  Jordan, Schur, etc --- each differing in the choice of additional structures on the factors $\varphi_i$ and $\psi_i$. In higher order, say, order three for notational simplicity, \eqref{eq:matd} generalizes as \emph{tensor rank decomposition},
\begin{equation}\label{eq:td}
f(x,y,z) = \sum_{i=1}^r \varphi_i(x) \psi_i(y) \theta_i(z),
\end{equation}
or as \emph{multilinear rank decomposition}
\begin{equation}\label{eq:md}
f(x,y,z) = \sum_{i,j,k=1}^{r_1,r_2,r_3} \varphi_i(x) \psi_j(y) \theta_k(z).
\end{equation}
Like \eqref{eq:matd}, \eqref{eq:td} and \eqref{eq:md}  decompose a function $f$ into a sum of products of factors $\varphi_i$, $\psi_j$, $\theta_k$, simpler functions that depend on fewer variables than $f$.  This simple idea is ubiquitous, underlying the separation-of-variables technique in partial differential equations \cite{svpde} and special functions \cite{svbook}, fast Fourier transforms \cite{svfft},  tensor product splines \cite{Chui} in approximation theory, mean field approximations \cite{StatPhy} in statistical physics, na\"{\i}ve Bayes model \cite{LC1} and tensor product kernels \cite{HSS} in machine learning, blind multilinear identification \cite{LC2} in signal processing.

The decompositions \eqref{eq:td} and \eqref{eq:md} can be inadequate when modeling more complicated interactions, calling for \emph{tensor network decompositions}. Some of the most popular ones include 
\emph{matrix product states} (\textsc{mps}) \cite{ITEH1987},
\[
f(x,y,z) = \sum_{i,j,k=1}^{r_1,r_2,r_3} \varphi_{ij}(x) \psi_{jk}(y) \theta_{ki}(z),
\]
\emph{tree tensor network states} (\textsc{ttns}) \cite{SDV},
\[
f(x,y,z,w) = \sum_{i,j,k=1}^{r_1,r_2,r_3} \varphi_{ijk}(x) \psi_{i}(y) \theta_{j}(z) \pi_{k}(w),
\]
\emph{tensor train}\footnote{We are aware that tensor trains \cite{O} have long been known in physics \cite{Anderson1959, White1992, WH1993} and are often called matrix product states \emph{with open boundary conditions} \cite{Orus}. What we called matrix product states are known more precisely as matrix product states \emph{with periodic boundary conditions} \cite{Orus}. We thank our colleagues in physics for pointing this out to us on many occasions. In our article, we use the terms \textsc{tt} and \textsc{mps} merely for the convenience of easy distinction between the two types of \textsc{mps}. We will say more about our  nomenclature after Definition~\ref{def:specialtns}.} (\textsc{tt}) \cite{Anderson1959},
\[
f(x,y,z,u,v) = \sum_{i,j,k,l=1}^{r_1,r_2,r_3,r_4}\varphi_{i}(x) \psi_{ij}(y) \theta_{jk}(z)\pi_{kl}(u)\rho_{l}(v),
\]
and \emph{projected entangled pair states} (\textsc{peps}) \cite{VC},\label{pg:peps}
\[
f(x,y,z,u,v,w) =\sum_{i,j,k,l,m,n,o=1}^{r_1,r_2,r_3,r_4,r_5,r_6,r_7} \varphi_{ij}(x) \psi_{jkl}(y) \theta_{lm}(z) \pi_{mn}(u) \rho_{nko}(v) \sigma_{oi}(w),
\]
among many others. Note that all these decompositions, including those in \eqref{eq:matd}, \eqref{eq:td}, \eqref{eq:md}, are of the same nature --- they decompose a function into a sum of \emph{separable functions}. Just as \eqref{eq:td} and \eqref{eq:md} differ in how the factors are indexed, tensor network decompositions differ from each other and from \eqref{eq:td} and \eqref{eq:md} in how the factors are indexed.
Every tensor network  decomposition is defined by an undirected graph $G$ that determines the indexing of the factors. The graphs associated with \textsc{mps}, \textsc{ttns}, \textsc{tt}, and \textsc{peps} are shown in Figure~\ref{fig:networks}. 
The decompositions above represent the simplest non-trivial instance for each tensor network --- they can become arbitrarily complicated with increasing \emph{order}, i.e., the number of arguments of the function $f$ or, equivalently, the number of vertices in the corresponding graphs. In Section~\ref{sec:tns}, we will formally define tensor network states in a mathematically rigorous and, more importantly, coordinate-free manner --- the importance of the latter  stems from the avoidance of a complicated mess of indices, evident even in the simplest instance of \textsc{peps} above. For now, a \emph{tensor network state} is an $f$ that has a tensor network decomposition corresponding to a given graph $G$, and a \emph{tensor network} corresponding to $G$ is the set of all such functions.
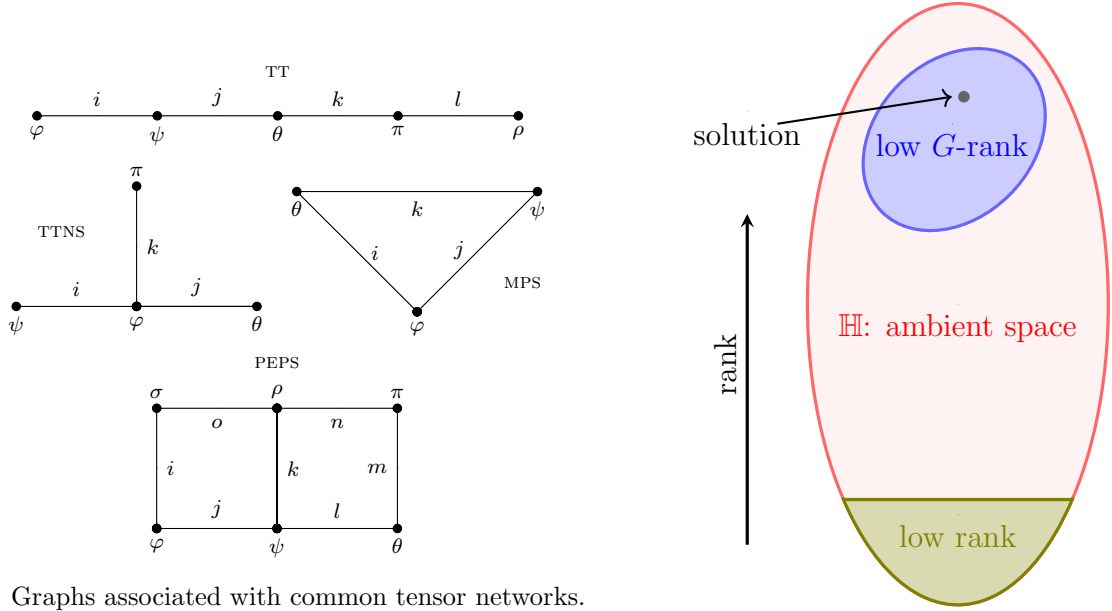
\begin{figure}
\centering
\begin{subfigure}{.64\textwidth}
\centering
\begin{tikzpicture}[scale=0.8]
\filldraw
     (4,1) node[align=center, below] {\textsc{tt}}
     (0,0) circle (2pt) node[align=center, below] {$\varphi$}
 -- (1,0) circle (0pt) node[align=center, above] {$i$}
 -- (2,0) circle (2pt)  node[align=center, below] {$\psi$}  
  -- (3,0) circle (0pt) node[align=center, above] {$j$}
 -- (4,0) circle (2pt)  node[align=center, below] {$\theta$}
  -- (5,0) circle (0pt) node[align=center, above] {$k$}
   -- (6,0) circle (2pt) node[align=center, below] {$\pi$}
    -- (7,0) circle (0pt) node[align=center, above] {$l$}
 -- (8,0) circle (2pt) node[align=center, below] {$\rho$} 
-- cycle;
\end{tikzpicture}
\begin{tikzpicture}[scale=0.8]
\filldraw
     (0.75,1.5) node[align=center, below] {\textsc{ttns}}
     (0,0) circle (2pt) node[align=center, below] {$\psi$}
      -- (1,0) circle (0pt) node[align=center, above] {$i$}
 -- (2,0) circle (2pt)  node[align=center, below] {$\varphi$}  
 -- (4,0) circle (2pt)  node[align=center, below] {$\theta$}  
  -- (3,0) circle (0pt) node[align=center, above] {$j$}
 -- (2,0) circle (2pt) node[align=center, below]{} 
  -- (2,1) circle (0pt) node[align=center, right] {$k$}
 -- (2,2) circle (2pt) node[align=center, above] {$\pi$} 
-- cycle;
\end{tikzpicture}
\begin{tikzpicture}[scale=0.8]
\filldraw
    (1.75,0.75) node[align=center, below] {\textsc{mps}}
    (0,0) circle (2pt) node[align=center, below] {$\varphi$}
     -- (0.7,0.7) circle (0pt) node[align=center, above] {$j$}
 -- (2,2) circle (2pt)  node[align=center, below] {$\psi$}  
  -- (0,2) circle (0pt) node[align=center, below] {$k$}
 -- (-2,2) circle (2pt)  node[align=center, below] {$\theta$}  
  -- (-0.7,0.7) circle (0pt) node[align=center, above] {$i$}
 -- (0,0) circle (2pt) node[align=center, below]{}
-- cycle;
\end{tikzpicture}
\begin{tikzpicture}[scale=0.8]
\filldraw
    (2,2.5) node[align=center, above] {\textsc{peps}}
    (0,0) circle (2pt) node[align=center, below] {$\varphi$}
     -- (1,0) circle (0pt) node[align=center, above] {$j$}
 --(2,0) circle (2pt)  node[align=center, below] {$\psi$}  
  -- (3,0) circle (0pt) node[align=center, above] {$l$}
 --(4,0) circle (2pt) node[align=center, below] {$\theta$}
  -- (4,1) circle (0pt) node[align=center, left] {$m$}
 --(4,2) circle (2pt)  node[align=center, above] {$\pi$}
   -- (3,2) circle (0pt) node[align=center, below] {$n$}
 -- (2,2) circle (2pt)  node[align=center, above] {$\rho$}
    -- (2,1) circle (0pt) node[align=center, right] {$k$}
  --(2,0) circle (2pt) node[align=center, below]{}
   -- (2,2) circle (2pt) node[align=center, above]{}
       -- (1,2) circle (0pt) node[align=center, below] {$o$}
 -- (0,2) circle (2pt)  node[align=center, above] {$\sigma$}
     -- (0,1) circle (0pt) node[align=center, right] {$i$}
--   (0,0) circle (2pt) node[align=center, below]{}
 -- cycle;
\end{tikzpicture}
\caption{Graphs associated with common tensor networks.}
\label{fig:networks}
\end{subfigure}
\begin{subfigure}{.35\textwidth}
\begin{tikzpicture}[rotate = 90, scale = 0.8]

\draw[very thick,-{stealth[scale=1]},black,shift={(5.5 cm,-1.5cm)}] (-7.5,6) to  (-2,6);
     \filldraw[color =black!60,fill=black!60, very thick,shift={(5.5 cm,0 cm)}]
      (-4.5,4.5) circle (0pt) node[align=center, above,rotate=90]{\large\textcolor{black}{rank}}
     (0,1) circle (1pt) node[align=center, above]{};

    \filldraw[color=red!60,fill=red!5, very thick] (2,1) ellipse (5cm and 2.5cm);

\filldraw[color=blue!60,fill=blue!20, very thick,rotate =45] (4.1,-2.6) ellipse (1.3cm and 1.7cm);

     \filldraw[shift={(6.2 cm,2 cm)}]     
     (-1.2,-0.9) circle (0pt) node[align=center, below] {\large\blue{low $G$-rank}};
     
     \filldraw
     (2,1) circle (0pt) node[align=center, below] {\large\textcolor{red}{$\mathbb{H}$: ambient space}};

     \draw[thick,-{>[scale=1]},black,shift={(5.5 cm,1 cm)}] (-0.5,3) to  (-0.05,0.1);

     \filldraw[color =black!60,fill=black!60, very thick,shift={(5.5 cm,0 cm)}]
      (-0.3,4.5) circle (0pt) node[align=center, below]{\large\textcolor{black}{solution}}
     (-0.05,0.9) circle (2pt) node[align=center, above]{};

\filldraw[color=green!50!red,fill=green!50!red!30, very thick]
(2,1) [partial ellipse=130:230:5cm and 2.5cm];

\draw[very thick,color=green!50!red] (-1.25,2.9) to  (-1.25,-0.9);

\filldraw[shift={(-7 cm,-0.5 cm)}]
     (5.5,1.5) circle (0pt) node[align=center, below] {\large \textcolor{green!50!red}{low rank}};
\end{tikzpicture}
\caption{$G$-rank versus rank.}
\label{fig:schematic}
\end{subfigure}
\caption{Every undirected graph $G$ defines a $G$-rank. Solution that we seek may have high rank but low $G$-rank.}
\end{figure}

The minimum $r$ in \eqref{eq:matd} gives us the matrix rank of $f$; the minimum $r$ in \eqref{eq:td} and the minimum $(r_1,r_2, r_3)$ in \eqref{eq:md} give us the \emph{tensor rank} and \emph{multilinear rank} of $f$ respectively. Informally, the tensor network rank or $G$-rank of $f$ may be similarly defined by requiring some form of minimality for $(r_1,\dots,r_c)$ in the other decompositions for \textsc{mps}, \textsc{tt}, \textsc{ttns}, \textsc{peps} (with an appropriate graph $G$ in each case). Note that this is no longer so straightforward since (i) $\mathbb{N}^c$ is not an ordered set when $c > 1$; (ii) it is not clear that any function would have such a decomposition for an arbitrary $G$.

We will show in Section~\ref{sec:Grank} that any $d$-variate function or  $d$-tensor  has a $G$-rank for any undirected connected graph $G$ with $d$ vertices. While this has been defined in special cases, particularly when $G$ is a path graph (\textsc{tt}-rank \cite{HK2009}) or more generally when $G$ is a tree (hierarchical rank \cite[Chapter~11]{Hackbusch} or tree rank \cite{BSU}), we show that the notion is well-defined for \emph{any} undirected connected graph $G$: Given any  $d$ vector spaces $\mathbb{V}_1,\dots,\mathbb{V}_d$ of arbitrary dimensions, there is a class of tensor network states associated with $G$, as well as a $G$-rank for any $T \in \mathbb{V}_1 \otimes \dots \otimes \mathbb{V}_d$; or equivalently, for any function $f \in L^2(X_1 \times \dots \times X_d)$; or, for those accustomed to working in terms of coordinates, for any hypermatrix $A \in \mathbb{C}^{n_1 \times \dots \times n_d}$. See Section~\ref{sec:tns} for a discussion on the relations between these objects ($d$-tensors, $d$-variate functions, $d$-hypermatrices).

Formalizing the notions of tensor networks and $G$-ranks provides several advantages, the most important of which is that it allows one to develop a rich calculus for working with tensor networks: deleting vertices, removing edges, restricting to subgraphs, taking unions of graphs, restricting to subspaces, taking intersections  of tensor network states, etc. We develop some of these basic techniques and properties in Sections~\ref{sec:calculus} and  \ref{sec:prop}, deferring to \cite{dtn} the more involved properties that are not needed for the rest of this article. Among other advantages, the notion of $G$-rank also sheds light on existing  methods in scientific computing: In hindsight, the algorithm  in \cite{ying2016} is one that approximates a given tensor network state by those of low $G$-rank. 

The results in Section~\ref{sec:low} may be viewed as the main impetus for tensor networks (as we pointed out earlier, these are `low $G$-rank tensors' for various choices of $G$):
\begin{itemize}
\item a tensor may have very high matrix, tensor, or multilinear rank and yet very low $G$-rank;
\item a tensor may have very high $H$-rank and very low $G$-rank for $G \ne H$;
\end{itemize}
We will exhibit an explicit example where tensor rank, multilinear rank, matrix rank, and \textsc{tt}-rank are all $O(n^2)$ but whose \textsc{mps}-rank is $O(n)$. In Section~\ref{sec:prop}, we will see that there is a choice of $G$ such that in a space of dimension $O(n^d)$, \emph{almost every} tensor  has $G$-rank $O\bigl(n(d-1)\bigr)$ and tensor rank $O\bigl(n^d/(nd - d +1)\bigr)$, i.e., for that particular $G$, almost every tensor has $G$-rank that is exponentially lower than its tensor rank or the dimension of its ambient space.

We will study in detail the simplest and most common $G$-ranks: \textsc{tt}-rank ($G$ is a path graph) in Section~\ref{sec:tt}, \textsc{ttns}-rank ($G$ is a tree) in Section~\ref{sec:ttns}, \textsc{mps}-rank ($G$ is a cyclic graph) in Section~\ref{sec:mps}, paying particular attention to questions of uniqueness, existence of best low $G$-rank approximations, polynomial-time computability, dimensions, generic and maximal $G$-ranks, etc.

Some other insights that may be worth highlighting include:
\begin{itemize}
\item Any tensor network state is the contraction of a rank-one tensor (Section~\ref{sec:tns}).

\item $G$-rank is polynomial-time computable when $G$ is acyclic (Section~\ref{sec:ttns}).

\item A best low $G$-rank approximation always exists if $G$ is acyclic (Section~\ref{sec:ttns}) but may not necessarily exist if $G$ contains a cycle\footnote{This was established in \cite{LQY} for $G = C_3$, a $3$-cycle; we show that it holds for any $G$ that contains a $d$-cycle, $d \ge 3$.} (Section~\ref{sec:mps}). 

\item $G$-ranks are distinct from tensor rank and multilinear rank in that neither is a special case of the other (Section~\ref{sec:compare}) but $G$-ranks may be regarded as an `interpolant' between tensor rank and multilinear rank (Section~\ref{sec:Grank}).

\end{itemize}
In Section~\ref{sec:eg}, we determine $G$-ranks of decomposable tensors, decomposable symmetric and skew-symmetric tensors, monomials, W state, GKZ state, and the structure tensor of matrix-matrix product for various choices of $G$.

\section{Tensor network states}\label{sec:tns}

We have left the function spaces in the decompositions in Section~\ref{sec:intro} unspecified. In physics applications where tensor networks were first studied \cite{Orus}, they are often assumed to be Hilbert spaces. For concreteness we may assume that they are all $L^2$-spaces, e.g., in \textsc{mps} we have $\varphi_{ij} \in L^2(X)$, $\psi_{jk} \in L^2(Y)$,  $\theta_{ki} \in L^2(Z)$ for all $i,j,k$ and  $f \in L^2(X \times Y \times Z) = L^2(X) \otimes L^2(Y) \otimes L^2(Z)$, although we may also allow for other function spaces that admit tensor product.

The reason we are not concern with the precise type of function space is that in this article we limit ourselves to finite-dimensional spaces, i.e., $X, Y, Z, \dots$ are finite sets and $x,y,z,\dots$ are discrete variables that take a finite number of values. In this case, it is customary\footnote{A vector $a \in \mathbb{C}^n$ is a function $f : \{1,\dots,n\} \to \mathbb{C}$ with $f(i) = a_i$; and a matrix/hypermatrix $A \in \mathbb{C}^{n_1 \times \dots \times n_d}$ is a function $f : \{1,\dots, n_1\} \times \dots \times \{1,\dots, n_d\} \to \mathbb{C}$ with $f(i_1,\dots, i_d) = a_{i_1 \cdots i_d}$ \cite{HLA}.} to identify $L^2(X) \cong \mathbb{C}^m$, $L^2(Y) \cong \mathbb{C}^n$, $L^2(Z) \cong \mathbb{C}^p$, $L^2(X \times Y \times Z) \cong \mathbb{C}^{m \times n \times p}$, where $m = \# X$, $n = \#Y$, $p = \#Z$, and write an \textsc{mps} decomposition as a decomposition
\begin{equation}\label{eq:A}
A_{\textsc{mps}} = \sum_{i,j,k=1}^{r_1,r_2,r_3} a_{ij} \otimes b_{jk} \otimes c_{ki},
\end{equation}
where $A \in \mathbb{C}^{m \times n \times p}$, $a_{ij} \in \mathbb{C}^m$, $b_{jk} \in \mathbb{C}^n$, $c_{kl} \in \mathbb{C}^p$ for all $i,j,k$. In order words, in finite dimension, the function $f$ is represented by a hypermatrix $A$ and the factor functions $\varphi_{ij}$, $\psi_{jk}$, $\theta_{ki}$ are represented by factor vectors $a_{ij}$, $b_{jk}$, $c_{kl}$ respectively. Henceforth we will use the word \emph{factor} regardless of whether it is a function or a vector.

The same applies to other tensor networks when the spaces are finite-dimensional --- they may all be regarded as decompositions of hypermatrices into sums of tensor products of vectors. For easy reference, we list the \textsc{ttns}, \textsc{tt}, \textsc{peps} decompositions below:
\begin{align}
A_{\textsc{ttns}}  &= \sum_{i,j,k=1}^{r_1,r_2,r_3} a_{ijk} \otimes b_{i} \otimes c_{j} \otimes d_{k},  \label{eq:B} \\
A_{\textsc{tt}}  &= \sum_{i,j,k,l=1}^{r_1,r_2,r_3,r_4} a_{i} \otimes b_{ij} \otimes c_{jk} \otimes d_{kl} \otimes e_{l},  \label{eq:C}  \\
A_{\textsc{peps}}  &=\sum_{i,j,k,l,m,n,o=1}^{r_1,r_2,r_3,r_4,r_5,r_6,r_7} a_{ij} \otimes b_{jkl} \otimes c_{lm} \otimes d_{mn} \otimes e_{nko} \otimes f_{oi},  \label{eq:D}
\end{align}
Note that $A_{\textsc{ttns}}$, $A_{\textsc{tt}}$, $A_{\textsc{peps}}$ are hypermatrices of orders $4$, $5$, $7$ respectively. In particular we observe that the simplest nontrivial instance of \textsc{peps} already involve a tensor of order $7$, which is a reason tensor network decompositions are more difficult than the well-studied decompositions associated with tensor rank and multilinear rank, where order-$3$ tensors already capture most of their essence.

From these examples, it is not difficult to infer the general definition of a tensor network decomposition \emph{in coordinates}. Take any undirected graph $G = (V,E)$ and assign a positive integer weight to each edge. Then a tensor network decomposition associated with $G$ may be constructed from the correspondence in  Table~\ref{tab:corr}.
\begin{table}[h!]
\centering
\begin{tabular}{l|l|l}
\emph{Graph} & \emph{Tensor} (function/hypermatrix) & \emph{Notation} \\ \hline
vertices & factors & $\varphi, \psi, \theta, \dots$/$a,b,c, \dots$ \\
edges & contraction indices  & $i,j,k, \dots $ \\
degree of vertex & number of indices in each factor & $n_1,\dots,n_d$ \\
weight of edge & upper limit of summation & $r_1,\dots,r_c$ \\
number of vertices & order of tensor & $d$ \\
number of edges & number of indices contracted & $c$
\end{tabular}
\bigskip
\caption{How a graph determines a tensor network decomposition.}
\label{tab:corr}
\end{table}

As we can see from even the simplest instance of \textsc{peps} above, a coordinate-dependent approach quickly run up against an impenetrable wall of indices. Aside from having to keep track of a large number of indices and their summation limits, we also run out of characters for labeling them (e.g., the functional form of the simplest instance of \textsc{peps} on p.~\pageref{pg:peps} already uses up 20 Roman and Greek alphabets), requiring even messier sub-indices.  We may observe that the label of a factor, i.e., $\varphi, \psi, \theta, \dots$ in the case of functions and $a, b, c, \dots$ in the case of vectors, plays no role in the decompositions --- only its indices matter. This is the impetus behind physicists' Dirac notation, in which \textsc{mps}, \textsc{ttns}, \textsc{tt}, \textsc{peps} are expressed as
\begin{align*}
A_{\textsc{mps}} &= \sum_{i,j,k=1}^{r_1,r_2,r_3} \lvert i, j\rangle  \lvert j, k\rangle  \lvert k, i\rangle, \\
A_{\textsc{ttns}}  &= \sum_{i,j,k=1}^{r_1,r_2,r_3} \lvert i, j, k\rangle  \lvert i\rangle  \lvert j\rangle  \lvert k\rangle, \\
A_{\textsc{tt}}  &= \sum_{i,j,k,l=1}^{r_1,r_2,r_3,r_4} \lvert i\rangle  \lvert i, j\rangle  \lvert j, k\rangle  \lvert k, l\rangle  \lvert l\rangle, \\
A_{\textsc{peps}} &=\sum_{i,j,k,l,m,n,o=1}^{r_1,r_2,r_3,r_4,r_5,r_6,r_7} \lvert i, j\rangle  \lvert j, k, l\rangle  \lvert l, m\rangle  \lvert m, n\rangle  \lvert n, k, o\rangle  \lvert o, i\rangle, 
\end{align*}
respectively. While this notation is slightly more economical, it does not circumvent the problem of indices.
With this in mind, we will adopt a modern \emph{coordinate-free} definition of tensor networks similar to the one in \cite{LQY} that by and large avoids the issue of indices.

Let $G = (V,E)$ be an undirected graph where the set of $d$ vertices and the set of $c$ edges are labeled respectively by
\begin{equation}\label{eq:graphlabels}
V  = \{ 1, \dots, d\} \quad \text{and}\quad
E  = \bigl\{ \{i_1, j_1\},\dots,\{i_c, j_c\} \bigr\} \subseteq \binom{V}{2}.
\end{equation}
We will assign arbitrary directions to the edges:
\[
\underline{E} = \bigl\{ (i_1,j_1),\dots,(i_c,j_c) \bigr\} \subseteq V \times V
\]
but still denote the resulting directed graph  $G$ for the following reason: Tensor network states depend only on the undirected graph structure of $G$ --- two directed graphs with the same underlying undirected graph give isomorphic tensor network states \cite{LQY}. For each $i \in V$, let
\[
\In(i)  = \bigl\{j \in \{1,\dots,c\}: (j,i) \in \underline{E}\bigr\},\qquad
\Out(i) = \bigl\{j\in \{1,\dots,c\} : (i,j) \in \underline{E}\},
\]
i.e., the sets of vertices pointing into and out of $i$ respectively. As usual, for a directed edge $(i,j)$, we will call $i$ its head and $j$ its tail.

The recipe for constructing tensor network states is easy to describe informally: Given any graph $G=(V,E)$, assign arbitrary directions to the edges to obtain $\underline{E}$; attach a vector space $\mathbb{V}_i$ to  each vertex $i$; attach a covector space  $\mathbb{E}_j^*$ to the head and a vector space  $\mathbb{E}_k$ to the tail of each directed edge $(j,k)$; do this for all vertices in $V$ and all directed edges in $\underline{E}$; contract along all edges to obtain a tensor in $\mathbb{V}_1 \otimes \dots \otimes \mathbb{V}_d$. The set of all tensors obtained this way form the tensor network states associated with $G$. We make this recipe precise in the following.

We will work over $\mathbb{C}$ for convenience although the discussions in this article will also apply to $\mathbb{R}$. We will also restrict ourselves mostly to finite-dimensional vector spaces as our study here is undertaken with a view towards computations and in computational applications of tensor networks, infinite-dimensional spaces are invariably approximated by finite-dimensional ones. 

Let $\mathbb{V}_1,\dots,\mathbb{V}_d$ be complex vector spaces with $\dim \mathbb{V}_i = n_i$, $i=1,\dots,d$. Let $\mathbb{E}_1,\dots, \mathbb{E}_c$ be complex vector spaces with $\dim \mathbb{E}_j = r_j$, $j = 1,\dots,c$. We denote the dual space of $\mathbb{V}_i$ by $\mathbb{V}^*_i$ (and that of $\mathbb{E}_j$ by $\mathbb{E}_j^*$). For each $i \in V$, consider the tensor product space 
\begin{equation}\label{eq:basic}
\Bigl(\bigotimes_{j\in \In(i)} \mathbb{E}_j \Bigr) \otimes \mathbb{V}_i \otimes \Bigl( \bigotimes_{j\in \Out(i)} \mathbb{E}^*_j \Bigr) 
\end{equation}
and the contraction map 
\begin{equation}\label{eq:con}
\kappa_G : \bigotimes_{i=1}^d \Bigl[\Bigl(\bigotimes_{j\in \In(i)} \mathbb{E}_j \Bigr) \otimes \mathbb{V}_i \otimes \Bigl( \bigotimes_{j\in \Out(i)} \mathbb{E}^*_j \Bigr) \Bigr] \to \bigotimes_{i=1}^d \mathbb{V}_i,
\end{equation}
defined by contracting factors in $\mathbb{E}_j$ with factors in $\mathbb{E}^*_j$. Since any directed edge $(i,j)$ must point out of a vertex $i$ and into a vertex $j$, each copy of $ \mathbb{E}_j^*$ is paired with one and only one copy of $\mathbb{E}_j$, i.e., the contraction is well-defined. 
\begin{definition}\label{def:tns}
A tensor in $\mathbb{V}_1\otimes \dots \otimes \mathbb{V}_d$ that can be written as $\kappa_G(T_1\otimes \cdots \otimes T_d)$ where  
\[
T_i\in \Bigl(\bigotimes_{j\in \In(i)} \mathbb{E}_j \Bigr) \otimes \mathbb{V}_i \otimes \Bigl( \bigotimes_{j\in \Out(i)} \mathbb{E}^*_j \Bigr),\qquad i=1,\dots,d,
\]
and $\kappa_G$ as in \eqref{eq:con}, is called a \emph{tensor network state}  associated to the undirected graph $G$ and vector spaces $\mathbb{V}_1,\dots, \mathbb{V}_d$, $\mathbb{E}_1,\dots, \mathbb{E}_c$. The set of all such tensor network states is called the \emph{tensor network} and denoted
\begin{multline*}
\tns(G;\mathbb{E}_1,\dots,\mathbb{E}_c;\mathbb{V}_1,\dots, \mathbb{V}_d)
\coloneqq \Bigl\{ \kappa_G(T_1\otimes \dots \otimes T_d) \in \mathbb{V}_1 \otimes \dots \otimes \mathbb{V}_d : \\
T_i \in \Bigl(\bigotimes_{j\in \In(i)} \mathbb{E}_j \Bigr) \otimes \mathbb{V}_i \otimes \Bigl( \bigotimes_{j\in \Out(i)} \mathbb{E}^*_j\Bigr) ,\; i=1,\dots,d
\Bigr\}.
\end{multline*}
We will always require that $\mathbb{E}_1,\dots,\mathbb{E}_c$ be finite-dimensional but $\mathbb{V}_1,\dots,\mathbb{V}_d$ may be of any dimensions, finite or infinite.
Since a vector space is determined up to isomorphism by its dimension, when the vector spaces $\mathbb{E}_1,\dots,\mathbb{E}_c$ are unimportant (these play the role of contraction indices), we will simply denote the tensor network by
$\tns(G; r_1,\dots,r_c; \mathbb{V}_1,\dots, \mathbb{V}_d)$; or, if the vector spaces  $\mathbb{V}_1,\dots,\mathbb{V}_d$ are also unimportant and finite-dimensional, we will denote it by
$\tns(G; r_1,\dots,r_c; n_1,\dots,n_d)$. As before, $n_i = \dim \mathbb{V}_i$ and $r_j = \dim \mathbb{E}_j$.
\end{definition}

While we have restricted Definition~\ref{def:tns} to tensor products of vector spaces $\mathbb{V}_1\otimes \dots \otimes \mathbb{V}_d$  for the purpose of this article, the definition works with any types of mathematical objects with a notion of tensor product: $\mathbb{V}_1,\dots,\mathbb{V}_d$ may be modules or algebras, Hilbert or Banach spaces,  von Neumann or $C^*$-algebras, Hilbert $C^*$-modules, etc. In fact we will need to use  Definition~\ref{def:tns} in the form where $\mathbb{V}_1,\dots,\mathbb{V}_d$ are vector bundles in Section~\ref{sec:calculus}.

Since they will be appearing with some frequency, we will introduce abbreviated notations for the \emph{inspace} and \emph{outspace} appearing in \eqref{eq:basic}: For each vertex $i=1,\dots, d$, set 
\begin{equation}\label{eq:inout}
\mathbb{I}_{i} \coloneqq \bigotimes_{j\in \In(i)} \mathbb{E}_j\qquad \text{and}\qquad  \mathbb{O}_{i} \coloneqq \bigotimes_{j\in \Out(i)} \mathbb{E}^*_j.
\end{equation}

Note that the image of every contraction map $ \kappa_G(T_1\otimes \dots \otimes T_d) $ gives a decomposition like the ones we saw in \eqref{eq:A}--\eqref{eq:D}. We call such a decomposition a \emph{tensor network decomposition} associated with $G$. A tensor $T \in  \mathbb{V}_1 \otimes \dots \otimes \mathbb{V}_d $ is said to be $G$-\emph{decomposable} if it can be expressed as $T = \kappa_G(T_1 \otimes \dots T_d)$ for some $r_1,\dots, r_c \in \mathbb{N}$; a fundamental result here  (see Theorem~\ref{thm:every tensor is a TNS}) is that:
\begin{quote}
\emph{Given any $G$ and any $\mathbb{V}_1,\dots, \mathbb{V}_d$, every tensor in $\mathbb{V}_1 \otimes \dots \otimes \mathbb{V}_d$ is $G$-decomposable  when $r_1,\dots,r_c$ are sufficiently large.}
\end{quote}
The tensor network $\tns(G; r_1,\dots,r_c; \mathbb{V}_1,\dots,\mathbb{V}_d)$ is simply the set of all $G$-decomposable tensors for a fixed choice of  $r_1,\dots, r_c $. A second fundamental result (see Definition~\ref{def:tnr} and discussions thereafter) is that:
\begin{quote}
\emph{Given any $G$ and any $T \in \mathbb{V}_1 \otimes \dots \otimes \mathbb{V}_d$, there is minimum choice of $r_1,\dots,r_c$ such that $T \in \tns(G; r_1,\dots,r_c; \mathbb{V}_1,\dots,\mathbb{V}_d)$.}
\end{quote}

The undirected graph $G$ can be extremely general. We impose no restriction on $G$ --- self-loops, multiple edges, disconnected graphs, etc --- are all permitted. However, we highlight the following:
\begin{description}
\item[Self-loops] Suppose a vertex $i$ has a self-loop, i.e., an edge $e$ from $i$ to itself. Let $\mathbb{E}_e$ be the vector space attached to $e$. Then by definition   $\mathbb{E}_e$ and $\mathbb{E}^*_e$ must both appear in the inspace and  outspace of $i$ and upon contraction they serve no role in the tensor network state; e.g., for $C_1$, the single vertex graph with one self-loop, $\kappa_{C_1}(\mathbb{E}_e \otimes \mathbb{V}_i \otimes \mathbb{E}_e^* )= \mathbb{V}_i$. Hence self-loops in $G$ have no effect on the tensor network states defined by $G$.

\item[Multiple edges] Multiple edges $e_1,\dots,e_m$ with vector spaces $\mathbb{E}_1,\dots,\mathbb{E}_m$ attached have the same effect as a single edge $e$ with the vector space $\mathbb{E}_1 \otimes \dots \otimes \mathbb{E}_m$ attached, or equivalently, multiple edges $e_1,\dots,e_m$ with edge weights $r_1,\dots,r_m$ have the same effect as a single edge with edge weight $r_1\cdots r_m$.

\item[Degree-zero vertices] If $G$ contains  a vertex of degree zero, i.e., an isolated vertex not connected to the rest of the graph, then by Definition~\ref{def:tns},
\begin{equation}\label{eq:isolate}
\tns(G; r_1,\dots,r_c; n_1,\dots,n_d) = \{0\}.
\end{equation}

\item[Weight-one edges] If $G$ contains an edge of weight one, i.e., a one-dimensional vector space is attached to that edge, then by Definition~\ref{def:tns}, that edge may be dropped. See Proposition~\ref{prop:remove an edge} for details.
\end{description}
In particular, allowing for a multigraph adds nothing to the definition of tensor network states and we may assume  that $G$ is always  a simple graph, i.e., no self-loops or multiple edges. However degree-zero vertices and  weight-one edges will be permitted since they are convenient in proofs.

\begin{definition}\label{def:specialtns}
Tensor network states associated to specific types of graphs are given special names. The most common ones are as follows:
\begin{enumerate}[\upshape (i)]
\item\label{tt} if $G$ is a path graph, then tensor network states associated to $G$ are variously called \emph{tensor trains} (\textsc{tt}) \cite{O}, \emph{linear tensor network} \cite{VanLoan}, \emph{concatenated tensor network states} \cite{HND}, \emph{Heisenberg chains} \cite{WH1993, White1992}, or \emph{matrix product states with open boundary conditions};
\item if $G$ is a star graph, then they are called \emph{star tensor network states} (\textsc{stns}) \cite{CMV};
\item if $G$ is a tree graph, then they are called \emph{tree tensor network states} (\textsc{ttns}) \cite{SDV} or \emph{hierarchical tensors} \cite{Hackbusch,HK2009,BSU};
\item\label{mps} if $G$ is a cycle graph, then they are called \emph{matrix product states} (\textsc{mps}) \cite{FNW,OR} or, more precisely, \emph{matrix product states with periodic boundary conditions};
\item if $G$ is a product of $d\ge 2$ path graphs, then they are called $d$-dimensional \emph{projected entangled pair states} (\textsc{peps}) \cite{VC,VWPC};
\item if $G$ is a complete graph, then they are called \emph{complete graph tensor network states} (\textsc{ctns}) \cite{MBRTV}.
\end{enumerate}
\end{definition}
We will use the term tensor trains as its acronym \textsc{tt} reminds us that they are a special case of tree tensor network states \textsc{ttns}. This is a matter of nomenclature convenience. As we can see from the references in \eqref{tt}, the notion has been rediscovered many times. The original sources for what we call tensor trains are \cite{WH1993, White1992} where they are called Heisenberg chains. In fact, as we have seen, tensor trains are also special cases of matrix product states. In some sources \cite{Orus}, the tensor trains in \eqref{tt} are called ``matrix product states with open boundary conditions'' and  the matrix product states in \eqref{mps} are called ``matrix product states with periodic conditions.''
\begin{figure}[h]
\centering
\begin{tikzpicture}
\filldraw
    (0,0) circle (2pt) node[align=center, below]{}
 --(1,0) circle (2pt)  node[align=center, below]{}
 -- cycle;
\end{tikzpicture}
\qquad
\begin{tikzpicture}
\filldraw
    (0,0) circle (2pt) node[align=center, below]{}
 --(1,0) circle (2pt)  node[align=center, below]{}
 --(2,0) circle (2pt) node[align=center, below]{}
 -- cycle;
\end{tikzpicture}
\qquad
\begin{tikzpicture}
\filldraw
    (0,0) circle (2pt) node[align=center, below]{}
 --(1,0) circle (2pt)  node[align=center, below]{}
 --(2,0) circle (2pt) node[align=center, below]{}
 --(3,0) circle (2pt)  node[align=center, below]{}
 -- cycle;
\end{tikzpicture}
\qquad
\begin{tikzpicture}
\filldraw
    (0,0) circle (2pt) node[align=center, below]{}
 --(1,0) circle (2pt)  node[align=center, below]{}
 --(2,0) circle (2pt) node[align=center, below]{}
 --(3,0) circle (2pt)  node[align=center, below]{}
 -- (4,0) circle (2pt)  node[align=center, below]{}
 -- cycle;
\end{tikzpicture}
\caption{Path graphs $P_2$, $P_3$, $P_4$, $P_5$. Each gives a \textsc{tt} decomposition.}
\label{fig:P}
\end{figure}
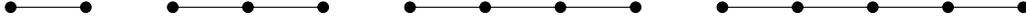
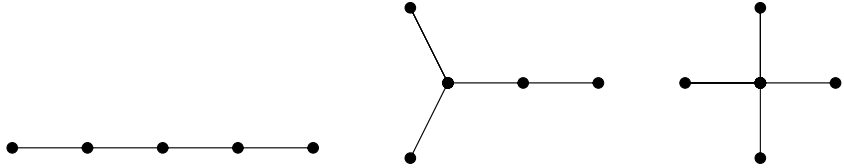
\begin{figure}[h]
\centering
\begin{tikzpicture}
\filldraw
    (0,0) circle (2pt) node[align=center, below]{}
 --(1,0) circle (2pt)  node[align=center, below]{}
 --(2,0) circle (2pt) node[align=center, below]{}
 --(3,0) circle (2pt)  node[align=center, below]{}
 -- (4,0) circle (2pt)  node[align=center, below]{}
 -- cycle;
\end{tikzpicture}
\qquad
\begin{tikzpicture}
\filldraw
    (0.5,-1) circle (2pt) node[align=center, left]{}
 --(1,0) circle (2pt)  node[align=center, left]{}  
 --(0.5,1) circle (2pt)  node[align=center, left]{}
    --(1,0) circle (2pt)  node[align=center, below]{}
 --(2,0) circle (2pt) node[align=center, below]{}
 --(3,0) circle (2pt)  node[align=center, below]{}
 -- cycle;
\end{tikzpicture}
\qquad
\begin{tikzpicture}
\filldraw
    (1,-1) circle (2pt) node[align=center, left] {}
 --(1,0) circle (2pt)  node[align=center, below] {}  
 --(0,0) circle (2pt)  node[align=center, below] {}  
 --(1,0) circle (2pt)  node[align=center, left] {}  
 --(1,1) circle (2pt)  node[align=center, left] {}
    --(1,0) circle (2pt)  node[align=center, below] {}
 --(2,0) circle (2pt) node[align=center, below] {}
 -- cycle;
\end{tikzpicture}
\caption{All five-vertex trees, including $P_5$ and $S_5$. Each gives a \textsc{ttns} decomposition.}
\label{fig:T}
\end{figure}

\begin{figure}[h]
\SetGraphUnit{3}
\SetVertexSimple[MinSize = 8pt]
\centering
\scalebox{0.5}{
\begin{tikzpicture}[rotate = 90]
  \Vertices{circle}{A,B,C}
  \Edges(A,B,C,A)
\end{tikzpicture}}
\qquad
\scalebox{0.5}{
\begin{tikzpicture}[rotate = 45]
  \Vertices{circle}{A,B,C,D}
  \Edges(A,B,C,D,A)
\end{tikzpicture}}
\qquad
\SetVertexSimple[MinSize = 10pt]
\scalebox{0.4}{
\begin{tikzpicture}[rotate = 18]
  \Vertices{circle}{A,B,C,D,E}
  \Edges(A,B,C,D,E,A)
\end{tikzpicture}}
\qquad
\scalebox{0.4}{
\begin{tikzpicture}[rotate = 0]
  \Vertices{circle}{A,B,C,D,E,F}
  \Edges(A,B,C,D,E,F,A)
\end{tikzpicture}}
\caption{Cycle graphs $C_3$, $C_4$, $C_5$, $C_6$. Each gives an \textsc{mps} decomposition.}
\label{fig:C}
\end{figure}
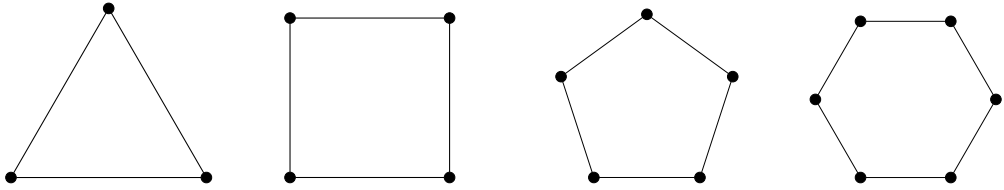

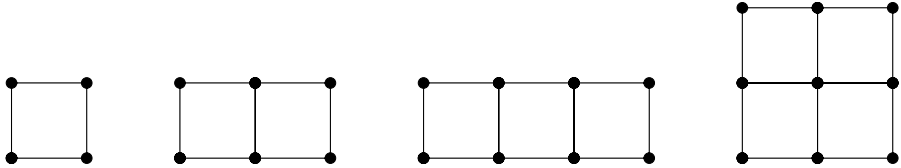
\begin{figure}[h]
\begin{tikzpicture}
\filldraw
    (0,0) circle (2pt) node[align=center, left]{}
 --(1,0) circle (2pt)  node[align=center, left]{}  
 --(1,1) circle (2pt)  node[align=center, left]{}
 --(0,1) circle (2pt)  node[align=center, left]{}
--    (0,0) circle (2pt) node[align=center, left]{}
 -- cycle;
\end{tikzpicture}
\qquad
\begin{tikzpicture}
\filldraw
    (0,0) circle (2pt) node[align=center, left]{}
 --(1,0) circle (2pt)  node[align=center, left]{}  
 --(2,0) circle (2pt)  node[align=center, left]{}
 --(2,1) circle (2pt)  node[align=center, left]{}
 --(1,1) circle (2pt)  node[align=center, left]{}
 --(1,0) circle (2pt)  node[align=center, left]{}
 --(1,1) circle (2pt)  node[align=center, left]{}
 --(0,1) circle (2pt)  node[align=center, left]{}
--    (0,0) circle (2pt) node[align=center, left]{}
 -- cycle;
\end{tikzpicture}
\qquad
\begin{tikzpicture}
\filldraw
    (0,0) circle (2pt) node[align=center, left]{}
 --(1,0) circle (2pt)  node[align=center, left]{}  
 --(2,0) circle (2pt)  node[align=center, left]{}
 --(3,0) circle (2pt)  node[align=center, left]{}
 --(3,1) circle (2pt)  node[align=center, left]{}
 --(2,1) circle (2pt)  node[align=center, left]{}
  --(2,0) circle (2pt)  node[align=center, left]{}
 --(2,1) circle (2pt)  node[align=center, left]{}
 --(1,1) circle (2pt)  node[align=center, left]{}
  --(1,0) circle (2pt)  node[align=center, left]{}
 --(1,1) circle (2pt)  node[align=center, left]{}
 --(0,1) circle (2pt)  node[align=center, left]{}
--  (0,0) circle (2pt) node[align=center, left]{}
 -- cycle;
\end{tikzpicture}
\qquad
\begin{tikzpicture}
\filldraw
    (0,0) circle (2pt) node[align=center, left]{}
 --(1,0) circle (2pt)  node[align=center, left]{}  
 --(2,0) circle (2pt)  node[align=center, left]{}
 --(2,1) circle (2pt)  node[align=center, left]{}
 --(2,2) circle (2pt)  node[align=center, left]{}
 --(1,2) circle (2pt)  node[align=center, left]{}
 --(1,1) circle (2pt)  node[align=center, left]{}
  --(2,1) circle (2pt)  node[align=center, left]{}
 --(0,1) circle (2pt)  node[align=center, left]{}
 --(1,1) circle (2pt)  node[align=center, left]{}
  --(1,0) circle (2pt)  node[align=center, left]{}
 --(1,1) circle (2pt)  node[align=center, left]{}
  --(1,2) circle (2pt)  node[align=center, left]{}
  --(0,2) circle (2pt)  node[align=center, left]{}
    --(0,1) circle (2pt)  node[align=center, left]{}
  --(0,0) circle (2pt)  node[align=center, left]{}
 -- cycle;
\end{tikzpicture}
\qquad
\caption{Four products of path graphs. Each gives a \textsc{peps} decomposition.}
\end{figure}

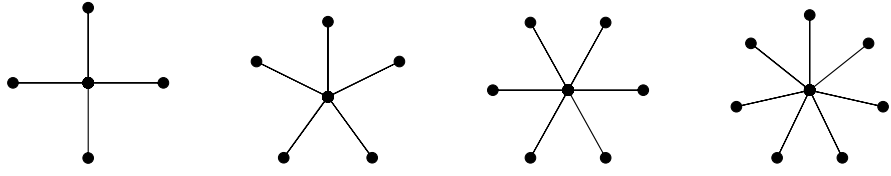
\begin{figure}[h]
\begin{tikzpicture}
\filldraw
    (0,0) circle (2pt) node[align=center, left]{}
 --(1,0) circle (2pt)  node[align=center, left]{}  
 --(0,0) circle (2pt) node[align=center, left]{}
 --(-1,0) circle (2pt)  node[align=center, left]{}  
  --(0,0) circle (2pt) node[align=center, left]{}
   --(0,1) circle (2pt) node[align=center, left]{}
 --(0,0) circle (2pt) node[align=center, left]{}
 --(0,-1) circle (2pt) node[align=center, left]{}
-- cycle;
\end{tikzpicture}
\qquad
\begin{tikzpicture}[rotate=90]
\filldraw
    (0,0) circle (2pt) node[align=center, left]{}
 --(1,0) circle (2pt)  node[align=center, left]{}  
  --(0,0) circle (2pt)  node[align=center, left]{}  
 --(0.47,0.95) circle (2pt)  node[align=center, left]{}  
 --(0,0) circle (2pt) node[align=center, left]{}
 --(-0.81,0.588) circle (2pt)  node[align=center, left]{}  
  --(0,0) circle (2pt) node[align=center, left]{}
 --(-0.81,-0.588) circle (2pt)  node[align=center, left]{}  
   --(0,0) circle (2pt)  node[align=center, left]{}  
 --(0.47,-0.95) circle (2pt)  node[align=center, left]{}  
 --(0,0) circle (2pt)  node[align=center, left]{}  
-- cycle;
\end{tikzpicture}
\qquad
\begin{tikzpicture}
\filldraw
    (0,0) circle (2pt) node[align=center, left]{}
 --(1,0) circle (2pt)  node[align=center, left]{}  
  --(0,0) circle (2pt)  node[align=center, left]{}  
 --(1/2,0.9) circle (2pt)  node[align=center, left]{}  
 --(0,0) circle (2pt) node[align=center, left]{}
 --(-1/2,0.9) circle (2pt)  node[align=center, left]{}  
  --(0,0) circle (2pt) node[align=center, left]{}
 --(-1,0) circle (2pt)  node[align=center, left]{}  
   --(0,0) circle (2pt)  node[align=center, left]{}  
 --(-1/2,-0.9) circle (2pt)  node[align=center, left]{}  
 --(0,0) circle (2pt)  node[align=center, left]{}  
 --(1/2,-0.9) circle (2pt)  node[align=center, left]{}  
-- cycle;
\end{tikzpicture}
\qquad
\begin{tikzpicture}[rotate = 90]
\filldraw
    (0,0) circle (2pt) node[align=center, left]{}
 --(1,0) circle (2pt)  node[align=center, left]{}  
 --(0,0) circle (2pt)  node[align=center, left]{}  
  --(0.623,0.782) circle (2pt)  node[align=center, left]{}  
 --(0,0) circle (2pt)  node[align=center, left]{}  
 --(-0.22,0.975) circle (2pt) node[align=center, left]{}
 --(0,0) circle (2pt)  node[align=center, left]{}  
  --(-0.9,-0.4337) circle (2pt)  node[align=center, left]{}  
   --(0,0) circle (2pt)  node[align=center, left]{}  
 --(-0.9,0.4337) circle (2pt) node[align=center, left]{}
 --(0,0) circle (2pt)  node[align=center, left]{}  
  --(-0.22,-0.975) circle (2pt)  node[align=center, left]{}  
  --(0,0) circle (2pt)  node[align=center, left]{}  
    --(0.623,-0.782) circle (2pt)  node[align=center, left]{}  
-- cycle;
\end{tikzpicture}
\caption{Star graphs $S_5,S_6,S_7, S_8$. Each gives a \textsc{stns} decomposition.}
\label{fig: star graph}
\end{figure}

\begin{figure}[h]
\SetGraphUnit{3}
\SetVertexSimple[MinSize = 8pt]
\centering
\scalebox{0.5}{
\begin{tikzpicture}[rotate = 45]
  \Vertices{circle}{A,B,C,D}
  \Edges(A,B,C,D,A,C,D,B)
\end{tikzpicture}}
\qquad
\SetVertexSimple[MinSize = 10pt]
\scalebox{0.4}{
\begin{tikzpicture}[rotate = 18]
  \Vertices{circle}{A,B,C,D,E}
  \Edges(A,B,C,D,E,A,C,E,B,D,A)
\end{tikzpicture}}
\qquad
\SetVertexSimple[MinSize = 10pt]
\scalebox{0.4}{
\begin{tikzpicture}[rotate = 18]
  \Vertices{circle}{A,B,C,D,E,F}
  \Edges(A,B,C,D,E,F,A,C,E,A,D,B,F,C,A,D,F)
\end{tikzpicture}}
\qquad
\SetVertexSimple[MinSize = 10pt]
\scalebox{0.4}{
\begin{tikzpicture}[rotate = 18]
  \Vertices{circle}{A,B,C,D,E,F,G}
  \Edges(A,B,C,D,E,F,G,A,C,E,G,B,D,F,A,D,G,C,F,B,E,A)
\end{tikzpicture}}
\caption{Complete graphs $K_4$, $K_5$, $K_6$, $K_7$. Each gives a \textsc{ctns} decomposition.}
\label{fig:K}
\end{figure}
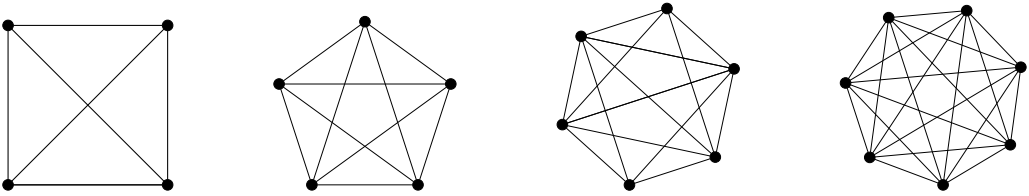

To illustrate Definition~\ref{def:tns} for readers unfamiliar with multilinear algebraic manipulations, we will work out the \textsc{mps} decomposition for the $3$-vertex graph in Figure~\ref{fig:networks} and  the \textsc{peps} decomposition from the $6$-vertex graph in Figure~\ref{fig:networks} in full details.
\begin{example}[\textsc{mps}]\label{eg:mps}
Let $C_3$ be the $3$-vertex  cycle graph for \textsc{mps} in Figure~\ref{fig:networks}. We attach vector spaces $\mathbb{A},\mathbb{B}, \mathbb{C}$ to the vertices labeled $x,y,z$ respectively and vector spaces $\mathbb{D},\mathbb{E}, \mathbb{F}$ to the edges labeled $i,j,k$ respectively.  $\tns(C_3; \mathbb{D},\mathbb{E},\mathbb{F}; \mathbb{A},\mathbb{B},\mathbb{C})$, the set of \textsc{mps} tensor network states corresponding to $C_3$, is obtained as follows. First, assign arbitrary directions to the edges, say, $x \xrightarrow{j} y \xrightarrow{k} z \xrightarrow{i} x$. Next, consider tensors
\[
T_1 \in \mathbb{D} \otimes \mathbb{A} \otimes  \mathbb{E}^*,\qquad
T_2 \in \mathbb{E} \otimes \mathbb{B} \otimes \mathbb{F}^*,\qquad
T_3 \in \mathbb{F} \otimes \mathbb{C} \otimes \mathbb{D}^*.
\]
An  \textsc{mps} tensor network state is obtained by contracting factors in $\mathbb{D}$, $\mathbb{E}$, $\mathbb{F}$ with those in $\mathbb{D}^*$, $\mathbb{E}^*$, $\mathbb{F}^*$ respectively,  giving us $\kappa_{C_3} (T_1\otimes T_2 \otimes T_3)\in \mathbb{A} \otimes \mathbb{B} \otimes \mathbb{C}$. Let
\[
\dim \mathbb{A}= n_1,\quad \dim \mathbb{B}= n_2,\quad \dim \mathbb{C}= n_3,\quad \dim \mathbb{D}= r_1,\quad \dim \mathbb{E}= r_2,\quad \dim \mathbb{F} = r_3.
\]
Let $\{d_1,\dots, d_{r_1}\}$, $\{e_1,\dots, e_{r_2}\}$, $\{f_1,\dots, f_{r_3}\}$ be bases on $\mathbb{D}$, $\mathbb{E}$, $\mathbb{F}$; and  $\{d_1^* ,\dots, d_{r_1}^*\}$, $\{e_1^*,\dots, e^*_{r_2}\}$, $\{f_1^* ,\dots, f^*_{r_3}\}$ be the corresponding dual bases on $\mathbb{D}^*$, $\mathbb{E}^*$, $\mathbb{F}^*$. Then
\[
T_1 = \sum_{i, j = 1}^{r_1, r_2} d_i \otimes a_{i j} \otimes e^*_j, \quad
T_2 = \sum_{j, k = 1}^{r_2, r_3} e_j \otimes b_{j k} \otimes f^*_k, \quad
T_3 = \sum_{k, i = 1}^{r_3, r_1} f_k \otimes c_{k i} \otimes d^*_i.
\]
We will derive the expression for $T_1$ for illustration: Let $a_1,\dots,a_{n_1}$  be a basis of $\mathbb{A}$. Then a tensor in $\mathbb{D}\otimes \mathbb{A}\otimes \mathbb{E}^*$ has the form
\[
T_1 = \sum_{i,k,j = 1}^{r_1,n_1,r_2} \alpha_{i k j }d_i \otimes a_k \otimes e^*_j,
\]
for some coefficients $\alpha_{i k j }\in \mathbb{C}$. We may then express $T_1$ as 
\[
T_1 = \sum_{i, j = 1}^{r_1,r_2} d_i \otimes \Bigl(\sum_{k=1}^{n_1} \alpha_{i k j }a_k \Bigr)\otimes e^*_j  = \sum_{i,j = 1}^{r_1,r_2} d_i \otimes a_{i j} \otimes e^*_j,
\]
where $a_{i j } \coloneqq \sum_{k=1}^{n_1} \alpha_{i k j }a_k $.  Finally we obtain the \textsc{mps} decomposition as
\begin{align*}
\kappa_{C_3} (T_1 \otimes T_2 \otimes T_3) &= \sum_{i,i',j,j',k,k' =1}^{r_1,r_1,r_2,r_2,r_3,r_3}\kappa \bigl( (d_i \otimes a_{i j}  \otimes e^*_j) \otimes (e_{j'} \otimes b_{j' k} \otimes f^*_{k}) \otimes (f_{k'} \otimes c_{k' i'} \otimes d^*_{i'}) \bigr)\\
& =  \sum_{i,i',j,j',k,k' =1}^{r_1,r_1,r_2,r_2,r_3,r_3}  d^*_{i'} (d_i)\, e^*_j(e_{j'})\, f^*_k(f_{k'})  \cdot a_{i j}  \otimes b_{j' k} \otimes c_{k' i'}  \\
& = \sum_{i,i',j,j',k,k' =1}^{r_1,r_1,r_2,r_2,r_3,r_3}  (\delta_{i,i'} \delta_{j,j'} \delta_{k,k'} ) \cdot a_{i j}  \otimes b_{j' k} \otimes c_{k' i'} 
 = \sum_{i,j,k =1}^{r_1,r_2,r_3} a_{i j}  \otimes b_{j k} \otimes c_{k i },
\end{align*}
where $\delta_{i,i'}$ denotes the Kronecker delta.
\end{example}

\begin{example}[\textsc{peps}]
Let $G$ be the $6$-vertex graph for \textsc{peps} in Figure~\ref{fig:networks}. We attach vector spaces $\mathbb{V}_1,\dots, \mathbb{V}_6$ to the vertices labeled $x,y,z,u,v,w$ and vector spaces $\mathbb{E}_1,\dots, \mathbb{E}_7$ to the edges labeled $i,j,k,l,m,n,o$ respectively.  $\tns(G; \mathbb{E}_1,\dots,\mathbb{E}_7; \mathbb{V}_1,\dots,\mathbb{V}_6)$, the set of \textsc{peps} tensor network states, is obtained as follows. First, assign arbitrary directions to the edges, say, $x \xrightarrow{j} y \xrightarrow{l} z \xrightarrow{m} u \xrightarrow{n} v \xrightarrow{k} y$ and $v \xrightarrow{o} w \xrightarrow{i} x$. Next, consider tensors $T_i\in \bigl(\bigotimes_{j\in \In(i)} \mathbb{E}_j \bigr) \otimes \mathbb{V}_i \otimes \bigl( \bigotimes_{j\in \Out(i)} \mathbb{E}^*_j\bigr)$, i.e.,
\begin{align*}
T_1 &\in \mathbb{E}_1\otimes \mathbb{V}_1\otimes  \mathbb{E}^*_2,&
T_2 &\in (\mathbb{E}_2 \otimes \mathbb{E}_3) \otimes \mathbb{V}_2\otimes \mathbb{E}^*_4,&
T_3 &\in \mathbb{E}_4 \otimes \mathbb{V}_3\otimes \mathbb{E}^*_5,\\
T_4 &\in \mathbb{E}_5 \otimes \mathbb{V}_4\otimes \mathbb{E}^*_6,&
T_5 &\in \mathbb{E}_6 \otimes \mathbb{V}_5\otimes (\mathbb{E}^*_3\otimes \mathbb{E}^*_7),&
T_6 &\in \mathbb{E}_7 \otimes \mathbb{V}_6\otimes \mathbb{E}^*_1.
\end{align*}
Finally, we contract factors in $\mathbb{E}_j$ with those in $\mathbb{E}_j^*$, $j =1,\dots,6$,  giving us $\kappa_G (T_1\otimes \dots \otimes T_6)\in \mathbb{V}_1 \otimes \dots \otimes \mathbb{V}_6$. If we choose bases on  $\mathbb{E}_1,\dots,\mathbb{E}_7$, then we obtain the expression for a \textsc{peps} tensor network state in coordinates,
\[
\kappa_G (T_1\otimes \dots \otimes T_6) =\sum_{i,j,k,l,m,n,o=1}^{r_1,r_2,r_3,r_4,r_5,r_6,r_7} a_{ij} \otimes b_{jkl} \otimes c_{lm} \otimes d_{mn} \otimes e_{nko} \otimes f_{oi},
\]
as in Example~\ref{eg:mps}; here $r_j = \dim \mathbb{E}_j$.
\end{example}

We end this section with a simple observation.
\begin{proposition}
For any undirected graph $G$ with $d$ vertices and $c$ edges,
$\tns(G; r_1,\dots,r_c; \mathbb{V}_1,\dots, \mathbb{V}_d)$ is an irreducible constructible subset of $\mathbb{V}_1\otimes \dots \otimes \mathbb{V}_d$.
\end{proposition}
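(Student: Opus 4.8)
The plan is to realize $\tns(G; r_1,\dots,r_c; \mathbb{V}_1,\dots,\mathbb{V}_d)$ as the image of a morphism of affine varieties and then invoke two standard facts: the image of an irreducible topological space under a continuous map is irreducible, and (Chevalley's theorem) the image of a constructible set under a morphism of varieties of finite type over $\mathbb{C}$ is constructible. For the algebro-geometric vocabulary to apply directly I take $\mathbb{V}_1,\dots,\mathbb{V}_d$ to be finite-dimensional; the edge spaces $\mathbb{E}_1,\dots,\mathbb{E}_c$ are finite-dimensional by standing assumption.

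First I would write $W_i \coloneqq \mathbb{I}_i \otimes \mathbb{V}_i \otimes \mathbb{O}_i$ for the space in which each factor $T_i$ lives (using the inspace/outspace notation of \eqref{eq:inout}) and consider the map
\[
\Phi \colon W_1 \times \dots \times W_d \longrightarrow \mathbb{V}_1 \otimes \dots \otimes \mathbb{V}_d, \qquad (T_1,\dots,T_d) \longmapsto \kappa_G(T_1 \otimes \dots \otimes T_d).
\]
By Definition~\ref{def:tns}, the image of $\Phi$ is exactly $\tns(G; r_1,\dots,r_c; \mathbb{V}_1,\dots,\mathbb{V}_d)$. The key structural observation is that $\Phi$ is a polynomial map: the assignment $(T_1,\dots,T_d)\mapsto T_1\otimes\dots\otimes T_d$ is multilinear, so in coordinates each entry of $T_1\otimes\dots\otimes T_d$ is a monomial of multidegree $(1,\dots,1)$ in the coordinates of the $T_i$, and $\kappa_G$ is linear; hence $\Phi$ is a morphism of affine varieties (indeed a multihomogeneous polynomial map). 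Note that this bypasses any need to pass through the Segre variety, since the domain is already an affine space.

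Next I would observe that the domain $W_1 \times \dots \times W_d$ is an affine space $\mathbb{C}^N$ with $N = \sum_{i=1}^d \dim W_i$, which is irreducible. Since a morphism is continuous in the Zariski topology and the continuous image of an irreducible space is irreducible, $\Phi(W_1 \times \dots \times W_d) = \tns(G; r_1,\dots,r_c; \mathbb{V}_1,\dots,\mathbb{V}_d)$ is irreducible. For constructibility I would apply Chevalley's theorem to the morphism $\Phi$: its domain is constructible (being the whole affine space), so its image is a constructible subset of $\mathbb{V}_1\otimes\dots\otimes\mathbb{V}_d$. Finally I would note the set is nonempty, since $0 = \Phi(0,\dots,0)$ lies in it, so the statement is not vacuous.

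There is essentially no serious obstacle here; the proof is routine once the framing is chosen. The only points requiring genuine care are (i) confirming that $\Phi$ is a morphism, which follows from the multilinearity of the tensor product together with the linearity of $\kappa_G$, and (ii) making the ambient setting precise so that ``irreducible'' and ``constructible'' carry their usual meaning, i.e.\ restricting to finite-dimensional $\mathbb{V}_i$ (for infinite-dimensional $\mathbb{V}_i$ one works instead inside an arbitrary finite-dimensional coordinate subspace, or passes to the framework of ind-varieties, and the conclusion is unchanged).
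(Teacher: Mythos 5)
Your proof is correct and is essentially the paper's own argument: the paper realizes $\tns(G; r_1,\dots,r_c; \mathbb{V}_1,\dots, \mathbb{V}_d)$ as $\kappa_G(X)$, where $X$ is the irreducible variety of decomposable tensors in $\mathbb{U}_1 \otimes \dots \otimes \mathbb{U}_d$ (with $\mathbb{U}_i = \mathbb{I}_i \otimes \mathbb{V}_i \otimes \mathbb{O}_i$), and then concludes irreducibility and constructibility of the image exactly as you do. The only cosmetic difference is that you parametrize by the affine space $W_1 \times \dots \times W_d$ and compose the Segre map with $\kappa_G$ into a single polynomial morphism $\Phi$, whereas the paper stops at the intermediate Segre variety $X = \mathrm{image}$ of that Segre map; the two images coincide and both arguments rest on the same two facts (continuous image of an irreducible set is irreducible, plus Chevalley's theorem).
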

\begin{proof}
Let $\mathbb{U}_i = \mathbb{I}_i \otimes \mathbb{V}_i \otimes \mathbb{O}_i$ for $i =1,\dots, d$. Let $X$ be the irreducible variety of decomposable tensors in $\mathbb{U}_1 \otimes \dots \otimes \mathbb{U}_d $, i.e., 
$X = \{  T_1 \otimes \dots \otimes T_d \in \mathbb{U}_1 \otimes \dots \otimes \mathbb{U}_d  :  \; T_i \in  \mathbb{U}_i,\; i=1,\dots,d \}$.
Since $X$ is irreducible and $\kappa_G$ is a morphism between two varieties, the image 
$\kappa_G(X) = \tns(G; r_1,\dots,r_c; \mathbb{V}_1,\dots, \mathbb{V}_d)$
must be irreducible and constructible.
\end{proof}
The proof of this proposition also reveals the following illuminating insight, which in retrospect should have been obvious from \eqref{eq:con} and Definition~\ref{def:tns}.
\begin{corollary}
Every tensor network state is a tensor contraction of a rank-one tensor. 
\end{corollary}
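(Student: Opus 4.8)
The plan is to read the corollary off directly from Definition~\ref{def:tns} together with the variety $X$ introduced in the proof of the preceding proposition, so that essentially no new work is required. First I would recall that, by Definition~\ref{def:tns}, an arbitrary element of $\tns(G; r_1,\dots,r_c; \mathbb{V}_1,\dots,\mathbb{V}_d)$ has the form $\kappa_G(T_1\otimes\cdots\otimes T_d)$ with $T_i \in \mathbb{I}_i \otimes \mathbb{V}_i \otimes \mathbb{O}_i$, and that $\kappa_G$ is by construction a contraction map [see \eqref{eq:con}]. It therefore suffices to exhibit the argument $T_1\otimes\cdots\otimes T_d$ as a rank-one tensor.

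Next I would fix the tensor product structure against which rank is measured. Regarding each inspace--vertex--outspace block $\mathbb{U}_i = \mathbb{I}_i \otimes \mathbb{V}_i \otimes \mathbb{O}_i$ as a single tensor factor, the domain of $\kappa_G$ is the $d$-fold product $\mathbb{U}_1\otimes\cdots\otimes\mathbb{U}_d$. With respect to this coarse decomposition, $T_1\otimes\cdots\otimes T_d$ is a product of exactly one vector from each of the $d$ factors, hence a decomposable tensor lying in the variety $X$ of rank-one tensors in $\mathbb{U}_1\otimes\cdots\otimes\mathbb{U}_d$ used in the previous proof. Combining, every tensor network state is the image under the contraction $\kappa_G$ of an element of $X$, i.e.\ the contraction of a rank-one tensor.

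The only point that requires care---and the one I would flag explicitly---is the meaning of \emph{rank-one}. Each individual factor $T_i$ need not be a rank-one tensor in its own finer factorization $\mathbb{I}_i \otimes \mathbb{V}_i \otimes \mathbb{O}_i$; indeed for \textsc{peps} or \textsc{ctns} the $T_i$ can be genuinely high-rank tensors. The assertion is only that, once the blocks $\mathbb{U}_i$ are treated as atomic, the single tensor $T_1\otimes\cdots\otimes T_d$ is decomposable in $\mathbb{U}_1\otimes\cdots\otimes\mathbb{U}_d$. I do not expect any real obstacle here: the content of the corollary is the conceptual observation, already latent in \eqref{eq:con} and in the proof of the proposition, that the entire complexity of a tensor network state is packaged into one contraction applied to a decomposable tensor, so the proof amounts to this reinterpretation of the maps and spaces rather than any computation.
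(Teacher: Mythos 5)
Your proposal is correct and matches the paper's own argument: the paper likewise observes that $T_1\otimes\cdots\otimes T_d$ is a decomposable (rank-one) element of $\mathbb{U}_1\otimes\cdots\otimes\mathbb{U}_d$ with $\mathbb{U}_i = \mathbb{I}_i\otimes\mathbb{V}_i\otimes\mathbb{O}_i$, i.e.\ a point of the variety $X$ from the preceding proposition, so every tensor network state is $\kappa_G$ applied to a rank-one tensor. Your explicit caveat that \emph{rank-one} is measured with respect to the coarse factorization into the blocks $\mathbb{U}_i$, not the finer one, is exactly the right reading and is implicit in the paper's proof.
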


\section{Calculus of tensor networks}\label{sec:calculus}

Let $\mathbb{N}$ and $\mathbb{N}_0$ denote the set of positive and nonnegative integers respectively. We will introduce some basic tools for manipulating tensor network states. We begin by introducing the notion of criticality, which will allow for various reductions of tensor network states.
\begin{definition}
Let $\tns(G; r_1,\dots,r_c; n_1,\dots,n_d)$ be a tensor network and the notations be as in Definition~\ref{def:tns}. Set
\[
m_i \coloneqq \prod_{j\in \In(i)\cup \Out(i)} r_j,\qquad i =1,\dots,d.
\]
A vertex $i \in V$  is called \emph{subcritical} if $n_i < m_i$, \emph{critical} if $n_i = m_i$, and \emph{supercritical} if $n_i > m_i$.
We say that $\tns(G; r_1,\dots,r_c; n_1,\dots,n_d)$ is 
\begin{enumerate}[\upshape (i)]
\item \emph{subcritical} if $n_i \le m_i$ for all $i=1,\dots,d$, and at least one inequality is strict;
\item \emph{critical} if $n_i = m_i$ for all $i=1,\dots,d$;
\item \emph{supercritical} if $n_i \ge m_i$ for all $i=1,\dots,d$, and at least one inequality is strict.
\end{enumerate}
\end{definition}

Let $\mathbb{V}$ be a $n$-dimensional vector space. For $ k =1,\dots, n$, we let $\Gr(k,\mathbb{V})$ denote the Grassmannian of $k$-dimensional subspaces of $\mathbb{V}$. For the special case, $\mathbb{V} = \mathbb{C}^n$, we write $\Gr(k,n)$ for the Grassmannian of $k$-planes in $\mathbb{C}^n$.

Let $(r_1,\dots,r_c) \in \mathbb{N}^c$ and other notations be as in Definition~\ref{def:tns}. The \emph{tautological vector bundle} on $\Gr(k,\mathbb{V})$, denoted $\mathcal{S}$, is the vector bundle whose base space is $\Gr(k,\mathbb{V})$ and whose fiber over $[\mathbb{W}]\in \Gr(k,\mathbb{V})$ is simply the $k$-dimensional linear subspace $\mathbb{W} \subseteq \mathbb{V}$.  For any $k_1,\dots,k_c \in \mathbb{N}$,  the \emph{tensor network bundle}, denoted
\[
\tns(G;r_1,\dots,r_c;\mathcal{S}_1,\dots, \mathcal{S}_d),
\]
is the fiber bundle over the base space $\Gr(k_1,\mathbb{V}_1)\times \dots \times \Gr(k_d,\mathbb{V}_d)$ whose fiber over a point $([\mathbb{W}_1], \dots,[\mathbb{W}_d])\in \Gr(k_1,\mathbb{V}_1)\times \dots \times \Gr(k_d,\mathbb{V}_d)$ is $ \tns(G; r_1,\dots,r_c; \mathbb{W}_1,\dots, \mathbb{W}_d)$.

We will need the following results from \cite[Propositions~3 and 4]{LQY}, reproduced here for easy reference.
\begin{proposition}[Reduction of degree-one subcritical vertices]\label{prop:reduction valence one}
Let $ (r_1,\dots, r_c) \in \mathbb{N}^c$ and $G = (V,E)$ be a graph. Let $i \in V$ be a vertex of degree one adjacent to the vertex $j \in V$. If $i$ is subcritical or critical, then we have the following reduction:
\[
\tns(G;r_1,r_2,\dots,r_c;\mathbb{V}_1,\mathbb{V}_2,\mathbb{V}_3,\dots, \mathbb{V}_d) = \tns(G'; r_2,\dots,r_c; \mathbb{V}_1\otimes \mathbb{V}_2, \mathbb{V}_3,\dots, \mathbb{V}_d),
\]
where $G' = (V\setminus \{i\}, E \setminus \{i,j\})$, i.e., the graph obtained by removing the vertex $i$ and edge $\{i,j\}$ from $G$, and $ (r_2,\dots,r_c) \in \mathbb{N}^{c-1}$. Alternatively, we may write
\[
\tns(G;r_1,r_2,\dots,r_c;n_1,n_2, n_3,\dots, n_d) = \tns(G'; r_2,\dots,r_c; n_1n_2, n_3,\dots, n_d).
\]
\end{proposition}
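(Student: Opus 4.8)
The plan is to relabel so that the degree-one vertex is $i = 1$, its neighbor is $j = 2$, and the edge joining them is the first edge, carrying the space $\mathbb{E}_1$ with $\dim \mathbb{E}_1 = r_1$. Since tensor network states depend only on the underlying undirected graph, I may orient this edge as $1 \to 2$; then $\In(1) = \varnothing$ and $\Out(1) = \{1\}$, so the factor attached to vertex $1$ lives in $T_1 \in \mathbb{V}_1 \otimes \mathbb{E}_1^*$, while $\mathbb{E}_1$ enters the inspace of vertex $2$. Because vertex $1$ has degree one, $m_1 = r_1$, so the hypothesis that $1$ is subcritical or critical is exactly the inequality $n_1 = \dim \mathbb{V}_1 \le r_1$; this is the only place the criticality assumption is used. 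I would then prove the asserted set equality by establishing the two inclusions separately, after fixing the canonical isomorphism $\mathbb{V}_1 \otimes \mathbb{V}_2 \otimes \mathbb{V}_3 \otimes \dots \otimes \mathbb{V}_d \cong (\mathbb{V}_1 \otimes \mathbb{V}_2) \otimes \mathbb{V}_3 \otimes \dots \otimes \mathbb{V}_d$ identifying the ambient spaces of the two networks.

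For the inclusion $\subseteq$, which needs no hypothesis on $i$, I would start from a state $\kappa_G(T_1 \otimes \dots \otimes T_d)$ and perform the contraction along $\mathbb{E}_1$ first. Since $\mathbb{E}_1^*$ occurs only in $T_1$ and $\mathbb{E}_1$ occurs only in the inspace of $T_2$, contracting these two factors produces a single tensor $S_2$ lying in $\mathbb{V}_1 \otimes \bigl(\bigotimes_{j \in \In(2)\setminus\{1\}} \mathbb{E}_j\bigr) \otimes \mathbb{V}_2 \otimes \bigl(\bigotimes_{j \in \Out(2)} \mathbb{E}_j^*\bigr)$, which is exactly the vertex-$2$ factor space for $G'$ once $\mathbb{V}_2$ is replaced by $\mathbb{V}_1 \otimes \mathbb{V}_2$. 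Carrying out the remaining contractions then exhibits the original state as $\kappa_{G'}(S_2 \otimes T_3 \otimes \dots \otimes T_d)$, a member of the reduced network with weights $r_2, \dots, r_c$.

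For the reverse inclusion $\supseteq$, I would take an arbitrary $\kappa_{G'}(S_2 \otimes T_3 \otimes \dots \otimes T_d)$ and show the merged factor $S_2$ can be split back across a space of dimension $r_1$. Writing $\mathbb{W}$ for everything in the vertex-$2$ factor other than $\mathbb{V}_1$, so that $S_2 \in \mathbb{V}_1 \otimes \mathbb{W}$, the key computation is that choosing a basis $e_1, \dots, e_{r_1}$ of $\mathbb{E}_1$ with dual basis $e_1^*, \dots, e_{r_1}^*$ and setting $T_1 = \sum_\alpha v_\alpha \otimes e_\alpha^*$, $T_2 = \sum_\alpha e_\alpha \otimes w_\alpha$ yields a contraction equal to $\sum_\alpha v_\alpha \otimes w_\alpha$; hence splitting $S_2$ through $\mathbb{E}_1$ amounts precisely to writing $S_2 = \sum_{\alpha=1}^{r_1} v_\alpha \otimes w_\alpha$ with $v_\alpha \in \mathbb{V}_1$ and $w_\alpha \in \mathbb{W}$. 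Regarding $S_2$ as a matrix under the flattening $\mathbb{V}_1 \otimes \mathbb{W}$, such a decomposition exists exactly when $\rank S_2 \le r_1$, and $\rank S_2 \le \min(n_1, \dim \mathbb{W}) \le n_1 \le r_1$ by the criticality hypothesis. This produces factors $T_1 \in \mathbb{V}_1 \otimes \mathbb{E}_1^*$ and $T_2 \in \mathbb{E}_1 \otimes \mathbb{W}$ of the correct type, and reinserting the remaining factors recovers the given state as $\kappa_G(T_1 \otimes \dots \otimes T_d)$.

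The only genuine obstacle is the reverse inclusion, and it is localized in the matrix-rank bound $\rank S_2 \le n_1$: this is what forces the hypothesis $n_1 \le r_1$. Were vertex $1$ supercritical ($n_1 > r_1$), a generic $S_2 \in \mathbb{V}_1 \otimes \mathbb{W}$ would have rank exceeding $r_1$ and could not be factored through the $r_1$-dimensional edge space, so the inclusion $\supseteq$, and hence the equality, would fail. Everything else is the bookkeeping of contraction indices together with the canonical reorderings of tensor factors, which I would keep implicit.
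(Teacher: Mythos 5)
The paper never proves this proposition itself --- it is reproduced from \cite{LQY} (Propositions~3 and 4) ``for easy reference'' --- so there is no internal proof to compare against; judged on its own, your argument is correct and is essentially the standard one: $\subseteq$ unconditionally, by performing the contraction along the pendant edge first so that $T_1$ and $T_2$ merge into a valid vertex factor for $G'$, and $\supseteq$ by factoring the merged tensor $S_2 \in \mathbb{V}_1 \otimes \mathbb{W}$ through $\mathbb{E}_1$ via a matrix decomposition with at most $r_1$ terms, which exists precisely because $\rank S_2 \le n_1 \le r_1$ --- the only place the subcritical/critical hypothesis is used, exactly as you say. One quibble with your closing aside (not with the proof itself): supercriticality of vertex $1$ alone does not force the equality to fail, since the generic rank of $S_2$ is $\min(n_1, \dim \mathbb{W})$; you also need $\dim \mathbb{W} > r_1$, and if the remaining factor $\mathbb{W}$ at the merged vertex happens to have dimension $\le r_1$ then both sides can coincide even when $n_1 > r_1$.
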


\begin{proposition}[Reduction of supercritical vertices]\label{prop:reduction supercritical}
Let $n_i = \dim \mathbb{V}_i$, $p_i = \min \{n_i, m_i\}$,  and $\mathcal{S}_i$ be the tautological vector bundle on $\Gr(p_i,\mathbb{V}_i)$ for $i=1,\dots,d$.
Then the map
\[
\pi : \tns(G;r_1,\dots,r_c;\mathcal{S}_1,\dots, \mathcal{S}_d) \to  \tns(G;r_1,\dots,r_c;\mathbb{V}_1,\dots, \mathbb{V}_d), \quad ([\mathbb{W}_1],\dots, [\mathbb{W}_d],T) \mapsto T,
\]
is a surjective birational map.
\end{proposition}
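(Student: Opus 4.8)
The plan is to prove surjectivity by a flattening argument and then to obtain birationality by writing down an explicit rational inverse of $\pi$ that is regular on a dense open set. Since the total space $\tns(G;r_1,\dots,r_c;\mathcal{S}_1,\dots,\mathcal{S}_d)$ fibers over the irreducible base $\Gr(p_1,\mathbb{V}_1)\times\dots\times\Gr(p_d,\mathbb{V}_d)$ with fibers $\tns(G;r_1,\dots,r_c;\mathbb{W}_1,\dots,\mathbb{W}_d)$ that are irreducible by the preceding proposition, the total space is itself irreducible; hence it suffices to exhibit a dense open subset $U$ of the target over which the (evidently regular) projection $\pi$ is an isomorphism.

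For surjectivity, let $T=\kappa_G(T_1\otimes\dots\otimes T_d)$ be an arbitrary tensor network state. Using the inspace and outspace of \eqref{eq:inout}, regroup $\mathbb{I}_i\otimes\mathbb{V}_i\otimes\mathbb{O}_i\cong\mathrm{Hom}\bigl((\mathbb{I}_i\otimes\mathbb{O}_i)^*,\mathbb{V}_i\bigr)$, so that $T_i$ becomes a linear map whose image has dimension at most $\min\{\dim(\mathbb{I}_i\otimes\mathbb{O}_i),n_i\}=\min\{m_i,n_i\}=p_i$. Choosing any $p_i$-dimensional subspace $\mathbb{W}_i\subseteq\mathbb{V}_i$ containing that image, we get $T_i\in\mathbb{I}_i\otimes\mathbb{W}_i\otimes\mathbb{O}_i$ for every $i$, whence $T\in\tns(G;r_1,\dots,r_c;\mathbb{W}_1,\dots,\mathbb{W}_d)$ and $([\mathbb{W}_1],\dots,[\mathbb{W}_d],T)$ maps to $T$ under $\pi$.

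For birationality, define $\sigma(T)=([\mathbb{W}_1(T)],\dots,[\mathbb{W}_d(T)],T)$, where $\mathbb{W}_i(T)\subseteq\mathbb{V}_i$ is the image of the mode-$i$ flattening of $T$, and let $U$ be the set of $T$ for which every mode-$i$ flattening has rank exactly $p_i$. On $U$ each $\mathbb{W}_i(T)$ is a point of $\Gr(p_i,\mathbb{V}_i)$ depending regularly on $T$, since sending a matrix of maximal rank to its column space is a morphism; thus $\sigma$ is a morphism on $U$ with $\pi\circ\sigma=\mathrm{id}_U$. Conversely, any $([\mathbb{W}_1],\dots,[\mathbb{W}_d],T)$ in the total space has $T\in\mathbb{W}_1\otimes\dots\otimes\mathbb{W}_d$, so $\mathbb{W}_i(T)\subseteq\mathbb{W}_i$, and equality of dimensions $p_i$ forces $\mathbb{W}_i(T)=\mathbb{W}_i$ whenever $T\in U$; hence $\sigma\circ\pi=\mathrm{id}$ over $\pi^{-1}(U)$. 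Therefore $\pi$ restricts to an isomorphism over $U$, and is in particular birational, as soon as $U$ is nonempty (being open in the irreducible target, it is then dense).

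The crux is thus to show $U\neq\emptyset$, i.e.\ that some---equivalently, the generic---tensor network state attains mode-$i$ multilinear rank $p_i$ in every mode $i$. The upper bound $\le p_i$ is automatic, as the mode-$i$ flattening factors through $\mathbb{I}_i\otimes\mathbb{O}_i$ (dimension $m_i$) and lands in $\mathbb{V}_i$ (dimension $n_i$). For the matching lower bound it suffices, for each $i$ separately, to produce one state whose mode-$i$ flattening has rank $p_i$: these are finitely many nonempty open conditions, and in the irreducible target their intersection is again nonempty and open, yielding $U$. To build such a state I would take the factor $T_i$ of full rank $p_i$ and choose the remaining factors so that the contraction of the environment around $i$ has full rank $m_i$ as a map to $\mathbb{I}_i\otimes\mathbb{O}_i$, whereupon the composite mode-$i$ flattening has rank $p_i$. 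Establishing that the environment can be made full rank is the main obstacle: it is a min-cut condition around the vertex $i$, which I would verify by propagating a full-rank configuration outward from $i$ along a spanning tree of its connected component and invoking the degree-one reduction of Proposition~\ref{prop:reduction valence one} (isolated vertices fall under the degenerate case \eqref{eq:isolate} and may be discarded). This environment-surjectivity step is where essentially all the work lies.
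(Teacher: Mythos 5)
First, a framing remark: the paper contains no proof of this statement to compare yours against --- Propositions~\ref{prop:reduction valence one} and \ref{prop:reduction supercritical} are reproduced verbatim from \cite{LQY} ``for easy reference'' --- so your proposal has to be judged on its own terms.

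Your surjectivity argument is correct and complete, and your reduction of birationality is essentially the right one: by the restriction lemma (Lemma~\ref{lemma: subspace TNS}), the fiber of $\pi$ over a state $T$ is $\{([\mathbb{W}_1],\dots,[\mathbb{W}_d]) : T\in\mathbb{W}_1\otimes\dots\otimes\mathbb{W}_d\}$, i.e.\ a product of sub-Grassmannians of $p_i$-planes containing the image of the $i$th flattening of $T$, so $\pi$ is birational precisely when the generic state has $i$th flattening rank equal to $p_i$ at every \emph{supercritical} vertex $i$. (Your set $U$ demands this at every vertex, which is more than needed --- at a vertex with $p_i=n_i$ the Grassmannian factor is a single point regardless, and indeed $U$ can be empty while $\pi$ is birational --- but this is a minor point.)

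The genuine gap is the step you yourself flagged as ``where essentially all the work lies,'' and it cannot be closed: the environment of a supercritical vertex cannot always be made injective, and in fact the proposition \emph{as stated here is false}. Take $G=P_3$ with edge weights $r_1=r_2=2$ and $\dim\mathbb{V}_1=\dim\mathbb{V}_3=1$, $\dim\mathbb{V}_2=5$, so that $m_2=4$ and $(p_1,p_2,p_3)=(1,4,1)$. Every element of $\tns(P_3;2,2;\mathbb{V}_1,\mathbb{V}_2,\mathbb{V}_3)$ has the form $v_1\otimes w\otimes v_3$ and every such element occurs, so the tensor network is all of $\mathbb{V}_1\otimes\mathbb{V}_2\otimes\mathbb{V}_3$ and the middle flattening of every state has rank at most $1<4=p_2$. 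The fiber of $\pi$ over any nonzero state is then $\{[\mathbb{W}_2]\in\Gr(4,\mathbb{V}_2): w\in\mathbb{W}_2\}\cong \Gr(3,4)$, which is $3$-dimensional: $\pi$ is surjective but has positive-dimensional fibers everywhere, hence is not birational. Your own environment map pinpoints the obstruction: it sends $\mathbb{I}_2\otimes\mathbb{O}_2$, of dimension $m_2=4$, into $\mathbb{V}_1\otimes\mathbb{V}_3$, of dimension $1$, so no spanning-tree propagation (or anything else) can make it injective. The same example violates the dimension formula of Corollary~\ref{cor:dimension supercritical} (left side $5$, right side $8$), which confirms that a hypothesis is missing from the statement as reproduced, not from your reading of it. The statement becomes true --- and your plan does go through --- under the assumption that every vertex is critical or supercritical, i.e.\ $n_h\ge m_h$ for all $h$, which is exactly the hypothesis the paper attaches to its dimension formulas (Theorems~\ref{thm:dimension of tensor train} and \ref{thm:dimension of MPS}). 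Under that assumption one has $\prod_{h\ne i}n_h\ge \prod_{h\ne i}m_h\ge m_i$ (each edge at $i$ contributes its weight to the $m_h$ of its other endpoint), and choosing each $T_h$, $h\ne i$, to be a ``copy tensor'' whose $\mathbb{V}_h$-components are $m_h$ linearly independent vectors indexed by the basis multi-indices of the edges at $h$ makes the environment of $i$ injective: distinct dangling-edge indices are sent to sums over disjoint sets of linearly independent decomposable tensors. Composing with a generic $T_i$ then gives flattening rank exactly $p_i$, and since these are finitely many nonempty open conditions on the irreducible parameter space $\prod_{h}\mathbb{I}_h\otimes\mathbb{V}_h\otimes\mathbb{O}_h$, a generic state satisfies all of them simultaneously. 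So your strategy is sound for the corrected statement, but no proof can be completed for the statement as given.
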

Immediate consequences of Proposition~\ref{prop:reduction supercritical} are a bound on the multilinear rank of tensor network states and a reduction formula for the dimension of a tensor network.
\begin{corollary}\label{cor:dimension supercritical}
Let the notations be as above. Then for any $ (r_1,\dots, r_c) \in \mathbb{N}^c$, we have
\[
\tns(G; r_1,\dots,r_c; \mathbb{V}_1,\dots, \mathbb{V}_d) \subseteq \Sub_{p_1,\dots,p_d}(\mathbb{V}_1,\dots, \mathbb{V}_d)
\] 
and
\[
\dim \tns(G; r_1,\dots,r_c; n_1,\dots, n_d) = \dim \tns(G; r_1,\dots,r_c; p_1,\dots, p_d) + \sum_{i=1}^d p_i(n_i-p_i). 
\]
\end{corollary}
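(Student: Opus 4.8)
The plan is to read off both assertions directly from Proposition~\ref{prop:reduction supercritical}, using its \emph{surjectivity} for the inclusion and its \emph{birationality} for the dimension count. Recall that $\Sub_{p_1,\dots,p_d}(\mathbb{V}_1,\dots,\mathbb{V}_d)$ is the subspace variety of tensors of multilinear rank bounded by $(p_1,\dots,p_d)$, i.e.\ those $T$ lying in $\mathbb{W}_1\otimes\dots\otimes\mathbb{W}_d$ for some subspaces $\mathbb{W}_i\subseteq\mathbb{V}_i$ with $\dim\mathbb{W}_i\le p_i$.

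First I would establish the inclusion. Given any $T\in\tns(G;r_1,\dots,r_c;\mathbb{V}_1,\dots,\mathbb{V}_d)$, surjectivity of $\pi$ furnishes a point $([\mathbb{W}_1],\dots,[\mathbb{W}_d])\in\Gr(p_1,\mathbb{V}_1)\times\dots\times\Gr(p_d,\mathbb{V}_d)$ with $T$ lying in the fiber $\tns(G;r_1,\dots,r_c;\mathbb{W}_1,\dots,\mathbb{W}_d)$. By Definition~\ref{def:tns} this fiber is contained in $\mathbb{W}_1\otimes\dots\otimes\mathbb{W}_d$, and since each $\dim\mathbb{W}_i=p_i$, this exhibits $T$ as a tensor of multilinear rank at most $(p_1,\dots,p_d)$, whence $T\in\Sub_{p_1,\dots,p_d}(\mathbb{V}_1,\dots,\mathbb{V}_d)$.

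For the dimension formula I would use that $\pi$ is birational between irreducible varieties and therefore preserves dimension, giving $\dim\tns(G;r_1,\dots,r_c;\mathbb{V}_1,\dots,\mathbb{V}_d)=\dim\tns(G;r_1,\dots,r_c;\mathcal{S}_1,\dots,\mathcal{S}_d)$. The total space on the right is the tensor network bundle over $\Gr(p_1,\mathbb{V}_1)\times\dots\times\Gr(p_d,\mathbb{V}_d)$ whose fiber over each base point is $\tns(G;r_1,\dots,r_c;\mathbb{W}_1,\dots,\mathbb{W}_d)$. Because a tensor network depends on the $\mathbb{W}_i$ only through their dimensions (up to isomorphism), every fiber is isomorphic to $\tns(G;r_1,\dots,r_c;p_1,\dots,p_d)$, so the total space has dimension equal to the base dimension plus the fiber dimension. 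Combining this with $\dim\Gr(p_i,\mathbb{V}_i)=p_i(n_i-p_i)$ yields exactly the claimed formula.

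The routine content is genuinely immediate; the only point needing care is the fiber-bundle dimension step—one must invoke that for a (Zariski-locally trivial) fiber bundle of irreducible varieties the total-space dimension is the sum of base and fiber dimensions, and that all fibers share the common value $\dim\tns(G;r_1,\dots,r_c;p_1,\dots,p_d)$. Given the bundle structure recorded just before Proposition~\ref{prop:reduction supercritical}, this is standard, so I expect no substantive obstacle beyond stating these facts cleanly.
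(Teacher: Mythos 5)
Your proposal is correct and is exactly the argument the paper intends: the corollary is stated as an immediate consequence of Proposition~\ref{prop:reduction supercritical}, with surjectivity of $\pi$ giving the inclusion into $\Sub_{p_1,\dots,p_d}(\mathbb{V}_1,\dots,\mathbb{V}_d)$ and birationality plus the fiber-bundle structure over $\Gr(p_1,\mathbb{V}_1)\times\dots\times\Gr(p_d,\mathbb{V}_d)$ giving the dimension formula. Your added care about the fibers all being isomorphic to $\tns(G;r_1,\dots,r_c;p_1,\dots,p_d)$ and the base contributing $\sum_{i=1}^d p_i(n_i-p_i)$ is precisely the routine content the paper leaves implicit.
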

Note that by Proposition~\ref{prop:reduction supercritical}, all tensor network states can be reduced to one that is either critical or subcritical.

The next proposition is useful for describing when we are allowed to remove an edge from the graph while keeping the tensor network unchanged. The reader may notice a resemblance to  Proposition~\ref{prop:reduction valence one}, which is about collapsing two vertices into one and thus results in a reduction in the total number of vertices, but the goal of Proposition~\ref{prop:remove an edge} is to remove an edge while leaving the total number of vertices  unchanged.
\begin{proposition}[Edge removal]\label{prop:remove an edge}
Let $G = (V,E)$ be a graph with $d$ vertices and $c$ edges. Let $(r_1,\dots, r_c) \in \mathbb{N}^c$ and $(n_1,\dots, n_d) \in \mathbb{N}^d$. Suppose that the edge $e  \in E$ has weight $r_1 = 1$ and $G' = (V,E\setminus \{e\})$ is the graph obtained by removing the edge $e$ from $G$ and suppose $G'$ has no isolated vertices. Then
\[
\tns(G;1,r_2,\dots, r_d;\mathbb{V}_1,\dots, \mathbb{V}_d) \simeq \tns(G';r_2,\dots,r_d; \mathbb{V}_1,\dots, \mathbb{V}_d). 
\]
\end{proposition}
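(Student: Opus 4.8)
The plan is to construct an explicit linear isomorphism $\Phi$ between the domain of $\kappa_{G'}$ and the domain of $\kappa_G$ that carries one contraction map to the other. The whole point is that a weight-one edge contributes only the one-dimensional space $\mathbb{E}_1 \cong \mathbb{C}$, and tensoring with a one-dimensional space is a canonical isomorphism $W \otimes \mathbb{C} \cong W$; so the edge $e$ should be removable ``for free''. Write $e = \{i_1, j_1\}$ with assigned direction $(i_1, j_1)$, so that $\mathbb{E}_1^*$ is a factor of the outspace $\mathbb{O}_{i_1}$ and $\mathbb{E}_1$ is a factor of the inspace $\mathbb{I}_{j_1}$, while all other inspaces and outspaces are common to $G$ and $G'$.

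First I would fix a nonzero $u \in \mathbb{E}_1$ together with its dual covector $u^* \in \mathbb{E}_1^*$ satisfying $u^*(u) = 1$; since $\dim \mathbb{E}_1 = 1$, these span $\mathbb{E}_1$ and $\mathbb{E}_1^*$. I would then define vertex-wise maps $\phi_i$: at $i_1$, insert $u^*$ into the (missing) $\mathbb{E}_1^*$-factor, i.e.\ $\phi_{i_1}(w) = w \otimes u^*$; at $j_1$, insert $u$ into the $\mathbb{E}_1$-factor, i.e.\ $\phi_{j_1}(w) = u \otimes w$; and $\phi_k = \mathrm{id}$ for every other vertex $k$. Each $\phi_i$ is a linear isomorphism (the two nontrivial ones precisely because $\dim \mathbb{E}_1 = \dim \mathbb{E}_1^* = 1$), so $\Phi \coloneqq \phi_1 \otimes \dots \otimes \phi_d$ is a linear isomorphism from the domain of $\kappa_{G'}$ onto the domain of $\kappa_G$ which, being a tensor product of isomorphisms, maps the set of decomposable tensors $S_1 \otimes \dots \otimes S_d$ bijectively onto the set of decomposable tensors $T_1 \otimes \dots \otimes T_d$.

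The key identity to verify is $\kappa_G \circ \Phi = \kappa_{G'}$. On a decomposable $S_1 \otimes \dots \otimes S_d$, the tensor $\Phi(S_1 \otimes \dots \otimes S_d)$ carries $u$ in its $\mathbb{E}_1$-slot and $u^*$ in its $\mathbb{E}_1^*$-slot and agrees with the input in every other factor; contracting along $e$ therefore produces the scalar $u^*(u) = 1$, while every remaining contraction is exactly one performed by $\kappa_{G'}$. Hence $\kappa_G(\Phi(S)) = \kappa_{G'}(S)$ on decomposables and, by linearity, everywhere. Since by Definition~\ref{def:tns} each tensor network is the image under its contraction map of the decomposable tensors, and $\Phi$ bijects these while intertwining the two contractions, I conclude that $\tns(G;1,r_2,\dots,r_c;\mathbb{V}_1,\dots,\mathbb{V}_d)$ and $\tns(G';r_2,\dots,r_c;\mathbb{V}_1,\dots,\mathbb{V}_d)$ coincide as subsets of $\mathbb{V}_1 \otimes \dots \otimes \mathbb{V}_d$, which is the asserted isomorphism.

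The step I expect to require the most care is not conceptual but bookkeeping: tracking which tensor factor each of $u$ and $u^*$ occupies after the canonical reordering of tensor products, and checking that the single contraction along $e$ really does collapse to $u^*(u) = 1$ without disturbing the other edge contractions. The one genuinely necessary hypothesis is that $G'$ has no isolated vertices: if removing $e$ left an endpoint isolated, then $\tns(G') = \{0\}$ by \eqref{eq:isolate}, whereas $\tns(G)$ need not vanish, so the statement would fail; this is exactly the degenerate case excluded by the hypothesis and handled instead by the vertex-collapsing reduction of Proposition~\ref{prop:reduction valence one}.
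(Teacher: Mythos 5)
Your proof is correct and takes essentially the same route as the paper's: both arguments rest on the canonical isomorphism coming from $\mathbb{E}_1 \cong \mathbb{C}$, which identifies each vertex factor space $\mathbb{I}_i \otimes \mathbb{V}_i \otimes \mathbb{O}_i$ for $G$ with the corresponding space $\mathbb{I}'_i \otimes \mathbb{V}_i \otimes \mathbb{O}'_i$ for $G'$. The only difference is one of detail: by choosing $u$, $u^*$ with $u^*(u)=1$ and verifying $\kappa_G \circ \Phi = \kappa_{G'}$ on decomposables, you make explicit the intertwining step that the paper compresses into the single assertion that ``the images of the contraction map must be isomorphic.''
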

\begin{proof}
Assume  without loss of generality that $e = \{1, 2\}$. By definition, 
\begin{align*}
\tns(G;1,r_2,\dots, r_d;\mathbb{V}_1,\dots, \mathbb{V}_d) &=\{ \kappa_{G} (T_1\otimes \cdots \otimes T_d): T_i \in \mathbb{I}_i \otimes \mathbb{V}_i \otimes \mathbb{O}_i, \; i =1,\dots,d\}, \\
\tns(G';r_2,\dots, r_d;\mathbb{V}_1,\dots, \mathbb{V}_d) &= \{ \kappa_{G'} (T'_1\otimes \cdots \otimes T'_d): T'_i \in \mathbb{I}'_i \otimes \mathbb{V}_i \otimes \mathbb{O}'_i, \; i =1,\dots,d\},
\end{align*}
where $\mathbb{I}_i$, $\mathbb{I}'_i$ are inspaces, $\mathbb{O}_i$, $\mathbb{O}'_i$ are outspaces, as defined in \eqref{eq:inout}, and $\kappa_G$, $\kappa_{G'}$ are contraction maps, as defined in \eqref{eq:con}, associated to $G$ and $G'$ respectively. Since $r_1 = 1$,  $\mathbb{E}_1 \simeq \mathbb{C}$  and so contributes nothing\footnote{A one-dimensional vector space is isomorphic to the field of scalars $\mathbb{C}$ and $\mathbb{C} \otimes \mathbb{E} = \mathbb{E}$ for any complex  vector space $\mathbb{E}$.}  to the factors $\mathbb{I}_1 \otimes \mathbb{V}_1 \otimes \mathbb{O}_1$, $\mathbb{I}_2 \otimes \mathbb{V}_2 \otimes \mathbb{O}_2$, and thus $\mathbb{I}_i \otimes \mathbb{V}_i \otimes \mathbb{O}_i \simeq \mathbb{I}'_i \otimes \mathbb{V}_i \otimes \mathbb{O}'_i$ for $i = 1, 2$.
On the other hand,  $\mathbb{I}_i \otimes \mathbb{V}_i \otimes \mathbb{O}_i = \mathbb{I}'_i \otimes \mathbb{V}_i \otimes \mathbb{O}'_i$ for $i =3,\dots,d$. Therefore the images of the contraction map must be isomorphic, as required.
\end{proof}
The assumption that an isolated vertex does not arise in $G'$ upon removing the edge $e$ is necessary because of \eqref{eq:isolate}.  An immediate consequence of Proposition~\ref{prop:remove an edge} is that tensor trains are  a special case of matrix product states since
\begin{equation}\label{eq:tt=mps}
\tns(C_d;r_1,r_2,\dots,r_{d-1},1;n_1,\dots,n_d) =  \tns(P_d ;r_1,\dots,r_{d-1};n_1,\dots,n_d),
\end{equation}
where $C_d$ is the cycle graph with $d$ vertices, the edge with weight $1$ is adjacent to the vertex $1$ and $d$, and $P_d$ is the path graph with $d$ vertices. 

We end the section with a result about restriction of tensor network states to subspaces of tensors, which will be crucial for an important property of tensor network rank established in Theorem~\ref{thm:$G$-rank subspace}.
\begin{lemma}[Restriction lemma]\label{lemma: subspace TNS}
Let $G$ be a graph with $d$ vertices and $c$ edges. Let $\mathbb{V}_1,\dots, \mathbb{V}_d$ be vector spaces and $\mathbb{W}_i \subseteq \mathbb{V}_i$ be subspaces, $i=1,\dots, d$. Then for any $ (r_1,\dots,r_c) \in \mathbb{N}^c$,
\begin{equation}\label{eq:subsp}
\tns(G; r_1,\dots, r_c; \mathbb{V}_1,\dots, \mathbb{V}_d) \cap \mathbb{W}_1\otimes \dots \otimes \mathbb{W}_d = \tns(G; r_1,\dots, r_c; \mathbb{W}_1,\dots, \mathbb{W}_d).
\end{equation}
In particular, we always have
\begin{equation}\label{eq:subsp2}
 \tns(G; r_1,\dots, r_c; \mathbb{W}_1,\dots, \mathbb{W}_d) \subseteq \tns(G; r_1,\dots, r_c; \mathbb{V}_1,\dots, \mathbb{V}_d).
\end{equation}
\end{lemma}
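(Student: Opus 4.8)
The plan is to establish the two inclusions of \eqref{eq:subsp} separately, with the containment \eqref{eq:subsp2} falling out of the easy direction. Throughout I would work directly from Definition~\ref{def:tns} and the fact that $\kappa_G$ contracts only the edge spaces $\mathbb{E}_j,\mathbb{E}_j^*$, leaving the vertex legs $\mathbb{V}_i$ as the output.

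For the inclusion $\supseteq$ in \eqref{eq:subsp}, which simultaneously yields \eqref{eq:subsp2}, I would take any $T \in \tns(G; r_1,\dots,r_c;\mathbb{W}_1,\dots,\mathbb{W}_d)$ and write $T = \kappa_G(T_1 \otimes \dots \otimes T_d)$ with $T_i \in \mathbb{I}_i \otimes \mathbb{W}_i \otimes \mathbb{O}_i$. Since $\mathbb{W}_i \subseteq \mathbb{V}_i$ forces $\mathbb{I}_i \otimes \mathbb{W}_i \otimes \mathbb{O}_i \subseteq \mathbb{I}_i \otimes \mathbb{V}_i \otimes \mathbb{O}_i$, the very same factors $T_i$ exhibit $T$ as an element of $\tns(G; r_1,\dots,r_c;\mathbb{V}_1,\dots,\mathbb{V}_d)$; and since the vertex legs are carried through the contraction untouched, the output automatically lies in $\mathbb{W}_1 \otimes \dots \otimes \mathbb{W}_d$. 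This gives both membership conditions for the intersection and, as a byproduct, \eqref{eq:subsp2}.

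The substantive direction is $\subseteq$. Given $T$ in the left-hand side of \eqref{eq:subsp}, fix a decomposition $T = \kappa_G(T_1 \otimes \dots \otimes T_d)$ with $T_i \in \mathbb{I}_i \otimes \mathbb{V}_i \otimes \mathbb{O}_i$. For each $i$ choose a linear projection $P_i \colon \mathbb{V}_i \to \mathbb{W}_i$ fixing $\mathbb{W}_i$ pointwise (such $P_i$ exists because every subspace admits a complement, even in infinite dimensions), and set $T_i' \coloneqq (\mathrm{id}_{\mathbb{I}_i} \otimes P_i \otimes \mathrm{id}_{\mathbb{O}_i})(T_i) \in \mathbb{I}_i \otimes \mathbb{W}_i \otimes \mathbb{O}_i$. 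I would then prove the key identity
\[
\kappa_G(T_1' \otimes \dots \otimes T_d') = (P_1 \otimes \dots \otimes P_d)\bigl(\kappa_G(T_1 \otimes \dots \otimes T_d)\bigr) = (P_1 \otimes \dots \otimes P_d)(T).
\]
Because $T \in \mathbb{W}_1 \otimes \dots \otimes \mathbb{W}_d$ and each $P_i$ restricts to the identity on $\mathbb{W}_i$, the right-hand side equals $T$; hence $T = \kappa_G(T_1' \otimes \dots \otimes T_d')$ with $T_i' \in \mathbb{I}_i \otimes \mathbb{W}_i \otimes \mathbb{O}_i$, which is exactly the assertion $T \in \tns(G; r_1,\dots,r_c;\mathbb{W}_1,\dots,\mathbb{W}_d)$.

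The main obstacle is the displayed identity, which is the statement that $\kappa_G$ is natural in its vertex (uncontracted) legs: applying $P_i$ to the $\mathbb{V}_i$ factor of each $T_i$ commutes with contracting the $\mathbb{E}_j$ factors against the $\mathbb{E}_j^*$ factors. I would verify this by observing that $\kappa_G$ and the operators $\mathrm{id}_{\mathbb{I}_i} \otimes P_i \otimes \mathrm{id}_{\mathbb{O}_i}$ act on disjoint sets of tensor slots — contraction touches only the $\mathbb{E}_j,\mathbb{E}_j^*$ slots while the $P_i$ touch only the $\mathbb{V}_i$ slots — so they commute. Concretely it suffices to check the identity on elementary rank-one factors $T_i$, where $\kappa_G$ amounts to evaluating the dual pairings $\mathbb{E}_j^* \times \mathbb{E}_j \to \mathbb{C}$ along the edges while the vertex vectors are simply transported to the output; the scalar produced by these pairings is unaffected by the $P_i$, and the surviving vertex factors are precisely those on which $P_1 \otimes \dots \otimes P_d$ acts. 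Extending by bilinearity (multilinearity) over all the $T_i$ then gives the identity in general, completing the argument.
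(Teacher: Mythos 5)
Your proof is correct, and it reaches the paper's conclusion by a genuinely different (and cleaner) mechanism, although the underlying data are the same. The projections $P_i\colon \mathbb{V}_i \to \mathbb{W}_i$ you choose are equivalent to the paper's choice of splittings $\mathbb{V}_i = \mathbb{V}_i^- \oplus \mathbb{V}_i^+$ with $\mathbb{V}_i^+ = \mathbb{W}_i$, and your projected factors $T_i' = (\mathrm{id}_{\mathbb{I}_i}\otimes P_i \otimes \mathrm{id}_{\mathbb{O}_i})(T_i)$ are precisely the paper's $T_i^+$. The difference lies in how one shows $T = \kappa_G(T_1'\otimes \dots \otimes T_d')$. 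The paper expands $T_1\otimes \dots \otimes T_d = \sum_{\pm} T_1^{\pm}\otimes \dots \otimes T_d^{\pm}$ into $2^d$ terms, observes that each term contracts into the corresponding block $\mathbb{V}_1^{\pm}\otimes \dots \otimes \mathbb{V}_d^{\pm}$ of the induced direct-sum decomposition of $\mathbb{V}_1\otimes\dots\otimes\mathbb{V}_d$, and concludes that since $T$ lies in the all-plus block, only the all-plus term can contribute. You instead prove the equivariance identity $\kappa_G \circ \bigotimes_i(\mathrm{id}\otimes P_i\otimes \mathrm{id}) = (P_1\otimes\dots\otimes P_d)\circ \kappa_G$ --- justified by the observation that contraction acts only on edge slots while the $P_i$ act only on vertex slots --- and then use that $P_1\otimes\dots\otimes P_d$ fixes $T$ because $T \in \mathbb{W}_1\otimes\dots\otimes\mathbb{W}_d$. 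Your route buys two things: it avoids the $2^d$-term bookkeeping, and it sidesteps the slightly loose final inference in the paper's write-up (which literally asserts that the original factors $T_i$ lie in $\mathbb{W}_i$, whereas what actually follows is only that the mixed-sign components of $T$ vanish, i.e.\ $T = \kappa_G(T_1^+\otimes\dots\otimes T_d^+)$). Conversely, the paper's block decomposition makes explicit where every summand lands, which is what its commutative diagrams are set up to track. Both arguments require a linear complement of $\mathbb{W}_i$ in $\mathbb{V}_i$ (Zorn's lemma in infinite dimensions), so neither is more general than the other.
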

\begin{proof}
It is obvious that `$\supseteq$' holds in \eqref{eq:subsp} and it remains to show `$\subseteq$'.
Let $\mathbb{E}_j$ be a vector space of dimension $r_j$,  $j=1,\dots, c$. Orient $G$ arbitrarily and let the inspace $\mathbb{I}_i$ and outspace $\mathbb{O}_i$ be as defined  in \eqref{eq:inout} for vertices $i =1,\dots,d$.
We obtain two commutative diagrams:
\[
\begin{tikzcd}
\prod_{i=1}^d \mathbb{I}_{i} \otimes \mathbb{W}_i \otimes 
\mathbb{O}_{i}  \arrow{r}{\Psi'} \arrow[swap]{d}{\kappa''} &\bigotimes_{i=1 }^d \mathbb{I}_{i} \otimes \mathbb{W}_i \otimes \mathbb{O}_{i} \arrow{r}{\Phi} \arrow[swap]{d}{\kappa'} 
&\bigotimes_{i=1 }^d \mathbb{I}_{i} \otimes \mathbb{V}_i \otimes \mathbb{O}_{i}  \arrow{d}{\kappa}  \\
\tns(G; r_1,\dots, r_c; \mathbb{W}_1,\dots, \mathbb{W}_d) \arrow{r}{\psi'}  &\mathbb{W}_1\otimes \dots \otimes \mathbb{W}_d \arrow{r}{\phi} & \mathbb{V}_1\otimes \dots \otimes \mathbb{V}_d
\end{tikzcd}
\]
and
\[
\begin{tikzcd}
\prod_{i=1}^d  \mathbb{I}_{i} \otimes \mathbb{V}_i \otimes \mathbb{O}_{i}  \arrow{r}{\Psi} \arrow[swap]{d}{\kappa'''} &\bigotimes_{i=1 }^d \mathbb{I}_{i} \otimes \mathbb{V}_i \otimes \mathbb{O}_{i}\arrow[swap]{d}{\kappa} \\
\tns(G; r_1,\dots, r_c; \mathbb{V}_1,\dots, \mathbb{V}_d) \arrow{r}{\psi}  & \mathbb{V}_1\otimes \dots \otimes \mathbb{V}_d,
\end{tikzcd}
\]
where $\Psi'$ sends $(x_1,\dots, x_d)$, $x_i \in \mathbb{I}_{i} \otimes \mathbb{W}_i \otimes 
\mathbb{O}_{i} ,i=1,\dots, d$, to  $x_1\otimes \cdots \otimes x_d$ and $\Psi$ is defined similarly;
$\psi',\psi$ are inclusions of tensor network states into their respective ambient spaces; $\Phi,\phi$ are inclusions induced by $\mathbb{W}_i \subseteq \mathbb{V}_i$, $i=1,\dots, d$; $\kappa'$ is the restriction of $\kappa \coloneqq \kappa_G$; $\kappa''$ the composition of $\kappa'$ and $\Psi'$; and $\kappa'''$ the composition of $\kappa$ and $\Psi$.

For each $i=1,\dots, d$, write $\mathbb{V}_i^+ = \mathbb{W}_i$ and decompose $\mathbb{V}_i$ into a direct sum
\[
\mathbb{V}_i = \mathbb{V}_i^- \oplus \mathbb{V}_i^+
\]
for some linear subspace $\mathbb{V}_i^- \subseteq \mathbb{V}_i$. These give us the decomposition
\[
\prod_{i=1}^d \mathbb{I}_{i} \otimes \mathbb{V}_i \otimes \mathbb{O}_{i} = \prod_{i=1}^d  \mathbb{I}_{i} \otimes (\mathbb{V}^-_i\oplus \mathbb{V}^+_i) \otimes \mathbb{O}_{i}
= \prod_{i=1}^d (\mathbb{I}_{i} \otimes \mathbb{V}^-_i \otimes \mathbb{O}_{i}) \oplus (\mathbb{I}_{i} \otimes \mathbb{V}^+_i \otimes \mathbb{O}_{i}).
\]
Now we may write an element $T_1 \otimes \dots \otimes T_d \in \Psi \bigl(\prod_{i=1}^d \mathbb{I}_{i} \otimes \mathbb{V}_i \otimes \mathbb{O}_{i}\bigr)$ as 
\[
T_1 \otimes \dots \otimes T_d = (T_1^- + T_1^+) \otimes \dots \otimes (T_d^- + T_d^+) = \sum_{\pm } T_1^{\pm} \otimes \dots\otimes T_d^{\pm}, 
\]
where $T_i^- \in \mathbb{I}_{i} \otimes \mathbb{V}^-_i \otimes \mathbb{O}_{i}$ and $ T_i^+ \in  \mathbb{I}_{i} \otimes \mathbb{V}^+_i \otimes \mathbb{O}_{i}$.
Since $T_1^{\pm} \otimes \dots \otimes T_d^{\pm} \in \Psi\bigl(\prod_{i=1}^d \mathbb{I}_{i} \otimes \mathbb{V}^{\pm}_i \otimes \mathbb{O}_{i}\bigr)$,
\[
\kappa(T_1 \otimes \dots \otimes T_d)\in \sum_{\pm } \kappa\Bigl(\Psi\Bigl(\prod_{i=1}^d \mathbb{I}_{i} \otimes \mathbb{V}^{\pm}_i \otimes \mathbb{O}_{i}\Bigr)\Bigr) = \sum_{\pm } \tns(G; r_1,\dots,r_c; \mathbb{V}^{\pm}_1,\dots, \mathbb{V}^{\pm}_d).
\]
Therefore $\kappa(T_1 \otimes \dots \otimes T_d) \in  \mathbb{V}^+_1\otimes \dots \otimes \mathbb{V}^+_d$ implies that $T_i\in \mathbb{V}^+_i = \mathbb{W}_i$ for all $i=1,\dots, d$ and hence $\kappa(T_1 \otimes \dots \otimes T_d)\in \tns(G; r_1,\dots,r_c; \mathbb{W}_1,\dots, \mathbb{W}_d)$.
\end{proof}

\section{$G$-ranks of tensors}\label{sec:Grank}

The main goal of this article is to show that there is a natural notion of rank for tensor network with respect to any connected graph $G$. We start by reminding our readers of the classical notions of tensor rank and multilinear rank, with a small twist   --- instead of first defining tensor and multilinear ranks and then defining the respective sets they cut out, i.e., secant quasiprojective variety and subspace variety, we will reverse the order of these definitions. This approach will be consistent with how we define tensor network ranks later. The results in this and subsequent sections require that the vector spaces $\mathbb{V}_1,\dots,\mathbb{V}_d$ be finite-dimensional.

The \emph{Segre variety} is the set of all decomposable tensors,
\[
\Seg(\mathbb{V}_1, \dots,\mathbb{V}_d) \coloneqq \{ T \in \mathbb{V}_1 \otimes \dots \otimes \mathbb{V}_d : T = v_1 \otimes \dots \otimes v_d,\; v_i \in \mathbb{V}_i\} .
\]
The $r$-\emph{secant quasiprojective variety} of the Segre variety is
\[
s_r\bigl(\Seg(\mathbb{V}_1, \dots,\mathbb{V}_d)\bigr) \coloneqq \Bigl\{T \in \mathbb{V}_1 \otimes \dots \otimes \mathbb{V}_d :  T = \sum_{i=1}^r T_i , \; T_i \in \Seg(\mathbb{V}_1, \dots,\mathbb{V}_d) \Bigr\},
\]
and its closure is the $r$-\emph{secant variety} of the Segre variety,
\[
\sigma_r\bigl(\Seg(\mathbb{V}_1, \dots,\mathbb{V}_d)\bigr) \coloneqq \overline{
s_r\bigl(\Seg(\mathbb{V}_1, \dots,\mathbb{V}_d)\bigr)}.
\]
The $(r_1,\dots,r_d)$-\emph{subspace variety} \cite{L} is the set
\[
\Sub_{r_1,\dots,r_d}(\mathbb{V}_1,\dots, \mathbb{V}_d) \coloneqq \{T \in  \mathbb{V}_1 \otimes \dots \otimes \mathbb{V}_d :  T\in \mathbb{W}_1\otimes \dots \otimes \mathbb{W}_d, \; \mathbb{W}_i \subseteq \mathbb{V}_i, \; \dim \mathbb{W}_i  = r_i  \}.
\]
The \emph{tensor rank} or just rank \cite{Hi1} of a tensor $T \in \mathbb{V}_1 \otimes \dots \otimes \mathbb{V}_d$ is 
\[
\rank(T) \coloneqq \min \bigl\{ r\in \mathbb{N}_0 : T  \in 
s_r\bigl(\Seg(\mathbb{V}_1, \dots,\mathbb{V}_d)\bigr)  \bigr\},
\]
its \emph{border rank} \cite{L} is
\[
\brank(T) \coloneqq \min \bigl\{ r\in \mathbb{N}_0 : T  \in 
\sigma_r\bigl(\Seg(\mathbb{V}_1, \dots,\mathbb{V}_d)\bigr)  \bigr\},
\]
and its \emph{multilinear rank} \cite{Hi1, DSL, L} is
\[
\mrank(T) \coloneqq \min \bigl\{(r_1,\dots, r_d) \in \mathbb{N}_0^d : T \in 
\Sub_{r_1,\dots,r_d}(\mathbb{V}_1,\dots, \mathbb{V}_d)  \bigr\}.
\]
Note that $\rank(T) = 0$ iff  $\mrank(T) = (0,\dots,0)$ iff $T = 0$ and that $\rank(T) = 1$ iff  $\mrank(T) = (1,\dots,1)$. Thus
\begin{align*}
\Seg(\mathbb{V}_1, \dots,\mathbb{V}_d) &= \{ T \in \mathbb{V}_1 \otimes \dots \otimes \mathbb{V}_d : \rank(T) \le 1 \} = \Sub_{1,\dots,1}(\mathbb{V}_1,\dots, \mathbb{V}_d),\\
s_r\bigl(\Seg(\mathbb{V}_1, \dots,\mathbb{V}_d)\bigr) &= \{T \in \mathbb{V}_1 \otimes \dots \otimes \mathbb{V}_d : \rank(T) \le r \},\\
\Sub_{r_1,\dots,r_d}(\mathbb{V}_1,\dots, \mathbb{V}_d) &= \{T \in  \mathbb{V}_1 \otimes \dots \otimes \mathbb{V}_d : \mrank(T) \le (r_1,\dots,r_d) \}.
\end{align*}

When the vector spaces are unimportant or when we choose coordinates and represent tensors as hypermatrices, we write
\begin{align*}
\Seg(n_1, \dots, n_d) &= \{ T \in \mathbb{C}^{n_1 \times \dots \times n_d} : \rank(T) \le 1 \},\\
s_r(n_1, \dots,n_d)\bigr) &= \{T \in \mathbb{C}^{n_1 \times \dots \times n_d} : \rank(T) \le r \},\\
\Sub_{r_1,\dots,r_d}(n_1,\dots, n_d) &= \{T \in  \mathbb{C}^{n_1 \times \dots \times n_d} : \mrank(T) \le (r_1,\dots,r_d) \}.
\end{align*}
The dimension of a subspace variety is given by
\begin{equation}\label{eq:dimSub}
\dim \Sub_{r_1,\dots,r_d}(n_1,\dots, n_d) = \sum_{i=1}^d r_i(n_i-r_i) + \prod_{j=1}^d r_j.
\end{equation}

Unlike tensor rank and multilinear rank, the existence of a tensor network rank is not obvious and will be established in the following. A tensor network $\tns(G;r_1,\dots, r_c;\mathbb{V}_1,\dots,\mathbb{V}_d)$ is defined for any graph $G$ although it is trivial  when $G$ contains an isolated vertex (see \eqref{eq:isolate}). However,  tensor network ranks or $G$-ranks will require the stronger condition that $G$ be \emph{connected}.
\begin{theorem}[Every tensor is a tensor network state]\label{thm:every tensor is a TNS}
Let $T\in \mathbb{V}_1\otimes \dots \otimes \mathbb{V}_d$ and let $G$ be a connected graph with $d$ vertices and $c$ edges. Then there exists $(r_1,\dots, r_c) \in \mathbb{N}^c$ such that 
\[
T \in \tns(G; r_1,\dots,r_c ; \mathbb{V}_1,\dots, \mathbb{V}_d).
\]
In fact, we may choose $r_1 = \dots = r_c = \rank(T)$, the tensor rank of $T$.
\end{theorem}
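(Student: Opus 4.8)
The plan is to take a rank decomposition of $T$ and realize it directly as a single tensor network state by placing a ``diagonal'' (copy) tensor at every vertex, using connectedness of $G$ to broadcast the single summation index of the rank decomposition across the whole graph. Concretely, set $r = \rank(T)$ and write $T = \sum_{s=1}^r v_1^{(s)} \otimes \dots \otimes v_d^{(s)}$ with $v_i^{(s)} \in \mathbb{V}_i$. I would attach to every edge the same space $\mathbb{E}_j = \mathbb{C}^r$, so that $r_1 = \dots = r_c = r$, fix a basis $\{e_1,\dots,e_r\}$ of $\mathbb{C}^r$ with dual basis $\{e_1^*,\dots,e_r^*\}$, and orient $G$ arbitrarily (permissible since tensor network states are orientation-independent).

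The core of the construction is the choice of vertex tensors. At vertex $i$ I would set
\[
T_i \;=\; \sum_{s=1}^r \Bigl(\bigotimes_{j\in\In(i)} e_s\Bigr) \otimes v_i^{(s)} \otimes \Bigl(\bigotimes_{j\in\Out(i)} e_s^*\Bigr) \;\in\; \mathbb{I}_i \otimes \mathbb{V}_i \otimes \mathbb{O}_i,
\]
i.e.\ a generalized diagonal in the edge indices tensored with the $i$-th leg of the rank-one summands, where every in- and out-edge incident to $i$ carries the common index $s$. This is a legitimate element of the inspace--vertex--outspace product of Definition~\ref{def:tns}, so $\kappa_G(T_1\otimes\dots\otimes T_d)$ is a bona fide element of $\tns(G;r,\dots,r;\mathbb{V}_1,\dots,\mathbb{V}_d)$.

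It remains to verify $\kappa_G(T_1\otimes\dots\otimes T_d)=T$. Expanding the tensor product over the independent summation indices $s_1,\dots,s_d$ at the $d$ vertices, the contraction $\kappa_G$ pairs, for each edge $j$ oriented from $a$ to $b$, the covector $e_{s_a}^*$ living in the outspace of $a$ with the vector $e_{s_b}$ living in the inspace of $b$; this pairing contributes the factor $e_{s_a}^*(e_{s_b}) = \delta_{s_a,s_b}$. Hence only terms with $s_a = s_b$ for every edge $\{a,b\}$ survive. This is exactly where connectedness of $G$ is essential: the surviving per-edge constraints force $s_1 = \dots = s_d =: s$, and the summand for each such $s$ reduces to $v_1^{(s)}\otimes\dots\otimes v_d^{(s)}$, so the contraction collapses to $\sum_{s=1}^r v_1^{(s)}\otimes\dots\otimes v_d^{(s)} = T$.

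The argument is essentially bookkeeping once the diagonal tensors are in place; the only genuine points requiring care are (i) keeping the dual pairings and orientations straight so that each $\mathbb{E}_j^*$ is contracted against its matching $\mathbb{E}_j$, and (ii) the appeal to connectedness, which is what upgrades the local constraints $s_a=s_b$ into global equality of all indices --- and which is precisely what would fail for a disconnected $G$. The degenerate case $T = 0$, where $\rank(T)=0 \notin \mathbb{N}$, is handled separately by taking any $r_j = 1$ and all $T_i = 0$.
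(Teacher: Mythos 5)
Your proposal is correct and follows essentially the same argument as the paper's own proof: both place a diagonal tensor $\sum_{s} \bigl(\bigotimes_{j\in\In(i)} e_s\bigr) \otimes v_i^{(s)} \otimes \bigl(\bigotimes_{j\in\Out(i)} e_s^*\bigr)$ at each vertex and use the edge-wise Kronecker deltas together with connectedness of $G$ to force all summation indices equal, collapsing the contraction to the rank decomposition of $T$. Your explicit treatment of the degenerate case $T=0$ is a small additional nicety not spelled out in the paper, but the approach is the same.
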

\begin{proof}
Let $r = \rank(T)$. Then there exist $v_{1}^{(i)},\dots,v_{r}^{(i)}\in \mathbb{V}_i$, $i=1,\dots, d$, such that
\[
T = \sum_{p=1}^r v_p^{(1)}\otimes \dots \otimes v_p^{(d)}.
\]
Let us take $r_1 = \dots = r_c = r$ and for each $i=1,\dots, d$, let
\[
T_i = \sum_{p=1}^r   \Bigl(\bigotimes_{j\in \In(i)}e_{p}^{(j)} \Bigr)\otimes  v^{(i)}_{p} \otimes \Bigl(\bigotimes_{j \in \Out(i)}e_{p}^{(j)\ast}\Bigr),
\]
where $e_{1}^{(j)},\dots, e_{r}^{(j)}\in \mathbb{E}_{j}$ are a basis with dual basis $e_{1}^{(j)\ast},\dots, e_{r}^{(j)\ast} \in \mathbb{E}^*_{j}$, i.e., $e_{p}^{(j)\ast}(e_{q}^{(j)}) = \delta_{pq}$ for $p,q = 1,\dots,r$ and $j=1,\dots,d$. In addition, we set $e_{p}^{(0)} = e_{p}^{(d+1)} = 1\in \mathbb{C}$ to be one-dimensional vectors (i.e., scalars), $p=1,\dots, r$. We claim that upon contraction,
\[
\kappa_G (T_1\otimes \cdots \otimes T_d) = T.
\]
To see this, observe that for each $i =1, \dots, d$, there exists a unique $h$ such that whenever $j\in \In(i) \cap \Out(h)$, $e_{p}^{(j)}$ and $e_{q}^{(j)\ast}$ contract to give $\delta_{pq}$; so the summand vanishes except when $p = q$.
This together with the assumption that $G$ is connected  implies that $\kappa_G (T_1\otimes \cdots \otimes T_d)$ reduces to a sum of terms of the form $v^{(1)}_p \otimes \cdots \otimes v^{(d)}_p$ for $p = 1, \dots, r$,
which is of course is just $T$.
\end{proof}
As an example to illustrate the above proof, let $d = 3$ and $G = P_3$, the path graph with three vertices. Let $e_1,\dots,e_r$ be a basis of $\mathbb{E}_1$ and let $e_1^*,\dots, e_r^*$ be the dual basis. Let $f_1,\dots, f_r$ be a basis of $\mathbb{E}_2$ and let $f_1^*,\dots, f_r^*$ be the dual basis. Given a tensor
\[
T = \sum_{p=1}^r u_p\otimes v_p \otimes w_p \in \mathbb{V}_1 \otimes \mathbb{V}_2 \otimes \mathbb{V}_3,
\]
consider 
\[
T_1  = \sum_{p=1}^r u_p \otimes e_p^* \in \mathbb{V}_1\otimes \mathbb{E}^*_1,\;\;
T_2 = \sum_{p=1}^r e_p \otimes v_p \otimes f_p^* \in \mathbb{E}_1\otimes \mathbb{V}_2 \otimes \mathbb{E}_2^*,\;\;
T_3 = \sum_{p=1}^r f_p \otimes w_p \in \mathbb{E}_2\otimes \mathbb{V}_3.
\]
Now observe that a nonzero term in $\kappa_G (T_1\otimes T_2 \otimes T_3)$ must come from contracting $e_p^*$ with $e_p$ and $f^*_{p}$ with $f_p$, showing that 
$T = \kappa_G ( T_1\otimes T_2\otimes T_2) \in \tns(G; r,r; \mathbb{V}_1,\mathbb{V}_2, \mathbb{V}_3)$.

By Theorem~\ref{thm:every tensor is a TNS} and Corollary~\ref{cor:dimension supercritical}, we obtain the following general inclusion relations (independent of $G$) between a tensor network and the sets of rank-$r$ tensors and multilinear rank-$(r_1,\dots,r_d)$ tensors.
\begin{corollary}\label{cor:intermediate}
Let $G$ be a connected graph with $d$ vertices and $c$ edges. Let $\mathbb{V}_1,\dots, \mathbb{V}_d$ be vector spaces of dimensions $n_1,\dots,n_d$. Then 
\[
s_r \coloneqq \{T \in \mathbb{V}_1\otimes \dots \otimes \mathbb{V}_d : \rank(T) \le r\}  \subseteq \tns(G; \underbrace{r,\dots,r}_{c}; \mathbb{V}_1,\dots, \mathbb{V}_d). 
\]
Let $(r_1,\dots,r_c)\in \mathbb{N}^c$ and let $(p_1,\dots,p_d)\in \mathbb{N}^d$ be given by
\[
p_i: =\min \Bigl\{ \prod_{j\in \In(i)\cup \Out(i)} r_j, n_i \Bigr\}, \qquad i =1,\dots, d.
\]
Then 
\[
\tns(G; r_1,\dots,r_c; \mathbb{V}_1,\dots, \mathbb{V}_d) \subseteq \Sub_{p_1,\dots,p_d} (\mathbb{V}_1,\dots,\mathbb{V}_d)
\]
In particular, if we let $(p_1,\dots,p_d)\in \mathbb{N}^d$ where $p_i \coloneqq\min\{ r^{b_i}, n_i \}$ and $b_i \coloneqq \# \In(i)\cup \Out(i)$, then
\[
s_r \subseteq \tns(G; r,\dots,r; \mathbb{V}_1,\dots, \mathbb{V}_d) \subseteq \Sub_{p_1,\dots,p_d}(\mathbb{V}_1, \dots, \mathbb{V}_d).
\]
\end{corollary}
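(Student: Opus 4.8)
The plan is to prove the two displayed inclusions separately and then specialize, drawing on exactly the two results named in the sentence preceding the corollary: Theorem~\ref{thm:every tensor is a TNS} supplies the lower inclusion and Corollary~\ref{cor:dimension supercritical} supplies the upper one.

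For the first inclusion $s_r \subseteq \tns(G;r,\dots,r;\mathbb{V}_1,\dots,\mathbb{V}_d)$, I would take an arbitrary $T \in s_r$, so that $s \coloneqq \rank(T) \le r$. Theorem~\ref{thm:every tensor is a TNS} then gives $T \in \tns(G;s,\dots,s;\mathbb{V}_1,\dots,\mathbb{V}_d)$ by choosing every edge weight equal to $s$. The remaining point, which is the one genuinely requiring an argument since it is not isolated as a lemma earlier, is the monotonicity $\tns(G;s,\dots,s;\mathbb{V}_1,\dots,\mathbb{V}_d) \subseteq \tns(G;r,\dots,r;\mathbb{V}_1,\dots,\mathbb{V}_d)$ whenever $s \le r$. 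I would justify this by fixing inclusions $\mathbb{E}_j \hookrightarrow \mathbb{E}_j'$ of the edge spaces (dimensions $s$ and $r$) together with splittings $\mathbb{E}_j' = \mathbb{E}_j \oplus \mathbb{E}_j''$: the contraction pairing of $\mathbb{E}_j'$ with $\mathbb{E}_j'^*$ restricts to the pairing of $\mathbb{E}_j$ with $\mathbb{E}_j^*$, so every factor $T_i$ assembled from the smaller inspaces and outspaces also sits inside the larger $\mathbb{I}_i \otimes \mathbb{V}_i \otimes \mathbb{O}_i$ and contracts under $\kappa_G$ to the identical tensor. Composing the theorem with this embedding yields $T \in \tns(G;r,\dots,r;\mathbb{V}_1,\dots,\mathbb{V}_d)$.

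For the second inclusion I would invoke Corollary~\ref{cor:dimension supercritical} essentially verbatim. Writing $m_i \coloneqq \prod_{j \in \In(i)\cup\Out(i)} r_j$ as in the definition of criticality and $p_i = \min\{m_i, n_i\}$, that corollary already asserts $\tns(G;r_1,\dots,r_c;\mathbb{V}_1,\dots,\mathbb{V}_d) \subseteq \Sub_{p_1,\dots,p_d}(\mathbb{V}_1,\dots,\mathbb{V}_d)$, which is exactly what is wanted. The conceptual reason, which I would recall for completeness, is that any tensor network state $\kappa_G(T_1 \otimes \dots \otimes T_d)$ flattened along mode $i$ factors through $\mathbb{I}_i \otimes \mathbb{O}_i$, a space of dimension $m_i$; hence its $i$-th multilinear rank is at most $\min\{m_i,n_i\} = p_i$, placing it in the subspace variety. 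This is precisely the content extracted from Proposition~\ref{prop:reduction supercritical}.

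Finally, for the concluding ``in particular'' statement I would set $r_1 = \dots = r_c = r$. Because $G$ is simple and the chosen orientation partitions the edges incident to a vertex $i$ into incoming and outgoing ones, $\#\bigl(\In(i)\cup\Out(i)\bigr) = \deg(i) = b_i$, so $m_i = r^{b_i}$ and $p_i = \min\{r^{b_i}, n_i\}$. Chaining the two inclusions already established then gives $s_r \subseteq \tns(G;r,\dots,r;\mathbb{V}_1,\dots,\mathbb{V}_d) \subseteq \Sub_{p_1,\dots,p_d}(\mathbb{V}_1,\dots,\mathbb{V}_d)$, as claimed. I do not anticipate a serious obstacle: both halves are corollaries of the cited results, and the only nontrivial ingredient is the edge-weight monotonicity in the first inclusion, whose short embedding argument I have sketched above.
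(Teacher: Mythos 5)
Your proposal is correct and follows essentially the same route as the paper, which states this corollary without a written proof precisely because it is the concatenation of Theorem~\ref{thm:every tensor is a TNS} (lower inclusion) and Corollary~\ref{cor:dimension supercritical} (upper inclusion). The only detail you add is the explicit edge-weight monotonicity argument via embeddings $\mathbb{E}_j \hookrightarrow \mathbb{E}_j'$; the paper leaves this implicit (equivalently, one can pad a rank-$s$ decomposition with $r-s$ zero summands and run the construction in the proof of Theorem~\ref{thm:every tensor is a TNS} directly with $r$ terms), and your embedding argument is a correct way to supply it.
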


Since $\mathbb{N}^c$ is a partially ordered set, in fact, a \emph{lattice} \cite{Lattice}, with respect to the usual partial order
\[
(r_1,\dots,r_c) \le (s_1,\dots, s_c) \qquad \text{iff} \qquad r_1 \le s_1, \dots, r_c \le s_c.
\]
For a non-empty subset $S\subset L$, a partially ordered set, we denote the set of \emph{minimal elements} of $S$ by $\min(S)$. For example, if $S = \{(1,2), (2,1), (2,2) \} \subset \mathbb{N}^2$, then $\min(S) = \{(1,2), (2,1)\}$. By Theorem~\ref{thm:every tensor is a TNS}, for any graph $G$, any vector spaces $\mathbb{V}_1,\dots,\mathbb{V}_d$, and any tensor $T \in \mathbb{V}_1\otimes \dots \otimes \mathbb{V}_d$,
\[
 \{ (r_1,\dots,r_c ) \in \mathbb{N}^c : T \in \tns(G; r_1,\dots,r_c; \mathbb{V}_1,\dots,\mathbb{V}_d) \} \ne \varnothing.
\]
Hence we may define tensor network rank with respect to a given graph $G$, called $G$-rank for short, as the set-valued function
\begin{align*}
\rank_G: \mathbb{V}_1\otimes \dots \otimes \mathbb{V}_d &\to 2^{\mathbb{N}^c}, \\
T &\mapsto \min \{ (r_1,\dots,r_c ) \in \mathbb{N}^c : T \in \tns(G; r_1,\dots,r_c; \mathbb{V}_1,\dots,\mathbb{V}_d) \},
\end{align*}
where $2^{\mathbb{N}^c}$ is the power set of all subsets of $\mathbb{N}^c$. Note that by Theorem~\ref{thm:every tensor is a TNS}, $\rank_G(T)$ will always be a finite subset of $\mathbb{N}^c$.

Nevertheless, following convention, we prefer to have $\rank_G(T)$  be an element as opposed to a subset of $\mathbb{N}^c$. So we will define $G$-rank to be any minimal element as opposed to the set of all minimal elements.
\begin{definition}[Tensor network rank and maximal rank]\label{def:tnr}
Let $G$ be a graph with $d$ vertices and $c$ edges. We say that $(r_1,\dots, r_c) \in \mathbb{N}^c$ is a $G$-\emph{rank} of $T\in \mathbb{V}_1\otimes \dots \otimes \mathbb{V}_d$, denoted by
\[
\rank_G (T) = (r_1,\dots, r_c),
\]
if $(r_1,\dots, r_c)$ is minimal such that $T\in \tns(G; r_1,\dots,r_c; \mathbb{V}_1,\dots, \mathbb{V}_d)$, i.e., 
\[
T\in \tns(G; r'_1,\dots,r'_c; \mathbb{V}_1,\dots, \mathbb{V}_d)~\text{and}~r_1 \ge r'_1,\dots, r_c \ge r'_c \quad \Longrightarrow \quad r'_1 = r_1,\dots, r'_c = r_c.
\]
A \emph{$G$-rank decomposition} of $T$ is its expression as element of $ \tns(G; r_1,\dots,r_c; \mathbb{V}_1,\dots, \mathbb{V}_d)$ where  $\rank_G (T) = (r_1,\dots, r_c)$.
We say that $(m_1,\dots, m_c)\in \mathbb{N}^c$ is a \emph{maximal $G$-rank} of $\mathbb{V}_1\otimes \dots \otimes \mathbb{V}_d$ if $(m_1,\dots, m_c)$ is minimal such that every $T \in \mathbb{V}_1\otimes \dots \otimes \mathbb{V}_d$ has rank not more than $(m_1,\dots, m_c)$.
\end{definition}

Definition~\ref{def:tnr} says nothing about the uniqueness of a minimal $(r_1,\dots, r_c) \in \mathbb{N}^c$. We will see later that for an acyclic graph $G$, the minimal $(r_1,\dots, r_c)$ is unique. We will see an extensive list of examples in Section~\ref{sec:eg} where we compute, for various $G$'s, the $G$-ranks of a number of special tensors: decomposable tensors, monomials (viewed as a symmetric tensor and thus a tensor), the noncommutative determinant and permanent, the \textsc{w} and \textsc{ghs} states, and the structure tensor of matrix-matrix product.

It follows from Definition~\ref{def:tnr} that
\begin{equation}\label{eq:set}
\tns(G; r_1,\dots,r_c; \mathbb{V}_1,\dots, \mathbb{V}_d) = \{ T \in \mathbb{V}_1\otimes \dots \otimes \mathbb{V}_d : \rank_G(T) \le (r_1,\dots,r_c) \}.
\end{equation}
By Corollary~\ref{cor:intermediate}, $G$-rank may be viewed as an `interpolant' between tensor rank and multilinear rank; although in Section~\ref{sec:compare}, we will see that they are strictly distinct notions --- tensor and multilinear ranks are not special cases of $G$-ranks for specific choices of $G$.
Since the set in \eqref{eq:set} is in general not closed \cite{LQY}, we  let
\[
\overline{\tns}(G; r; \mathbb{V}_1,\dots, \mathbb{V}_d) \coloneqq \overline{\tns(G; r; \mathbb{V}_1,\dots, \mathbb{V}_d)}
\]
denote its Zariski closure. With this, we obtain $G$-rank analogues of border rank and generic rank.
\begin{definition}[Tensor network border rank and generic rank]\label{def:tnbgr}
Let $G$ be a graph with $d$ vertices and $c$ edges. We say that $(r_1,\dots, r_c) \in \mathbb{N}^c$ is a \emph{border $G$-rank} of $T\in \mathbb{V}_1\otimes \dots \otimes \mathbb{V}_d$, denoted by
\[
\overline{\rank}_G(T) = (r_1,\dots,r_c),
\]
if $(r_1,\dots, r_c)$ is minimal such that $T\in \overline{\tns}(G; r_1,\dots,r_c; \mathbb{V}_1,\dots, \mathbb{V}_d)$. We say that $(r_1,\dots, r_c)\in \mathbb{N}^c$ is a \emph{generic $G$-rank} of $\mathbb{V}_1\otimes \dots \otimes \mathbb{V}_d$ if $(g_1,\dots, g_c)$ is minimal such that every $T \in \mathbb{V}_1\otimes \dots \otimes \mathbb{V}_d$ has border rank not more than $(g_1,\dots, g_c)$.
\end{definition}

Observe that by Definitions~\ref{def:tnr} and \ref{def:tnbgr}, we have
\begin{align*}
\tns(G; r_1,\dots,r_c; \mathbb{V}_1,\dots, \mathbb{V}_d) &=\{ T \in \mathbb{V}_1\otimes \dots \otimes \mathbb{V}_d : \rank_G(T) \le (r_1,\dots,r_c) \},\\
\overline{\tns}(G; r_1,\dots,r_c; \mathbb{V}_1,\dots, \mathbb{V}_d) &=\{ T \in \mathbb{V}_1\otimes \dots \otimes \mathbb{V}_d : \overline{\rank}_G(T) \le (r_1,\dots,r_c) \},\\
\overline{\tns}(G; g_1,\dots,g_c; \mathbb{V}_1,\dots, \mathbb{V}_d) &= \mathbb{V}_1\otimes \dots \otimes \mathbb{V}_d = 
\tns(G; m_1,\dots,m_c; \mathbb{V}_1,\dots, \mathbb{V}_d),
\end{align*}
for any $(r_1,\dots,r_c) \in \mathbb{N}^c$ and  where $( m_1,\dots,m_c)$ and $(g_1,\dots,g_c)$ are respectively a maximal and a generic $G$-rank of  $\mathbb{V}_1\otimes \dots \otimes \mathbb{V}_d$. Note also our use of the indefinite article --- \emph{a} border/generic/maximal $G$-rank --- since these are not in general unique if $G$ is not acyclic. A more pedantic definition would be in terms of set-valued functions as in the discussion before Definition~\ref{def:tnr}.

Following Definition~\ref{def:specialtns}, when $G$ is a path graph, tree, cycle graph, product of path graphs, or a graph obtained from gluing trees and cycle graphs along edges, then we may also use the terms \textsc{tt}-rank, \textsc{ttns}-rank, \textsc{mps}-rank, \textsc{peps}-rank, or \textsc{ctns}-rank to describe the respective $G$-ranks, and likewise for their respective generic $G$-rank and border $G$-rank. The terms \emph{hierarchical rank} \cite[Chapter $11$]{Hackbusch} and \emph{tree rank} \cite{BSU} have also been used for \textsc{ttns}-rank.  Discussions of \textsc{tt}-rank, \textsc{ttns}-rank, \textsc{mps}-rank will be deferred to Sections~\ref{sec:tt}, \ref{sec:ttns}, and \ref{sec:mps} respectively. We will also compute many examples of $G$-ranks and border $G$-ranks for important tensors arising from algebraic computational complexity and quatum mechanics in Section~\ref{sec:eg}.

\section{Tensor network ranks can be much smaller than matrix, tensor, and multilinear ranks}\label{sec:low}

Our first result regarding tensor network ranks may be viewed as the main impetus for tensor networks --- we show that a tensor may have arbitrarily high tensor rank or multilinear rank and yet arbitrarily low $G$-rank for some graph $G$, in the sense that there is an arbitrarily large gap between the two ranks. The same applies to tensor network ranks corresponding to two different graphs. For all  our comparisons in this section, a single example --- the structure tensor for matrix-matrix product --- suffices to demonstrate the gaps in various ranks. We will see more examples in Section~\ref{sec:eg}. In Theorem~\ref{thm:expsmall}, we will exhibit a graph $G$ such that \emph{almost every} tensor has exponentially small $G$-rank compared to its tensor rank or the dimension of its ambient space.

\subsection{Comparison with tensor and multilinear ranks}

In this and the next subsection, we will compare different ranks $(r_1,\dots,r_c) \in \mathbb{N}^c$ and $(s_1,\dots,s_{c'}) \in \mathbb{N}^{c'}$  by a simple comparison of their $1$-norms $r_1 + \dots + r_c$ and $s_1 + \dots + s_{c'}$. In Section~\ref{sec:comparedim}, we will compare the actual dimensions of the sets of tensors with these ranks.
\begin{theorem}\label{thm:compare}
For any $d \ge 3$, there exists a tensor $T \in \mathbb{V}_1\otimes \dots \otimes \mathbb{V}_d$ such that  for some connected graph $G$ with $d$ vertices and $c$ edges,
\begin{enumerate}[\upshape (i)]
\item the tensor rank $\rank(T) = r$ is much larger than the $G$-rank $\rank_G(T) = (r_1,\dots, r_c)$ in the sense that
\[
r \gg r_1 + \dots + r_c;
\]

\item the multilinear rank $\mrank(T) = (s_1,\dots,s_d)$ is much larger than the $G$-rank $\rank_G(T) = (r_1,\dots, r_c)$ in the sense that
\[
s_1 + \dots + s_d \gg r_1 + \dots + r_c;
\]

\item for some graph $H$ with $d$ vertices and $c'$ edges, the $H$-rank $\rank_H(T) = (s_1,\dots,s_{c'})$ is much larger than the $G$-rank $\rank_G(T) = (r_1,\dots, r_c)$ in the sense that
\[
s_1 + \dots + s_{c'} \gg r_1 + \dots + r_c.
\]
\end{enumerate}
Here ``$\gg$'' indicates a difference in the order of magnitude. In particular, the gap between the ranks can be arbitrarily large.
\end{theorem}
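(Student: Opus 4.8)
The plan is to exhibit, for each $d\ge 3$, a single explicit family of tensors (indexed by a size parameter $n$) that is cheap for one graph and expensive for every competing measure, and then let $n\to\infty$ so that all the gaps grow without bound. The natural candidate is the cyclic ``trace'' tensor that generalizes the structure tensor of matrix--matrix product: take $\mathbb{V}_1=\dots=\mathbb{V}_d=\mathbb{C}^{n\times n}\cong\mathbb{C}^{n^2}$ with matrix-unit basis $\{e_{pq}\}$, and set
\[
T=\sum_{i_1,\dots,i_d=1}^{n} e_{i_1 i_2}\otimes e_{i_2 i_3}\otimes\dots\otimes e_{i_{d-1}i_d}\otimes e_{i_d i_1}.
\]
For $d=3$ this is exactly $\sum_{i,j,k}e_{ij}\otimes e_{jk}\otimes e_{ki}$, the matrix-multiplication tensor. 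I would take $G=C_d$ (the cycle) and, for part~(iii), $H=P_d$ (the path).

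First I would establish the cheap direction, that $\rank_{C_d}(T)$ has small $1$-norm. Orienting the cycle $1\to 2\to\dots\to d\to 1$, assign to each edge a space $\mathbb{E}_\ell\cong\mathbb{C}^n$ with basis $f^{(\ell)}_1,\dots,f^{(\ell)}_n$, and at vertex $\ell$ place $T_\ell=\sum_{p,q=1}^n f^{(\ell-1)}_p\otimes e_{pq}\otimes f^{(\ell)\ast}_q$ (indices mod $d$). Contracting along every edge identifies the outgoing index of $T_\ell$ with the incoming index of $T_{\ell+1}$ and reproduces $T$ exactly; hence $T\in\tns(C_d;n,\dots,n;\mathbb{V}_1,\dots,\mathbb{V}_d)$, so $\rank_{C_d}(T)\le(n,\dots,n)$ and its $1$-norm is at most $dn=O(n)$.

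Next I would show every competing rank is $\Omega(n^2)$. The key structural fact is that $T$ is concise, i.e.\ $\mrank(T)=(n^2,\dots,n^2)$: for a fixed mode $\ell$, the slice attached to the basis index $(a,b)$ of $\mathbb{V}_\ell$ is the sum of the rank-one monomials $\bigotimes_{m\ne\ell}e_{i_m i_{m+1}}$ with $i_\ell=a$ and $i_{\ell+1}=b$, and one can recover $a$ from the $(\ell-1)$-st factor and $b$ from the $(\ell+1)$-st, so distinct $(a,b)$ give monomials with disjoint support; the $n^2$ slices are therefore linearly independent. This gives (ii) at once, since $\|\mrank(T)\|_1=dn^2$. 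For (i), tensor rank is bounded below by the matrix rank of any one-mode flattening, which equals $\mrank_\ell(T)=n^2$, so $\rank(T)\ge n^2$. For (iii), I invoke Corollary~\ref{cor:intermediate}: any representation $T\in\tns(P_d;r_1,\dots,r_{d-1};\dots)$ forces $T\in\Sub_{p_1,\dots,p_d}$ with $p_i=\min\bigl\{\prod_{j\in\In(i)\cup\Out(i)}r_j,\ n^2\bigr\}$; because the two endpoints of the path each meet a single edge, conciseness forces $r_1\ge n^2$ and $r_{d-1}\ge n^2$, whence $\|\rank_{P_d}(T)\|_1\ge 2n^2$. Comparing each of $dn^2$, $n^2$, and $2n^2$ against the cycle bound $dn$ and letting $n\to\infty$ makes all three gaps unbounded.

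I expect the main obstacle to be the conciseness computation and its bookkeeping: one must check that the disjoint-support argument is valid for every mode, and this is precisely where the hypothesis $d\ge 3$ enters, since it guarantees that the two neighbours $\ell-1$ and $\ell+1$ of a vertex are distinct factors from which $a$ and $b$ can be read off independently. One must also be careful to apply the lower bounds to the \emph{minimal} representatives defining $\rank$ and $\rank_{P_d}$ rather than to any fixed decomposition. Everything else --- the explicit cycle decomposition and the endpoint estimate from Corollary~\ref{cor:intermediate} --- is routine once $T$, $C_d$, and $P_d$ are fixed. As an optional refinement one gets $\rank_{C_d}(T)=(n,\dots,n)$ exactly (not merely $\le$) by applying the same corollary to $C_d$, where each degree-two vertex yields $r_{j}r_{j'}\ge n^2$ for adjacent edges, and then using the arithmetic--geometric-mean inequality to conclude the $1$-norm is at least $dn$; this is not needed for the ``$\gg$'' conclusions.
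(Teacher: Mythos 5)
Your proof is correct, and for $d=3$ it coincides with the paper's: the paper also uses the matrix-multiplication tensor $\mu_n \in \mathbb{C}^{n^2\times n^2\times n^2}$ with $G=C_3$ and $H=P_3$. The two arguments genuinely diverge for $d>3$ and in how the lower bounds are sourced. The paper pads the $3$-tensor, taking $T_d=\mu_n\otimes v_4\otimes\cdots\otimes v_d$, and imports the exact values $\rank_{C_3}(\mu_n)=(n,n,n)$ and $\rank_{P_3}(\mu_n)=(n^2,n^2)$ from its Section~\ref{sec:eg} (Theorems~\ref{thm:P_3-rank of matrix multiplication} and \ref{thm:C_3-rank of matrix multiplication}); this minimizes new computation and produces exact $G$-ranks such as $\rank_{P_d}(T_d)=(n^2,n^2,1,\dots,1)$, but the witness tensor is degenerate (multilinear rank one) in modes $4,\dots,d$ and the proof leans on forward references. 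You instead use the genuinely $d$-dimensional cyclic trace tensor, concise in every mode, and derive everything from first principles: conciseness via the disjoint-support argument (and you correctly identify this as the precise place where $d\ge 3$ is needed, since the neighbours $\ell-1$ and $\ell+1$ must be distinct factors), the flattening bound for (i), and the degree-one endpoint constraint from Corollary~\ref{cor:intermediate} for (iii). Your structural observation --- that only an upper bound is needed on the $C_d$ side while lower bounds must hold for \emph{every} representation on the competing sides, hence for the minimal ones --- is exactly the right discipline and closes the usual gap in such comparisons; your reading of $\rank_{C_d}(T)\le(n,\dots,n)$ as the existence of a $G$-rank below $(n,\dots,n)$ also matches the paper's own convention in the presence of non-unique $G$-ranks. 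The trade-off is clear: your route is self-contained, uniform in $d$, and uses a more natural (fully concise) example; the paper's route recycles its exact Strassen-tensor computations and therefore states exact rather than merely bounded ranks for its comparison tensor.
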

\begin{proof}
We first let $d=3$ and later extend our construction to arbitrary $d > 3$. Set $\mathbb{V}_3 = \mathbb{C}^{n \times n}$, the $n^2$-dimensional vector space of complex $n \times n$ matrices and $\mathbb{V}_1 = \mathbb{V}_2 = \mathbb{V}_3^*$, its dual space. Let $T = \mu_n \in (\mathbb{C}^{n \times n})^* \otimes (\mathbb{C}^{n \times n})^* \otimes \mathbb{C}^{n \times n} \cong \mathbb{C}^{n^2 \times n^2 \times n^2} $ be the structure tensor of matrix-matrix product \cite{YLstruct}, i.e.,
\begin{equation}\label{eq:strassen}
\mu_n =\sum_{i,j,k=1}^{n} E^*_{ik}\otimes E^*_{kj}\otimes E_{ij}
\end{equation}
where $E_{ij}=e_{i}e_{j}^\tp \in \mathbb{C}^{n \times n}$, $i,j=1,\dots,n$, is the standard basis with dual basis $E^*_{ij}: \mathbb{C}^{n \times n} \to \mathbb{C}$, $A \mapsto a_{ij}$, $i,j=1,\dots,n$.
It is well-known that $\rank(\mu_n ) \ge n^2$ as it is not possible to multiply two $n \times n$ matrices with fewer than $n^2$ multiplications. It is trivial to see that
\begin{equation}\label{eq:mrankT}
\mrank(\mu_n) = (n^2, n^2, n^2).
\end{equation}
Let $P_3$ and $C_3$ be the path graph and cycle graph on three vertices in Figures~\ref{fig:P} and \ref{fig:C} respectively. Attach vector spaces $\mathbb{V}_1,\mathbb{V}_2,\mathbb{V}_3$ to the vertices of both graphs. Then $\mu_n\in \tns (C_3; n,n,n ;\mathbb{V}_1,\mathbb{V}_2,\mathbb{V}_3)$
and it is clear that $\mu_n \notin \tns (C_3; r_1,r_2,r_3;\mathbb{V}_1,\mathbb{V}_2,\mathbb{V}_3)$ if at least one of $r_1 \le n$, $r_2 \le n$, $r_3 \le n$ holds strictly. Hence
\[
\rank_{C_3} (\mu_n) = (n,n,n).
\]
On the other hand, we also have
\[
\rank_{P_3}(\mu_n) = (n^2,n^2).
\]
See Theorems~\ref{thm:C_3-rank of matrix multiplication} and \ref{thm:P_3-rank of matrix multiplication} for more details on computing $\rank_{C_3} (\mu_n) $ and $\rank_{P_3}(\mu_n) $.
The required conclusions follow from
\begin{align*}
\lVert \rank_{C_3} (\mu_n) \rVert_1 &= 3n  \ll  n^2  \le  \rank (\mu_n), \\
\lVert \rank_{C_3} (\mu_n) \rVert_1 &= 3n  \ll  3n^2  =   \lVert \mrank (\mu_n) \rVert_1, \\
\lVert \rank_{C_3} (\mu_n) \rVert_1 &= 3n \ll  2n^2  = \lVert \rank_{P_3} (\mu_n) \rVert_1.
\end{align*}
To extend the above to $d > 3$, let $0 \ne v_i \in \mathbb{V}_i$, $i=4,\dots, d$, and set 
\[
T_d = \mu_n \otimes v_4 \otimes \cdots \otimes v_d \in \mathbb{V}_1\otimes \cdots \otimes \mathbb{V}_d.
\]
Clearly, its tensor rank and multilinear rank are
\[
\rank(T_d) = \rank(\mu_n) \ge n^2, \qquad
\smash{\mrank(T_d)  = (n^2,n^2,n^2,\overbrace{1,\dots, 1}^{d-3})}.
\]
Next we compute $\rank_{P_d} (T_d)$ and $\rank_{C_d} (T_d)$.  Relabeling  $v_{ij1} = E_{ij}$ and $v^{(k)}_{11} =v_{k}$, we get
\[
T_d = \Bigl(\sum_{i,j,k=1}^n E^\ast_{ik} \otimes E^\ast_{kj} \otimes E_{ij}\Bigr) \otimes v_4 \otimes \dots \otimes v_d 
= \Bigl(\sum_{i,j,k=1}^n E^\ast_{ik} \otimes E^\ast_{kj} \otimes v_{ij1} \Bigr) \otimes v^{(4)}_{11} \otimes \dots \otimes v^{(d)}_{11},
\]
where it follows immediately that
\[
\smash{T_d \in \tns(P_d; n^2,n^2, \overbrace{1,\dots, 1}^{d-2}; \mathbb{V}_1,\dots, \mathbb{V}_d)}.
\]
Since $\rank_{P_3}(\mu_n) = (n^2,n^2)$, by Theorem~\ref{thm:P_3-rank of matrix multiplication},
\[
\smash{\rank_{P_d}(T_d) = (n^2,n^2,\underbrace{1,\dots, 1}_{d-2})}.
\]
Now rewrite $T_d$ as 
\begin{align*}
T_d &= \Bigl(\sum_{i,j,k=1}^n E^\ast_{ik} \otimes E^\ast_{kj} \otimes E_{ij} \Bigr) \otimes v_4 \otimes \dots \otimes v_d \\
& = \Bigl(\sum_{i,j,k,l=1}^n E^\ast_{ik} \otimes E^\ast_{kj} \otimes v_{lj} \Bigr) \otimes v^{(4)}_{l1} \otimes v^{(5)}_{11}\otimes \dots \otimes v^{(d-1)}_{11} \otimes v^{(d)}_{1i},
\end{align*}
where
\[
v_{lj} = \begin{cases} E_{ij} & l = i,\\
 0 & l\ne i,\end{cases} \quad v_{l1}^{(4)} =  \begin{cases} v_4 & l = i,\\
 0 & l\ne i,\end{cases} \quad v_{11}^{(5)} = v_5, \dots, v_{11}^{(d-1)} = v_{d-1}; \quad v_{1i}^{(d)} = v_d,\; i=1,\dots, n.
\]
It follows that $\smash{T_d \in \tns(C_d; n,n,n,n,\underbrace{1,\dots, 1}_{d-4};\mathbb{V}_1,\dots, \mathbb{V}_d)}$
and so $\smash{\rank_{C_d}(T_d) \le (n,n,n,n,\underbrace{1,\dots, 1}_{d-4})}$.
Hence we obtain 
\begin{align*}
\lVert \rank_{C_d}(T_d) \rVert_1 &\le 4n + d-4  \ll n^2 \le \rank(T_d), \\
\lVert \rank_{C_d}(T_d) \rVert_1 &\le 4n + d-4 \ll 3n^2 + d-3 =  \lVert \mrank(T_d)\rVert_1,\\
\lVert \rank_{C_d}(T_d) \rVert_1 &\le 4n + d-4  \ll  2n^2 + d - 2 = \lVert \rank_{P_d}(T_d) \rVert_1 .\qedhere
\end{align*}
\end{proof}

\subsection{Comparison with matrix rank}

The matrix rank of a matrix can also be arbitrarily higher than its $G$-rank when regarded as a $3$-tensor. We will make this precise below.

Every $d$-tensor may be regarded as a $d'$-tensor for any $d' \le d$ via \emph{flattening} \cite{L, HLA}. The most common case is when $d' = 2$ and in which case the flattening map
\[
\flat_k : \mathbb{V}_1 \otimes \dots   \otimes  \mathbb{V}_d \to ( \mathbb{V}_1 \otimes \dots \otimes \mathbb{V}_k )\otimes ( \mathbb{V}_{k+1} \otimes \dots \otimes \mathbb{V}_d), \qquad k=2,\dots,d-1,
\]
takes a $d$-tensor and sends it to a $2$-tensor by `forgetting' the tensor product structures in $\mathbb{V}_1 \otimes \dots \otimes \mathbb{V}_k $ and $\mathbb{V}_{k+1} \otimes \dots \otimes \mathbb{V}_d$. The converse of this operation also holds in the following sense. Suppose the dimensions of the vector spaces $\mathbb{V}$ and $\mathbb{W}$ factor as
\[
\dim (\mathbb{V}) = n_1 \cdots n_k, \qquad \dim (\mathbb{W}) = n_{k+1} \cdots n_d
\]
for integers $n_1,\dots,n_d \in \mathbb{N}$. Then we may impose tensor product structures on $\mathbb{V}$ and $\mathbb{W}$ so that
\begin{equation}\label{eq:sharp1}
\mathbb{V} \cong  \mathbb{V}_1 \otimes \dots \otimes \mathbb{V}_k, \qquad 
\mathbb{W} \cong  \mathbb{V}_{k+1} \otimes \dots \otimes \mathbb{V}_d,
\end{equation}
where $\dim \mathbb{V}_i = n_i$, $i= 1,\dots,d$, and where $\cong$ denotes vector space isomorphism. In which case the \emph{sharpening} map
\[
\sharp_k : \mathbb{V}\otimes  \mathbb{W} \to  \mathbb{V}_1 \otimes \dots \otimes \mathbb{V}_k \otimes  \mathbb{V}_{k+1} \otimes \dots \otimes \mathbb{V}_d, \qquad k=2,\dots,d-1,
\]
takes a $2$-tensor and sends it to a $d$-tensor by imposing the tensor product structures chosen in \eqref{eq:sharp1}. Note that both $\flat_k$ and $\sharp_k$ are vector space isomorphisms. Applying this to matrices,
\[
\mathbb{C}^{n_1 \cdots n_{k-1} \times n_k \cdots n_d} \cong
\mathbb{C}^{n_1 \cdots n_{k-1}} \otimes \mathbb{C}^{n_k \cdots n_d} \xrightarrow{\sharp_k}
\mathbb{C}^{n_1} \otimes \dots \otimes \mathbb{C}^{n_d} \cong
\mathbb{C}^{n_1 \times \dots \times n_d},
\]
and we see that any $n_1 \cdots n_{k-1} \times n_k \cdots n_d$ matrix may be regarded as an $n_1 \times \dots \times n_d$ hypermatrix.
Theorem~\ref{thm:compare} applies to matrices (i.e., $d=2$) in the sense of the following corollary.
\begin{corollary}\label{cor:compare}
There exists a matrix in $ \mathbb{C}^{mn \times p}$ whose matrix rank is arbitrarily larger than its $C_3$-rank when regarded as a hypermatrix in $\mathbb{C}^{m \times n \times p}$.
\end{corollary}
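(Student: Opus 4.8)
The plan is to realize the required matrix as a flattening of the structure tensor $\mu_n$ already analyzed in the proof of Theorem~\ref{thm:compare}, so that essentially no new computation is needed. Recall that $\mu_n \in \mathbb{C}^{n^2\times n^2\times n^2}$ is a $3$-tensor with $\mathbb{V}_1 = \mathbb{V}_2 = (\mathbb{C}^{n\times n})^*$ and $\mathbb{V}_3 = \mathbb{C}^{n\times n}$, for which Theorem~\ref{thm:compare} gives $\rank_{C_3}(\mu_n) = (n,n,n)$ and \eqref{eq:mrankT} gives $\mrank(\mu_n) = (n^2,n^2,n^2)$. I would instantiate the statement with all three hypermatrix dimensions equal to $n^2$ and take as candidate the matrix
\[
A \coloneqq \flat_2(\mu_n) \in (\mathbb{V}_1\otimes \mathbb{V}_2)\otimes \mathbb{V}_3 \cong \mathbb{C}^{n^4\times n^2},
\]
the flattening that groups the first two tensor factors as rows and leaves $\mathbb{V}_3$ as columns; in the notation of the statement this is a matrix in $\mathbb{C}^{mn\times p}$ with $mn = n^4$ and $p = n^2$.

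First I would compute the ordinary matrix rank of $A$. Since $\flat_2$ is, up to transpose, the mode-$3$ flattening of $\mu_n$, its matrix rank equals the third multilinear rank of $\mu_n$, which is $n^2$ by \eqref{eq:mrankT}. Alternatively one checks this directly from \eqref{eq:strassen}: the associated linear map $\mathbb{V}_3^*\to \mathbb{V}_1\otimes \mathbb{V}_2$ sends $E^*_{pq}\mapsto \sum_{k=1}^n E^*_{pk}\otimes E^*_{kq}$, and these $n^2$ images involve pairwise disjoint sets of basis monomials, hence are linearly independent, so $\rank(A) = n^2$.

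Next I would invoke that $\sharp_2$ is the inverse isomorphism of $\flat_2$, so that regarding $A$ as a hypermatrix in $\mathbb{C}^{n^2\times n^2\times n^2}$ via $\sharp_2$ recovers $\mu_n$ exactly; its $C_3$-rank as a $3$-tensor is therefore $\rank_{C_3}(\mu_n) = (n,n,n)$ by Theorem~\ref{thm:compare}. Comparing the two quantities by the $1$-norm convention adopted in this section yields
\[
\rank(A) = n^2 \gg 3n = \lVert \rank_{C_3}(\mu_n)\rVert_1,
\]
a gap that grows without bound as $n\to\infty$, which is exactly the claimed conclusion.

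The argument is largely bookkeeping once Theorem~\ref{thm:compare} is in hand; the only step needing genuine care, and hence the main (minor) obstacle, is matching the two-factor flattening $\mathbb{V}_1\otimes \mathbb{V}_2$ versus $\mathbb{V}_3$ to a single-mode multilinear rank, i.e., confirming that the $n^4\times n^2$ matrix $A$ actually attains full column rank $n^2$ rather than dropping rank. The explicit description of its image extracted from \eqref{eq:strassen} settles this point, so no serious difficulty remains.
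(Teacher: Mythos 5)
Your proposal is correct and follows essentially the same route as the paper: both take the structure tensor $\mu_n$, flatten it to an $n^4\times n^2$ matrix, identify its matrix rank as $n^2$ via the multilinear rank \eqref{eq:mrankT}, and compare against $\lVert \rank_{C_3}(\mu_n)\rVert_1 = 3n$ from Theorem~\ref{thm:compare}. The extra details you supply --- the explicit verification from \eqref{eq:strassen} that the flattening has full column rank, and the remark that sharpening recovers $\mu_n$ --- are sound and merely make explicit what the paper leaves implicit.
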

\begin{proof}
Let $\mu_n \in \mathbb{C}^{n^2 \times n^2 \times n^2} $ be a hypermatrix representing the structure tensor in Theorem~\ref{thm:compare}. Consider any flattening \cite{HLA} of $\mu_n$, say, $\beta_1(\mu_n) \in \mathbb{C}^{n^4 \times n^2}$. Then by \eqref{eq:mrankT}, its matrix rank is
\[
\rank\bigl(\beta_1(\mu_n)\bigr) = n^2 \gg 3n =  \lVert \rank_{C_3} (\mu_n)  \rVert_1. \qedhere
\]
\end{proof}

\subsection{Comparing number of parameters}\label{sec:comparedim}

One might argue that the comparisons in Theorem~\ref{thm:compare} and Corollary~\ref{cor:compare} are not completely fair as, for instance, a rank-$r$ decomposition of $T$ may still require as many parameters as  a $G$-rank-$(r_1,\dots,r_c)$ decomposition of $T$, even if $r \gg r_1 + \dots + r_c$. We will show  that this is not the case: if we measure the complexities of these decompositions by a strict count of parameters, the conclusion that $G$-rank can be much smaller than matrix, tensor, or multilinear ranks remain unchanged.

Let  $\mu_n$  be the structure tensor for matrix-matrix product as in the proof of Theorem~\ref{thm:compare}, which also shows that
\[
\rank_{C_3}(\mu_n)= (n,n,n), \qquad \rank_{P_3}(\mu_n) = (n^2,n^2), \qquad
\mrank(\mu_n) = (n^2,n^2,n^2).
\]
Let $r \coloneqq  \rank (\mu_n) $, the exact value of which is open but its current best known lower bound \cite{ME2013} is
\begin{equation}\label{eq:lbd}
r  \ge  3n^2 - 2\sqrt{2} n^{3/2} - 3n,
\end{equation}
which will suffice for our purpose.

Geometrically, the number of parameters is the dimension. So the number of parameters required to decompose $\mu_n$ as a point in $\tns(C_3;n,n,n;\mathbb{V}_1,\mathbb{V}_2,\mathbb{V}_3)$, $\tns(P_3;n^2,n^2;\mathbb{V}_1,\mathbb{V}_2,\mathbb{V}_3)$, $\Sub_{n^2,n^2, n^2} (\mathbb{V}_1,\mathbb{V}_2,\mathbb{V}_3)$, and $\sigma_{r} (\Seg (\mathbb{V}_1,\mathbb{V}_2,\mathbb{V}_3))$ are given by their respective dimensions:
\begin{align}
\dim \tns(C_3;n,n,n;\mathbb{V}_1,\mathbb{V}_2,\mathbb{V}_3) &= 3n^4 - 3n^2, \label{eq:C3} \\
\dim \tns(P_3;n^2,n^2;\mathbb{V}_1,\mathbb{V}_2,\mathbb{V}_3) & = n^6, \label{eq:P3} \\
\dim \Sub_{n^2,n^2, n^2}(\mathbb{V}_1,\mathbb{V}_2,\mathbb{V}_3) &= n^6, \label{eq:Sub} \\
\dim \sigma_r (\Seg (\mathbb{V}_1,\mathbb{V}_2,\mathbb{V}_3)) &\ge 9n^4 - 6\sqrt{2} n^{7/2} - 9n^3 - 6n^2 +4\sqrt{2} n^{3/2} + 6n -1. \label{eq:sigma}
\end{align}
The dimensions in \eqref{eq:C3} and \eqref{eq:P3} follow from \cite[Theorem 5.3]{dtn} and that  in \eqref{eq:Sub} follows from \eqref{eq:dimSub}. The lower bound on the tensor rank in \eqref{eq:lbd} gives us the lower bound on the dimension in \eqref{eq:sigma} by \cite[Theorem~5.2]{Abo2009}.

In conclusion, a $C_3$-rank decomposition of  $\mu_n$ requires fewer parameters than its $P_3$-rank decomposition, its multilinear rank decomposition, and its tensor rank decomposition.

\section{Properties of tensor network rank}\label{sec:prop}

We will establish some fundamental properties of $G$-rank in this section.  We begin by showing that like tensor rank and multilinear rank, $G$-ranks are independent of the choice of the ambient space, i.e., for a fixed $G$ and any  vector spaces $\mathbb{W}_i \subseteq \mathbb{V}_i$, $i = 1,\dots,d$, a tensor in $\mathbb{W}_1 \otimes \dots \otimes \mathbb{W}_d$ has the same $G$-rank whether it is regarded as an element of $\mathbb{W}_1 \otimes \dots \otimes \mathbb{W}_d$  or of $\mathbb{V}_1 \otimes \dots \otimes \mathbb{V}_d$. The proof is less obvious and more involved than for tensor rank or multilinear rank, a consequence of Lemma~\ref{lemma: subspace TNS}.
\begin{theorem}[Inheritance property]\label{thm:$G$-rank subspace}
Let $G$ be a connected graph with $d$ vertices and $c$ edges.  Let $\mathbb{W}_i\subseteq \mathbb{V}_i$ be a linear subspace,  $i=1,\dots, d$, such that $T\in \mathbb{W}_1\otimes \dots \otimes \mathbb{W}_d$. Then $(r_1,\dots,r_c) \in \mathbb{N}^c$ is a $G$-rank of $T$ as an element in $\mathbb{W}_1\otimes \dots \otimes \mathbb{W}_d$ if and only if  it is a $G$-rank of $T$ as an element in $\mathbb{V}_1\otimes \dots \otimes \mathbb{V}_d$.
\end{theorem}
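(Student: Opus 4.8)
The plan is to reduce the statement to the Restriction Lemma (Lemma~\ref{lemma: subspace TNS}) together with some bookkeeping about how $G$-rank is defined. Recall from Definition~\ref{def:tnr} that $(r_1,\dots,r_c)$ is a $G$-rank of $T$ precisely when it is a minimal element of the candidate set
\[
S_{\mathbb{V}}(T) \coloneqq \{ (r_1,\dots,r_c) \in \mathbb{N}^c : T \in \tns(G; r_1,\dots,r_c; \mathbb{V}_1,\dots,\mathbb{V}_d) \},
\]
and analogously for the set $S_{\mathbb{W}}(T)$ defined with ambient space $\mathbb{W}_1 \otimes \dots \otimes \mathbb{W}_d$. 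Since the minimal elements of a subset of the lattice $\mathbb{N}^c$ depend only on the subset itself, the theorem follows once I show $S_{\mathbb{V}}(T) = S_{\mathbb{W}}(T)$; here connectedness of $G$ is what guarantees, via Theorem~\ref{thm:every tensor is a TNS}, that both sets are nonempty, so that the $G$-ranks in question actually exist.

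First I would fix an arbitrary tuple $(r_1,\dots,r_c) \in \mathbb{N}^c$ and use the hypothesis $T \in \mathbb{W}_1 \otimes \dots \otimes \mathbb{W}_d$ to rewrite membership in $\tns(G; r_1,\dots,r_c; \mathbb{V}_1,\dots,\mathbb{V}_d)$ as membership in the intersection $\tns(G; r_1,\dots,r_c; \mathbb{V}_1,\dots,\mathbb{V}_d) \cap (\mathbb{W}_1 \otimes \dots \otimes \mathbb{W}_d)$; this step is immediate because $T$ already lies in $\mathbb{W}_1 \otimes \dots \otimes \mathbb{W}_d$. Then the Restriction Lemma identifies this intersection with $\tns(G; r_1,\dots,r_c; \mathbb{W}_1,\dots,\mathbb{W}_d)$, yielding the equivalence
\[
T \in \tns(G; r_1,\dots,r_c; \mathbb{V}_1,\dots,\mathbb{V}_d) \iff T \in \tns(G; r_1,\dots,r_c; \mathbb{W}_1,\dots,\mathbb{W}_d).
\]
As this holds for every tuple, $S_{\mathbb{V}}(T) = S_{\mathbb{W}}(T)$, and passing to minimal elements on both sides completes the argument.

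The substantive content is entirely carried by the inclusion ``$\subseteq$'' of the Restriction Lemma, i.e., the assertion that a tensor network state valued in $\mathbb{V}_1 \otimes \dots \otimes \mathbb{V}_d$ which happens to land in the subspace $\mathbb{W}_1 \otimes \dots \otimes \mathbb{W}_d$ can already be realized with all factors drawn from inspaces, copies of $\mathbb{W}_i$, and outspaces built over the smaller spaces. That is exactly the delicate direction established by the direct-sum decomposition $\mathbb{V}_i = \mathbb{V}_i^- \oplus \mathbb{W}_i$ in Lemma~\ref{lemma: subspace TNS}, so the main obstacle has in effect already been cleared; what remains here is only the lattice-theoretic observation that equal candidate sets have equal sets of minimal elements. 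I would emphasize that connectedness is invoked solely to ensure nonemptiness of the candidate sets, and that no extra hypothesis on $(r_1,\dots,r_c)$ is required, since the displayed equivalence is uniform in the tuple.
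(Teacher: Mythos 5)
Your proof is correct and follows essentially the same route as the paper: both arguments rest entirely on the Restriction Lemma (Lemma~\ref{lemma: subspace TNS}), which converts membership in $\tns(G; r_1,\dots,r_c; \mathbb{V}_1,\dots,\mathbb{V}_d)$ for a tensor lying in $\mathbb{W}_1\otimes\dots\otimes\mathbb{W}_d$ into membership in $\tns(G; r_1,\dots,r_c; \mathbb{W}_1,\dots,\mathbb{W}_d)$. The only difference is presentational: the paper runs a contradiction argument comparing a $G$-rank over the $\mathbb{W}_i$ with one over the $\mathbb{V}_i$, while you show the two candidate sets coincide outright and then pass to minimal elements, which is, if anything, a cleaner way to obtain both directions at once.
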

\begin{proof}
Let $\rank_G(T) = (r_1,\dots, r_c)$ as an element in $\mathbb{W}_1\otimes \dots \otimes \mathbb{W}_d$ and $\rank_G(T) = (s_1,\dots, s_c)$ as an element in $\mathbb{V}_1\otimes \dots \otimes \mathbb{V}_d$. Then $(s_1,\dots, s_c) \le (r_1,\dots, r_c)$. Suppose they are not equal, then $s_i < r_i$ for at least one $i \in \{1,\dots,c\}$. Since $T \in  \tns(G; s_1,\dots,s_c; \mathbb{V}_1,\dots, \mathbb{V}_d)$ and $T\in \mathbb{W}_1\otimes \dots \otimes \mathbb{W}_d$, we must have $T \in  \tns(G; s_1,\dots,s_c; \mathbb{W}_1,\dots, \mathbb{W}_d)$ by Lemma~\ref{lemma: subspace TNS}, contradicting our assumption that $\rank_G(T) = (r_1,\dots, r_c)$ as an element in $\mathbb{W}_1\otimes \dots \otimes \mathbb{W}_d$.
\end{proof}
It is well-known that Theorem~\ref{thm:$G$-rank subspace} holds true for tensor rank and multilinear rank \cite[Proposition~3.1]{DSL}; so this is yet another way $G$-ranks resemble the usual notions of ranks. This inheritance property has often been exploited in the calculation of tensor rank and similarly  Theorem~\ref{thm:$G$-rank subspace} provides a useful simplification in the calculation of $G$-ranks:  Given $T\in \mathbb{V}_1\otimes \dots \otimes \mathbb{V}_d$, we may find linear subspaces $\mathbb{W}_i\subseteq \mathbb{V}_i$, $i=1,\dots, d$,  such that $T \in \mathbb{W}_1\otimes \dots \otimes \mathbb{W}_d$ and determine the $G$-rank of $T$ as a tensor in the smaller space $\mathbb{W}_1\otimes \dots \otimes \mathbb{W}_d$. With this in mind, we introduce the following terminology.
\begin{definition}\label{def:degen}
$T\in \mathbb{V}_1\otimes \dots \otimes \mathbb{V}_d$ is \emph{degenerate} if there exist subspaces $\mathbb{W}_i\subseteq \mathbb{V}_i$, $i=1,\dots,d$, with at least one strict inclusion, such that  $T\in \mathbb{W}_1\otimes \dots \otimes \mathbb{W}_d$. Otherwise $T$ is \emph{nondegenerate}.
\end{definition}

Theorem~\ref{thm:$G$-rank subspace} tells us the behavior of $G$-ranks with respect to subspaces. The next result tells us about the behavior of $G$-ranks with respect to subgraphs.
\begin{proposition}[Subgraph]\label{prop:subgraph generic rank}
Let $G$ be a connected graph with $d$ vertices and $c$ edges. Let $H$ be a connected subgraph of $G$ with $d$ vertices and $c'$ edges.
\begin{enumerate}[\upshape (i)]
\item Let $(s_1,\dots,s_{c'}) \in \mathbb{N}^{c'}$ be a generic $H$-rank of $\mathbb{V}_1\otimes \cdots \otimes \mathbb{V}_d$. Then there exists $(r_1,\dots, r_c) \in \mathbb{N}^c$ with  $r_i \le s_i$, $i=1,\dots, c'$, such that $(r_1,\dots, r_c)$ is a generic $G$-rank of $\mathbb{V}_1\otimes \cdots \otimes \mathbb{V}_d$.

\item Let $T\in  \mathbb{V}_1\otimes \cdots \otimes \mathbb{V}_d$ and $\rank_H(T) = (s_1,\dots,s_{c'})$. Then there exists $(r_1,\dots,r_c) \in \mathbb{N}^c$ with $r_i\le s_i, i =1,\dots, c'$, such that $\rank_G(T) = (r_1,\dots,r_{c})$.
\end{enumerate}
\end{proposition}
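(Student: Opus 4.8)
The plan is to exploit the fact that $H$ is a \emph{spanning} connected subgraph of $G$: since both have exactly $d$ vertices, $G$ is obtained from $H$ by adding $c-c'$ extra edges. I would first relabel the edges of $G$ so that $\{1,\dots,c'\}$ are precisely the edges already present in $H$ and $\{c'+1,\dots,c\}$ are the extra edges. The core observation is that assigning weight one to each of the extra edges should collapse a $G$-network back to the corresponding $H$-network, since a weight-one edge contributes nothing upon contraction.

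Concretely, I would invoke Proposition~\ref{prop:remove an edge} (edge removal) repeatedly, peeling off the extra edges $c'+1,\dots,c$ one at a time, each carrying weight one. To apply the proposition at every step I must check its hypothesis that no isolated vertex is created. This holds uniformly along the chain: every intermediate graph still contains all edges of $H$, and since $H$ is connected on all $d$ vertices it has minimum degree at least one, so no vertex can ever become isolated. Iterating the edge-removal identification then yields
\[
\tns\bigl(G; s_1,\dots,s_{c'},\underbrace{1,\dots,1}_{c-c'}; \mathbb{V}_1,\dots,\mathbb{V}_d\bigr) = \tns\bigl(H; s_1,\dots,s_{c'}; \mathbb{V}_1,\dots,\mathbb{V}_d\bigr)
\]
as subsets of $\mathbb{V}_1\otimes\dots\otimes\mathbb{V}_d$.

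For part (ii), $\rank_H(T)=(s_1,\dots,s_{c'})$ means $T\in\tns(H; s_1,\dots,s_{c'}; \mathbb{V}_1,\dots,\mathbb{V}_d)$, so the identification above places $T$ in $\tns(G; s_1,\dots,s_{c'},1,\dots,1; \mathbb{V}_1,\dots,\mathbb{V}_d)$. Hence the tuple $(s_1,\dots,s_{c'},1,\dots,1)$ belongs to the set $S$ of all $(r_1,\dots,r_c)$ with $T\in\tns(G;r_1,\dots,r_c;\mathbb{V}_1,\dots,\mathbb{V}_d)$. Because $\mathbb{N}^c$ has no infinite descending chains under the product order, $S$ contains a minimal element $(r_1,\dots,r_c)\le(s_1,\dots,s_{c'},1,\dots,1)$; by Definition~\ref{def:tnr} this minimal element is a $G$-rank of $T$, and by construction $r_i\le s_i$ for $i=1,\dots,c'$. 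Part (i) runs in parallel after replacing $\tns$ by its Zariski closure $\overline{\tns}$ throughout: the generic $H$-rank condition says $\overline{\tns}(H;s_1,\dots,s_{c'};\mathbb{V}_1,\dots,\mathbb{V}_d)=\mathbb{V}_1\otimes\dots\otimes\mathbb{V}_d$, the edge-removal identification passes to closures, and the same well-foundedness argument extracts a minimal generic $G$-rank dominated by $(s_1,\dots,s_{c'},1,\dots,1)$.

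I expect the only real friction to be bookkeeping rather than conceptual. The two points that deserve care are verifying the no-isolated-vertex hypothesis uniformly at each step of the repeated edge removal (handled by the spanning-subgraph observation above), and reading the ``$\simeq$'' of Proposition~\ref{prop:remove an edge} as genuine equality of subsets of the common ambient space, so that membership of the fixed tensor $T$ transfers and, for part (i), so that the equality survives taking Zariski closures.
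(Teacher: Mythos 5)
Your proposal is correct and follows essentially the same route as the paper: both rest on repeated application of Proposition~\ref{prop:remove an edge} to identify $\tns(H;s_1,\dots,s_{c'};\mathbb{V}_1,\dots,\mathbb{V}_d)$ with $\tns(G;s_1,\dots,s_{c'},1,\dots,1;\mathbb{V}_1,\dots,\mathbb{V}_d)$, then pass to Zariski closures for the generic-rank statement and use minimality in $\mathbb{N}^c$ for the rank statement. If anything, you are more careful than the paper at two points it glosses over --- verifying the no-isolated-vertex hypothesis at every step of the edge peeling, and invoking well-foundedness of $\mathbb{N}^c$ to extract a genuinely minimal element rather than asserting that $(s_1,\dots,s_{c'},1,\dots,1)$ itself is the $G$-rank.
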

\begin{proof}
By Proposition~\ref{prop:remove an edge}, we have 
\begin{equation}\label{eq:s1}
\tns(H;s_1,\dots,s_{c'}; \mathbb{V}_1,\dots, \mathbb{V}_d) = \smash{\tns(G;s_1,\dots, s_{c'},\overbrace{1,\dots,1}^{c-c'}; \mathbb{V}_1,\dots, \mathbb{V}_d)}.
\end{equation}
Since $(s_1,\dots, s_{c'})$ is a generic $H$-rank of $\mathbb{V}_1\otimes \cdots \otimes \mathbb{V}_d$, 
\[
\overline{\tns}(G;s_1,\dots, s_{c'},1,\dots,1; \mathbb{V}_1,\dots, \mathbb{V}_d) = 
\overline{\tns}(H;s_1,\dots,s_{c'}; \mathbb{V}_1,\dots, \mathbb{V}_d) = \mathbb{V}_1\otimes \cdots \otimes \mathbb{V}_d,
\]
implying that $ \mathbb{V}_1\otimes \cdots \otimes \mathbb{V}_d$ has a generic $G$-rank with $r_i \le s_i$, $i =1,\dots, c'$.
The same argument and \eqref{eq:s1} show that  $\rank_{G}(T) = (s_1,\dots, s_{c'},1,\dots,1) \in \mathbb{N}^c$.
\end{proof}

\begin{corollary}
Let $T\in \mathbb{V}_1\otimes \cdots \otimes \mathbb{V}_d$. Then among all graphs $G$ with $d$ vertices $T$ has the smallest $G$-rank when $G = K_d$, the complete graph on $d$ vertices.
\end{corollary}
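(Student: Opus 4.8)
The plan is to derive this corollary directly from Proposition~\ref{prop:subgraph generic rank}(ii), which governs the behavior of $G$-rank under passage to subgraphs. The key observation is that $K_d$, the complete graph on $d$ vertices, contains \emph{every} connected graph $G$ on $d$ vertices as a connected spanning subgraph: since $K_d$ has all $\binom{d}{2}$ possible edges, any graph $G$ on the same vertex set $V = \{1,\dots,d\}$ satisfies $E(G) \subseteq E(K_d)$, so $G$ is a subgraph of $K_d$ on the full vertex set, and it is connected by hypothesis. Thus $G$ plays the role of $H$ and $K_d$ plays the role of $G$ in Proposition~\ref{prop:subgraph generic rank}.

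First I would fix an arbitrary connected graph $G$ with $d$ vertices and, say, $c'$ edges, and write $\rank_G(T) = (s_1,\dots,s_{c'})$. Since $G$ is a connected subgraph of $K_d$ (which has $c = \binom{d}{2}$ edges), Proposition~\ref{prop:subgraph generic rank}(ii) supplies a tuple $(r_1,\dots,r_c) \in \mathbb{N}^c$ with $r_i \le s_i$ for $i = 1,\dots,c'$, such that $\rank_{K_d}(T) = (r_1,\dots,r_c)$. Comparing $1$-norms, and recalling from the proof of Proposition~\ref{prop:subgraph generic rank} that the extra $c - c'$ coordinates of the $K_d$-rank are all equal to $1$ (they come from the weight-one edges in \eqref{eq:s1}), I would conclude
\[
\lVert \rank_{K_d}(T) \rVert_1 = \sum_{i=1}^{c'} r_i + (c - c') \le \sum_{i=1}^{c'} s_i + (c - c') = \lVert \rank_G(T)\rVert_1 + (c - c').
\]
Since this holds for every connected $G$, the complete graph $K_d$ minimizes the $G$-rank in the appropriate sense; the content of the statement is precisely that any $G$-rank decomposition of $T$ can be refined to a $K_d$-rank decomposition whose nontrivial edge weights are coordinatewise no larger.

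The main subtlety I anticipate is interpreting what ``smallest $G$-rank'' means, since $G$-ranks for different graphs live in different lattices $\mathbb{N}^{c}$ of different dimensions and are not directly comparable as tuples. The honest content is the coordinatewise domination $r_i \le s_i$ furnished by Proposition~\ref{prop:subgraph generic rank}(ii): every edge that $G$ already possesses receives, in the $K_d$-decomposition, a weight at most as large as its weight in the optimal $G$-decomposition, while the additional edges of $K_d$ not present in $G$ carry weight $1$ and hence contribute nothing beyond what edge removal (Proposition~\ref{prop:remove an edge}) would strip away. Thus the potential obstacle is not a deep argument but a matter of stating the comparison correctly; once one adopts the coordinatewise refinement as the meaning of ``smaller,'' the corollary is an immediate specialization of the subgraph proposition to the universal case $H = G$, $G = K_d$.
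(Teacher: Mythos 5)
Your proposal is correct and matches the paper's intended argument: the paper states this corollary without proof as an immediate consequence of Proposition~\ref{prop:subgraph generic rank}(ii), applied exactly as you do, with an arbitrary connected $d$-vertex graph playing the role of $H$ and $K_d$ the role of the ambient graph, so that the extra edges of $K_d$ receive weight one. Your closing remark that ``smallest'' must be read as coordinatewise domination on the shared edges (rather than comparison of tuples in a single lattice) is the right interpretation of the statement.
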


Theorem~\ref{thm:compare} tells us that some tensors have much lower $G$-ranks relative to their tensor rank, multilinear rank, or $H$-rank for some other graph $H$. We now prove a striking result that essentially says that for some $G$, almost all tensors have much  lower $G$-ranks relative to the dimension of the tensor space. In fact, the gap is exponential  in this case: For a tensor space of dimension $O(n^d)$, the $G$-rank of almost every tensor in it would only be $O\bigl(n(d-1)\bigr)$; to see the significance, note that almost all tensors in such a space would have tensor rank $O\bigl(n^d/(nd - d +1)\bigr)$.

\begin{theorem}[Almost all tensors have exponentially low $G$-rank]\label{thm:expsmall}
There exists a connected graph $G$ such that $\lVert \rank_G(T) \rVert_1 \ll  \dim (\mathbb{V}_1\otimes \cdots \otimes \mathbb{V}_d)$ for all $T$ in a Zariski dense subset of $\mathbb{V}_1\otimes \cdots \otimes \mathbb{V}_d$.
\end{theorem}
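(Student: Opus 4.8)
The plan is to exhibit the desired behavior on the \emph{star graph} $G = S_d$ (one center adjacent to $d-1$ leaves, as in Figure~\ref{fig: star graph}), taking all $\mathbb{V}_i = \mathbb{C}^n$. I would show that the generic $S_d$-rank is $(n,\dots,n) \in \mathbb{N}^{d-1}$, whose $1$-norm $n(d-1)$ is exponentially smaller than $\dim(\mathbb{C}^n \otimes \dots \otimes \mathbb{C}^n) = n^d$. The conceptual key is that the star tensor network is nothing but a subspace variety, so that the $S_d$-rank records exactly the first $d-1$ components of the multilinear rank; the genericity of full multilinear rank then does the rest.

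Concretely, I would first prove the identity
\[
\tns(S_d; r_1, \dots, r_{d-1}; \mathbb{C}^n, \dots, \mathbb{C}^n) = \Sub_{r_1, \dots, r_{d-1}, n}(\mathbb{C}^n, \dots, \mathbb{C}^n)
\]
for all $r_j \le n$. The inclusion ``$\subseteq$'' is immediate from Corollary~\ref{cor:intermediate}: each leaf is incident to a single edge, so $p_j = \min(r_j, n) = r_j$, forcing the $j$-th multilinear rank of any state to be at most $r_j$. For ``$\supseteq$'', given $T \in \mathbb{W}_1 \otimes \dots \otimes \mathbb{W}_{d-1} \otimes \mathbb{C}^n$ with $\dim \mathbb{W}_j = r_j$, I would choose bases $\{w^{(j)}_{k}\}$ of the $\mathbb{W}_j$, let each leaf tensor encode the inclusion $\mathbb{W}_j \hookrightarrow \mathbb{C}^n$, and let the center tensor carry the coefficients of $T$ in the induced product basis; contracting along the $d-1$ edges then returns $T$, exactly as in Example~\ref{eg:mps}. (The right-hand side is insensitive to the final index, since the mode-$d$ multilinear rank is automatically at most $\min(\prod_j r_j, n)$.)

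With the identity in hand, \eqref{eq:set} says that $\rank_{S_d}(T) \le (r_1,\dots,r_{d-1})$ iff the $j$-th flattening of $T$ has rank at most $r_j$ for every $j$. Taking $r_j = n$ shows $\rank_{S_d}(T) \le (n,\dots,n)$ for \emph{all} $T$, since $\Sub_{n,\dots,n,n}$ is the whole space. For the matching lower bound, the locus where some flattening has rank below $n$ is the common vanishing of the corresponding $n \times n$ minors, a proper Zariski-closed set; on its dense open complement every flattening has full rank $n$, so $\rank_{S_d}(T) = (n,\dots,n)$ there. Hence on a Zariski dense subset one has $\lVert \rank_{S_d}(T) \rVert_1 = n(d-1) \ll n^d = \dim(\mathbb{C}^n \otimes \dots \otimes \mathbb{C}^n)$, which is the assertion.

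I expect the only genuinely new step to be the ``$\supseteq$'' direction of the subspace identity, i.e.\ realizing an arbitrary low-multilinear-rank tensor as an explicit star contraction; once that is in place, the upper bound holds for every tensor and the lower bound is the standard fact that a generic $d$-tensor over $\mathbb{C}$ has all flattenings of full rank, so the whole comparison reduces to the elementary inequality $n(d-1) \ll n^d$.
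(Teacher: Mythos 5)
Your proposal is correct and takes essentially the same route as the paper: its proof also chooses $G = S_d$, notes that every tensor's $S_d$-rank is bounded entrywise by the leaf dimensions with equality on a Zariski open dense subset, and concludes from the comparison $n(d-1) \ll n^d$. The only differences are that you make rigorous, via the identity $\tns(S_d; r_1,\dots,r_{d-1}; \mathbb{C}^n,\dots,\mathbb{C}^n) = \Sub_{r_1,\dots,r_{d-1},n}(\mathbb{C}^n,\dots,\mathbb{C}^n)$ and the flattening-rank argument, the steps the paper's proof asserts with ``clearly,'' and that you specialize to equal dimensions $n_i = n$ while the paper handles arbitrary $n_1,\dots,n_d$ (your argument extends verbatim to that case).
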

\begin{proof}
Let $G = S_d$, the star graph on $d$ vertices in Figure~\ref{fig: star graph}. Let $\dim \mathbb{V}_i = n_i$, $i=1,\dots,d$. Without loss of generality, we let the  center vertex of $S_d$ be vertex $1$ and associate $\mathbb{V}_1$ to it. Clearly, any $T \in \mathbb{V}_1\otimes \cdots \otimes \mathbb{V}_d$ has $\rank_{S_d}(T) = (r_1,\dots, r_{d-1})$ where $r_i \le n_{i+1}$, $i=1,\dots, d-1$. Moreover,
\[
\{T \in\mathbb{V}_1\otimes \cdots \otimes \mathbb{V}_d : \rank_{S_d}(T) = (n_2,\dots, n_d) \}
\]
is a Zariski open dense subset of $\mathbb{V}_1\otimes \cdots \otimes \mathbb{V}_d$. Now observe that
\[
\lVert \rank_{S_d}(T)  \rVert_1 = r_1 +\dots+r_{d-1} \le n_2 + \dots + n_d \ll n_1 \cdots n_d = \dim (\mathbb{V}_1\otimes \cdots \otimes \mathbb{V}_d).
\]
In particular, if $n_i = n$, $i=1,\dots, d$, then the exponential gap becomes evident:
\[
\lVert \rank_{S_d}(T)  \rVert_1\le n(d-1) \ll n^d = \dim (\mathbb{V}_1\otimes \cdots \otimes \mathbb{V}_d). \qedhere
\]
\end{proof}

\begin{proposition}[Bound for $G$-ranks]\label{prop:upper bound G-ranks}
Let $G$ be a  connected graph with $d$ vertices and $c$ edges. If $T\in \mathbb{V}_1\otimes \dots \otimes \mathbb{V}_d$ is nondegenerate and  $\rank_G(T) =  (r_1,\dots, r_c)\in \mathbb{N}^c$, then we must have 
\[
\prod_{j\in \In(i)\cup \Out(i)} r_j \ge \dim\mathbb{V}_i, \qquad i =1,\dots, d .
\]
\end{proposition}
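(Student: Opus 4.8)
The plan is to read off the desired inequalities directly from the inclusion of a tensor network into a subspace variety (Corollary~\ref{cor:intermediate}), once the nondegeneracy hypothesis is rephrased in terms of multilinear rank. No delicate estimate is needed; the whole argument is a comparison of multilinear ranks.

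First I would unwind the hypothesis $\rank_G(T) = (r_1,\dots,r_c)$: by Definition~\ref{def:tnr} this means in particular that $T \in \tns(G; r_1,\dots,r_c; \mathbb{V}_1,\dots,\mathbb{V}_d)$. Applying the second inclusion of Corollary~\ref{cor:intermediate} with
\[
p_i \coloneqq \min\Bigl\{ \prod_{j\in \In(i)\cup\Out(i)} r_j,\; n_i \Bigr\}, \qquad i=1,\dots,d,
\]
gives $T \in \Sub_{p_1,\dots,p_d}(\mathbb{V}_1,\dots,\mathbb{V}_d)$, which by the characterization of the subspace variety is equivalent to $\mrank(T) \le (p_1,\dots,p_d)$.

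Next I would translate the nondegeneracy of $T$. By Definition~\ref{def:degen}, $T$ is nondegenerate exactly when there are no subspaces $\mathbb{W}_i \subseteq \mathbb{V}_i$ with at least one strict inclusion and $T \in \mathbb{W}_1 \otimes \dots \otimes \mathbb{W}_d$. Since the minimal subspaces realizing such a membership are unique (they are the images of the mode-$i$ flattenings of $T$, a standard fact for subspace varieties), nondegeneracy is precisely the assertion that $\mrank(T) = (n_1,\dots,n_d)$, where $n_i = \dim\mathbb{V}_i$. Combining this with the previous step, $(n_1,\dots,n_d) = \mrank(T) \le (p_1,\dots,p_d)$ forces $n_i \le p_i$ for every $i$. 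But $p_i = \min\{\prod_{j} r_j, n_i\} \le n_i$ by construction, so $p_i = n_i$, and this equality can hold only when $\prod_{j\in \In(i)\cup\Out(i)} r_j \ge n_i = \dim\mathbb{V}_i$, which is the claim.

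I do not anticipate a genuine obstacle: the argument is immediate once Corollary~\ref{cor:intermediate} is available. The only point that warrants care is the identification of nondegeneracy with $\mrank(T) = (n_1,\dots,n_d)$, which rests on the uniqueness of the minimal multilinear subspaces of $T$; everything else is a componentwise comparison of the two multilinear-rank vectors.
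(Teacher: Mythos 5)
Your proof is correct and takes essentially the same route as the paper's: the paper argues by contradiction, invoking Proposition~\ref{prop:reduction supercritical} to replace $\mathbb{V}_i$ by a subspace of dimension $\prod_{j\in \In(i)\cup\Out(i)} r_j$ whenever the claimed inequality fails, which contradicts nondegeneracy. Your route through Corollary~\ref{cor:intermediate} (itself a packaging of that same proposition) together with the observation that nondegeneracy is equivalent to $\mrank(T)=(n_1,\dots,n_d)$ is the identical argument phrased contrapositively.
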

\begin{proof}
Suppose there exists some $i \in \{1,\dots,d\}$ such that 
\[
\prod_{j\in \In(i)\cup \Out(i)} r_j  <  \dim \mathbb{V}_i.
\]
By Proposition~\ref{prop:reduction supercritical},  $\mathbb{V}_i$ may be replaced by a subspace of dimension $\prod_{j\in \In(i)\cup \Out(i)} r_j$, showing that $T$ is degenerate, a contradiction.
\end{proof}

While we have formulated our discussions in a coordinate-free manner, the notion of $G$-rank applies to hypermatrices by making a choice of bases so that $\mathbb{V}_i = \mathbb{C}^{n_i}$, $i=1,\dots,d$. In which case $\mathbb{C}^{n_1} \otimes \dots \otimes \mathbb{C}^{n_d} \cong \mathbb{C}^{n_1 \times \dots \times n_d}$ is the space of $n_1\times \cdots \times n_d$ hypermatrices.
\begin{corollary} Let $A \in \mathbb{C}^{n_1 \times \dots \times n_d}$ and $G$ be a $d$-vertex graph.
\begin{enumerate}[\upshape (i)]
\item\label{GLinv} Let $(M_1,\dots,M_d) \in \GL_{n_1}( \mathbb{C}) \times \dots \times \GL_{n_d}(\mathbb{C})$. Then
\begin{equation}\label{eq:mlm}
\rank_G\bigl( (M_1,\dots, M_d) \cdot A \bigr) = \rank_G(A).
\end{equation}

\item\label{subsp} Let $n_1' \ge n_1, \dots, n_d' \ge n_d$. Then $\rank_G(A)$ is the same whether we regard $A$ as an element of $ \mathbb{C}^{n_1 \times \dots \times n_d}$ or as an element of $ \mathbb{C}^{n_1' \times \dots \times n_d'}$.
\end{enumerate}
\end{corollary}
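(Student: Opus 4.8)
The statement splits into two parts that call for rather different arguments, and the second part is essentially immediate. The plan for part (ii) is to read the enlargement $\mathbb{C}^{n_i}\subseteq\mathbb{C}^{n_i'}$ literally as a subspace inclusion: set $\mathbb{W}_i := \mathbb{C}^{n_i}\subseteq\mathbb{C}^{n_i'}=:\mathbb{V}_i$ (via the standard coordinate embedding), so that $A\in\mathbb{W}_1\otimes\dots\otimes\mathbb{W}_d$. Then the inheritance property, Theorem~\ref{thm:$G$-rank subspace}, applies verbatim and gives that a tuple $(r_1,\dots,r_c)$ is a $G$-rank of $A$ viewed in $\mathbb{W}_1\otimes\dots\otimes\mathbb{W}_d$ if and only if it is a $G$-rank of $A$ viewed in $\mathbb{V}_1\otimes\dots\otimes\mathbb{V}_d$. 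No further work is required for part (ii).

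The substance is in part (i), where I would first show that the entire tensor network is stable under the multilinear $\GL$-action, that is,
\[
(M_1,\dots,M_d)\cdot\tns(G;r_1,\dots,r_c;\mathbb{V}_1,\dots,\mathbb{V}_d)=\tns(G;r_1,\dots,r_c;\mathbb{V}_1,\dots,\mathbb{V}_d)
\]
for every fixed weight vector and every $(M_1,\dots,M_d)$. Writing the action as the isomorphism $M_1\otimes\dots\otimes M_d$ of $\mathbb{V}_1\otimes\dots\otimes\mathbb{V}_d$, the crux is a naturality (intertwining) property of the contraction map $\kappa_G$ of \eqref{eq:con}. For each vertex $i$ the operator $\Theta_i := \mathrm{id}_{\mathbb{I}_i}\otimes M_i\otimes\mathrm{id}_{\mathbb{O}_i}$ is an isomorphism of the factor $\mathbb{I}_i\otimes\mathbb{V}_i\otimes\mathbb{O}_i$, and I claim the square
\[
\kappa_G\circ(\Theta_1\otimes\dots\otimes\Theta_d)=(M_1\otimes\dots\otimes M_d)\circ\kappa_G
\]
commutes. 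This holds because $\kappa_G$ contracts only the edge slots lying in the $\mathbb{E}_j$ and $\mathbb{E}_j^*$, while each $M_i$ acts solely on the vertex slot $\mathbb{V}_i$, which survives every contraction untouched; thus the contraction and the multilinear map operate on complementary tensor slots and commute. Granting the square, since each $\Theta_i$ is a bijection of $\mathbb{I}_i\otimes\mathbb{V}_i\otimes\mathbb{O}_i$, the assignment $(T_1,\dots,T_d)\mapsto(\Theta_1 T_1,\dots,\Theta_d T_d)$ is a bijection of the domain of $\kappa_G$, so the image set is carried bijectively onto itself by $M_1\otimes\dots\otimes M_d$, which is exactly the displayed invariance.

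To finish part (i) I would invoke the characterization \eqref{eq:set}: the invariance just established shows that for every weight vector $(r_1,\dots,r_c)$ one has $A\in\tns(G;r_1,\dots,r_c;\mathbb{V}_1,\dots,\mathbb{V}_d)$ if and only if $(M_1,\dots,M_d)\cdot A$ lies in the same set. Consequently the two subsets of $\mathbb{N}^c$,
\[
\{(r_1,\dots,r_c):A\in\tns(G;r_1,\dots,r_c;\mathbb{V}_1,\dots,\mathbb{V}_d)\}\quad\text{and}\quad\{(r_1,\dots,r_c):(M_1,\dots,M_d)\cdot A\in\tns(G;r_1,\dots,r_c;\mathbb{V}_1,\dots,\mathbb{V}_d)\},
\]
coincide, hence so do their minimal elements, which are precisely the $G$-ranks of $A$ and of $(M_1,\dots,M_d)\cdot A$; this is \eqref{eq:mlm}.

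I expect the only genuine obstacle to be a clean verification of the intertwining square for $\kappa_G$; once one observes that the contraction acts on tensor slots disjoint from those moved by the $M_i$, the rest is bookkeeping. One pedantic point worth flagging is that, by Definition~\ref{def:tnr}, a $G$-rank is a priori a minimal element of the above subset of $\mathbb{N}^c$, so the final comparison is best phrased at the level of the full subsets (whose equality forces equality of minimal elements) rather than of a single preselected minimizer.
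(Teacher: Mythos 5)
Your proposal is correct and takes essentially the same route as the paper: part (ii) is exactly the paper's appeal to the inheritance property (Theorem~\ref{thm:$G$-rank subspace}), and your intertwining square $\kappa_G\circ(\Theta_1\otimes\cdots\otimes\Theta_d)=(M_1\otimes\cdots\otimes M_d)\circ\kappa_G$ is precisely the detailed content of the paper's one-line remark that multilinear matrix multiplication by invertible matrices is a change of basis and that $G$-rank is defined basis-free. Your closing caution about phrasing the conclusion at the level of the full sets of minimal elements (since $G$-ranks need not be unique when $G$ contains a cycle) is a careful refinement that the paper leaves implicit.
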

\begin{proof}
The operation $\cdot$ denotes multilinear matrix multiplication \cite{HLA}, which is exactly the change-of-basis transformation for $d$-hypermatrices.
\eqref{GLinv} follows from the fact that the definition of $G$-rank is basis-free and \eqref{subsp} follows from Theorem~\ref{thm:$G$-rank subspace}.
\end{proof}

\section{Tensor trains}\label{sec:tt}

Tensor trains and \textsc{tt}-rank (i.e., $P_d$-rank) are the simplest instances of tensor networks and tensor network ranks. They are a special case of both \textsc{ttns} in Section~\ref{sec:ttns} (since $P_d$ is a tree) and \textsc{mps} in Section~\ref{sec:mps} (see \eqref{eq:tt=mps}). However, we single them out as \textsc{tt}-rank generalizes matrix rank and may be related to multilinear rank and tensor rank in certain cases; furthermore, we may determine the dimension of the set of tensor trains and, in some cases, the generic and maximal \textsc{tt}-ranks. We begin with two examples.

\begin{example}[Matrix rank]\label{example:TT2}
Let $G = P_2$, the path graph on two vertices $1$ and $2$ (see Figure~\ref{fig:P}). This yields the simplest tensor network states:
$\tns(P_2; r; m,n) $ is simply the set rank-$r$ matrices, or more precisely,
\begin{equation}\label{eq:ttmat}
\tns(P_2; r; m,n) = \{ T \in \mathbb{C}^{m \times n} : \rank(T) \le r \}, 
\end{equation}
and so matrix rank is just $P_2$-rank. Moreover, observe that
\[
\tns(P_2; r; m,n)  \cap \tns(P_2; s; m,n) = \tns(P_2; \min\{r,s\}; m,n),
\]
a property that we will generalize  in Lemma~\ref{lemma: intersection TNS}  to arbitrary $G$-ranks for acyclic $G$'s.
\end{example}

\begin{example}[Multilinear rank]\label{example:TT3}
Let $G = P_3$ with vertices $1,2,3$, which is the next simplest case. Orient $P_3$ by $1\rightarrow 2\rightarrow 3$. Let $(r_1,r_2) \in \mathbb{N}^2$ satisfy
$r_1\le m$,  $ r_1 r_2\le n$,  $r_2\le p$. In this case
\begin{equation}\label{eq:tt3ten}
\tns(P_3; r_1, r_2; m,n,p) = \{ T \in \mathbb{C}^{m \times n \times p} : \mrank(T) \le (r_1,r_1r_2,r_2) \},
\end{equation}
and so
\[
\rank_{P_3}(T) = (r_1, r_2) \quad \text{iff} \quad \mrank(T) = (r_1, r_1r_2, r_3).
\]
The $P_3$-rank of any  $T\in \mathbb{C}^{m \times n \times p}$ is unique, a consequence of  Theorem~\ref{thm:uniqueness G-rank}. But this may be deduced directly: Suppose $T$ has two $P_3$-ranks $(r_1,r_2)$ and $(s_1,s_2)$. Then $T\in\tns(P_3; r_1, r_2; m,n, p)  \cap \tns(P_3; s_1, s_2; m,n, p) $. We claim that there exists $(t_1, t_2) \in \mathbb{N}^2$ such that 
\[
T\in\tns(P_3; t_1, t_2; m,n, p) \subseteq \tns(P_3; r_1, r_2; m,n, p)  \cap \tns(P_3; s_1, s_2; m,n, p) 
\] 
Without loss of generality, we may assume\footnote{We cannot have $(r_1, r_2) \le (s_1, s_2)$ or $(s_1,s_2) \le (r_1, r_2)$ since both are assumed to be $P_3$-ranks of $T$. So that leaves either (i) $r_1 \le s_1$, $r_2 \ge s_2$ or (ii) $r_1 \ge s_1$, $r_2 \le s_2$ --- we pick (i) if $r_1r_2 \le s_1s_2$ and (ii) if $s_1s_2 \le r_1r_2$. By symmetry the subsequent arguments are identical.} that $r_1\le s_1$, $r_2 \ge s_2$, and that $r_1 r_2\le s_1 s_2$. By \eqref{eq:tt3ten} and the observation that
\[
\Sub_{r_1,r_1 r_2,r_2} (m,n, p)\cap \Sub_{s_1,s_1 s_2,s_2} (m,n, p)= \Sub_{r_1,r_1 r_2,s_2}(m,n, p),
\]
the assumption that $r_2\ge s_2$ allows us to conclude that
\[
\Sub_{r_1,r_1 r_2,s_2} (m,n, p)= \Sub_{r_1,r_1 s_2,s_2}(m,n, p) = \tns(P_3;r_1,s_2;m,n,p)
\]
and therefore we may take $(t_1, t_2) = (r_1,s_2)$. So $(r_1,r_2)=\rank_{P_3}(T) \le (r_1,s_2)$ and we must have $r_2 = s_2$; similarly $(s_1,s_2)=\rank_{P_3}(T) \le (r_1,s_2)$ and we must have $r_1 = s_1$.
\end{example}

\begin{example}[Rank-one tensors]\label{example:TTd}
The set of decomposable tensors of order $d$, i.e.,  rank-$1$ or multilinear rank-$(1,\dots,1)$ tensors (or the zero tensor), are exactly tensor trains of $P_d$-rank $(1,\dots,1)$.
\begin{equation}\label{eq:ttrank1}
\tns(P_d;1,\dots,1;n_1,\dots,n_d) = \{T \in \mathbb{C}^{n_1 \times \dots \times n_d} : \rank(T) \le 1 \}.
\end{equation}
\end{example}

The equalities \eqref{eq:ttmat}, \eqref{eq:tt3ten}, \eqref{eq:ttrank1} are obvious from definition and may also be deduced from the respective dimensions given in \cite[Theorem~4.8]{dtn}.
\begin{theorem}[Dimension of tensor trains]\label{thm:dimension of tensor train}
Let $P_d$ be the path graph with $d\ge 2$ vertices and $d-1$ edges. Let $(r_1,\dots,r_{d-1})\in \mathbb{N}^{d-1}$ be such that $\tns(P_d; r_1,\dots,r_{d-1}; n_1,\dots, n_d)$ is supercritical or critical. Then 
\begin{multline}\label{eqn:dimension of TT}
\dim \tns(P_d; r_1,\dots,r_{d-1}; n_1,\dots, n_d)  = r_{d/2}^2 + \sum_{i=1}^{d} r_{i-1}r_i(n_i-r_{i-1}r_i) \\
+ \sum_{j=1}^{\lfloor d/2 \rfloor-1} r_{j+1}^2(r_j^2-1) + r_{d-{j-1}}^2(r_{d-j}^2-1),
\end{multline}
where $r_0 = r_d \coloneqq 1$ and 
\[
r_{d/2} \coloneqq
\begin{cases}
r_{d/2} &\text{for} \; d\; \text{even},\\
r_{(d-1)/2} r_{(d+1)/2} &\text{for} \; d \; \text{odd}.
\end{cases}
\]
\end{theorem}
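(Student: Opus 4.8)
The plan is to present $\tns(P_d;r_1,\dots,r_{d-1};\mathbb{V}_1,\dots,\mathbb{V}_d)$ as the image of an explicit parametrization and to read off its dimension from the fiber-dimension theorem. First I would orient $P_d$ as $1 \to 2 \to \dots \to d$, so that $\mathbb{I}_i = \mathbb{E}_{i-1}$ and $\mathbb{O}_i = \mathbb{E}_i^*$ for each vertex $i$, with the boundary conventions $\mathbb{E}_0 = \mathbb{E}_d = \mathbb{C}$ and $r_0 = r_d = 1$. Writing $\mathbb{U}_i \coloneqq \mathbb{E}_{i-1}\otimes \mathbb{V}_i \otimes \mathbb{E}_i^*$, contraction gives a morphism
\[
\Phi \colon \mathbb{U}_1 \times \dots \times \mathbb{U}_d \longrightarrow \mathbb{V}_1 \otimes \dots \otimes \mathbb{V}_d, \qquad (T_1,\dots,T_d) \longmapsto \kappa_{P_d}(T_1 \otimes \dots \otimes T_d),
\]
whose image is exactly the tensor network. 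The domain is an irreducible affine space of dimension $\sum_{i=1}^d r_{i-1} n_i r_i$, and $\Phi$ is dominant onto the irreducible variety $\overline{\tns}(P_d;r_1,\dots,r_{d-1};\mathbb{V}_1,\dots,\mathbb{V}_d)$, so the fiber-dimension theorem will give
\[
\dim \tns(P_d;r_1,\dots,r_{d-1};n_1,\dots,n_d) = \sum_{i=1}^d r_{i-1} n_i r_i - \dim(\text{generic fiber of }\Phi).
\]
Everything therefore reduces to identifying the generic fiber.

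The fiber comes from the gauge redundancy. I would introduce the gauge group $\mathcal{G} \coloneqq \prod_{j=1}^{d-1}\GL(\mathbb{E}_j)$ acting on the domain by regluing each edge, $T_i \mapsto g_{i-1} T_i g_i^{-1}$ on the $\mathbb{E}_{i-1}$ and $\mathbb{E}_i^*$ legs (with $g_0 = g_d = \operatorname{id}$); since $g_j$ on $\mathbb{E}_j$ cancels $g_j^{-1}$ on $\mathbb{E}_j^*$ under contraction, $\Phi$ is $\mathcal{G}$-invariant, so every fiber is a union of $\mathcal{G}$-orbits. This is where the critical/supercritical hypothesis enters: it forces $n_i \ge r_{i-1} r_i$ for all $i$, so at a generic tuple each carriage $T_i$, regarded as a linear map $\mathbb{E}_i \to \mathbb{E}_{i-1}\otimes \mathbb{V}_i$, has full column rank $r_i$ (i.e. is injective on $\mathbb{E}_i$). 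A left-to-right propagation then shows the action is free at a generic point: $T_1 g_1^{-1} = T_1$ forces $g_1 = \operatorname{id}$ by injectivity of $T_1$, then $T_2 g_2^{-1} = T_2$ forces $g_2 = \operatorname{id}$, and so on up to $g_{d-1}$. Hence a generic orbit is smooth of dimension $\dim \mathcal{G} = \sum_{j=1}^{d-1} r_j^2$.

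The hard part will be showing that the generic fiber is nothing more than a single orbit, i.e. that the tensor train decomposition with full-rank carriages is unique up to gauge. Infinitesimally this amounts to proving
\[
\ker d\Phi_{(T_1,\dots,T_d)} = \bigl\{ (\dot T_1,\dots,\dot T_d) : \dot T_i = X_{i-1} T_i - T_i X_i,\; X_j \in \End(\mathbb{E}_j),\; X_0 = X_d = 0 \bigr\}
\]
at a generic point. The inclusion $\supseteq$ is infinitesimal gauge invariance, and by freeness the right-hand side already has dimension $\sum_{j=1}^{d-1} r_j^2$, so the content is the reverse inclusion. I would prove it by expanding $d\Phi(\dot T_1,\dots,\dot T_d) = \sum_i \kappa_{P_d}(T_1\otimes \dots \otimes \dot T_i \otimes \dots \otimes T_d) = 0$ and solving recursively for the $X_j$ using the full-rank unfoldings of the carriages; the full-rank hypothesis is precisely what makes this linear system determined and pins down each $X_j$ uniquely. (Globally this is the statement that $r_k$ equals the rank of the $k$-th flattening $\flat_k(T)$ and that the successive factorizations are unique up to change of basis on the $\mathbb{E}_j$.) Granting this, the generic fiber has dimension $\sum_{j=1}^{d-1} r_j^2$, and I obtain the clean formula
\[
\dim \tns(P_d;r_1,\dots,r_{d-1};n_1,\dots,n_d) = \sum_{i=1}^d r_{i-1} n_i r_i - \sum_{j=1}^{d-1} r_j^2 .
\]

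Finally I would recover the displayed expression \eqref{eqn:dimension of TT} by a purely algebraic rewriting of this clean formula. The terms linear in the $n_i$ match immediately, since $\sum_{i=1}^d r_{i-1} r_i (n_i - r_{i-1} r_i)$ contributes precisely $\sum_{i=1}^d r_{i-1} n_i r_i$. For the constant terms it remains to verify
\[
r_{d/2}^2 - \sum_{i=1}^d (r_{i-1} r_i)^2 + \sum_{j=1}^{\lfloor d/2\rfloor - 1}\bigl[ r_{j+1}^2 (r_j^2 - 1) + r_{d-j-1}^2 (r_{d-j}^2 - 1)\bigr] = -\sum_{j=1}^{d-1} r_j^2,
\]
and here the cross terms $r_{i-1}^2 r_i^2$ cancel in pairs: the off-center products are absorbed by the two families in the bracketed sum, the central product is absorbed by $r_{d/2}^2$ (treating the even and odd parities of $d$ separately, in accordance with the definition of $r_{d/2}$), and the two boundary products $r_0^2 r_1^2 = r_1^2$ and $r_{d-1}^2 r_d^2 = r_{d-1}^2$, together with the residual $-r_j^2$ contributions left over from the brackets, leave exactly $-\sum_{j=1}^{d-1} r_j^2$. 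This elementary cancellation completes the identification with \eqref{eqn:dimension of TT}.
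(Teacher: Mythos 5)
You should know at the outset that this paper never proves Theorem~\ref{thm:dimension of tensor train}: it is quoted from the companion paper \cite[Theorem~4.8]{dtn}, so your argument can only be judged on its own merits. On those merits it is correct, and it is the natural argument. Presenting the network as the image of the contraction map $\Phi$ on $\prod_{i=1}^d \mathbb{E}_{i-1}\otimes\mathbb{V}_i\otimes\mathbb{E}_i^*$, quotienting by the gauge group $\prod_{j=1}^{d-1}\GL(\mathbb{E}_j)$, and invoking the fiber-dimension theorem is the standard mechanism for such counts, and each ingredient checks out: criticality/supercriticality gives $n_i\ge r_{i-1}r_i$, hence a generic carriage is injective as a map $\mathbb{E}_i\to\mathbb{E}_{i-1}\otimes\mathbb{V}_i$ (one needs $r_{i-1}n_i\ge r_i$, which follows); your left-to-right propagation then kills the stabilizer; and the resulting clean formula $\sum_{i=1}^d r_{i-1}n_ir_i-\sum_{j=1}^{d-1}r_j^2$ does reduce to \eqref{eqn:dimension of TT}. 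I verified the constant-term identity you state in both parities of $d$; note that your reading of the garbled subscript in the theorem as $r_{d-j-1}$ is the one under which the identity holds --- with the grouping $r_{d-(j-1)}$ the printed formula already fails at $d=4$ (it would force $(r_2^2-1)(r_3^2-1)=0$). As a sanity check, your formula also reproduces \eqref{eq:ttmat}, \eqref{eq:tt3ten}, and \eqref{eq:P3}.

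The one step you leave as a sketch --- that the generic fiber has dimension exactly $\sum_j r_j^2$, equivalently that $\ker d\Phi$ at a generic point consists exactly of the gauge directions $\dot T_i=X_{i-1}T_i-T_iX_i$ --- is indeed the crux, but your plan for it works, and two remarks would tighten it. First, you do not need the stronger claim that the fiber is a single orbit: by generic smoothness in characteristic zero, $\dim\overline{\tns}(P_d;r_1,\dots,r_{d-1};\mathbb{V}_1,\dots,\mathbb{V}_d)=\dim(\mathrm{domain})-\dim\ker d\Phi_x$ at a general point $x$ of the \emph{domain}, where all unfoldings are of full rank by the criticality hypothesis; so the recursive linear-algebra computation (whose $d=2$ case is the familiar determinantal-variety computation: from $\dot A B+A\dot B=0$ with $A$ left-invertible and $B$ right-invertible one sets $X_1=\dot B B^{+}$ and solves) is all that is required, and it does extend inductively along the path. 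Second, if you prefer the global single-orbit route, the input you need is available inside this very paper: Theorem~\ref{thm:G-rank of ttns} identifies the minimal edge dimensions with the flattening ranks $\rank\bigl(\flat_{i,i+1}(T)\bigr)$, and for a $T$ whose flattening ranks are exactly $(r_1,\dots,r_{d-1})$ every decomposition with these edge dimensions automatically has injective carriages (the composite $\mathbb{E}_i\to\mathbb{V}_1\otimes\dots\otimes\mathbb{V}_i$ through which $\flat_{i,i+1}(T)$ factors must have full rank $r_i$), after which uniqueness up to gauge follows by induction on $d$. Either way your proof closes.
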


If we set $k_i = m_i = r_{i-1}r_i$, $i=1,\dots, d$, in \eqref{eq:dimSub}, then
\[
\dim \Sub_{k_1,\dots,k_d}(\mathbb{V}_1,\dots,\mathbb{V}_d)= \sum_{i=1}^d r_{i -1}r_i(n_i-r_{i-1}r_i) + \prod_{j=1}^{d-1} r_j^2,
\]
and with this, we have the following corollary of \eqref{eqn:dimension of TT}.
\begin{corollary}\label{cor:dimension of TT}
Let $P_d$ be the path graph of $d\ge 2$ vertices and $d-1$ edges. Let $(r_1,\dots,r_{d-1})\in \mathbb{N}^{d-1}$ be such that $\tns(P_d; r_1,\dots,r_{d-1}; \mathbb{V}_1,\dots, \mathbb{V}_d)$ is supercritical or critical and $m_i = r_{i-1}r_i$, $i=1,\dots, d$. Then
\[
\tns(P_d; r_1,\dots,r_{d-1}; \mathbb{V}_1,\dots, \mathbb{V}_d) \subseteq \Sub_{m_1,\dots,m_d}(\mathbb{V}_1,\dots, \mathbb{V}_d)
\]
is a subvariety of codimension
\[
\prod_{j=1}^{d-1} r_j^2 - \Bigl(\sum_{j=1}^{\lfloor d/2\rfloor-1} r_{j+1}^2(r_j^2-1) + r_{d-{j-1}}^2(r_{d-j}^2-1) + m^2_{d/2} \Bigr).
\]
In particular, we have
\begin{align*}
\tns(P_2; r; \mathbb{V}_1,\mathbb{V}_2) &= \sigma_{r}\bigl(\Seg(\mathbb{V}_1,\mathbb{V}_2)\bigr),\\
\tns(P_3; r_1, r_2; \mathbb{V}_1,\mathbb{V}_2,\mathbb{V}_3) &= \Sub_{r_1,r_1 r_2,r_2}(\mathbb{V}_1,\mathbb{V}_2,\mathbb{V}_3),\\
\tns(P_d; \underbrace{1,\dots,1}_{d-1};\mathbb{V}_1,\dots, \mathbb{V}_d) &= \Seg(\mathbb{V}_1, \dots,\mathbb{V}_d),
\end{align*}
where $r,r_1,r_2 \in \mathbb{N}$. For all other $d$ and $r$, we have a strict inclusion
\[
\tns(P_d; r_1,\dots,r_{d-1}; \mathbb{V}_1,\dots, \mathbb{V}_d) \subsetneq \Sub_{m_1,\dots,m_d}(\mathbb{V}_1,\dots, \mathbb{V}_d).
\]
\end{corollary}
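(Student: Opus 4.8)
The plan is to deduce the entire corollary from the two dimension formulas already in hand — the dimension of the tensor train in Theorem~\ref{thm:dimension of tensor train} and the dimension of the subspace variety in \eqref{eq:dimSub} — together with the inclusion supplied by Corollary~\ref{cor:intermediate}. First I would record the inclusion: taking $G = P_d$ in Corollary~\ref{cor:intermediate}, a vertex $i$ of the path is incident to edges $i-1$ and $i$, so $\prod_{j\in \In(i)\cup\Out(i)} r_j = r_{i-1}r_i = m_i$ with the convention $r_0 = r_d = 1$; in the supercritical or critical regime $n_i \ge m_i$, so $p_i = m_i$ and the inclusion $\tns(P_d;r_1,\dots,r_{d-1};\mathbb{V}_1,\dots,\mathbb{V}_d) \subseteq \Sub_{m_1,\dots,m_d}(\mathbb{V}_1,\dots,\mathbb{V}_d)$ holds. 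Both sides are irreducible (the tensor network as the image of an irreducible variety under $\kappa_{P_d}$, as established in Section~\ref{sec:tns}; the subspace variety classically), so the codimension is the difference of the two dimensions.

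I would then carry out the subtraction. The term $\sum_{i=1}^d r_{i-1}r_i(n_i - r_{i-1}r_i)$ occurs identically in both formulas and cancels. Using the telescoping identity
\[
\prod_{j=1}^d m_j = \prod_{j=1}^d r_{j-1}r_j = \Bigl(\prod_{j=1}^{d-1} r_j\Bigr)^2 = \prod_{j=1}^{d-1} r_j^2,
\]
valid because $r_0 = r_d = 1$, the $\dim \Sub$ formula contributes $\prod_{j=1}^{d-1} r_j^2$, while Theorem~\ref{thm:dimension of tensor train} contributes $m_{d/2}^2 + \sum_{j=1}^{\lfloor d/2\rfloor-1}\bigl[r_{j+1}^2(r_j^2-1)+r_{d-j-1}^2(r_{d-j}^2-1)\bigr]$, where $m_{d/2}^2$ abbreviates $r_{d/2}^2$ under the stated even/odd convention. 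This yields exactly the claimed codimension. For the three equalities I would note that the bookkeeping makes the codimension vanish: for $P_2$ and for the all-ones case the sum $\sum_{j=1}^{\lfloor d/2\rfloor-1}$ is empty and $m_{d/2}^2 = \prod_{j=1}^{d-1}r_j^2$, and for $P_3$ one has $m_{d/2}^2 = (r_1r_2)^2 = \prod_{j=1}^{2}r_j^2$. The set-level identifications themselves are already available — they are \eqref{eq:ttmat} (with $s_r = \sigma_r$ for matrices), \eqref{eq:tt3ten}, and \eqref{eq:ttrank1} from Examples~\ref{example:TT2}, \ref{example:TT3}, and \ref{example:TTd} — so nothing beyond the codimension computation is needed here.

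Finally, for strict inclusion: since $\tns \subseteq \Sub$ with $\Sub$ irreducible, a strictly positive codimension forces $\dim \tns < \dim \Sub$ and hence $\tns \subsetneq \Sub$, because a constructible set equal to $\Sub$ would share its (closure) dimension. Thus the remaining content is the strict positivity of the codimension, namely
\[
\prod_{j=1}^{d-1} r_j^2 \;>\; m_{d/2}^2 + \sum_{j=1}^{\lfloor d/2\rfloor-1}\bigl[r_{j+1}^2(r_j^2-1)+r_{d-j-1}^2(r_{d-j}^2-1)\bigr],
\]
and this is the step I expect to be the main obstacle. My plan is to pass to the variables $\rho_j = r_j^2$ and argue by induction on $d$ (peeling the symmetric outermost pair of terms indexed by $j$ and $d-j$): each summand $\rho_{j+1}(\rho_j - 1)$ is majorized by the two-consecutive-factor product $\rho_j\rho_{j+1}$, which divides $\prod_{j=1}^{d-1}\rho_j$, and once every $\rho_j \ge 4$ the full product dominates the $O(d)$ lower-order terms. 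The delicate point — and the reason the statement must be read with care — is the degenerate case where some $r_j = 1$. Already at $d=4$ the codimension factors as $r_2^2(r_1^2-1)(r_3^2-1)$, which vanishes the moment a boundary weight equals $1$; such configurations collapse under Propositions~\ref{prop:reduction valence one} and \ref{prop:remove an edge} to a path graph on fewer vertices and thereby reduce to the base cases $P_2$, $P_3$, or $\Seg$ already treated. Hence the strict inequality, and with it the proper inclusion, holds precisely for the remaining non-reducible ranks with $d \ge 4$.
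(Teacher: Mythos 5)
Your core argument --- the inclusion via Corollary~\ref{cor:intermediate} and the codimension as the difference of the dimension formulas in Theorem~\ref{thm:dimension of tensor train} and \eqref{eq:dimSub} --- is exactly the paper's route; the paper offers no separate proof, treating the corollary as an immediate consequence of those two formulas, and your cancellation of $\sum_{i} r_{i-1}r_i(n_i - r_{i-1}r_i)$ together with $\prod_{j=1}^d m_j = \prod_{j=1}^{d-1} r_j^2$ is precisely the intended computation. Your reading of $m_{d/2}^2$ as the theorem's $r_{d/2}^2$ (with its even/odd convention) is also the right one: the literal $m_{d/2} = r_{d/2-1}r_{d/2}$ would give an incorrect codimension already at $d=4$, where the true value is $r_2^2(r_1^2-1)(r_3^2-1)$. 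Likewise, deferring the three set-level equalities to \eqref{eq:ttmat}, \eqref{eq:tt3ten}, \eqref{eq:ttrank1} matches the paper, which declares them ``obvious from definition.''

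Where you go beyond the paper is the final sentence of the corollary, and your instinct there is correct: the blanket claim ``for all other $d$ and $r$'' is literally false for degenerate weights. At $d=4$ with $(r_1,r_2,r_3)=(1,2,2)$ the codimension vanishes, and indeed $\tns(P_4;1,r_2,r_3;\mathbb{V}_1,\dots,\mathbb{V}_4)$ consists of the tensors $v\otimes S$ with $S\in\tns(P_3;r_2,r_3;\mathbb{V}_2,\mathbb{V}_3,\mathbb{V}_4)=\Sub_{r_2,r_2r_3,r_3}(\mathbb{V}_2,\mathbb{V}_3,\mathbb{V}_4)$, which is all of $\Sub_{1,r_2,r_2r_3,r_3}(\mathbb{V}_1,\dots,\mathbb{V}_4)$; so the inclusion is an equality there, and the statement must be read modulo such reductions, as you say. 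Two caveats. First, your positivity argument is only sketched; it does close --- when every $r_j\ge 2$, each subtracted term is bounded by $4^{-(d-3)}\prod_{j=1}^{d-1}r_j^2$ and there are at most $d-1$ of them, so the codimension is positive for all $d\ge 4$ --- but this estimate (or your induction) needs to be written out. Second, your concluding ``holds precisely for the remaining non-reducible ranks'' overstates in one direction: reducibility does not imply equality. For $d=5$ and $(r_1,r_2,r_3,r_4)=(1,2,2,2)$ the configuration collapses to $P_4$ with weights $(2,2,2)$, which is a \emph{strict} case, and correspondingly the $P_5$ codimension equals $r_3^2(r_2^2-1)(r_4^2-1)=36>0$; the inclusion is strict even though the tuple is reducible. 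The correct criterion is simply positivity of the codimension --- equivalently, that iterated stripping of weight-one boundary edges does not terminate in one of the three listed equality cases.
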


We now provide a few examples of generic and maximal \textsc{tt}-ranks, in which $G$ is the path graph $P_2$, $P_3$, or $P_4$ in Figure~\ref{fig:P}. Again, these represent the simplest instances of more general results for tree tensor networks in Section~\ref{sec:ttns} and are intended to be instructive.
In the following let $\mathbb{V}_i$ be a vector space of dimension $n_i$, $i=1,2,3,4$.
\begin{example}[Generic/maximal \textsc{tt}-rank of $\mathbb{C}^{n_1 \times n_2}$]
In this case maximal and generic $G$-ranks are equivalent since $G$-rank and border $G$-rank are equal for acyclic graphs (see Corollary~\ref{cor:border G-rank}).
By \eqref{eq:ttmat}, the generic $P_2$-rank of $\mathbb{V}_1\otimes \mathbb{V}_2 \cong \mathbb{C}^{n_1 \times n_2}$ is $\min\{n_1,n_2\}$, i.e., the generic matrix rank.
\end{example}
\begin{example}[Generic/maximal \textsc{tt}-rank of $\mathbb{C}^{n_1 \times n_2 \times n_3}$]\label{example:P_3 generic rank}
Assume for simplicity that $n_1 n_2 \ge n_3$, we will show that the generic $P_3$-rank of $\mathbb{V}_1\otimes \mathbb{V}_2 \otimes \mathbb{V}_3 \cong \mathbb{C}^{n_1 \times n_2 \times n_3}$ is $(n_1,n_3)$. Let $(g_1,g_2)\in \mathbb{N}^2$. By Corollary~\ref{cor:dimension supercritical}, if $\tns(P_3; g_1,g_2; \mathbb{V}_1,\mathbb{V}_2,\mathbb{V}_3)$ is supercritical at vertices $1$ and $3$, then
\[
\tns(P_3; g_1,g_2; \mathbb{V}_1,\mathbb{V}_2, \mathbb{V}_3) \subsetneq \mathbb{V}_1\otimes \mathbb{V}_2\otimes \mathbb{V}_3.
\]
So we may assume that $g_1$ and $g_2$ are large enough so that $\tns(P_3; g_1,g_2; \mathbb{V}_1,\mathbb{V}_2, \mathbb{V}_3)$ is critical or subcritical at  vertex $1$ or vertex $3$. Thus we must have  $g_1\ge n_1$ or $g_2\ge n_3$.
By Proposition~\ref{prop:reduction valence one},
\[
\tns(P_3 ; n_1,n_2;\mathbb{V}_1,\mathbb{V}_2, \mathbb{V}_3) = \mathbb{V}_1\otimes \mathbb{V}_2\otimes \mathbb{V}_3
\]
and hence the generic $P_3$-rank of $\mathbb{V}_1\otimes \mathbb{V}_2\otimes \mathbb{V}_3$ is $(n_1,n_3)$.
\end{example}
\begin{example}[Generic/maximal \textsc{tt}-rank of $\mathbb{C}^{2\times 2 \times 2 \times 2}$]\label{example:P_4 generic rank}
Let $n_1 = n_2 = n_3 = n_4 = 2$. Let $(g_1,g_2,g_3) \in \mathbb{N}^3$ be the generic $P_4$-rank of $\mathbb{V}_1 \otimes \mathbb{V}_2 \otimes \mathbb{V}_3 \otimes \mathbb{V}_4\cong \mathbb{C}^{2 \times 2 \times 2 \times 2} $. By the definition of $P_4$-rank we must have 
\[
(g_1,g_2,g_3) \le (2,4,2).
\]
Suppose that either $g_1 = 1$ or $g_3 = 1$ --- by symmetry, suppose $g_1 = 1$. In this case a $4$-tensor in
$\tns (P_4;1,g_2,g_3;\mathbb{V}_1, \mathbb{V}_2, \mathbb{V}_3, \mathbb{V}_4)$
has rank at most one when regarded as a matrix in $\mathbb{V}_1 \otimes (\mathbb{V}_2 \otimes \mathbb{V}_3 \otimes \mathbb{V}_4)$. However, a generic element in $\mathbb{V}_1 \otimes \mathbb{V}_2 \otimes \mathbb{V}_3 \otimes \mathbb{V}_4$ has rank two when regarded as a matrix in $\mathbb{V}_1 \otimes (\mathbb{V}_2 \otimes \mathbb{V}_3 \otimes \mathbb{V}_4)$, a contradiction. Thus $g_1 = g_3 = 2$. Now by Proposition~\ref{prop:reduction valence one}, if $g_2 \le 3$, then
\[
\tns(P_4;2,g_2,2; \mathbb{V}_1, \mathbb{V}_2, \mathbb{V}_3, \mathbb{V}_4) = \tns(P_2; g_2; \mathbb{V}_1\otimes \mathbb{V}_2, \mathbb{V}_3 \otimes \mathbb{V}_4) \subsetneq  \mathbb{V}_1 \otimes \mathbb{V}_2 \otimes \mathbb{V}_3 \otimes \mathbb{V}_4,
\]
since $\tns(P_2;g_2;4,4)$ is the set of all $4\times 4$ matrices of rank at most three.
Hence the generic $P_4$-rank of $ \mathbb{V}_1 \otimes \mathbb{V}_2 \otimes \mathbb{V}_3 \otimes \mathbb{V}_4 $ must be $(2,4,2)$.
\end{example}

\section{Tree tensor networks}\label{sec:ttns}

We will now discuss \textsc{ttns}-ranks, i.e., $G$-ranks where $G$ is a tree (see Figure~\ref{fig:T}). Since $G$ is assumed to be connected and every connected acyclic graph is a tree, this includes all acyclic $G$ with  tensor trains ($G= P_d$) and star tensor network states ($G = S_d$) as special cases. A particularly important result in this case is that \textsc{ttns}-rank is always \emph{unique}  and  is easily computable as matrix ranks of various flattenings of tensors.

We first establish the intersection property that we saw in Examples~\ref{example:TT2} and \ref{example:TT3} more generally.
\begin{lemma}\label{lemma: intersection TNS}
Let $G$ be a tree with $d$ vertices and $c$ edges. Let $(r_1,\dots,r_c)$ and $(s_1,\dots,s_c) \in \mathbb{N}^{c}$ be such that $\tns(G; r_1,\dots,r_c; \mathbb{V}_1,\dots, \mathbb{V}_d)$ and $\tns(G; s_1,\dots,s_c; \mathbb{V}_1,\dots, \mathbb{V}_d)$ are subcritical. Then 
\[
 \tns(G; r_1,\dots,r_c; \mathbb{V}_1,\dots, \mathbb{V}_d) \cap \tns(G; s_1,\dots,s_c; \mathbb{V}_1,\dots, \mathbb{V}_d)
= \tns(G; t_1,\dots, t_c; \mathbb{V}_1,\dots, \mathbb{V}_d),
\]
where $(t_1,\dots, t_c)\in \mathbb{N}^{c}$ is given by $t_j = \min\{r_j,s_j\}$, $j=1,\dots,c$.
\end{lemma}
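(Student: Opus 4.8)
The inclusion ``$\supseteq$'' is immediate from the monotonicity built into \eqref{eq:set}: since $t_j = \min\{r_j,s_j\}$ satisfies $t_j \le r_j$ and $t_j \le s_j$ for every $j$, any $T$ with $\rank_G(T) \le (t_1,\dots,t_c)$ also has $\rank_G(T) \le (r_1,\dots,r_c)$ and $\rank_G(T) \le (s_1,\dots,s_c)$, so $\tns(G;t_1,\dots,t_c;\mathbb{V}_1,\dots,\mathbb{V}_d)$ is contained in both of the other networks and hence in their intersection. The whole content of the lemma is therefore the reverse inclusion ``$\subseteq$'': if a single tensor $T$ admits a tree-network decomposition with edge weights $(r_1,\dots,r_c)$ and another with weights $(s_1,\dots,s_c)$, then it admits one with the componentwise minimum weights $(t_1,\dots,t_c)$. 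The plan is to prove this by induction on the number of edges $c$, working throughout in the slightly larger regime $n_i \le m_i$ for all $i$ (``subcritical or critical''), which is closed under the reduction below and specializes to the stated hypothesis.

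The base case $c=1$ is $G=P_2$: here $\tns(P_2;w;\mathbb{V}_1,\mathbb{V}_2)$ is the set of matrices of rank at most $w$ by Example~\ref{example:TT2}, and the intersection of the rank-$\le r$ and rank-$\le s$ matrices is exactly the rank-$\le\min\{r,s\}$ matrices. For the inductive step I would pick a leaf (a degree-one vertex), say vertex $1$, with unique neighbour vertex $2$ and incident edge $e_1$. Because the leaf is subcritical or critical in both weightings, i.e.\ $n_1 \le r_1$ and $n_1 \le s_1$, Proposition~\ref{prop:reduction valence one} applies to both and gives
\[
T \in \tns(G';r_2,\dots,r_c;\mathbb{U},\mathbb{V}_3,\dots,\mathbb{V}_d) \cap \tns(G';s_2,\dots,s_c;\mathbb{U},\mathbb{V}_3,\dots,\mathbb{V}_d),
\]
where $\mathbb{U}=\mathbb{V}_1\otimes\mathbb{V}_2$ and $G'$ is the tree on $d-1$ vertices obtained by collapsing $e_1$.

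The one delicate point is that the collapsed vertex may now be \emph{supercritical}, so the induction hypothesis does not yet apply to $G'$ over these ambient spaces. To repair this I would pass, via the Inheritance Property (Theorem~\ref{thm:$G$-rank subspace}), to the smallest subspaces $\mathbb{U}'\subseteq\mathbb{U}$ and $\mathbb{V}_i'\subseteq\mathbb{V}_i$ ($i\ge 3$) with $T\in\mathbb{U}'\otimes\mathbb{V}_3'\otimes\dots\otimes\mathbb{V}_d'$; this leaves every $G'$-rank of $T$ unchanged. By Corollary~\ref{cor:intermediate} the multilinear rank of $T$ in each mode is bounded by the corresponding product of incident edge weights, whence $\dim\mathbb{U}'\le\prod_j r_j$ and $\dim\mathbb{U}'\le\prod_j s_j$ (products over the edges of $G'$ meeting the collapsed vertex), while $\dim\mathbb{V}_i'\le n_i\le m_i$ for $i\ge 3$. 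Thus both reduced networks over $\mathbb{U}',\mathbb{V}_3',\dots,\mathbb{V}_d'$ are again subcritical or critical, and the induction hypothesis yields $T\in\tns(G';t_2,\dots,t_c;\mathbb{U}',\mathbb{V}_3',\dots,\mathbb{V}_d')$.

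Finally I would run the two structural moves in reverse: the Inheritance Property re-enlarges the ambient spaces to give $T\in\tns(G';t_2,\dots,t_c;\mathbb{U},\mathbb{V}_3,\dots,\mathbb{V}_d)$, and Proposition~\ref{prop:reduction valence one} applied to the $t$-weighting (valid since $n_1\le\min\{r_1,s_1\}=t_1$ keeps the leaf subcritical or critical) re-expands the collapsed vertex to give $T\in\tns(G;t_1,\dots,t_c;\mathbb{V}_1,\dots,\mathbb{V}_d)$, closing the induction. The step I expect to be the genuine obstacle is exactly the one flagged above: taming the vertex that becomes supercritical upon collapsing the leaf. Its resolution hinges on the interplay between Theorem~\ref{thm:$G$-rank subspace} (restricting to the minimal enclosing subspaces disturbs no $G$-rank) and the multilinear-rank bound of Corollary~\ref{cor:intermediate} (which forces the collapsed mode back into the subcritical-or-critical range after that restriction), so that the induction hypothesis becomes applicable. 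Every remaining step is routine bookkeeping with the reduction and inheritance results already established.
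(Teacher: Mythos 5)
Your proposal is correct and takes essentially the same route as the paper's own proof: induction on the size of the tree, collapsing a leaf with Proposition~\ref{prop:reduction valence one}, restricting to smaller subspaces so that the collapsed vertex becomes subcritical again, invoking the induction hypothesis, and then undoing both reductions via \eqref{eq:subsp2} and Proposition~\ref{prop:reduction valence one} applied to the weights $(t_1,\dots,t_c)$. If anything, your write-up is more explicit than the paper's at exactly the delicate point you flag --- the paper merely asserts the existence of a subspace $\mathbb{W}$ of dimension bounded by the incident edge weights (your combination of Theorem~\ref{thm:$G$-rank subspace} and Corollary~\ref{cor:intermediate} supplies the justification), and it does not spell out that the induction must be run in the closed regime ``subcritical or critical'' rather than under the strictly subcritical hypothesis as literally stated.
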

\begin{proof}
Without loss of generality, let the vertex $1$ be a degree-one vertex (which must exist in a tree) and let the edge $e_1$ be adjacent to the vertex $1$. It is straightforward to see that 
\[
\tns(G; t_1,\dots, t_c; \mathbb{V}_1,\dots, \mathbb{V}_d) \subseteq 
 \tns(G; r_1,\dots,r_c; \mathbb{V}_1,\dots, \mathbb{V}_d) \cap \tns(G; s_1,\dots,s_c; \mathbb{V}_1,\dots, \mathbb{V}_d).
\]
To prove the opposite inclusion, we proceed by induction on $d$. The required inclusion holds for $d\le 3$ by our calculations in Examples~\ref{example:TT2} and \ref{example:TT3}. Assume that it holds for $d-1$. Now observe that by Proposition~\ref{prop:reduction valence one},
\[
\tns(G; r_1,r_2,\dots, r_c; \mathbb{V}_1,\dots, \mathbb{V}_d) = \tns(G'; r_2,\dots, r_c; \mathbb{V}_1\otimes \mathbb{V}_2,\mathbb{V}_3,\dots, \mathbb{V}_d), 
\]
where $G'$ is the graph obtained by removing vertex $1$ and its only edge $e_1$. Similarly,
\[
\tns(G; s_1, s_2,\dots, s_{c}; \mathbb{V}_1,\dots, \mathbb{V}_d) = \tns(G'; s_2,\dots, s_{c}; \mathbb{V}_1\otimes \mathbb{V}_2,\mathbb{V}_3,\dots, \mathbb{V}_d).
\]
Therefore,
\begin{multline*}
\tns(G; r_1, r_2,\dots,r_c; \mathbb{V}_1,\dots, \mathbb{V}_d)  \cap \tns(G; s_1, s_2, \dots,s_c; \mathbb{V}_1,\dots, \mathbb{V}_d) = \\
\tns(G'; r_2,\dots,r_c; \mathbb{V}_1\otimes \mathbb{V}_2,\dots, \mathbb{V}_d) \cap \tns(G'; s_2,\dots,s_c; \mathbb{V}_1\otimes \mathbb{V}_2,\dots, \mathbb{V}_d).
\end{multline*}
Given  $T\in\tns(G'; r_2,\dots,r_c; \mathbb{V}_1\otimes \mathbb{V}_2,\dots, \mathbb{V}_d) \cap \tns(G'; s_2,\dots,s_c; \mathbb{V}_1\otimes \mathbb{V}_2,\dots, \mathbb{V}_d)$, there must be some subspace $\mathbb{W} \subseteq \mathbb{V}_1\otimes \mathbb{V}_2$ such that $\dim \mathbb{W}\le \min\{r_2,s_2\}$ and thus
\[
T\in \tns(G'; r_2,\dots,r_c; \mathbb{W},\mathbb{V}_3, \dots, \mathbb{V}_d) \cap \tns(G'; s_2,\dots,s_c; \mathbb{W},\mathbb{V}_3, \dots, \mathbb{V}_d).
\]
By the induction hypothesis, we have
\begin{multline*}
\tns(G'; r_2,\dots,r_c; \mathbb{W},\mathbb{V}_3, \dots, \mathbb{V}_d) \cap \tns(G'; s_2,\dots,s_c; \mathbb{W},\mathbb{V}_3, \dots, \mathbb{V}_d)\\
 = \tns(G'; t_2,\dots,t_c; \mathbb{W},\mathbb{V}_3, \dots, \mathbb{V}_d).
\end{multline*}
Since both $\tns(G; r_1,r_2,\dots,r_c; \mathbb{V}_1,\dots, \mathbb{V}_d)$ and $\tns(G; s_1,s_2,\dots,s_c; \mathbb{V}_1,\dots, \mathbb{V}_d)$ are subcritical,  $\tns(G; t_1,t_2,\dots,t_c; \mathbb{V}_1,\dots, \mathbb{V}_d)$ is also subcritical. By \eqref{eq:subsp2} and Proposition~\ref{prop:reduction valence one},
\begin{align*}
T\in \tns(G'; t_2,\dots,t_c; \mathbb{W},\mathbb{V}_3, \dots, \mathbb{V}_d)&\subseteq \tns(G'; t_2,\dots,t_c; \mathbb{V}_1\otimes \mathbb{V}_2,\dots, \mathbb{V}_d) \\
&=\tns(G; t_1,t_2,\dots,t_c ; \mathbb{V}_1,\dots, \mathbb{V}_d),
\end{align*}
showing that the inclusion also holds for $d$, completing our induction proof.
\end{proof}

We are now ready to prove a more general version of Lemma~\ref{lemma: intersection TNS}, removing the subcriticality requirement. Note  that Lemma~\ref{lemma: intersection TNS} is inevitable since our next proof relies on it.
\begin{theorem}[Intersection of \textsc{ttns}]\label{thm:intersection of TNS}
Let $G$ be a tree with $d$ vertices and $c$ edges. Let $(r_1,\dots,r_c)$ and $(s_1,\dots,s_c) \in \mathbb{N}^{c}$. Then 
\[
 \tns(G; r_1,\dots,r_c; \mathbb{V}_1,\dots, \mathbb{V}_d) \cap \tns(G; s_1,\dots,s_c; \mathbb{V}_1,\dots, \mathbb{V}_d)
= \tns(G; t_1,\dots, t_c; \mathbb{V}_1,\dots, \mathbb{V}_d),
\]
where $(t_1,\dots, t_c)\in \mathbb{N}^{c}$ is given by $t_j = \min\{r_j,s_j\}$, $j=1,\dots,c$.
\end{theorem}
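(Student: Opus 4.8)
The plan is to induct on the number of vertices $d$, using the subcritical case already settled in Lemma~\ref{lemma: intersection TNS} together with the leaf reduction of Proposition~\ref{prop:reduction valence one}, after first shrinking the ambient space so that the leaf we wish to peel is no longer supercritical. The inclusion $\supseteq$ is immediate: since $t_j=\min\{r_j,s_j\}\le r_j$ and $\le s_j$, the monotonicity of $\tns$ in the edge weights recorded in \eqref{eq:set} places $\tns(G;t_1,\dots,t_c;\mathbb{V}_1,\dots,\mathbb{V}_d)$ inside each of the two factors, hence inside their intersection. All the content is in proving $\subseteq$.

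So let $T$ lie in the intersection, write $\mrank(T)=(\rho_1,\dots,\rho_d)$, and fix a leaf $1$ of the tree $G$ (a tree always has one) adjacent to vertex $2$ along the unique incident edge, of weight $r_1$ in the first decomposition and $s_1$ in the second. Let $\mathbb{W}_1\subseteq\mathbb{V}_1$ be the minimal mode-$1$ subspace of $T$, so $T\in\mathbb{W}_1\otimes\mathbb{V}_2\otimes\dots\otimes\mathbb{V}_d$ with $\dim\mathbb{W}_1=\rho_1$. Since the only edge at vertex $1$ has weight $r_1$, Corollary~\ref{cor:intermediate} bounds $\rho_1\le\min\{r_1,n_1\}\le r_1$, and likewise $\rho_1\le s_1$, so $\dim\mathbb{W}_1\le\min\{r_1,s_1\}=t_1$. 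By the restriction Lemma~\ref{lemma: subspace TNS} (equation \eqref{eq:subsp}), $T$ still lies in $\tns(G;r_1,\dots,r_c;\mathbb{W}_1,\mathbb{V}_2,\dots,\mathbb{V}_d)\cap\tns(G;s_1,\dots,s_c;\mathbb{W}_1,\mathbb{V}_2,\dots,\mathbb{V}_d)$, and now vertex $1$ is subcritical or critical in \emph{both} networks.

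With the leaf tamed I would apply Proposition~\ref{prop:reduction valence one} to both networks at once: it absorbs vertex $1$ into vertex $2$, deletes the leaf edge, and identifies the two tensor networks with networks on the $(d-1)$-vertex tree $G'=G\setminus\{1\}$ over the \emph{common} spaces $\mathbb{W}_1\otimes\mathbb{V}_2,\mathbb{V}_3,\dots,\mathbb{V}_d$, carrying the weight vectors $(r_2,\dots,r_c)$ and $(s_2,\dots,s_c)$. As these are equalities of subsets of one and the same ambient space, intersecting them and applying the induction hypothesis to $G'$ gives
\[
\tns(G';r_2,\dots,r_c;\mathbb{W}_1\otimes\mathbb{V}_2,\dots,\mathbb{V}_d)\cap\tns(G';s_2,\dots,s_c;\mathbb{W}_1\otimes\mathbb{V}_2,\dots,\mathbb{V}_d)=\tns(G';t_2,\dots,t_c;\mathbb{W}_1\otimes\mathbb{V}_2,\dots,\mathbb{V}_d),
\]
so $T$ lies on the right. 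Because $t_1\ge\dim\mathbb{W}_1$, vertex $1$ is subcritical or critical in the weight-$t$ network over $\mathbb{W}_1,\mathbb{V}_2,\dots,\mathbb{V}_d$, so Proposition~\ref{prop:reduction valence one} run backwards re-expands this as $\tns(G;t_1,\dots,t_c;\mathbb{W}_1,\mathbb{V}_2,\dots,\mathbb{V}_d)$, and \eqref{eq:subsp2} embeds the latter into $\tns(G;t_1,\dots,t_c;\mathbb{V}_1,\dots,\mathbb{V}_d)$, giving $T\in\tns(G;t;\mathbb{V}_1,\dots,\mathbb{V}_d)$ as required. The base case $d=2$ is the elementary matrix identity $\tns(P_2;r;\mathbb{V}_1,\mathbb{V}_2)\cap\tns(P_2;s;\mathbb{V}_1,\mathbb{V}_2)=\tns(P_2;\min\{r,s\};\mathbb{V}_1,\mathbb{V}_2)$ of Example~\ref{example:TT2}, with Lemma~\ref{lemma: intersection TNS} supplying the genuinely subcritical small cases (where the middle-mode bound — the second multilinear rank is at most the product of the first and third — already enters).

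I expect the crux to be the criticality bookkeeping rather than any single computation. The meet $t$ is \emph{not} obtained from $r$ and $s$ by any one-sided coordinate comparison, so one cannot naively deduce $\rank_G(T)\le t$ from $\rank_G(T)\le r$ and $\rank_G(T)\le s$: the minimal weight vectors realizing $T$ need not be comparable, and the set of admissible weight vectors of $T$, while upward closed, is not automatically closed under coordinatewise minimum. The theorem asserts that for a tree this set \emph{is} a meet-subsemilattice, and the one delicate step that makes the induction go through is ensuring a single leaf can be peeled in both networks simultaneously; this is exactly what the reduction to the minimal mode-$1$ subspace secures, converting a potentially supercritical leaf into one to which Proposition~\ref{prop:reduction valence one} legitimately applies.
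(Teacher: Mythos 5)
Your proof is correct, but it is organized differently from the paper's. The paper's proof of this theorem is a three-line reduction to Lemma~\ref{lemma: intersection TNS}: given $T$ in the intersection, it restricts \emph{all} vertex spaces at once to subspaces $\mathbb{W}_i\subseteq\mathbb{V}_i$ chosen so that both networks become subcritical and still contain $T$, applies that lemma over $\mathbb{W}_1,\dots,\mathbb{W}_d$, and embeds back via \eqref{eq:subsp2}; the inductive leaf-peeling is confined to the lemma's proof. You instead run a single induction on $d$ for the full unrestricted statement, shrinking only the leaf's space to the minimal mode-$1$ subspace (bounded by $t_1=\min\{r_1,s_1\}$ via Corollary~\ref{cor:intermediate}) so that Proposition~\ref{prop:reduction valence one} applies to both networks simultaneously, then invoking the inductive hypothesis on $G'$ and re-expanding. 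In particular you never actually need Lemma~\ref{lemma: intersection TNS}: your base case $d=2$ is Example~\ref{example:TT2} and your inductive step is self-contained, so your closing remark that the lemma supplies the ``small cases'' is superfluous. What your route buys: supercriticality is handled locally, one leaf at a time, so you bypass the one terse point in the paper's argument --- the unconstructed assertion that subspaces $\mathbb{W}_i$ exist making \emph{both} networks simultaneously subcritical (which must be extracted from minimal multilinear subspaces plus Corollary~\ref{cor:intermediate}, with the minor nuisance that the paper's notion of ``subcritical'' demands a strict inequality somewhere). What the paper's route buys: since Lemma~\ref{lemma: intersection TNS} is proved anyway, the theorem then follows in three lines, whereas your argument in effect re-proves that lemma's induction with the restriction step inlined. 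Both arguments ultimately rest on the same tools: the restriction Lemma~\ref{lemma: subspace TNS}, the leaf reduction Proposition~\ref{prop:reduction valence one}, and the multilinear-rank bound of Corollary~\ref{cor:intermediate}.
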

\begin{proof}
We just need to establish `$\subseteq$'  as `$\supseteq$' is obvious.
Let $T\in \tns(G; r_1,\dots,r_c; \mathbb{V}_1,\dots,\mathbb{V}_d) \cap  \tns(G; s_1,\dots,s_c; \mathbb{V}_1,\dots,\mathbb{V}_d)$. Then there exist subspaces $\mathbb{W}_1\subseteq \mathbb{V}_1,\dots,\mathbb{W}_c\subseteq \mathbb{V}_c$ such that both $\tns(G; r_1,\dots,r_c; \mathbb{W}_1,\dots,\mathbb{W}_d)$ and $\tns(G; s_1,\dots,s_c; \mathbb{W}_1,\dots,\mathbb{W}_d)$ are subcritical and
\begin{multline*}
T\in \tns(G; r_1,\dots,r_c; \mathbb{W}_1,\dots,\mathbb{W}_d) \cap  \tns(G; s_1,\dots,s_c; \mathbb{W}_1,\dots,\mathbb{W}_d) \\
= \tns(G; t_1,\dots,t_c; \mathbb{W}_1,\dots,\mathbb{W}_d)  \subseteq \tns(G; t_1,\dots,t_c; \mathbb{V}_1,\dots,\mathbb{V}_d),
\end{multline*}
where the equality follows from Lemma~\ref{lemma: intersection TNS} and the inclusion from \eqref{eq:subsp2}.
\end{proof}
Note that subspace varieties also satisfy the intersection property in Theorem~\ref{thm:intersection of TNS}, i.e.,
\[
\Sub_{r_1,\dots,r_d}(\mathbb{V}_1,\dots, \mathbb{V}_d)  \cap 
\Sub_{s_1,\dots,s_d}(\mathbb{V}_1,\dots, \mathbb{V}_d)  =  
\Sub_{t_1,\dots,t_d}(\mathbb{V}_1,\dots, \mathbb{V}_d).
\]
However neither  Lemma~\ref{lemma: intersection TNS} nor Theorem~\ref{thm:intersection of TNS} holds for graphs containing cycles, as we will see in Example~\ref{example:non uniqueness} and Proposition~\ref{prop:intersectMPS}.

We now establish the uniqueness of $G$-rank for any acyclic $G$, i.e., for a given $d$-tensor $T$, the $G$-rank of $T$ is a unique $d$-tuple in $\mathbb{N}^d$, as opposed to a subset of a few $d$-tuples in $\mathbb{N}^d$. In particular, the \textsc{tt}-rank  and \textsc{stns}-rank of a tensor are both unique.
\begin{theorem}[Uniqueness of \textsc{ttns}-rank]\label{thm:uniqueness G-rank}
The $G$-rank of a tensor $T\in \mathbb{V}_1\otimes \dots \otimes \mathbb{V}_d$ is unique if $G$ is a $d$-vertex tree.
\end{theorem}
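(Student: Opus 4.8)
The plan is to derive uniqueness directly from the intersection property established in Theorem~\ref{thm:intersection of TNS}, which is precisely why that result was proved in the requisite generality (without any subcriticality hypothesis). The observation is that the $G$-rank of $T$ is defined as \emph{a} minimal element of the set $\{(r_1,\dots,r_c)\in\mathbb{N}^c : T\in\tns(G;r_1,\dots,r_c;\mathbb{V}_1,\dots,\mathbb{V}_d)\}$, and a partially ordered set can have two distinct minimal elements only when neither dominates the other. The intersection theorem will rule this out.

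Concretely, I would argue by contradiction. Suppose $T$ has two distinct $G$-ranks, say $\rank_G(T)=(r_1,\dots,r_c)$ and $\rank_G(T)=(s_1,\dots,s_c)$ with $(r_1,\dots,r_c)\ne(s_1,\dots,s_c)$. Since both are minimal elements of the same subset of $\mathbb{N}^c$, neither can satisfy $(r_1,\dots,r_c)\le(s_1,\dots,s_c)$ nor $(s_1,\dots,s_c)\le(r_1,\dots,r_c)$; hence there exist indices $j$ and $k$ with $r_j<s_j$ and $s_k<r_k$. Now, by the definition of $G$-rank, $T$ lies in \emph{both} $\tns(G;r_1,\dots,r_c;\mathbb{V}_1,\dots,\mathbb{V}_d)$ and $\tns(G;s_1,\dots,s_c;\mathbb{V}_1,\dots,\mathbb{V}_d)$, so $T$ lies in their intersection.

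Because $G$ is a $d$-vertex tree, Theorem~\ref{thm:intersection of TNS} applies and identifies that intersection with $\tns(G;t_1,\dots,t_c;\mathbb{V}_1,\dots,\mathbb{V}_d)$, where $t_\ell=\min\{r_\ell,s_\ell\}$ for $\ell=1,\dots,c$. Thus $T\in\tns(G;t_1,\dots,t_c;\mathbb{V}_1,\dots,\mathbb{V}_d)$ with $(t_1,\dots,t_c)\le(r_1,\dots,r_c)$ coordinatewise, and moreover $t_k=\min\{r_k,s_k\}=s_k<r_k$, so the inequality is strict in coordinate $k$ and $(t_1,\dots,t_c)\ne(r_1,\dots,r_c)$. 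This contradicts the minimality of $(r_1,\dots,r_c)$ as a $G$-rank of $T$, completing the argument.

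I do not expect a genuine obstacle here, since all the real work is absorbed into Theorem~\ref{thm:intersection of TNS}; the only point requiring care is the elementary lattice-theoretic fact that two incomparable minimal elements force their coordinatewise minimum to sit strictly below each of them, which is exactly the leverage the intersection property provides. One could also phrase the conclusion positively: the set of $G$-ranks is closed under coordinatewise minimum, so it has a unique minimal element, namely its infimum in the lattice $\mathbb{N}^c$.
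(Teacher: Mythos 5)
Your proof is correct, and the engine driving it is the same as in the paper: the intersection property for trees forces the set of admissible tuples to be closed under coordinatewise minimum, so it has a unique minimal element. The difference is in which version of that property you invoke. You apply Theorem~\ref{thm:intersection of TNS} (the general intersection theorem, with no subcriticality hypothesis) directly to the two competing ranks. The paper instead works with the weaker Lemma~\ref{lemma: intersection TNS}, which requires both tensor networks to be subcritical; to earn the right to use it, the paper first reduces to the case where $T$ is nondegenerate via the inheritance property (Theorem~\ref{thm:$G$-rank subspace}) and then invokes Proposition~\ref{prop:upper bound G-ranks} to conclude that both $\tns(G;r_1,\dots,r_c;\mathbb{V}_1,\dots,\mathbb{V}_d)$ and $\tns(G;s_1,\dots,s_c;\mathbb{V}_1,\dots,\mathbb{V}_d)$ are subcritical. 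Your route is shorter and arguably cleaner, because the degenerate-case bookkeeping is already absorbed into the proof of Theorem~\ref{thm:intersection of TNS} (which itself restricts to subspaces and calls Lemma~\ref{lemma: intersection TNS}); there is no circularity, since that theorem's proof does not rely on uniqueness of $G$-rank. What the paper's phrasing buys is a self-contained illustration of how nondegeneracy and subcriticality interact --- a pattern reused elsewhere (e.g., in Theorem~\ref{thm:tns of trees are closed}) --- but as a matter of logic your argument is a valid and slightly more economical proof of the same statement. Your closing lattice-theoretic observation is also the right way to state the conclusion: the admissible set is a sub-lattice of $\mathbb{N}^c$ under coordinatewise minimum, hence its unique minimal element is its infimum.
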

\begin{proof}
We may assume that $T$ is nondegenerate; if not, we may replace $\mathbb{V}_1,\dots,\mathbb{V}_d$ by appropriate subspaces without affecting the $G$-rank of $T$, by Theorem~\ref{thm:$G$-rank subspace}. Let $(r_1,\dots,r_d)$ and $(s_1,\dots,s_d) \in \mathbb{N}^d$ be two $G$-ranks of $T$. By Proposition~\ref{prop:upper bound G-ranks}, $T$ lies in the intersection of $\tns(G; r_1,\dots,r_d; \mathbb{V}_1,\dots, \mathbb{V}_d)$ and $\tns(G; s_1,\dots,s_d; \mathbb{V}_1,\dots, \mathbb{V}_d)$, and both of them are subcritical since $T$ is nondegnerate. By Lemma~\ref{lemma: intersection TNS}, $T$ lies in $\tns(G; t_1,\dots,t_d; \mathbb{V}_1,\dots, \mathbb{V}_d)$ where $t_i \le r_i$ and $t_i \le s_i$, $i=1,\dots,d$. By the minimality in the definition of $G$-rank, we must have $r_i = s_i = t_i$, $i=1,\dots,d$, as required.
\end{proof}

\begin{corollary}[Uniqueness of generic \textsc{ttns}-rank]\label{cor:uniqueness generic G-rank}
Let $G$ be a tree with $d$ vertices. For any vector spaces $\mathbb{V}_1,\dots, \mathbb{V}_d$, there is a unique generic $G$-rank for $\mathbb{V}_1\otimes \dots \otimes \mathbb{V}_d$.
\end{corollary}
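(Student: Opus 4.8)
The plan is to reduce the uniqueness of the generic $G$-rank to the intersection property for trees (Theorem~\ref{thm:intersection of TNS}) together with the coincidence of $G$-rank and border $G$-rank for acyclic graphs (Corollary~\ref{cor:border G-rank}). First I would unwind the definition: by Definition~\ref{def:tnbgr} (and the observation following it), a generic $G$-rank is a minimal tuple $(g_1,\dots,g_c)\in\mathbb{N}^c$ for which $\overline{\tns}(G;g_1,\dots,g_c;\mathbb{V}_1,\dots,\mathbb{V}_d)=\mathbb{V}_1\otimes\dots\otimes\mathbb{V}_d$. Since $G$ is a tree, hence acyclic, Corollary~\ref{cor:border G-rank} gives $\overline{\tns}(G;\cdots)=\tns(G;\cdots)$, so the defining condition simplifies to $\tns(G;g_1,\dots,g_c;\mathbb{V}_1,\dots,\mathbb{V}_d)=\mathbb{V}_1\otimes\dots\otimes\mathbb{V}_d$; in other words, for a tree the generic $G$-rank coincides with the maximal $G$-rank of Definition~\ref{def:tnr}.

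Next I would take two generic $G$-ranks $(g_1,\dots,g_c)$ and $(g_1',\dots,g_c')$ and show they are equal. By the previous paragraph both satisfy $\tns(G;g_1,\dots,g_c;\mathbb{V}_1,\dots,\mathbb{V}_d)=\tns(G;g_1',\dots,g_c';\mathbb{V}_1,\dots,\mathbb{V}_d)=\mathbb{V}_1\otimes\dots\otimes\mathbb{V}_d$. Applying the intersection theorem for trees (Theorem~\ref{thm:intersection of TNS}) to these two tuples yields
\[
\tns(G;t_1,\dots,t_c;\mathbb{V}_1,\dots,\mathbb{V}_d)
= \tns(G;g_1,\dots,g_c;\mathbb{V}_1,\dots,\mathbb{V}_d)\cap\tns(G;g_1',\dots,g_c';\mathbb{V}_1,\dots,\mathbb{V}_d)
= \mathbb{V}_1\otimes\dots\otimes\mathbb{V}_d,
\]
where $t_j=\min\{g_j,g_j'\}$ for $j=1,\dots,c$. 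Thus the tuple $(t_1,\dots,t_c)$ also fills the whole ambient space and satisfies $t_j\le g_j$ and $t_j\le g_j'$ componentwise. The minimality built into the definition of generic $G$-rank then forces $(t_1,\dots,t_c)=(g_1,\dots,g_c)$ and $(t_1,\dots,t_c)=(g_1',\dots,g_c')$, whence $(g_1,\dots,g_c)=(g_1',\dots,g_c')$, as required.

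I expect the only subtlety to be careful bookkeeping of which notion each hypothesis refers to: the generic $G$-rank is defined through the closure $\overline{\tns}$ and border $G$-rank, while the intersection theorem and the minimality condition are naturally stated for $\tns$ itself. The single genuinely substantive move is the invocation of Corollary~\ref{cor:border G-rank} to pass from $\overline{\tns}$ to $\tns$ for the acyclic graph $G$; after that, Theorem~\ref{thm:intersection of TNS} does all the heavy lifting, and no hard step remains. The underlying principle is simply that the componentwise minimum of two minimal ``space-filling'' tuples is again space-filling, so two such minimal tuples must coincide—exactly the mechanism already used to prove uniqueness of $G$-rank in Theorem~\ref{thm:uniqueness G-rank}.
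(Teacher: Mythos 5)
Your proof is correct, and since the paper states this corollary with no explicit proof (it is placed immediately after Theorem~\ref{thm:uniqueness G-rank} as an evident consequence), your argument is a legitimate filling-in of the gap; the engine you use --- Theorem~\ref{thm:intersection of TNS} plus minimality --- is exactly the mechanism the paper uses for Theorem~\ref{thm:uniqueness G-rank} itself. One point deserves attention, though: you invoke Corollary~\ref{cor:border G-rank} (equivalently Theorem~\ref{thm:tns of trees are closed}) to identify $\overline{\tns}$ with $\tns$, but in the paper both of these results appear \emph{after} the corollary you are proving. This is not circular --- Theorem~\ref{thm:tns of trees are closed} is proved by induction on $d$ using Propositions~\ref{prop:reduction valence one} and \ref{prop:reduction supercritical} and never touches generic ranks --- so your proof is logically sound, but it reverses the paper's ordering. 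If you wanted an argument using only material preceding the corollary, you could replace the closedness step by constructibility: $\tns(G;g_1,\dots,g_c;\mathbb{V}_1,\dots,\mathbb{V}_d)$ is a constructible set (by the proposition at the end of Section~\ref{sec:tns}), so if its closure is all of $\mathbb{V}_1\otimes\dots\otimes\mathbb{V}_d$ it contains a Zariski open dense subset; the intersection of the two open dense subsets coming from two generic $G$-ranks is again dense and lies in $\tns(G;t_1,\dots,t_c;\mathbb{V}_1,\dots,\mathbb{V}_d)$ by Theorem~\ref{thm:intersection of TNS}, whence $\overline{\tns}(G;t_1,\dots,t_c;\mathbb{V}_1,\dots,\mathbb{V}_d)$ is the whole space and minimality forces $(g_1,\dots,g_c)=(t_1,\dots,t_c)=(g_1',\dots,g_c')$ exactly as in your last step.
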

As we saw in Examples~\ref{example:P_3 generic rank} and \ref{example:P_4 generic rank}, the unique generic $P_3$-rank of $\mathbb{C}^{m \times mn \times n}$ is $(m,n)$ while the unique generic $P_4$-rank of $\mathbb{C}^{2\times 2 \times 2 \times 2}$ is $(2,4,2)$. There will be many examples in Sections~\ref{sec:mps} and \ref{sec:eg} showing that neither Theorem~\ref{thm:uniqueness G-rank} nor Corollary~\ref{cor:uniqueness generic G-rank} holds for graphs containing cycles.

The next result is an important one. It guarantees that whenever $G$ is acyclic, $G$-rank is  upper semicontinuous and thus the kind of illposedness issues in \cite{DSL} where a tensor may lack a best low-rank approximation do not happen.
\begin{theorem}[\textsc{ttns} are closed]\label{thm:tns of trees are closed}
Let $G$ be a tree with $d$ vertices. For any vector spaces $\mathbb{V}_1,\dots, \mathbb{V}_d$ and any $(r_1,\dots,r_c) \in \mathbb{N}^c$, the set  $\tns(G; r_1,\dots,r_d; \mathbb{V}_1,\dots, \mathbb{V}_d)$ is Zariski closed in $\mathbb{V}_1\otimes \dots \otimes \mathbb{V}_d$. 
\end{theorem}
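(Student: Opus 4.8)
The plan is to exhibit $\tns(G; r_1,\dots,r_c; \mathbb{V}_1,\dots,\mathbb{V}_d)$ as a finite intersection of determinantal varieties. Concretely, I would characterize membership in the tree tensor network purely in terms of the matrix ranks of certain flattenings of $T$, one for each edge of $G$; since each such rank condition cuts out a Zariski closed set and a finite intersection of closed sets is closed, the theorem follows at once.

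Here is the characterization I would establish. Because $G$ is a tree, deleting an edge $e_j$ disconnects $G$ into exactly two connected components, inducing a bipartition $V = A_j \sqcup B_j$ of the vertex set; moreover $e_j$ is the \emph{unique} edge of $G$ crossing this cut (this is precisely where acyclicity is used). Writing $\flat_{A_j}$ for the flattening
\[
\flat_{A_j}\colon \mathbb{V}_1\otimes\dots\otimes\mathbb{V}_d \xrightarrow{\ \sim\ } \Bigl(\bigotimes_{i\in A_j}\mathbb{V}_i\Bigr)\otimes\Bigl(\bigotimes_{i\in B_j}\mathbb{V}_i\Bigr)
\]
that regroups the factors along the cut, I claim
\[
\tns(G; r_1,\dots,r_c; \mathbb{V}_1,\dots,\mathbb{V}_d) = \bigl\{\, T : \rank\bigl(\flat_{A_j}(T)\bigr)\le r_j,\ j=1,\dots,c \,\bigr\}.
\]

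For the inclusion ``$\subseteq$'', suppose $T=\kappa_G(T_1\otimes\dots\otimes T_d)$ and fix an edge $e_j$. I would regroup the contraction $\kappa_G$ by first contracting all edges lying inside $A_j$ and all edges lying inside $B_j$; since $e_j$ is the only edge across the cut, this produces a tensor in $\bigl(\bigotimes_{i\in A_j}\mathbb{V}_i\bigr)\otimes\mathbb{E}_j$ and a tensor in $\mathbb{E}_j^*\otimes\bigl(\bigotimes_{i\in B_j}\mathbb{V}_i\bigr)$, whose contraction over $\mathbb{E}_j$ is exactly $\flat_{A_j}(T)$. This displays $\flat_{A_j}(T)$ as a product factoring through the $r_j$-dimensional space $\mathbb{E}_j$, so $\rank\flat_{A_j}(T)\le r_j$. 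For ``$\supseteq$'', I would argue by induction on $d$, with the base cases $d\le 3$ supplied by \eqref{eq:ttmat}, \eqref{eq:tt3ten}, and the subspace-variety descriptions in Corollary~\ref{cor:dimension of TT}: choosing a leaf vertex $1$ with incident edge $e_1$, the bound $\rank\flat_{\{1\}}(T)\le r_1$ lets me write $T=\kappa(T_1\otimes T')$ with $T_1\in\mathbb{V}_1\otimes\mathbb{E}_1^*$, $\dim\mathbb{E}_1=r_1$, and $T'\in\mathbb{E}_1\otimes\bigotimes_{i\ge 2}\mathbb{V}_i$; one then checks, via the nestedness of flattening ranks under this reduction, that $T'$ satisfies the edge-flattening bounds for the smaller tree obtained by deleting vertex $1$, so Proposition~\ref{prop:reduction valence one} and the induction hypothesis finish the step.

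With the characterization in hand the conclusion is immediate: each flattening $\flat_{A_j}$ is a linear isomorphism, and the set of matrices of rank at most $r_j$ is the determinantal variety cut out by the vanishing of all $(r_j+1)\times(r_j+1)$ minors, hence Zariski closed. Thus $\{T:\rank\flat_{A_j}(T)\le r_j\}$ is Zariski closed for each $j$, and $\tns(G; r_1,\dots,r_c; \mathbb{V}_1,\dots,\mathbb{V}_d)$, being their intersection over $j=1,\dots,c$, is Zariski closed. The main obstacle is the ``$\supseteq$'' direction of the characterization: it is the coordinate-free analogue of the hierarchical singular value decomposition, and the delicate point is verifying that the reduced tensor $T'$ inherits the correct flattening-rank bounds so that the induction on the tree structure goes through. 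The use of acyclicity --- that each edge cut is crossed by a single edge --- is exactly what keeps each flattening rank bounded by the single weight $r_j$ rather than by a product of weights, and is why the analogous statement fails once $G$ contains a cycle.
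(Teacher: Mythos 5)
Your proposal is correct, but it is a genuinely different argument from the paper's. What you propose to prove first --- the identity
\[
\tns(G;r_1,\dots,r_c;\mathbb{V}_1,\dots,\mathbb{V}_d)=\bigl\{\,T:\rank\bigl(\flat_{i_pj_p}(T)\bigr)\le r_p,\ p=1,\dots,c\,\bigr\}
\]
--- is exactly the content of the paper's later Theorem~\ref{thm:G-rank of ttns} and Corollary~\ref{cor:ideal of ttns}, whose proofs in the paper are independent of Theorem~\ref{thm:tns of trees are closed}, so your reordering of the logic introduces no circularity. The paper instead proves closedness by induction on $d$ using its calculus of reductions: Proposition~\ref{prop:reduction supercritical} exhibits the tensor network as the image, under the projection $\operatorname{pr}_2$, of a fiber bundle over $\Gr(k_1,n_1)\times\dots\times\Gr(k_d,n_d)$; this projection is a closed map because Grassmannians are projective, so everything reduces to closedness of the (critical) fiber, which Proposition~\ref{prop:reduction valence one} identifies with a tensor network on a tree with $d-1$ vertices. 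Comparing the two: your determinantal route buys strictly more --- set-theoretic defining equations (the $(r_p+1)\times(r_p+1)$ minors of the edge flattenings), and, as byproducts, uniqueness of \textsc{ttns}-rank and its polynomial-time computability --- using only linear algebra; the paper's route is shorter given the machinery of Section~\ref{sec:calculus} and isolates the single geometric mechanism (properness of the projective base) whose failure is irrelevant here but whose conclusion genuinely fails once $G$ contains a cycle, as Theorem~\ref{thm:brner} shows.

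One step of your sketch must be made precise, and it is exactly the point you flagged: the flattening bounds are \emph{not} inherited by an arbitrary factorization $T=\kappa(T_1\otimes T')$ across the leaf edge. For instance, if $T=v\otimes S$, you may write $T=v\otimes(S-S')+v\otimes S'$ with $S'$ arbitrary, and the resulting $T'=e_1\otimes(S-S')+e_2\otimes S'$ can have edge-flattening ranks far exceeding those of $T$. You must take a minimal (rank-revealing) factorization across the cut $\{1\}\mid\{2,\dots,d\}$, so that $T'=\sum_k e_k\otimes S_k$ with each $S_k=(\varphi_k\otimes\mathrm{id})(T)$ for functionals $\varphi_k\in\mathbb{V}_1^*$; then, writing $T=\sum_{l=1}^{r_q}X_l\otimes Y_l$ along any other edge cut, each $S_k$ inherits this length-$r_q$ expression, so every edge flattening of $T'$ has rank at most that of the corresponding flattening of $T$, and your induction goes through. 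Note also that the final reassembly does not literally follow from Proposition~\ref{prop:reduction valence one} (whose hypotheses concern collapsing $\mathbb{V}_1\otimes\mathbb{V}_2$, not $\mathbb{E}_1\otimes\mathbb{V}_2$): instead, a $G'$-decomposition $T'=\kappa_{G'}(T_2'\otimes\dots\otimes T_d')$ with $T_2'\in\mathbb{I}_2'\otimes(\mathbb{E}_1\otimes\mathbb{V}_2)\otimes\mathbb{O}_2'$ can be reinterpreted, after regrouping $\mathbb{E}_1$ into the inspace of vertex $2$, as the data of a $G$-decomposition $T=\kappa_G(T_1\otimes T_2'\otimes T_3'\otimes\dots\otimes T_d')$, which is immediate and keeps the weights $(r_1,\dots,r_c)$. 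With these two points spelled out, your argument is complete.
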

\begin{proof}
We proceed by induction on $d$. The statement holds trivially when $d = 1$ by \eqref{eq:isolate}. Suppose it holds for all trees with at most $d-1$ vertices. Let $G$ be a $d$-vertex tree. Applying Proposition~\ref{prop:reduction supercritical} to $\tns(G; r_1,\dots,r_c; \mathbb{V}_1,\dots, \mathbb{V}_d)$, there is a subbundle $\mathcal{E}$ of the bundle $\mathcal{S}_1\times \dots \times \mathcal{S}_d$ on $\Gr(k_1,n_1)\times \dots \times \Gr(k_d,n_d)$ whose fiber over a point $([\mathbb{W}_1], \dots, [\mathbb{W}_d])$ is 
\[
F: = \tns(G; r_1,\dots,r_c; \mathbb{W}_1,\dots, \mathbb{W}_d),
\]
with the surjective birational map $\pi: \mathcal{E}\to  \tns(G; r; \mathbb{V}_1,\dots, \mathbb{V}_d)$ induced by the projection map 
\[
\operatorname{pr}_2: \bigl[\Gr(k_1,n_1)\times \dots \times \Gr(k_d,n_d)\bigr] \times \bigl[\mathbb{V}_1\times \dots \times \mathbb{V}_d\bigr] \to \mathbb{V}_1\times \dots \times \mathbb{V}_d.
\]
$\operatorname{pr}_2$ is a closed map since Grassmannian varieties are projective. Thus $\pi$ is also a closed map. To show that $\tns(G; r_1,\dots,r_c; \mathbb{V}_1,\dots, \mathbb{V}_d)$ is Zariski closed, it suffices to show that $\mathcal{E}$ is Zariski closed in $\bigl[\Gr(k_1,n_1)\times \dots \times \Gr(k_d,n_d)\bigr] \times \bigl[\mathbb{V}_1\times \dots \times \mathbb{V}_d\bigr]$. As $\mathcal{E}$ is a fiber bundle on $\Gr(k_1,n_1)\times \dots \times \Gr(k_d,n_d)$ with fiber $F$, it in turn suffices to show that $F$ is Zariski closed in $\mathbb{V}_1\otimes \dots \otimes \mathbb{V}_d$. Since $F = \tns(G; r_1,\dots,r_c; \mathbb{W}_1,\dots, \mathbb{W}_d)$
is critical, we may apply Proposition~\ref{prop:reduction valence one} to $F$ and regard $F$ as the tensor network of a tree with $d-1$ vertices. Therefore $F$ is Zariski closed by the induction hypothesis.
\end{proof}

Theorems~\ref{thm:uniqueness G-rank} and \ref{thm:tns of trees are closed} together yield the following corollary, which is in general false when $G$ is not acyclic (see
 Theorem~\ref{thm:brner}).
\begin{corollary}[Border \textsc{ttns}-rank]\label{cor:border G-rank}
For any tree $G$, border $G$-rank equals $G$-rank and is unique.
\end{corollary}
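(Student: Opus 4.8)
The plan is to derive this corollary directly from the two results it cites, observing that it is essentially a formal consequence of them. The crux is that for a tree $G$, the tensor network state set is \emph{already} Zariski closed, so passing to its Zariski closure changes nothing; consequently the defining condition for border $G$-rank collapses to the defining condition for $G$-rank. First I would recall that, by Definition~\ref{def:tnbgr}, $\overline{\tns}(G; r_1,\dots,r_c; \mathbb{V}_1,\dots,\mathbb{V}_d)$ is by definition the Zariski closure of $\tns(G; r_1,\dots,r_c; \mathbb{V}_1,\dots,\mathbb{V}_d)$. Theorem~\ref{thm:tns of trees are closed} tells us that when $G$ is a tree this latter set is Zariski closed, so it equals its own closure:
\[
\overline{\tns}(G; r_1,\dots,r_c; \mathbb{V}_1,\dots,\mathbb{V}_d) = \tns(G; r_1,\dots,r_c; \mathbb{V}_1,\dots,\mathbb{V}_d)
\]
for every tuple $(r_1,\dots,r_c) \in \mathbb{N}^c$.

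Next I would compare the two rank definitions. By Definition~\ref{def:tnr}, a $G$-rank of $T$ is a minimal element of the set $\{(r_1,\dots,r_c)\in\mathbb{N}^c : T \in \tns(G; r_1,\dots,r_c; \mathbb{V}_1,\dots,\mathbb{V}_d)\}$, while by Definition~\ref{def:tnbgr} a border $G$-rank of $T$ is a minimal element of $\{(r_1,\dots,r_c)\in\mathbb{N}^c : T \in \overline{\tns}(G; r_1,\dots,r_c; \mathbb{V}_1,\dots,\mathbb{V}_d)\}$. The key step is simply to note that, by the displayed equality above applied to each tuple, these two subsets of $\mathbb{N}^c$ are literally the same subset. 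Two identical subsets of the poset $\mathbb{N}^c$ have identical sets of minimal elements, so $(r_1,\dots,r_c)$ is a border $G$-rank of $T$ if and only if it is a $G$-rank of $T$; that is, $\overline{\rank}_G(T) = \rank_G(T)$.

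Finally, uniqueness of border $G$-rank is then immediate: having identified border $G$-rank with $G$-rank, I would invoke Theorem~\ref{thm:uniqueness G-rank}, which asserts that the $G$-rank of a tensor is unique whenever $G$ is a tree. I do not anticipate any real obstacle here, since the argument is purely formal once the two cited theorems are in hand; the only point requiring a modicum of care is to confirm that the closedness of Theorem~\ref{thm:tns of trees are closed} holds for \emph{every} value of $(r_1,\dots,r_c)$, so that the two index sets coincide tuple-by-tuple rather than merely for the minimal ones, which is exactly how that theorem is stated.
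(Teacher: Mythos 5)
Your proposal is correct and is exactly the argument the paper intends: the corollary is stated as a direct consequence of Theorem~\ref{thm:uniqueness G-rank} and Theorem~\ref{thm:tns of trees are closed}, and your derivation (closedness makes $\overline{\tns} = \tns$ for every tuple, hence the two index sets and their minimal elements coincide, and uniqueness transfers from $G$-rank) is precisely how those two theorems combine. No gaps; the care you take to note that closedness holds for \emph{every} $(r_1,\dots,r_c)$ is the right point to check.
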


It is well-known that tensor rank is NP-hard but multilinear rank is polynomial-time computable. We will next see that like multilinear rank, $G$-rank is polynomial-time computable whenever $G$ is acyclic. We begin with some additional notations.  Let $G = (V,E)$ be a connected tree with $d$ vertices and $c$ edges.  Since $G$ is a connected tree,  removing any edge $\{i,j\} \in E$ results in a disconnected graph with two components.\footnote{\label{fn4}These components are also connected trees and so the subsequent argument may be repeated on each of them.} Let $V(i)$ denote the set of vertices in the component connected to the vertex $i$. Then we have a disjoint union $V = V(i) \sqcup V(j)$. Now for any vector spaces $\mathbb{V}_1,\dots,\mathbb{V}_d$, we may define a \emph{flattening map} associated with each edge $\{i,j\} \in E$,
\begin{equation}\label{eq:flat}
\flat_{ij} : \mathbb{V}_1\otimes \cdots \otimes \mathbb{V}_d \to \Bigl(\bigotimes_{h\in V(i)} \mathbb{V}_h\Bigr) \otimes \Bigl(\bigotimes_{h \in V(j)} \mathbb{V}_h \Bigr).
\end{equation}
Note that $\rank\bigl(\flat_{ij} (T)\bigr)$ is polynomial-time computable as matrix rank for any  $T\in \mathbb{V}_{1}\otimes \cdots \otimes \mathbb{V}_d$.

\begin{lemma}\label{lemma:G-rank of ttns}
Let $\mathbb{V}_1,\mathbb{V}_2,\mathbb{E}$ be vector spaces and let $\kappa: (\mathbb{V}_1 \otimes \mathbb{E} ) \times  (\mathbb{E}^\ast \otimes \mathbb{V}_2) \to \mathbb{V}_1\otimes \mathbb{V}_2$ defined by contracting factors in $\mathbb{E}$ with factors in $\mathbb{E}^\ast$. For any $T_1\in \mathbb{V}_1\otimes \mathbb{E}$ and $T_2\in \mathbb{E}^\ast \otimes \mathbb{V}_2$, the rank of $\kappa( (T_1,T_2))$ is at most $\dim \mathbb{E}$.
\end{lemma}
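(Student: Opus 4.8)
The plan is to choose coordinates on $\mathbb{E}$ so that the contraction $\kappa$ collapses into a Kronecker delta, exhibiting $\kappa(T_1,T_2)$ explicitly as a sum of $\dim\mathbb{E}$ decomposable tensors; the rank bound is then immediate from the definition of tensor rank.

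First I would set $r = \dim\mathbb{E}$ and fix a basis $e_1,\dots,e_r$ of $\mathbb{E}$ with dual basis $e_1^\ast,\dots,e_r^\ast$ of $\mathbb{E}^\ast$, so that $e_j^\ast(e_i) = \delta_{ij}$. Since $T_1 \in \mathbb{V}_1\otimes \mathbb{E}$, collecting the $\mathbb{E}$-factor lets me write $T_1 = \sum_{i=1}^r u_i \otimes e_i$ for uniquely determined vectors $u_1,\dots,u_r \in \mathbb{V}_1$ --- this is exactly the manipulation carried out for the individual factors in Example~\ref{eg:mps}. Likewise $T_2 = \sum_{j=1}^r e_j^\ast \otimes v_j$ for some $v_1,\dots,v_r \in \mathbb{V}_2$.

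Next I would apply the contraction. By definition $\kappa$ pairs each factor in $\mathbb{E}$ with the corresponding factor in $\mathbb{E}^\ast$, so
\[
\kappa(T_1,T_2) = \sum_{i,j=1}^r e_j^\ast(e_i)\, u_i \otimes v_j = \sum_{i,j=1}^r \delta_{ij}\, u_i \otimes v_j = \sum_{i=1}^r u_i \otimes v_i.
\]
This displays $\kappa(T_1,T_2)$ as a sum of $r$ decomposable tensors in $\mathbb{V}_1\otimes \mathbb{V}_2$, whence $\rank\bigl(\kappa(T_1,T_2)\bigr) \le r = \dim\mathbb{E}$.

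There is no substantial obstacle here; the lemma is essentially a bookkeeping fact. The only point requiring a little care is arranging for $T_1$ and $T_2$ to be expanded over the \emph{same} index range $1,\dots,r$ with matching basis/dual-basis labels, since it is precisely this matching that forces the contraction to reduce to the diagonal sum $\sum_{i=1}^r u_i\otimes v_i$ rather than a genuine double sum. Once the two coordinate expressions are in place, the bound follows directly from the definition of tensor rank.
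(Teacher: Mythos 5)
Your proposal is correct and is essentially the same argument as the paper's proof: both expand $T_1$ and $T_2$ over a basis of $\mathbb{E}$ and its dual basis, contract to obtain $\sum_{i=1}^r u_i \otimes v_i$, and read off the rank bound. The only cosmetic difference is that you write out the intermediate double sum with the Kronecker delta, which the paper omits.
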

\begin{proof}
We denote by $r$ the dimension of $\mathbb{E}$ and we take a basis $e_1,\dots,e_r$ of $\mathbb{E}$ with dual basis $e_1^\ast,\dots, e_r^\ast$. By definition, we may write 
\[
T_1 = \sum_{i=1}^r e_i \otimes x_i,\quad T_2 = \sum_{i=1}^r e_i^\ast \otimes y_i, 
\]
for some $x_i\in \mathbb{V}_1,y_i\in \mathbb{V}_2,i=1,\dots,r$. Hence we have 
\[
\kappa((T_1,T_2)) = \sum_{i=1}^r x_i \otimes y_i,
\]
and this shows that the rank of $\kappa((T_1,T_2))$ is at most $r$.
\end{proof}

\begin{theorem}[\textsc{ttns}-rank is polynomial-time computable]\label{thm:G-rank of ttns}
Let $G$ be a tree with $d$ vertices and $c$ edges labeled as in \eqref{eq:graphlabels}, i.e., $V=\{1,\dots,d\}$ and  $E =\bigl\{\{i_1,j_1\},\dots, \{i_c,j_c\} \bigr\}$. Then for any  $T\in \mathbb{V}_1\otimes \cdots \otimes \mathbb{V}_d$, the $G$-rank of $T$ is given by
\begin{equation}\label{eq:ttnsrank}
\rank_G(T)  = (r_1,\dots,r_c), \qquad r_p = \rank\bigl(\flat_{i_p j_p} (T)\bigr), \qquad p=1,\dots,c.
\end{equation}
\end{theorem}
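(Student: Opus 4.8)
The plan is to combine a lower bound valid for \emph{every} tensor network decomposition with an achievability argument, and then invoke uniqueness (Theorem~\ref{thm:uniqueness G-rank}). Since $G$-rank is unique, it suffices to prove that the tuple $(r_1,\dots,r_c)$ with $r_p = \rank(\flat_{i_p j_p}(T))$ is both a componentwise lower bound for every $(s_1,\dots,s_c)$ with $T\in\tns(G;s_1,\dots,s_c;\mathbb{V}_1,\dots,\mathbb{V}_d)$ and itself attained. By the inheritance property (Theorem~\ref{thm:$G$-rank subspace}) I may replace each $\mathbb{V}_i$ by the minimal subspace supporting $T$ and so assume $T$ nondegenerate; this changes neither $\rank_G(T)$ nor any $r_p$, since each $\flat_{i_p j_p}(T)$ is the same linear map merely restricted to subspaces containing its support.

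For the \emph{lower bound}, fix a decomposition $T=\kappa_G(T_1\otimes\cdots\otimes T_d)$ with $\dim\mathbb{E}_p=s_p$ and fix an edge $p=\{i_p,j_p\}$. Deleting this edge disconnects the tree into the two subtrees on $V(i_p)$ and $V(j_p)$, whose internal edges together with $p$ partition $E$. Because $\kappa_G$ contracts each matched pair $\mathbb{E}_j/\mathbb{E}_j^*$ independently, I may perform the contractions along the edges internal to $V(i_p)$ first to obtain $S_i\in\bigl(\bigotimes_{h\in V(i_p)}\mathbb{V}_h\bigr)\otimes\mathbb{E}_p^*$, and likewise $S_j\in\mathbb{E}_p\otimes\bigl(\bigotimes_{h\in V(j_p)}\mathbb{V}_h\bigr)$, so that $\flat_{i_p j_p}(T)$ is exactly the contraction of $S_i$ and $S_j$ along $\mathbb{E}_p$. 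Lemma~\ref{lemma:G-rank of ttns} then gives $r_p=\rank(\flat_{i_p j_p}(T))\le\dim\mathbb{E}_p=s_p$. As $p$ was arbitrary, $(r_1,\dots,r_c)\le(s_1,\dots,s_c)$.

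For \emph{achievability} I induct on $d$, the base $d=2$ being Example~\ref{example:TT2}, where $\flat_{12}(T)=T$ and $\rank_{P_2}(T)=\rank(T)$. For the step, choose a leaf vertex, say vertex $1$, adjacent to vertex $2$ along the edge $e_1$. Since vertex $1$ is a leaf, deleting $e_1$ isolates it, so $\flat_{12}$ is the ordinary mode-$1$ flattening; hence $r_1=\rank(\flat_{12}(T))$ equals the first multilinear rank of $T$, which is $\dim\mathbb{V}_1$ by nondegeneracy. Thus vertex $1$ is critical for the weight $r_1$ and Proposition~\ref{prop:reduction valence one} yields $\tns(G;r_1,\dots,r_c;\mathbb{V}_1,\dots,\mathbb{V}_d)=\tns(G';r_2,\dots,r_c;\mathbb{V}_1\otimes\mathbb{V}_2,\mathbb{V}_3,\dots,\mathbb{V}_d)$, where $G'$ merges $1$ into $2$. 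Regard $T$ as $T'\in(\mathbb{V}_1\otimes\mathbb{V}_2)\otimes\mathbb{V}_3\otimes\cdots\otimes\mathbb{V}_d$. For every remaining edge $p\ge 2$, deleting $p$ from $G$ leaves vertex $1$ on the same side as vertex $2$, so the flattening of $T'$ across $p$ in $G'$ groups the modes exactly as $\flat_{i_p j_p}(T)$ does; the two matrices coincide and have rank $r_p$. The induction hypothesis applied to $T'$ on the $(d-1)$-vertex tree $G'$ gives $\rank_{G'}(T')=(r_2,\dots,r_c)$, so $T'\in\tns(G';r_2,\dots,r_c;\mathbb{V}_1\otimes\mathbb{V}_2,\dots,\mathbb{V}_d)$ and, by the displayed equality, $T\in\tns(G;r_1,\dots,r_c;\mathbb{V}_1,\dots,\mathbb{V}_d)$.

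Combining the two bounds, $(r_1,\dots,r_c)$ is the minimal weight tuple for which $T$ is a tensor network state, hence is the $G$-rank by Theorem~\ref{thm:uniqueness G-rank}; since each $r_p$ is a matrix rank, it is computable in polynomial time, which gives the complexity claim. The step I expect to require the most care is the lower bound: one must argue cleanly that $\kappa_G$ factors as ``contract each subtree, then contract the single bond $\mathbb{E}_p$,'' keeping track of which side carries $\mathbb{E}_p$ versus $\mathbb{E}_p^*$ under the orientation of \eqref{eq:con}, so that Lemma~\ref{lemma:G-rank of ttns} applies verbatim. Everything else---criticality of the leaf, the matching of flattenings after merging, and the inductive bookkeeping---is routine once these orientation conventions are spelled out.
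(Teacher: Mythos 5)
Your proof is correct, and its minimality half is essentially the paper's: the paper likewise groups the factors of an arbitrary decomposition into the two components $V(i_p)$ and $V(j_p)$ created by deleting the edge $p$ (using that in a tree $p$ is the \emph{only} edge joining them, so exactly one pair $\mathbb{E}_p$, $\mathbb{E}_p^*$ survives between the two sides) and applies Lemma~\ref{lemma:G-rank of ttns} to get $r_p \le s_p$. Where you genuinely diverge is achievability. The paper proves $T\in\tns(G;r_1,\dots,r_c;\mathbb{V}_1,\dots,\mathbb{V}_d)$ by an explicit construction: it starts from a rank-$r_p$ splitting of $T$ across one edge, then refines the resulting factors across the remaining edges one at a time, using at each stage that the pieces appearing in the finer splitting span a space of dimension at most $r_q=\rank\bigl(\flat_{i_q j_q}(T)\bigr)$, so only $r_q$ of them are needed; iterating over all edges produces the tensors $T_1,\dots,T_d$ explicitly, with no induction on $d$ and no appeal to nondegeneracy or to Theorem~\ref{thm:$G$-rank subspace}. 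You instead induct on the number of vertices: reduce to nondegenerate $T$, observe that nondegeneracy forces the leaf weight $r_1=\rank\bigl(\flat_{12}(T)\bigr)=\dim\mathbb{V}_1$ so the leaf is critical, merge it into its neighbor by Proposition~\ref{prop:reduction valence one}, match the flattenings of $T'$ on $G'$ with those of $T$ on $G$, and invoke the induction hypothesis. Both routes are valid; yours is shorter and reuses the reduction machinery (it mirrors the induction pattern of Lemma~\ref{lemma: intersection TNS} and Theorem~\ref{thm:tns of trees are closed}), while the paper's is self-contained and actually exhibits a $G$-rank decomposition, which is closer to the algorithmic content of the theorem. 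One organizational caveat: your inductive statement must be the \emph{full} theorem, valid for arbitrary (possibly degenerate) tensors, because the merged tensor $T'\in(\mathbb{V}_1\otimes\mathbb{V}_2)\otimes\mathbb{V}_3\otimes\cdots\otimes\mathbb{V}_d$ need not be nondegenerate even when $T$ is; hence the nondegeneracy reduction has to be performed inside each inductive step rather than once at the outset, as you currently phrase it. Since the flattening ranks and $\rank_G$ are both unchanged under passing to supporting subspaces, this is a matter of arrangement, not a gap. Finally, your appeal to Theorem~\ref{thm:uniqueness G-rank} is unnecessary: a tuple that is a componentwise lower bound over all achievable tuples and is itself achievable is automatically the unique minimal element.
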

\begin{proof}
We will show that for $(r_1,\dots,r_c)$ as defined in \eqref{eq:ttnsrank}, (i)  $T\in \tns(G;r_1,\dots,r_c;\mathbb{V}_1,\dots, \mathbb{V}_d)$, i.e., $\rank_G(T) \le (r_1,\dots,r_c)$; and (ii) it is minimal in $\mathbb{N}_0^c$ such that (i) holds. Together, (i) and (ii) imply that $\rank_G(T) = (r_1,\dots,r_c)$.

Let $p$ be an integer such that $1\le p \le c$. Since $r_p = \rank \bigl(\flat_{i_p j_p}(T)\bigr)$, we may write 
\[
T = \sum_{i=1}^{r_p} R_i \otimes S_i, \quad R_i \in \bigotimes_{h \in V(i_p)} \mathbb{V}_h, \quad S_i\in \bigotimes_{h \in V(j_p)} \mathbb{V}_h, \quad i=1,\dots,r_p.
\]
Let $\mathbb{E}_p$ be a vector space of dimension $r_p$ attached to the edge $\{i_p, j_p\}$.  Let  $e_1,\dots, e_{r_p}$ be a basis of  $\mathbb{E}_p$ and $e_1^\ast,\dots, e_{r_p}^\ast$ be the corresponding dual basis of $\mathbb{E}_p^\ast$. Then
\[
T = \kappa_G \Bigl( \Bigl[\sum_{i=1}^{r_p}R_i\otimes e_i\Bigr] \otimes \Bigl[\sum_{j=1}^{r_p} e_j^\ast \otimes S_j \Bigr] \Bigr).
\]
We let $R_i$ (resp.\ $S_j$) take the role of $T$ and repeat the argument. Let $\{i_q,j_q\} \in E$ be such that $i_q, j_q$ are both in $V(i_p)$. Then $V(i_p)$ is the disjoint union  $V(i_p,i_q) \sqcup V(i_p,j_q)$ where $V(i_p, \ast)$ denotes the subset of $V(i_p)$ comprising all vertices in the component of vertex $\ast$ upon removal of $\{i_q, j_q\}$ (see Footnote~\ref{fn4}). Since $R_i \in \bigotimes_{h \in V(i_p)} \mathbb{V}_h$, we may write
\begin{equation}\label{eq:Qki}
R_i =\sum_{k=1}^{r} P_{ik} \otimes Q_{ki}, \quad P_{ik}\in \bigotimes_{h \in V(i_p,i_q)} \mathbb{V}_h, \quad Q_{kj} \in \bigotimes_{h\in V(i_p,j_q)} \mathbb{V}_h, \quad k =1,\dots,r,
\end{equation}
for some $r \in \mathbb{N}$. We claim that we may choose $r \le r_q$. Since $ r_q =\rank \bigl(\flat_{i_qj_q} (T)\bigr)$, 
\[
\dim \operatorname{span}\{P_{ik} : i=1,\dots, r_p,\; k=1,\dots, r\} = r_q,
\]
and so we may find $P_1,\dots, P_{r_q}$ such that each $P_{ik}$ is a linear combination of $P_1,\dots, P_{r_q}$. Thus for each $i= 1,\dots, r_p$, $R_i$ can be written as
\[
R_i =\sum_{k=1}^{r_q} P_k \otimes Q'_{ki}, \quad P_k \in \bigotimes_{h \in V(i_p,i_q)} \mathbb{V}_h, \quad Q'_{ki} \in \bigotimes_{h\in V(i_p,j_q)} \mathbb{V}_h, \quad  k=1,\dots, r_q,
\]
where each $Q'_{ki}$ is a linear combination of the $Q_{ki}$'s in \eqref{eq:Qki}.
Then we may write $T$ as 
\begin{align*}
T &= \kappa_G \Bigl( \Bigl[\sum_{i=1}^{r_p} \Bigl(\sum_{k=1}^{r_q}  P_k \otimes f_k\Bigr)\otimes \Bigl( \sum_{l=1}^{r_q} f_l^\ast \otimes Q'_{li} \Bigr) \otimes e_i \Bigr] \otimes \Bigl[\sum_{j=1}^{r_p} e_j^\ast \otimes S_j \Bigr] \Bigr) \\
& = \kappa_G \Bigl(\Bigl[ \sum_{k=1}^{r_q} P_k\otimes f_k \Bigr] \otimes \Bigl[\sum_{i,l=1}^{r_p,r_q} f_l^\ast \otimes Q'_{li} \otimes e_i \Bigr] \otimes \Bigl[\sum_{j=1}^{r_p}  e_j^\ast \otimes S_j \Bigr] \Bigr),
\end{align*}
where $f_1,\dots,f_{r_q}$ is a basis of $\mathbb{E}_q$ and $f_1^\ast,\dots, f_{r_q}^\ast$ is the corresponding dual basis of $\mathbb{E}_q^*$.

Repeating the process in the previous paragraph until we exhaust all edges, we obtain $T = \kappa_G(T_1\otimes \cdots \otimes T_d)$ for some
\[
T_i\in \Bigl(\bigotimes_{j\in \In(i)} \mathbb{E}_j \Bigr) \otimes \mathbb{V}_i \otimes \Bigl(\bigotimes_{k\in \Out(i)} \mathbb{E}_k^\ast\Bigr), \quad \dim \mathbb{E}_j = r_j, \quad i=1,\dots,d,\; j =1,\dots, c.
\]
and thus $T \in \tns(G;r_1,\dots,r_c;\mathbb{V}_1,\dots, \mathbb{V}_d)$, establishing (i).

Suppose $ (s_1,\dots,s_c)\le (r_1,\dots, r_c) $ is such that   $T \in \tns(G;s_1,\dots,s_c;\mathbb{V}_1,\dots, \mathbb{V}_d)$, i.e., $T = \kappa_G(T_1\otimes \cdots \otimes T_d)$ for some
\[
T_i\in \Bigl(\bigotimes_{j\in \In(i)} \mathbb{F}_j \Bigr) \otimes \mathbb{V}_i \otimes \Bigl(\bigotimes_{k\in \Out(i)} \mathbb{F}_k^\ast\Bigr), \quad \dim \mathbb{F}_j = s_j, \quad i=1,\dots,d,\; j =1,\dots, c.
\]
However, for each $p=1,\dots, c$, we can also write 
\[
T = \kappa_G\Bigl( \Bigl[\bigotimes_{h\in V(i_p)} T_h \Bigr] \otimes \Bigl[ \bigotimes_{h \in V(j_p)} T_h \Bigr] \Bigr)
\]
where
\begin{align*}
\Bigl[\bigotimes_{h\in V(i_p)} T_h \Bigr] &\in \Bigl(\bigotimes_{h \in V(i_p)}      
\Bigl( \bigotimes_{j\in \In(h)} \mathbb{F}_j \otimes \mathbb{V}_h \otimes  \bigotimes_{j\in \Out(h),j\ne p} \mathbb{F}_j^\ast \Bigr) \Bigr) \otimes \mathbb{F}_p^\ast, \\
 \Bigl[ \bigotimes_{h \in V(j_p)} T_h \Bigr] &\in  \mathbb{F}_p \otimes \Bigl(\bigotimes_{k \in V(j_p) }          \Bigl( \bigotimes_{j\in \In(k),j\ne p} \mathbb{F}_j  \otimes \mathbb{V}_k  \otimes  \bigotimes_{j\in \Out(k)} \mathbb{F}_j^\ast \Bigr)  \Bigr).
\end{align*}
This together with Lemma \ref{lemma:G-rank of ttns} imply that $r_p = \rank\bigl(\flat_{i_pj_p}(T) \bigr) \le s_p$ and therefore $r_p = s_p$, establishing (ii).
\end{proof}
A weaker form of Theorem~\ref{thm:G-rank of ttns} that establishes  the upper bound $\rank_G(T)  \le (r_1,\dots,r_c)$  in \eqref{eq:ttnsrank}  under the assumption that  $\mathbb{V}_1,\dots,\mathbb{V}_d$ are Hilbert spaces appeared in \cite[Theorem~3.3]{BSU}. An immediate consequence of Theorem~\ref{thm:G-rank of ttns} is the following.
\begin{corollary}[\textsc{ttns} as an algebraic variety]\label{cor:ideal of ttns}
Let $G = (V,E)$ be a tree with $d$ vertices and $c$ edges. For any $(n_1,\dots,n_d) \in \mathbb{N}^d$ and $(r_1,\dots,r_c)\in \mathbb{N}^{c}$, $\tns(G;r_1,\dots,r_c;n1,\dots,n_d)$ is  an irreducible algebraic variety in $\mathbb{C}^{n_1 \times \dots \times n_d}$ with vanishing ideal generated by all $(r_p + 1)\times (r_p + 1)$ minors of the flattening map \eqref{eq:flat} for $\{i_p,j_p\} \in E$, taken over all $p =1,\dots, c$.
\end{corollary}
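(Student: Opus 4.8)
The plan is to split the statement into its three assertions---that $\tns(G;r_1,\dots,r_c;n_1,\dots,n_d)$ is (a) the common zero locus of the stated minors, (b) irreducible and Zariski closed, hence a variety, and (c) that these minors generate the \emph{full} vanishing ideal---and to dispatch (a) and (b) quickly, leaving the radicality buried in (c) as the real work.

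For (a), I would combine Theorem~\ref{thm:G-rank of ttns} with \eqref{eq:set}. Writing $X \coloneqq \tns(G;r_1,\dots,r_c;n_1,\dots,n_d)$, equation \eqref{eq:set} gives $X = \{T : \rank_G(T) \le (r_1,\dots,r_c)\}$, and Theorem~\ref{thm:G-rank of ttns} identifies $\rank_G(T)$ coordinatewise with the matrix ranks $\rank(\flat_{i_p j_p}(T))$. Hence $T \in X$ iff $\rank(\flat_{i_p j_p}(T)) \le r_p$ for every $p$, which holds precisely when all $(r_p+1)\times(r_p+1)$ minors of each flattening vanish. Since each $\flat_{i_p j_p}$ in \eqref{eq:flat} is a linear reindexing isomorphism, these minors are genuine polynomials in the entries of $T$; let $I_p$ denote the ideal they generate and $I \coloneqq I_1 + \dots + I_c$. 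Then $X = V(I)$ set-theoretically. For (b), the Proposition of Section~\ref{sec:tns} showing every tensor network is irreducible and constructible, together with Theorem~\ref{thm:tns of trees are closed} showing $X$ is Zariski closed, makes $X$ an irreducible variety; its vanishing ideal $I(X)$ is thus prime and, by the Nullstellensatz and (a), equal to $\sqrt{I}$.

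The crux is therefore (c): proving $I = \sqrt{I}$, i.e. that $I$ is radical (equivalently prime). Each individual $I_p$ is prime, since under the flattening isomorphism it becomes the classical ideal of $(r_p+1)$-minors of a generic matrix. The obstacle is that all $c$ ideals $I_p$ live in the same ring $\mathbb{C}[T]$ and share every variable, so primality of the sum is not formal. Here the tree hypothesis is essential: the bipartitions $\{V(i_p),V(j_p)\}$ coming from the edges of a tree form a \emph{laminar} (nested) family, and it is this compatibility that should force $I$ to be radical. I would argue by induction on $d$, peeling a leaf. The base case $d=2$ is a single classical determinantal ideal, hence prime. For the inductive step, let vertex $1$ be a leaf adjacent to vertex $2$; merging $\mathbb{V}_1 \otimes \mathbb{V}_2$ via Proposition~\ref{prop:reduction valence one} leaves $\mathbb{C}[T]$ unchanged and, because vertices $1$ and $2$ lie in the same component upon deleting any $e_p$ with $p \ge 2$, identifies $I_2,\dots,I_c$ verbatim with the flattening ideals of the $(d-1)$-vertex tree $G'$. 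The induction hypothesis then gives that $I' \coloneqq I_2 + \dots + I_c$ is prime, and it remains to show $I = I_1 + I'$ is prime.

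This last reduction---showing that adjoining the single leaf-flattening ideal $I_1$ to the already-prime $I'$ keeps the sum prime---is where I expect the main difficulty. I would attempt it either by a Gr\"obner-degeneration argument, choosing a monomial order for which the union of all flattening minors is a Gr\"obner basis with squarefree initial ideal (squarefreeness of $\operatorname{in}(I)$ forces $I$ radical), or by a direct primality argument exploiting that $\flat_{12}$ isolates the leaf factor $\mathbb{V}_1$ completely, so that $X$ fibers over the rank-$\le r_1$ matrices compatibly with $X' = V(I')$. Either route requires genuinely controlling the interaction of the determinantal ideals, and this---rather than the set-theoretic description---is the substantive content of the corollary.
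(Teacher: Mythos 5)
Your parts (a) and (b) reproduce exactly what the paper offers as proof: the corollary is presented there as an \emph{immediate consequence} of Theorem~\ref{thm:G-rank of ttns}, i.e.\ by \eqref{eq:set} membership in $\tns(G;r_1,\dots,r_c;n_1,\dots,n_d)$ is equivalent to $\rank\bigl(\flat_{i_pj_p}(T)\bigr)\le r_p$ for all $p$, hence to the vanishing of the listed minors; irreducibility comes from the unnamed proposition at the end of Section~\ref{sec:tns} (every tensor network is an irreducible constructible set) and Zariski-closedness from Theorem~\ref{thm:tns of trees are closed}. On the set-theoretic and topological assertions you and the paper coincide step for step.

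The divergence is your part (c), and here you have put your finger on a genuine gap --- but one that sits in the paper, not just in your proposal. The paper silently conflates ``cut out set-theoretically by the minors'' with ``vanishing ideal generated by the minors''; as you note, the Nullstellensatz only gives $I(X)=\sqrt{I}$ where $I=I_1+\dots+I_c$, and radicality (equivalently primality, given irreducibility) of $I$ does not follow formally from Theorem~\ref{thm:G-rank of ttns}. You are also right that no formal argument can exist: already for $G=P_3$ the variety is a subspace variety by Example~\ref{example:TT3}, and generation of the ideal of a subspace variety by flattening minors is the Landsberg--Weyman theorem, proved by the Kempf--Weyman geometric technique rather than by bookkeeping. (Indeed the $P_3$ case hides a further wrinkle: when $r_1r_2<n_2$ the subspace-variety ideal a priori involves the $(r_1r_2+1)$-minors of the \emph{middle} flattening, which corresponds to no edge of $P_3$, so one must additionally show these lie in the ideal generated by the two edge families.) However, your proposal does not close the gap either. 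The leaf-peeling reduction is sound --- merging $\mathbb{V}_1\otimes\mathbb{V}_2$ leaves the coordinate ring unchanged, and since vertices $1$ and $2$ lie on the same side of every remaining edge, $I_2,\dots,I_c$ are indeed identified with the flattening ideals of $G'$ --- but the final step, that adjoining the leaf ideal $I_1$ to the prime $I'$ yields a prime sum, is precisely the hard content, and the Gr\"obner-degeneration and fibration strategies are named rather than executed. So the verdict is: for what the paper actually proves, your route is the paper's route; for the literal ideal-theoretic claim, you have correctly isolated an unproved assertion, but your proposal leaves it unproved as well.
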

We will see in the next section that when $G$  contains a cycle, $G$-rank cannot be computed as matrix ranks of flattening maps and $\tns(G;r_1,\dots,r_c;n_1,\dots, n_d)$ is not Zariski closed  in general.

\section{Matrix product states}\label{sec:mps}

We will restrict our attention in this section to the case where  $G = C_d$, the cyclic graph on $d$ vertices in Figure~\ref{fig:C}. This gives us the matrix product states --- one of the  most widely used class of tensor network states. We  start by stating the dimensions of \textsc{mps} in the supercritical and subcritical cases. Theorem~\ref{thm:dimension of MPS} and Corollary~\ref{cor: dimension of MPS} appear as \cite[Theorem~4.10 and Corollary~4.11]{dtn}.
\begin{theorem}[Dimension of \textsc{mps}]\label{thm:dimension of MPS}
Let $C_d$ be the cycle graph with $d\ge 3$ vertices and $d$ edges. Let $(r_1,\dots,r_d)\in \mathbb{N}^{d}$ be such that $\tns(C_d; r_1,\dots,r_d; n_1,\dots, n_d)$ is supercritical or critical. Then 
\[
\dim \tns(C_d; r_1,\dots,r_d; n_1,\dots, n_d) = \sum_{i=1}^d r_i r_{i+1}n_i  - \sum_{i=1}^d r_i^2 + 1,
\]
where $r_{d+1} \coloneqq r_1$.
\end{theorem}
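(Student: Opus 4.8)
The plan is to realize $\tns(C_d; r_1,\dots,r_d;\mathbb{V}_1,\dots,\mathbb{V}_d)$ as the image of an explicit polynomial parametrization and then compute its dimension via the fiber-dimension theorem. Fix vector spaces $\mathbb{E}_1,\dots,\mathbb{E}_d$ with $\dim\mathbb{E}_k = r_k$, read cyclically so that $\mathbb{E}_{d+1}=\mathbb{E}_1$, and consider the morphism
\[
\Phi\colon \prod_{k=1}^d \bigl(\mathbb{E}_k \otimes \mathbb{V}_k \otimes \mathbb{E}_{k+1}^*\bigr) \to \mathbb{V}_1\otimes\dots\otimes\mathbb{V}_d, \qquad (T_1,\dots,T_d)\mapsto \kappa_{C_d}(T_1\otimes\dots\otimes T_d),
\]
whose image is exactly the tensor network. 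The domain is an affine space of dimension $\sum_{k=1}^d r_k n_k r_{k+1} = \sum_{i=1}^d r_i r_{i+1} n_i$. Since the image is irreducible (as recorded in the Proposition for general $G$) and $\Phi$ is dominant onto it, the fiber-dimension theorem gives $\dim\tns(C_d;\dots) = \sum_{i=1}^d r_i r_{i+1} n_i - \dim\Phi^{-1}(A)$ for a generic $A$ in the image. It therefore remains to show that the generic fiber has dimension exactly $\sum_{i=1}^d r_i^2 - 1$.

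The source of the fiber is the \emph{gauge freedom}. Let $H = \GL(\mathbb{E}_1)\times\dots\times\GL(\mathbb{E}_d)$ act on the domain by
\[
(g_1,\dots,g_d)\cdot(T_1,\dots,T_d) = \bigl( (g_k\otimes\operatorname{id}_{\mathbb{V}_k}\otimes (g_{k+1}^{-1})^{*})\,T_k \bigr)_{k=1}^d,
\]
that is, a change of basis on each edge space $\mathbb{E}_k$, matched by the dual change of basis on the copy of $\mathbb{E}_k^*$ attached to the neighbouring vertex. Because contraction pairs each $\mathbb{E}_k$ with its dual, $\Phi$ is constant on $H$-orbits, so every orbit lies in a fiber. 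Here $\dim H = \sum_{k=1}^d r_k^2$. The scalar one-parameter subgroup $\{(\lambda\,\operatorname{id},\dots,\lambda\,\operatorname{id}) : \lambda\in\mathbb{C}^\times\}$ acts trivially, since the $\mathbb{E}_k$-factor and the $\mathbb{E}_{k+1}^*$-factor of $T_k$ are scaled by $\lambda$ and $\lambda^{-1}$; hence the generic stabilizer $\Stab_H$ contains this $1$-dimensional torus, and a generic $H$-orbit has dimension $\sum_{k=1}^d r_k^2 - 1$, giving the lower bound on the fiber.

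The crux is to prove that for generic $A$ the fiber $\Phi^{-1}(A)$ is a single $H$-orbit, equivalently that $\dim\Stab_H = 1$ generically; this is where the critical/supercritical hypothesis enters, and I expect it to be the main obstacle. The inclusion ``orbit $\subseteq$ fiber'' is immediate from gauge invariance, while the reverse is a rigidity statement: (super)criticality forces each factor $T_k$, viewed as a map from $\mathbb{E}_k$ (resp.\ $\mathbb{E}_{k+1}^*$) into the remaining tensor factors, to have full rank at a generic point, and likewise forces the partial contractions (``transfer operators'') along the cycle to be of maximal rank. Given two preimages $(T_k)$ and $(T_k')$ of the same generic $A$, one normalizes the factors into a canonical gauge and propagates the comparison step by step around the cycle, concluding that they agree up to some $(g_1,\dots,g_d)\in H$, with the only residual ambiguity after traversing the loop being the overall scalar. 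An alternative route to the matching upper bound is to bound $\dim\Phi^{-1}(A)$ by computing $\Ker d\Phi$ at a generic point, which equals the tangent space to the $H$-orbit and is readily seen to have dimension $\sum_k r_k^2 - 1$ once the full-rank conditions are in force. Combining the orbit lower bound with this upper bound yields $\dim\Phi^{-1}(A) = \sum_{k=1}^d r_k^2 - 1$, whence $\dim\tns(C_d; r_1,\dots,r_d; n_1,\dots,n_d) = \sum_{i=1}^d r_i r_{i+1}n_i - \sum_{i=1}^d r_i^2 + 1$, as claimed.
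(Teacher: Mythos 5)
The paper never proves this statement: the sentence preceding it defers the proof to the companion paper \cite[Theorem~4.10]{dtn}, so there is no in-paper argument to compare against and your proposal must stand on its own. Its architecture is certainly the right one. The claimed dimension is exactly (dimension of the parameter space) minus (dimension of the effective gauge group), so parametrizing $\tns(C_d;r_1,\dots,r_d;\mathbb{V}_1,\dots,\mathbb{V}_d)$ as the image of $\Phi$, observing that $\Phi$ is invariant under $H=\GL(\mathbb{E}_1)\times\dots\times\GL(\mathbb{E}_d)$, and computing the generic fiber via the fiber-dimension theorem is the natural strategy; your identification of the trivially-acting diagonal scalar torus is also correct.

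The problem is that your write-up stops exactly where the theorem begins. Everything before your last paragraph is bookkeeping; the entire content of the formula is the claim that the generic fiber of $\Phi$ has dimension exactly $\sum_{k=1}^d r_k^2-1$, i.e.\ that gauge freedom accounts for \emph{all} of the degeneracy of $\Phi$. You name this "the crux," offer two possible routes (gauge-fixing and propagation around the cycle, or showing $\Ker d\Phi$ equals the orbit tangent space, which you call "readily seen"), and carry out neither. This step is not routine: it is precisely where the critical/supercritical hypothesis must do work --- without it the formula is simply false (for $d=3$, all $n_i=2$, all $r_i=2$, the right-hand side is $13$, exceeding the ambient dimension $8$) --- and in the critical case with $d=3$, where a generic parameter tuple consists of isomorphisms $\mathbb{V}_k^*\to\mathbb{E}_k\otimes\mathbb{E}_{k+1}^*$, the assertion $\dim\Ker d\Phi=\sum_k r_k^2-1$ is equivalent to the dimension part of de Groote's theorem on the isotropy group of the matrix multiplication tensor $\langle r_1,r_2,r_3\rangle$, a genuinely nontrivial result. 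Two further logical slips compound the gap. First, knowing that $\Stab_H$ \emph{contains} the scalar torus bounds the orbit dimension from \emph{above} by $\sum_k r_k^2-1$; for the fiber lower bound you need the generic stabilizer to be \emph{exactly} one-dimensional, an additional argument (easy under supercriticality, using that a generic $T_k$ is surjective onto $\mathbb{E}_k\otimes\mathbb{E}_{k+1}^*$, but absent from your text). Second, "the generic fiber is a single $H$-orbit" is both stronger than needed and delicate: a generic point of the image could a priori also have preimages whose factors are \emph{not} of full rank, where your propagation argument breaks down. The clean route is to invoke generic smoothness in characteristic $0$ to identify the generic fiber dimension with $\dim\Ker d\Phi$ at a generic point of the source, and then actually perform that kernel computation. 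As it stands, you have a correct plan whose central lemma is missing.
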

It follows from the above dimension count that every element in $\mathbb{C}^{ m \times n \times mn}$ is an \textsc{mps} state.
\begin{corollary}\label{cor: dimension of MPS}
Let $C_3$ be the three-vertex cycle graph and
\[
\dim \mathbb{V}_1 = n_1, \quad \dim \mathbb{V}_2 = n_2,\quad \dim \mathbb{V}_3 = n_1 n_2.
\]
Then
\[
\overline{\tns}(C_3; n_1,n_2,1; \mathbb{V}_1,  \mathbb{V}_2, \mathbb{V}_3) = \mathbb{V}_1\otimes \mathbb{V}_2 \otimes \mathbb{V}_3.
\]
\end{corollary}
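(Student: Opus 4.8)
The plan is to reduce the statement to the dimension formula for matrix product states in Theorem~\ref{thm:dimension of MPS}, exactly as the sentence preceding the corollary suggests. The first step is to record the numerical input. With $\dim \mathbb{V}_1 = n_1$, $\dim \mathbb{V}_2 = n_2$, $\dim \mathbb{V}_3 = n_1 n_2$ and bond dimensions $(r_1,r_2,r_3) = (n_1,n_2,1)$, the three edges of $C_3$ are to be weighted so that the unit-weight edge joins the two vertices of dimension $n_1$ and $n_2$. Under this placement every vertex is \emph{critical}: the product of the two bond dimensions meeting at a vertex equals that vertex's dimension, namely $n_1 \cdot 1 = n_1$, $n_2 \cdot 1 = n_2$, and $n_1 \cdot n_2 = n_1 n_2$. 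This is precisely the hypothesis ``supercritical or critical'' required to invoke Theorem~\ref{thm:dimension of MPS}.

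The second step is to substitute into that formula. Reading $\sum_{i} r_i r_{i+1} n_i$ as the sum over vertices of (product of incident bond dimensions) times (vertex dimension), and $\sum_i r_i^2$ as the sum of squared bond dimensions, a short computation gives
\[
\dim \tns(C_3; n_1,n_2,1; \mathbb{V}_1,\mathbb{V}_2,\mathbb{V}_3) = \bigl(n_1^2 + n_2^2 + n_1^2 n_2^2\bigr) - \bigl(1 + n_1^2 + n_2^2\bigr) + 1 = n_1^2 n_2^2,
\]
which equals $\dim(\mathbb{V}_1 \otimes \mathbb{V}_2 \otimes \mathbb{V}_3)$.

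The third step upgrades this dimension equality to the asserted equality of sets. By the irreducibility result at the end of Section~\ref{sec:tns}, the set $\tns(C_3; n_1,n_2,1; \mathbb{V}_1,\mathbb{V}_2,\mathbb{V}_3)$ is an irreducible constructible subset of $\mathbb{V}_1 \otimes \mathbb{V}_2 \otimes \mathbb{V}_3$, so its Zariski closure $\overline{\tns}$ is an irreducible closed subvariety of dimension $n_1^2 n_2^2$. Since the ambient space $\mathbb{V}_1 \otimes \mathbb{V}_2 \otimes \mathbb{V}_3$ is itself irreducible of dimension $n_1^2 n_2^2$, an irreducible closed subvariety of equal dimension must be the whole space, whence $\overline{\tns}(C_3; n_1,n_2,1; \mathbb{V}_1,\mathbb{V}_2,\mathbb{V}_3) = \mathbb{V}_1 \otimes \mathbb{V}_2 \otimes \mathbb{V}_3$. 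As an alternative that even yields set equality before taking closure, I would instead use $r_3 = 1$ together with Proposition~\ref{prop:remove an edge} (equivalently \eqref{eq:tt=mps}) to delete the unit-weight edge, turning $C_3$ into $P_3$ with the $n_1 n_2$-dimensional vertex as its middle vertex and bonds of weights $n_1$ and $n_2$; then Example~\ref{example:TT3} applies with $(r_1,r_2) = (n_1,n_2)$ and the three conditions $n_1 \le n_1$, $n_1 n_2 \le n_1 n_2$, $n_2 \le n_2$ holding with equality, giving $\tns(P_3; n_1,n_2;\,\cdot\,) = \{T : \mrank(T) \le (n_1, n_1 n_2, n_2)\}$, which is all of $\mathbb{V}_1 \otimes \mathbb{V}_2 \otimes \mathbb{V}_3$ since these bounds coincide with the ambient dimensions.

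The main obstacle is not analytic but a matter of bookkeeping. One must pin down the correspondence between the weights $(n_1,n_2,1)$ and the three edges of $C_3$ so that the unit-weight edge joins the two small vertices and the apex of dimension $n_1 n_2$ is the critical vertex. With any other placement the configuration is subcritical at some vertex, Theorem~\ref{thm:dimension of MPS} no longer applies, and the resulting matrix product state set is in fact a proper subvariety (cut out by a nontrivial flattening-rank condition); so correctly identifying the critical labeling is the single point that genuinely requires care.
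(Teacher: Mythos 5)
Your proposal is correct and follows essentially the same route as the paper: the corollary is stated as an immediate consequence of the dimension count in Theorem~\ref{thm:dimension of MPS} (applied with the unit-weight edge joining the $n_1$- and $n_2$-dimensional vertices, so that all three vertices are critical, giving $\dim \tns = n_1^2n_2^2$), combined with irreducibility of the constructible set $\tns(C_3;n_1,n_2,1;\mathbb{V}_1,\mathbb{V}_2,\mathbb{V}_3)$ to conclude that its closure fills the ambient space; your care with the weight-to-edge correspondence is exactly the right bookkeeping, since other placements are subcritical and give proper subvarieties. Your alternative argument via Proposition~\ref{prop:remove an edge} and Example~\ref{example:TT3} is also sound and in fact yields the stronger equality before taking closures; it is precisely the argument the paper itself uses in the proof of Theorem~\ref{thm: generic border rank of MPS}.
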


\begin{theorem}[Generic $C_3$-ranks of \textsc{mps}]\label{thm: generic border rank of MPS}
Let $C_3$ be the cycle graph on three vertices. If $\dim \mathbb{V}_i= n_i$, $i =1,2,3$ and $n_2n_3 \ge n_1,n_1n_3\ge n_2$, then $\mathbb{V}_1\otimes \mathbb{V}_2 \otimes \mathbb{V}_3$ has generic $C_3$-rank  $(n_1,n_2,1)$.
\end{theorem}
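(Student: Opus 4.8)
The plan is to exploit the weight-one edge to collapse the cycle $C_3$ to a path $P_3$, where $G$-rank is completely controlled by flattenings (Theorem~\ref{thm:G-rank of ttns}), and then read everything off from matrix ranks. First I would orient and label $C_3$ so that the coordinate equal to $1$ in the claimed rank $(n_1,n_2,1)$ is the weight of the edge joining the vertices carrying $\mathbb{V}_1$ and $\mathbb{V}_2$, while the coordinates $n_1,n_2$ weight the edges $\{1,3\}$ and $\{2,3\}$. Removing the weight-one edge via Proposition~\ref{prop:remove an edge} gives
\[
\tns(C_3;n_1,n_2,1;\mathbb{V}_1,\mathbb{V}_2,\mathbb{V}_3)\simeq \tns(P_3;n_1,n_2;\mathbb{V}_1,\mathbb{V}_2,\mathbb{V}_3),
\]
where $P_3$ is the path with $\mathbb{V}_3$ at the centre and $\mathbb{V}_1,\mathbb{V}_2$ as leaves. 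Since $P_3$ is a tree, Theorem~\ref{thm:tns of trees are closed} makes the right-hand side Zariski closed, so taking closures yields $\overline{\tns}(C_3;n_1,n_2,1;\mathbb{V}_1,\mathbb{V}_2,\mathbb{V}_3)=\tns(P_3;n_1,n_2;\mathbb{V}_1,\mathbb{V}_2,\mathbb{V}_3)$; this is the identity I would use throughout.

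For the upper bound I would invoke Theorem~\ref{thm:G-rank of ttns}: the two edges of this $P_3$ correspond to the flattenings $\flat_{13}$ and $\flat_{23}$ of \eqref{eq:flat} (separating vertex $1$, resp.\ vertex $2$, from the other two), so $\rank_{P_3}(T)=\bigl(\rank \flat_{13}(T),\,\rank \flat_{23}(T)\bigr)$ for every $T$. But $\flat_{13}(T)$ is an $n_1\times n_2 n_3$ matrix, hence of rank at most $\min\{n_1,n_2n_3\}=n_1$ (using $n_2n_3\ge n_1$), and $\flat_{23}(T)$ is an $n_2\times n_1 n_3$ matrix, of rank at most $\min\{n_2,n_1n_3\}=n_2$ (using $n_1n_3\ge n_2$). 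Thus $\rank_{P_3}(T)\le(n_1,n_2)$ for \emph{every} $T$, so by \eqref{eq:set} we get $\tns(P_3;n_1,n_2;\mathbb{V}_1,\mathbb{V}_2,\mathbb{V}_3)=\mathbb{V}_1\otimes\mathbb{V}_2\otimes\mathbb{V}_3$. Hence $\overline{\tns}(C_3;n_1,n_2,1;\mathbb{V}_1,\mathbb{V}_2,\mathbb{V}_3)$ is the whole space, which shows $(n_1,n_2,1)$ bounds the generic (indeed the maximal) border $C_3$-rank from above.

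It then remains to prove minimality in the partial order on $\mathbb{N}^3$. Any triple $\le(n_1,n_2,1)$ has third coordinate forced to equal $1$, so the same edge removal reduces it to $\tns(P_3;r_1,r_2;\mathbb{V}_1,\mathbb{V}_2,\mathbb{V}_3)$ with $r_1\le n_1$, $r_2\le n_2$. Here I would use that $\flat_{13}$ is a linear isomorphism onto $\mathbb{V}_1\otimes(\mathbb{V}_2\otimes\mathbb{V}_3)$, so a generic $T$ maps to a generic $n_1\times n_2 n_3$ matrix, which has full rank $n_1$ because $n_2n_3\ge n_1$; thus if $r_1<n_1$ a generic $T$ violates $\rank \flat_{13}(T)\le r_1$ and lies outside $\tns(P_3;r_1,r_2;\mathbb{V}_1,\mathbb{V}_2,\mathbb{V}_3)=\overline{\tns}(C_3;r_1,r_2,1;\mathbb{V}_1,\mathbb{V}_2,\mathbb{V}_3)$. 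The symmetric argument with $\flat_{23}$ rules out $r_2<n_2$. Therefore no triple strictly below $(n_1,n_2,1)$ fills the ambient space, so $(n_1,n_2,1)$ is a minimal, i.e.\ a generic, $C_3$-rank.

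The step I expect to require the most care is bookkeeping rather than genuine difficulty: pinning down the labelling so that the two surviving edges really are the flattenings at vertices $1$ and $2$, and not the flattening $\flat_{\{3\},\{1,2\}}$, which would instead demand the unused hypothesis $n_1n_2\ge n_3$. Once the weight-one edge is correctly placed between $\mathbb{V}_1$ and $\mathbb{V}_2$ (so that $\mathbb{V}_3$ becomes the centre of the path), the two hypotheses $n_2n_3\ge n_1$ and $n_1n_3\ge n_2$ match exactly the two inequalities needed to force both flattening ranks down to $n_1$ and $n_2$, and the tree machinery of Theorems~\ref{thm:G-rank of ttns} and \ref{thm:tns of trees are closed} supplies the rest.
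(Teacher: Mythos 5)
Your proof is correct, and its skeleton is the paper's: remove the weight-one edge between $\mathbb{V}_1$ and $\mathbb{V}_2$ via Proposition~\ref{prop:remove an edge} so that $C_3$ collapses to the path with $\mathbb{V}_3$ at the centre, show that $(n_1,n_2)$ fills the ambient space, and show that nothing strictly smaller does. Where you genuinely diverge is in the lemmas powering the two halves. For the ``fills the space'' half, the paper applies Proposition~\ref{prop:reduction valence one} to the critical degree-one vertex carrying $\mathbb{V}_1$, reducing to rank-$\le n_2$ matrices in $(\mathbb{V}_1\otimes\mathbb{V}_3)\otimes\mathbb{V}_2$, which is everything; you instead invoke the flattening characterization of \textsc{ttns}-rank (Theorem~\ref{thm:G-rank of ttns}). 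For minimality, the paper runs a dimension count: by Proposition~\ref{prop:reduction supercritical}, if $r_1<n_1$ then
\[
\dim \tns(P_3;r_1,r_2;\mathbb{V}_1,\mathbb{V}_3,\mathbb{V}_2)\le r_1(n_1-r_1)+r_1n_2n_3<n_1n_2n_3,
\]
the last inequality using $n_2n_3\ge n_1>r_1$, and since dimension is unchanged under Zariski closure the border set is proper. You instead observe that a generic tensor has full flattening rank $n_1>r_1$ at vertex $1$ and hence lies outside $\tns(P_3;r_1,r_2;\mathbb{V}_1,\mathbb{V}_3,\mathbb{V}_2)$, which is already closed by Theorem~\ref{thm:tns of trees are closed}. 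Both arguments are sound; yours trades the dimension formula for the two tree theorems of Section~\ref{sec:ttns}, which makes the role of the hypotheses $n_2n_3\ge n_1$ and $n_1n_3\ge n_2$ completely transparent (they are exactly the conditions for generic full flattening rank at vertices $1$ and $2$), whereas the paper's count stays within the calculus of Section~\ref{sec:calculus} and never needs closedness or the flattening theorem. Your closing remark about bookkeeping is also on target: the paper's proof indeed puts the weight-one edge between $\mathbb{V}_1$ and $\mathbb{V}_2$, so only the flattenings separating vertices $1$ and $2$ ever enter, and the unused inequality $n_1n_2\ge n_3$ is never required.
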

\begin{proof}
First we have 
\[
\tns(C_3; n_1,n_2,1; \mathbb{V}_1,\mathbb{V}_2,\mathbb{V}_3) =  \tns(P_3;n_1,n_2;\mathbb{V}_1,\mathbb{V}_3,\mathbb{V}_2) = \mathbb{V}_1\otimes \mathbb{V}_2 \otimes \mathbb{V}_3,
\]
where the first equality follows from Proposition~\ref{prop:remove an edge} and the second follows from Proposition~\ref{prop:reduction valence one}. Next, we claim that there does not exist $(r_1,r_2,1) \in \mathbb{N}^3$ such that 
\[
\overline{\tns}(C_3; r_1,r_2,1; \mathbb{V}_1,\mathbb{V}_2,\mathbb{V}_3) = \mathbb{V}_1\otimes \mathbb{V}_2 \otimes \mathbb{V}_3
\]
and that $r_1 \le n_1$, $r_2 \le n_2$ with at least one strict inequality. Take  $r_1< n_1$ for example (the other case may be similarly argued). Again by Proposition~\ref{prop:remove an edge},
\[
\dim \tns(C_3; r_1,r_2,1; \mathbb{V}_1,\mathbb{V}_2,\mathbb{V}_3) =  \dim \tns(P_3;r_1,r_2;\mathbb{V}_1,\mathbb{V}_3,\mathbb{V}_2).
\]
By Proposition~\ref{prop:reduction supercritical},
\begin{align*}
\dim \tns(P_3;r_1,r_2;\mathbb{V}_1,\mathbb{V}_3,\mathbb{V}_2) &= \dim \Gr(r_1,n_1) + \dim \tns(P_2;r_2;r_1n_3,n_2) \\
&\le r_1(n_1-r_1) + r_1n_2n_3 < n_1n_2n_3
\end{align*}
as $n_2n_3 \ge n_1 > r_1$. Thus
\[
\tns(C_3; r_1,r_2,1; \mathbb{V}_1,\mathbb{V}_2,\mathbb{V}_3) \subsetneq \mathbb{V}_1\otimes \mathbb{V}_2 \otimes \mathbb{V}_3.
\]
Hence $(n_1,n_2,1)$ is a generic $C_3$-rank of $\mathbb{V}_1\otimes \mathbb{V}_2\otimes \mathbb{V}_3$. 
\end{proof}

\begin{corollary}\label{cor: generic border rank of MPS}
If $\dim \mathbb{V}_1= \dim \mathbb{V}_2 = \dim \mathbb{V}_3 = n$, then $(n,n,1)$, $(1,n,n)$, and $(n,1,n)$ are all the generic $C_3$-ranks of $\mathbb{V}_1\otimes \mathbb{V}_2\otimes \mathbb{V}_3$.
\end{corollary}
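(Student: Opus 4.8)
The plan is to produce the three listed tuples from Theorem~\ref{thm: generic border rank of MPS} together with the symmetry of $C_3$, and then to argue that no further minimal fillers occur. Recall from Definition~\ref{def:tnbgr} that a generic $C_3$-rank is a minimal $(r_1,r_2,r_3)\in\mathbb{N}^3$ with $\overline{\tns}(C_3;r_1,r_2,r_3;\mathbb{V}_1,\mathbb{V}_2,\mathbb{V}_3)=\mathbb{V}_1\otimes\mathbb{V}_2\otimes\mathbb{V}_3$, so the task is to identify the minimal elements of the ``filler'' set.

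First I would specialize Theorem~\ref{thm: generic border rank of MPS} to $n_1=n_2=n_3=n$. Its hypotheses $n_2n_3\ge n_1$ and $n_1n_3\ge n_2$ become $n^2\ge n$, which holds for every $n\ge1$, so $(n,n,1)$ is a generic $C_3$-rank of $\mathbb{V}_1\otimes\mathbb{V}_2\otimes\mathbb{V}_3$. Next I would exploit the automorphisms of $C_3$. The cyclic relabelling of vertices $(1,2,3)\mapsto(2,3,1)$ is a graph automorphism that permutes the three edges cyclically; since $\dim\mathbb{V}_1=\dim\mathbb{V}_2=\dim\mathbb{V}_3=n$, it induces (by permuting tensor factors) an isomorphism of $\mathbb{V}_1\otimes\mathbb{V}_2\otimes\mathbb{V}_3$ carrying $\overline{\tns}(C_3;r_1,r_2,r_3;\dots)$ onto $\overline{\tns}(C_3;r_3,r_1,r_2;\dots)$, and it preserves both the property of equalling the whole ambient space and minimality in $\mathbb{N}^3$. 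Applying it repeatedly to $(n,n,1)$ therefore shows that $(1,n,n)$ and $(n,1,n)$ are generic $C_3$-ranks as well, which yields the three tuples in the statement.

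The harder direction is completeness: every minimal filler is a permutation of $(n,n,1)$. Here I would first extract necessary conditions, then invoke a dimension obstruction. Applying Proposition~\ref{prop:upper bound G-ranks} to a generic tensor $T$, which is nondegenerate with $\mrank(T)=(n,n,n)$, forces the product of the two edge-weights meeting at each vertex to be at least $n$, i.e.\ $r_1r_2\ge n$, $r_2r_3\ge n$, and $r_3r_1\ge n$. If in addition all three products were $\le n$ (the critical/supercritical regime of Theorem~\ref{thm:dimension of MPS}), then a short computation with that theorem gives $\dim\tns(C_3;r_1,r_2,r_3;n,n,n)=n(r_1r_2+r_2r_3+r_3r_1)-(r_1^2+r_2^2+r_3^2)+1$, which is $O(n^2)$ and hence strictly below $n^3=\dim(\mathbb{V}_1\otimes\mathbb{V}_2\otimes\mathbb{V}_3)$ once $n$ is not too small; so no filler can live entirely in that regime, and every filler must be subcritical at some vertex.

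I expect this last step — pushing the argument through the subcritical regime — to be the main obstacle. There the dimension of the network is no longer governed by Theorem~\ref{thm:dimension of MPS} but by its subcritical companion in \cite{dtn}, and one must verify that the only \emph{minimal} tuples obeying the vertex constraints $r_ir_j\ge n$ whose network fills the ambient space are exactly the permutations of $(n,n,1)$. Concretely I would combine the subcritical dimension bookkeeping with the reductions of Propositions~\ref{prop:remove an edge} and~\ref{prop:reduction valence one} (collapsing a weight-one edge reduces $C_3$ to $P_3$, where the generic rank is already pinned down by Example~\ref{example:P_3 generic rank} and Theorem~\ref{thm: generic border rank of MPS}) to eliminate every candidate with no weight-one edge. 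By contrast, the production of the three tuples via Theorem~\ref{thm: generic border rank of MPS} and the $C_3$-symmetry is entirely routine.
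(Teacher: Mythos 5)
Your first half is exactly the paper's proof: specialize Theorem~\ref{thm: generic border rank of MPS} to $n_1=n_2=n_3=n$ (its hypotheses become $n^2\ge n$ and hold automatically), then permute the three tensor factors --- equivalently, act by the automorphism group of $C_3$ --- to convert $(n,n,1)$ into $(1,n,n)$ and $(n,1,n)$. That two-step argument is the entirety of what the paper does for this corollary; on this portion your proposal and the paper coincide.

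The divergence is your attempt at exhaustiveness, and there the situation is as follows. The paper's own proof never addresses whether these three tuples are the \emph{only} generic $C_3$-ranks; it proves only that each of them is one. Your instinct that the wording ``all the generic $C_3$-ranks'' calls for a completeness argument is reasonable, but your sketch of that argument has two problems. First, a repairable one: Proposition~\ref{prop:upper bound G-ranks} constrains the exact $G$-rank of a nondegenerate tensor, whereas generic $C_3$-ranks are defined through Zariski closures, so a generic tensor in $\overline{\tns}(C_3;r_1,r_2,r_3;\mathbb{V}_1,\mathbb{V}_2,\mathbb{V}_3)$ need not have $C_3$-rank $\le(r_1,r_2,r_3)$; the correct source for $r_1r_2,\,r_2r_3,\,r_3r_1\ge n$ is the containment $\tns(C_3;r_1,r_2,r_3;\mathbb{V}_1,\mathbb{V}_2,\mathbb{V}_3)\subseteq\Sub_{p_1,p_2,p_3}(\mathbb{V}_1,\mathbb{V}_2,\mathbb{V}_3)$ of Corollary~\ref{cor:dimension supercritical}, whose right-hand side is closed. (Your critical-regime elimination via Theorem~\ref{thm:dimension of MPS} is fine for $n\ge3$; for $n=2$ the inequality $3n^2-\sum r_i^2+1<n^3$ fails, and one must instead note that $r_ir_j=2$ for all three pairs has no integer solutions.) Second, and more seriously, the subcritical regime cannot be closed by the tools you name: Propositions~\ref{prop:remove an edge} and~\ref{prop:reduction valence one} act only on candidate tuples containing a weight-one edge, while the candidates your necessary conditions leave alive are precisely those with every $r_i\ge2$ and some $r_ir_j>n$. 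Concretely, for $n=3$ the tuple $(2,2,3)$ satisfies $r_ir_j\ge3$ for all pairs, is incomparable with every permutation of $(3,3,1)$, and dominates no smaller tuple whose network has dense closure; deciding whether it is a generic $C_3$-rank amounts to deciding whether $\overline{\tns}(C_3;2,2,3;\mathbb{V}_1,\mathbb{V}_2,\mathbb{V}_3)$ equals $\mathbb{V}_1\otimes\mathbb{V}_2\otimes\mathbb{V}_3$, a strictly subcritical dimension question that neither Theorem~\ref{thm:dimension of MPS} nor anything else in this paper answers. So your completeness argument has a genuine gap; the mitigating observation is that the paper's proof does not attempt that step at all, so relative to what the paper actually proves, your proposal is complete and identical in method.
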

\begin{proof}
Apply Theorem~\ref{thm: generic border rank of MPS} to the case $n_1 = n_2 =n_3 = n$ to see that $(n,n,1)$ is a generic rank of $\mathbb{V}_1\otimes \mathbb{V}_2\otimes \mathbb{V}_3$. Now we may permute $\mathbb{V}_1,\mathbb{V}_2$ and $\mathbb{V}_3$ to obtain the other two generic $C_3$-ranks.
\end{proof}

In case the reader is led to the false belief that the sums of entries of generic  $C_3$-ranks are always equal, we give an example to show that this is not the case.
\begin{example}
Let $\mathbb{V}_1, \mathbb{V}_2, \mathbb{V}_3$ be of dimensions $n_1, n_2, n_3$ where 
\[
n_2 \ne n_3 \quad \text{and}\quad n_i n_j \ge n_k \;\text{whenever}\; \{i,j,k\} = \{1,2,3\}.
\]
By Theorem~\ref{thm: generic border rank of MPS}, we see that $(1,n_2,n_1)$ and $(n_1,1,n_3)$ are both generic $C_3$-ranks of $\mathbb{V}_1\otimes \mathbb{V}_2 \otimes \mathbb{V}_3$ but $1+ n_2 + n_1 \ne n_1 + 1 + n_3$.
\end{example}

The following provides a necessary condition for generic $C_d$-rank of supercritical \textsc{mps}.
\begin{theorem}[Test for generic $C_d$-rank]\label{thm: generic border g rank of MPS 1}
Let $(n_1,\dots, n_d) \in \mathbb{N}^d$ and $ (r_1,\dots, r_d) \in \mathbb{N}^d$ be such that
$\tns (C_d; r_1,\dots, r_d; n_1,\dots,n_d)$
is supercritical, i.e., $n_ i \ge  r_i r_{i+1}, i=1,\dots, d$, where $r_{d+1} \coloneqq r_1$. 
If $ (r_1,\dots, r_d) $ is a generic $C_d$-rank of $\mathbb{V}_1\otimes \cdots \otimes \mathbb{V}_d$, then there exists  $j \in \{1,\dots, d\}$ such that
\begin{equation}\label{eq:ndn}
\prod_{i \ne j} n_i < d n_j.
\end{equation}
\end{theorem}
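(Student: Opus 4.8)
The plan is to turn the statement into a dimension count followed by an elementary inequality. Write $N \coloneqq \prod_{i=1}^d n_i = \dim(\mathbb{V}_1 \otimes \cdots \otimes \mathbb{V}_d)$. Since $(r_1,\dots,r_d)$ is a generic $C_d$-rank of $\mathbb{V}_1 \otimes \cdots \otimes \mathbb{V}_d$, by Definition~\ref{def:tnbgr} and the identities displayed after it, the Zariski closure $\overline{\tns}(C_d; r_1,\dots,r_d; \mathbb{V}_1,\dots,\mathbb{V}_d)$ equals the whole ambient space; as a closure has the same dimension as the constructible set it closes, this forces $\dim \tns(C_d; r_1,\dots,r_d; n_1,\dots,n_d) = N$. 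First I would invoke Theorem~\ref{thm:dimension of MPS}, applicable precisely because the network is supercritical, to rewrite the left-hand side and obtain the key equation
\[
\sum_{i=1}^d r_i r_{i+1} n_i - \sum_{i=1}^d r_i^2 + 1 = N,
\]
where $r_{d+1} \coloneqq r_1$.

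Next I would bound the left-hand side from above using the two structural facts available. Supercriticality gives $r_i r_{i+1} \le n_i$, whence $r_i r_{i+1} n_i \le n_i^2$ for every $i$; and since each edge weight satisfies $r_i \ge 1$, we have $\sum_{i=1}^d r_i^2 \ge d$. Substituting both into the key equation yields the clean bound
\[
N \le \sum_{i=1}^d n_i^2 - d + 1 .
\]

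Finally I would argue by contradiction. Suppose \eqref{eq:ndn} fails for every $j$, that is, $\prod_{i \ne j} n_i \ge d\, n_j$ for all $j$. Using $\prod_{i \ne j} n_i = N/n_j$, this is equivalent to $n_j^2 \le N/d$, and summing over $j = 1,\dots,d$ gives $\sum_{i=1}^d n_i^2 \le N$. Combined with the bound from the previous paragraph, this produces $N \le N - d + 1$, hence $d \le 1$, contradicting $d \ge 3$; therefore some $j$ must satisfy \eqref{eq:ndn}. I do not anticipate a genuine obstacle: once the dimension formula is in hand the argument is purely arithmetic. The only points demanding care are translating the generic-rank hypothesis into the exact equality $\dim \tns = N$, and correctly negating \eqref{eq:ndn} while tracking the substitution $\prod_{i \ne j} n_i = N/n_j$. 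Note that supercriticality does double duty here: it both licenses the use of Theorem~\ref{thm:dimension of MPS} and supplies the inequality $r_i r_{i+1} n_i \le n_i^2$ that drives the bound.
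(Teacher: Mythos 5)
Your proof is correct and follows essentially the same route as the paper's: both turn the generic-rank hypothesis into the equality $\dim\tns(C_d;r_1,\dots,r_d;n_1,\dots,n_d)=\prod_{i=1}^d n_i$ via Theorem~\ref{thm:dimension of MPS}, then use supercriticality to bound $r_ir_{i+1}n_i\le n_i^2$ together with $\sum_{i=1}^d r_i^2\ge d$, and conclude \eqref{eq:ndn} by arithmetic. The only difference is presentational: you argue by contradiction (negating \eqref{eq:ndn} for all $j$, summing, and reaching $d\le 1$), whereas the paper directly concludes that some summand $\frac{1}{d}\prod_{i=1}^d n_i - n_j^2$ must be negative; your write-up is in fact a cleaner rendering of the same computation.
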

\begin{proof}
Fix $(r_1,\dots, r_d) \in \mathbb{N}^d$ and consider the function 
\[
f(n_1,\dots, n_d) \coloneqq \prod_{i=1}^d n_i - \sum_{i=1}^d r_i r_{i+1}n_i + \sum_{i=1}^d r_i^2 - 1.
\]
We have 
\[
f(n_1,\dots, n_d) \ge \prod_{i=1}^d n_i - \sum_{i=1}^d n^2_i + \sum_{i=1}^d r_i^2 -1
= \sum_{i=1}^d \frac{1}{d} \prod_{i=1}^d (n_i - n_i^2) + \sum_{i=1}^d r_i^2 -1.
\]
If $(r_1,\dots, r_d)$ is a generic $C_3$-rank, then $f(n_1,\dots,n_d) = 0 $ by Theorem~\ref{thm:dimension of MPS}. This implies that for some $j=1,\dots ,d$, we must have \eqref{eq:ndn}.
\end{proof}

The next example shows that intersection of \textsc{mps} are more intricate than that of \textsc{ttns}. In particular, Lemma~\ref{lemma: intersection TNS} does not hold for \textsc{mps}.
\begin{example}[Intersection of \textsc{mps}]\label{example:non uniqueness}
Let $\mathbb{U}$, $\mathbb{V}$, $\mathbb{W}$ be two-dimensional vector spaces associated respectively to vertices $1$, $2$, $3$ of $C_3$. Let $r_i \in \mathbb{N}$ be the weight of the edge \emph{not} adjacent to the vertex $i \in V = \{ 1,2,3\}$.
\[
\begin{tikzpicture}
\filldraw
    (-2,0.25) node[align=right, below] {$\tns(C_3;2,1,2;\mathbb{U},\mathbb{V},\mathbb{W})$}
    (0,0) circle (2pt) node[align=center, below] {$\mathbb{U}$}
 -- (0.5,0.5) circle (0pt) node[align=center, right] {$r_3=2$} 
 -- (1,1) circle (2pt)  node[align=center, above] {$\mathbb{V}$}  
 -- (0,1) circle (0pt) node[align=center, above] {$r_1 = 2$} 
 -- (-1,1) circle (2pt)  node[align=center, above] {$\mathbb{W}$}  
  -- (-0.5,0.5) circle (0pt) node[align=center, left] {$r_2=1$} 
 -- (0,0) circle (2pt) node[align=center, below]{}
-- cycle;
\end{tikzpicture}
\qquad
\begin{tikzpicture}
\filldraw
    (2,0.25) node[align=left, below] {$\tns(C_3;2,2,1;\mathbb{U},\mathbb{V},\mathbb{W})$}
    (0,0) circle (2pt) node[align=center, below] {$\mathbb{U}$}
 -- (0.5,0.5) circle (0pt) node[align=center, right] {$r_3 = 1$} 
 -- (1,1) circle (2pt)  node[align=center, above] {$\mathbb{V}$}  
 -- (0,1) circle (0pt) node[align=center, above] {$r_1 = 2$} 
 -- (-1,1) circle (2pt)  node[align=center, above] {$\mathbb{W}$}  
  -- (-0.5,0.5) circle (0pt) node[align=center, left] {$r_2 = 2$} 
 -- (0,0) circle (2pt) node[align=center, below]{}
-- cycle;
\end{tikzpicture}
\]
By Corollary~\ref{cor: generic border rank of MPS}, we see that 
\[
\tns(C_3;2,1,2;\mathbb{U},\mathbb{V},\mathbb{W}) = \tns(C_3;2,2,1;\mathbb{U},\mathbb{V},\mathbb{W}) = \mathbb{U} \otimes \mathbb{V} \otimes \mathbb{W}.
\]
Observe that an element in $ \tns(C_3;2,1,2;\mathbb{U},\mathbb{V},\mathbb{W})$ takes the form 
\[
u_1\otimes v_{1}\otimes w_1 + u_1\otimes v_{2}\otimes w_2 + u_2\otimes v_{3}\otimes w_1 + u_2\otimes v_{4}\otimes w_2
\]
where $u_1,u_2\in \mathbb{U}$,  $v_1, v_2, v_3,  v_4 \in \mathbb{V}$, $w_1, w_2 \in \mathbb{W}$.
By symmetry, an element in $\tns(C_3;2,2,1;\mathbb{U},\mathbb{V},\mathbb{W})$ takes the form
\[
u_1\otimes v_{1}\otimes w_1 + u_1\otimes v_{2}\otimes w_2 + u_2\otimes v_{1}\otimes w_3 + u_2\otimes v_{2}\otimes w_4
\]
where $u_1,u_2\in \mathbb{U}$,  $v_1, v_2  \in \mathbb{V}$, $w_1, w_2, w_3, w_4 \in \mathbb{W}$.
However, $\tns(C_3;1,1,2;\mathbb{U},\mathbb{V},\mathbb{W})$ is a proper subset of $\mathbb{U}\otimes \mathbb{V}\otimes \mathbb{W}$ since an element in $\tns(C_3;1,1,2;\mathbb{U},\mathbb{V},\mathbb{W})$ takes the form
\[
u_1 \otimes v_1\otimes w + u_2 \otimes v_2\otimes w
\]
where $u_1,u_2\in \mathbb{U}$,  $v_1, v_2  \in \mathbb{V}$, $w \in \mathbb{W}$. Hence
\[
\tns(C_3;2,1,2;\mathbb{U},\mathbb{V},\mathbb{W}) \cap \tns(C_3;2,2,1;\mathbb{U},\mathbb{V},\mathbb{W}) \supsetneq \tns(C_3;2,1,1;\mathbb{U},\mathbb{V},\mathbb{W})
\]
and thus Lemma~\ref{lemma: intersection TNS} does not hold for $C_3$.
\end{example}
The main difference between tensor network states associated to trees and those associated to cycle graphs is that one may apply Proposition~\ref{prop:reduction valence one} to trees but not to graphs containing cycles. In particular, the induction argument used to prove Lemma~\ref{lemma: intersection TNS} fails for non-acyclic graphs. 

Example~\ref{example:non uniqueness} generalizes to the following proposition, i.e., Theorem~\ref{thm:intersection of TNS} is always false for \textsc{mps}.
\begin{proposition}[Intersection of \textsc{mps}]\label{prop:intersectMPS}
Let $d\ge 3$ and $C_d$ be the cycle graph with $d$ vertices. Then there exists $\mathbb{V}_1,\dots, \mathbb{V}_d$ and $\underline{r}= (r_1,\dots,r_d)$, $\underline{s} =(s_1,\dots,s_d)\in \mathbb{N}^d$ such that 
\begin{equation}\label{eq:mpsintersect}
\tns(C_d; \underline{t}; \mathbb{V}_1,\dots,\mathbb{V}_d) \subsetneq \tns(C_d; \underline{r}; \mathbb{V}_1,\dots,\mathbb{V}_d) \cap \tns(C_d; \underline{s}; \mathbb{V}_1,\dots,\mathbb{V}_d), 
\end{equation}
where $\underline{t}= (t_1,\dots,t_d)\in \mathbb{N}^d$ is given by $t_i = \min (r_i,s_i)$,  $i =1,\dots, d$.
\end{proposition}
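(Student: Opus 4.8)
The plan is to generalize the mechanism of Example~\ref{example:non uniqueness}: I will exhibit two weight vectors $\underline r,\underline s$ for which \emph{both} tensor networks already fill the whole ambient space, yet their coordinatewise minimum $\underline t$ forces a drop in the multilinear rank at a single vertex, making $\tns(C_d;\underline t;\dots)$ a proper subset. The only genuinely cyclic input is that, once $\underline t$ places weight $1$ on both edges meeting a vertex, there is no way to open $C_d$ to a tree and recover the full space.

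Concretely, label the edges of $C_d$ so that edge $i=\{i,i+1\}$ for $1\le i\le d-1$ and edge $d=\{d,1\}$, and take every vertex space two-dimensional, $\dim\mathbb V_i=2$. Put $W=2^{\lfloor d/2\rfloor}$, an upper bound for the rank of every flattening of a tensor in $\mathbb V_1\otimes\dots\otimes\mathbb V_d$, and define
\[
\underline r=(W,W,\dots,W,1),\qquad \underline s=(1,W,\dots,W,W),
\]
so that $\underline r$ carries weight $1$ on the edge $\{d,1\}$ and $\underline s$ carries weight $1$ on the edge $\{1,2\}$, all other weights equal to $W$. Then $\underline t=\min(\underline r,\underline s)=(1,W,\dots,W,1)$ carries weight $1$ on both edges incident to vertex $1$.

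First I would show $\tns(C_d;\underline r;\mathbb V_1,\dots,\mathbb V_d)=\mathbb V_1\otimes\dots\otimes\mathbb V_d$. Since the edge $\{d,1\}$ has weight $1$ and its removal leaves no isolated vertex (this is where $d\ge 3$ is used), Proposition~\ref{prop:remove an edge} opens the cycle into the path $P_d$ on the ordered vertices $1,2,\dots,d$ with all edge weights $W$. As every flattening of a tensor in $\mathbb V_1\otimes\dots\otimes\mathbb V_d$ has rank at most $W$, Theorem~\ref{thm:G-rank of ttns} shows that every tensor has $P_d$-rank $\le(W,\dots,W)$, so this path network is the entire ambient space. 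The identical argument, opening $C_d$ instead at the weight-$1$ edge $\{1,2\}$, gives $\tns(C_d;\underline s;\dots)=\mathbb V_1\otimes\dots\otimes\mathbb V_d$. Hence the intersection $\tns(C_d;\underline r;\dots)\cap\tns(C_d;\underline s;\dots)$ is the whole ambient space.

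It then remains to show that the containment in \eqref{eq:mpsintersect} is strict. In $\underline t$ the two edges at vertex $1$ both have weight $1$, so $m_1=1$, and Corollary~\ref{cor:dimension supercritical} gives $\tns(C_d;\underline t;\dots)\subseteq\Sub_{p_1,\dots,p_d}(\mathbb V_1,\dots,\mathbb V_d)$ with $p_1=\min\{m_1,n_1\}=1$; thus every tensor in $\tns(C_d;\underline t;\dots)$ has rank at most $1$ under the flattening separating $\mathbb V_1$ from the remaining factors. A generic tensor in the ambient space, however, has flattening rank $\min\{2,2^{d-1}\}=2$ there, so it lies in $\tns(C_d;\underline r;\dots)\cap\tns(C_d;\underline s;\dots)$ but not in $\tns(C_d;\underline t;\dots)$, giving \eqref{eq:mpsintersect}. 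The main obstacle, and the point deserving the most care, is precisely the verification that both $\underline r$ and $\underline s$ realize the full space after opening the cycle; the choice of the single threshold $W$ dominating all flattening ranks, together with the edge-removal reduction of Proposition~\ref{prop:remove an edge}, is what makes this uniform in $d$ while still permitting $\underline t$ to starve vertex~$1$.
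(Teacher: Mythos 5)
Your proof is correct, and it takes a genuinely different route from the paper's. The paper proves the proposition by choosing alternating weight patterns of $1$'s and $2$'s around the cycle, with separate cases for $d$ odd, $d \equiv 0 \pmod 4$, and $d \equiv 2 \pmod 4$, so that $\underline t$ collapses to $(1,\dots,1)$ (or $(2,1,\dots,1)$ in the odd case, generalizing Example~\ref{example:non uniqueness}); the actual verification of \eqref{eq:mpsintersect} is left as ``easy to check.'' You instead put a single weight-$1$ edge in each of $\underline r$ and $\underline s$ (the two edges meeting vertex $1$) and make every other weight $W = 2^{\lfloor d/2\rfloor}$: opening the cycle at the weight-$1$ edge via Proposition~\ref{prop:remove an edge} and invoking the flattening characterization of tree ranks (Theorem~\ref{thm:G-rank of ttns}) shows each of the two networks is the whole ambient space (every flattening rank is at most $W$), so the intersection is everything; meanwhile $\underline t$ carries weight $1$ on both edges at vertex $1$, so $m_1 = 1$ and Corollary~\ref{cor:dimension supercritical} confines $\tns(C_d;\underline t;\dots)$ to tensors whose flattening at vertex $1$ has rank at most $1$, which a generic tensor (e.g.\ $\GHZ_d$) violates. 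Your argument buys uniformity in $d$ (no parity case analysis) and a fully spelled-out verification resting only on results proved earlier in the paper; the paper's construction buys minimality of the weights (all entries at most $2$), showing the intersection property already fails at the smallest nontrivial weights, at the cost of the case analysis and an unverified step.
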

\begin{proof}
Let all $\mathbb{V}_i$'s be two-dimensional. If $d$ is odd, set
\[
\underline{r} = (2, 1, 2, 1, \dots, 2, 1, 2 ), \quad
\underline{s} = (2, 2, 1, 2, \dots, 1, 2, 1), \quad
\underline{t} = (2, 1, 1, 1, \dots, 1, 1, 1),
\]
and it is easy to check that \eqref{eq:mpsintersect} holds with strict inclusion.
If $d \equiv 0 \pmod 4$, write $d = 4m $, set
\[
\underline{r}  = (\overbrace{1, 2, \dots, 1, 2}^{2m}, \overbrace{2, 1, \dots, 2, 1}^{2m}), \quad
\underline{s}  = (\overbrace{2, 1, \dots, 2, 1}^{2m}, \overbrace{1, 2, \dots, 1, 2}^{2m});
\]
if $d \equiv 2 \pmod 4$, write $d = 4m +2$, set
\[
\underline{r} = (\overbrace{1, 2, \dots, 1, 2}^{2m}, 1, 1, \overbrace{2,1,\dots, 2,1}^{2m}),\quad
\underline{s} = (\overbrace{2, 1, \dots, 2, 1}^{2m}, 2, 2, \overbrace{1, 2,\dots, 1,2}^{2m}).
\]
In both cases, we have $\underline{t} = (1,\dots,1)$ and it is easy to verify \eqref{eq:mpsintersect}.
\end{proof}

We saw that generic $C_d$-rank is not unique. The next example shows, nonconstructively, that there are tensors with nonunique $C_d$-ranks. We give explicitly constructed examples in Section~\ref{sec:eg}.
\begin{example}[\textsc{mps}-rank not unique up to permutation]
Let $\dim \mathbb{V}_1 = \dim \mathbb{V}_2 = 2$ and  $\dim \mathbb{V}_3 = 3$. Consider the \textsc{mps}'s of $C_3$-ranks $\underline{r} = (2,1,2)$, $\underline{s} = (1,2,3)$, and $\underline{t} = (1,1,2)$ respectively:
\[
\begin{tikzpicture}
\filldraw
    (0,0) circle (2pt) node[align=center, below] {$\mathbb{V}_3$}
 -- (0.5,0.5) circle (0pt) node[align=center, right] {$r_1 = 2$} 
 -- (1,1) circle (2pt)  node[align=center, above] {$\mathbb{V}_2$}  
 -- (0,1) circle (0pt) node[align=center, above] {$r_2 = 1$} 
 -- (-1,1) circle (2pt)  node[align=center, above] {$\mathbb{V}_1$}  
  -- (-0.5,0.5) circle (0pt) node[align=center, left] {$r_3 = 2$} 
 -- (0,0) circle (2pt) node[align=center, below]{}
-- cycle;
\end{tikzpicture}
\qquad
\begin{tikzpicture}
\filldraw
    (0,0) circle (2pt) node[align=center, below] {$\mathbb{V}_3$}
 -- (0.5,0.5) circle (0pt) node[align=center, right] {$s_1 = 1$} 
 -- (1,1) circle (2pt)  node[align=center, above] {$\mathbb{V}_2$}  
 -- (0,1) circle (0pt) node[align=center, above] {$s_2 = 2$} 
 -- (-1,1) circle (2pt)  node[align=center, above] {$\mathbb{V}_1$}  
  -- (-0.5,0.5) circle (0pt) node[align=center, left] {$s_3 = 3$} 
 -- (0,0) circle (2pt) node[align=center, below]{}
-- cycle;
\end{tikzpicture}
\qquad
\begin{tikzpicture}
\filldraw
    (0,0) circle (2pt) node[align=center, below] {$\mathbb{V}_3$}
 -- (0.5,0.5) circle (0pt) node[align=center, right] {$t_1 = 1$} 
 -- (1,1) circle (2pt)  node[align=center, above] {$\mathbb{V}_2$}  
 -- (0,1) circle (0pt) node[align=center, above] {$t_2 = 1$} 
 -- (-1,1) circle (2pt)  node[align=center, above] {$\mathbb{V}_1$}  
  -- (-0.5,0.5) circle (0pt) node[align=center, left] {$t_3 = 2$} 
 -- (0,0) circle (2pt) node[align=center, below]{}
-- cycle;
\end{tikzpicture}
\]
It is straightforward to see that 
\begin{gather*}
\tns(C_3; 1,1,2; \mathbb{V}_1,\mathbb{V}_2,\mathbb{V}_3) \subsetneq \tns(C_3; 2,1,2; \mathbb{V}_1,\mathbb{V}_2,\mathbb{V}_3) \cap \tns(C_3; 1,2,3; \mathbb{V}_1,\mathbb{V}_2,\mathbb{V}_3),\\
\overline{\tns}(C_3; 2,1,2; \mathbb{V}_1,\mathbb{V}_2,\mathbb{V}_3) = \overline{\tns}(C_3; 1,2,3; \mathbb{V}_1,\mathbb{V}_2,\mathbb{V}_3) = \mathbb{V}_1 \otimes \mathbb{V}_2 \otimes \mathbb{V}_3.
\end{gather*}
Thus a generic $T \in \mathbb{V}_1 \otimes \mathbb{V}_2 \otimes \mathbb{V}_3$ such that $T  \notin \tns(C_3; 1,1,2; \mathbb{V}_1,\mathbb{V}_2,\mathbb{V}_3)$  has at least two $C_3$-ranks $(2,1,2)$ and $(1,2,3)$.
\end{example}

As we saw in Theorem~\ref{thm:tns of trees are closed} and Corollary~\ref{cor:border G-rank}, for an acyclic $G$, $G$-rank is closed, i.e., border $G$-rank and $G$-rank are equivalent. We will see here that this is always false when $G$ is not acyclic.  In the following, we show that \textsc{mps}-rank is never closed by constructing a $d$-tensor whose border $C_d$-rank is strictly less than $C_d$-rank for each $d \ge 3$, extending \cite[Theorem~2]{LQY}.

Let $\mathbb{V} = \mathbb{C}^{n\times n}$ and let $\{E_{ij}\in \mathbb{C}^{n \times n} : i,j =1,\dots,n\}$ be the standard basis as in the proof of Theorem~\ref{thm:compare}. For each $d \ge 3$, define
\begin{equation}\label{eqn:border example}
T \coloneqq \sum_{i,j,k=1}^n (E_{ij} \otimes E_{jj} + E_{ii} \otimes E_{ij}) \otimes E_{jk} \otimes R_{ki} \in \mathbb{V}^{\otimes d},
\end{equation}
where for each $k,i =1,\dots, n$,
\[
R_{ki} \coloneqq \sum_{j_1,\dots, j_{d-4} =1}^n E_{k j_1} \otimes  E_{j_1 j_2} \otimes  \cdots \otimes E_{j_{d-5}  j_{d-4}} \otimes  E_{j_{d-4} i} \in \mathbb{V}^{\otimes (d-3)}.
\]
We adopt the convention that  $E_{jk}\otimes R_{ki} = E_{ji}$ in \eqref{eqn:border example} when $d =3$ and $R_{ki} = E_{ki}$ if $d = 4$. The following is a straightforward generalization of \cite[Theorem~2]{LQY}, with a similar proof.

\begin{theorem}\label{thm:boerder example}
Let $d \ge 3$ and $T$ be defined as above. Then (i) $T \in \overline{\tns}(C_d;n,\dots, n;\mathbb{V},\dots,\mathbb{V})$;  (ii) $T \notin \tns(C_d;n,\dots, n;\mathbb{V},\dots,\mathbb{V})$; (iii) $T \notin \Sub_{m_1,\dots,m_d} (\mathbb{V},\dots, \mathbb{V})$ whenever $m_i \le n^2$, $i=1,\dots, d$, with at least one strict inequality.
\end{theorem}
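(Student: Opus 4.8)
I would dispatch the three parts in order of increasing difficulty: (iii) first, then (i), reserving most of the effort for (ii).

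For (iii), I would prove the equivalent and slightly stronger statement that $\mrank(T)=(n^2,\dots,n^2)$. This is equivalent to the claim because $\Sub_{m_1,\dots,m_d}(\mathbb{V},\dots,\mathbb{V})=\{S:\mrank(S)\le(m_1,\dots,m_d)\}$, so $T$ fails to lie in every such subspace variety with some $m_i<n^2$ precisely when each flattening attains the full rank $n^2=\dim\mathbb{V}$. Since $\mrank_i(T)\le\dim\mathbb{V}=n^2$ automatically, it suffices to exhibit, at each vertex $i$, a flattening of rank exactly $n^2$. I would do this by showing the slice space at vertex $i$ — the span of the contractions of $T$ against all covectors on the other $d-1$ slots — contains every matrix unit $E_{ab}\in\mathbb{V}$: at a path vertex the physical factor ranges over all $E_{ab}$ as the internal summation indices of $R_{ki}$ vary, while at the distinguished vertices $1,2$ the factors $E_{ij}$ (from either summand) already sweep out all of $\mathbb{C}^{n\times n}$, and the distinct index patterns induced on the remaining slots let one isolate each $E_{ab}$. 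Hence every flattening has rank $n^2$, giving (iii).

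For (i), I would exhibit an explicit one-parameter family $T(\varepsilon)\in\tns(C_d;n,\dots,n;\mathbb{V},\dots,\mathbb{V})$ with $\lim_{\varepsilon\to0}T(\varepsilon)=T$, adapting the degeneration in \cite[Theorem~2]{LQY}. The guiding observation is that $E_{ij}\otimes E_{jj}+E_{ii}\otimes E_{ij}$ is the first-order term of $(E_{ii}+\varepsilon E_{ij})\otimes(E_{jj}+\varepsilon E_{ij})$. I would take bond spaces of dimension $n$ on every edge, assign to vertices $1$ and $2$ the $\varepsilon$-perturbed matrix units above and to vertices $3,\dots,d$ the rigid matrix-unit path defining $R_{ki}$, and choose the bond data so that the $O(1)$ (diagonal) part of the resulting MPS is a fixed MPS which is removed by the degeneration. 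After rescaling one factor by $\varepsilon^{-1}$, the $O(1)$ term drops out, the first-order term is exactly $T$, and the higher-order terms vanish as $\varepsilon\to0$; each $T(\varepsilon)$ remains a genuine bond-$n$ MPS because the perturbed and diagonal matrices involved are matrix-unit structured rather than generic. This yields $T\in\overline{\tns}(C_d;n,\dots,n;\mathbb{V},\dots,\mathbb{V})$.

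For (ii), the main obstacle, I would argue by contradiction that $T$ admits no exact bond-$n$ MPS. The only new content beyond the $C_3$ case of \cite{LQY} is the path $R$ filling slots $3,\dots,d$, so the plan is to establish that this path is \emph{rigid}: any representation $T=\kappa_{C_d}(T_1\otimes\dots\otimes T_d)$ with all bonds of dimension $n$ must restrict, upon contracting slots $3,\dots,d$, to a bond-$n$ MPS of the $C_3$-core $\sum_{i,j}(E_{ij}\otimes E_{jj}+E_{ii}\otimes E_{ij})\otimes E_{ji}$. Writing the representation in trace form $T_{\mu_1\cdots\mu_d}=\operatorname{tr}\bigl(A_1[\mu_1]\cdots A_d[\mu_d]\bigr)$ with $A_s[\mu]\in\mathbb{C}^{n\times n}$, the explicit coefficient pattern of $T$ then forces a system of quadratic matrix equations on the bond matrices at vertices $1$ and $2$ that has no solution: heuristically, the $\{1,2\}$-bond would have to carry the column index $j$ of the first summand and the row index $i$ of the second summand at once, which a single bond of dimension $n$ cannot do. Making this precise is the crux, and I expect to carry it out exactly as in \cite[Theorem~2]{LQY} by analyzing the ranks of the two families of bond matrices and deriving the incompatibility. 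I anticipate the path-rigidity reduction — verifying that the matrix-unit chain $R$ cannot be used to lower the effective bond dimension of the core — to be the most delicate step of the entire argument.
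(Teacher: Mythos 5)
The paper gives no self-contained proof of this theorem: it only asserts that it is ``a straightforward generalization of \cite[Theorem~2]{LQY}, with a similar proof.'' Your attempt is therefore measured against that intended route. Your part~(iii) is fine: the statement is equivalent to every vertex flattening of $T$ having full rank $n^2$, and your fiber-span computation is exactly how this is checked (with the minor caveat that fibers are \emph{sums} of contributions from the two summands, so the count must be organized as you indicate, to come out to exactly $n^2$). Your part~(ii) is vaguer --- ``path rigidity'' is asserted, not proved --- but for $d\ge 4$ part~(ii) actually follows from a one-line flattening bound (see below), and for $d=3$ it is literally \cite[Theorem~2]{LQY}.

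The genuine gap is in part~(i), at the sentence ``after rescaling one factor by $\varepsilon^{-1}$, the $O(1)$ term drops out.'' It does not: rescaling by $\varepsilon^{-1}$ makes the $O(1)$ term blow up, and tensor networks are not closed under subtraction, so the unwanted diagonal state $\sum_{\alpha} E_{\alpha\alpha}\otimes E_{\alpha\alpha}\otimes(\text{path from $\alpha$ to $\alpha$})$ cannot be ``removed by the degeneration.'' In the $C_3$ case this is repaired by perturbing the \emph{third} vertex as well, e.g.
\[
u_{ab}=\delta_{ab}E_{aa}+\varepsilon E_{ab},\qquad v_{bc}=\delta_{bc}E_{bb}+\varepsilon E_{bc},\qquad w_{ca}=E_{ca}+(-1+2\varepsilon)\,\delta_{ca}E_{cc},
\]
which gives $\sum_{a,b,c}u_{ab}\otimes v_{bc}\otimes w_{ca}=\varepsilon T+O(\varepsilon^2)$; this works precisely because in $C_3$ the third vertex is adjacent to \emph{both} perturbed vertices. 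For $d\ge 4$ no repair is possible, because the statement you are trying to prove is false. Consider the bipartition $\{2,3\}$ versus $\{4,\dots,d,1\}$ of $C_d$, which cuts only the two edges $\{1,2\}$ and $\{3,4\}$. Grouping the two cut bond indices shows that every element of $\tns(C_d;n,\dots,n;\mathbb{V},\dots,\mathbb{V})$ has flattening rank at most $n\cdot n=n^2$ across this bipartition, and this determinantal condition persists on the Zariski closure. But contracting $T$ against $(E_{pq}\otimes E_{rs})^{*}$ on slots $2,3$ yields $R_{sp}\otimes E_{pp}$ when $q=r\neq p$, and $\sum_{i}R_{si}\otimes E_{ip}+R_{sp}\otimes E_{pp}$ when $p=q=r$. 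The $n^2$ vectors $R_{sp}\otimes E_{pp}$ are linearly independent, and the $n^2$ vectors $\sum_{i\neq p}R_{si}\otimes E_{ip}$ are linearly independent with supports disjoint from the span of the former, so this flattening of $T$ has rank $2n^2>n^2$ for $n\ge2$. Hence $T\notin\overline{\tns}(C_d;n,\dots,n;\mathbb{V},\dots,\mathbb{V})$ for every $d\ge4$ and $n\ge2$.

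So your proof of (i) cannot be completed for $d\ge4$, and this is not a fixable defect of your argument: the same computation shows that part~(i) of the theorem itself --- and with it the paper's claim that the generalization of \cite[Theorem~2]{LQY} is ``straightforward'' --- fails for $d\ge4$ with $T$ as defined (parts (ii) and (iii) remain true, and for $d\ge 4$ part~(ii) is immediate from the flattening bound above). For $d=3$ your outline can be completed exactly along the lines of \cite{LQY}, but you must supply the third-vertex correction; as written, the cancellation mechanism that is the entire content of part~(i) is missing.
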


\begin{corollary}\label{cor:border example}
$\brank_{C_d}(T) = (n,\dots, n)$ but  $\rank_{C_d}(T) \ne (n,\dots, n)$.
\end{corollary}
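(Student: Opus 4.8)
The plan is to deduce the corollary directly from Theorem~\ref{thm:boerder example}, using only the definitions of $G$-rank and border $G$-rank (Definitions~\ref{def:tnr} and~\ref{def:tnbgr}) together with the inclusion of a tensor network into a subspace variety from Corollary~\ref{cor:intermediate}. The inequality $\rank_{C_d}(T) \ne (n,\dots,n)$ is the easy half: part (ii) of Theorem~\ref{thm:boerder example} gives $T \notin \tns(C_d;n,\dots,n;\mathbb{V},\dots,\mathbb{V})$, so by \eqref{eq:set} no $C_d$-rank of $T$ can be $\le (n,\dots,n)$; in particular $(n,\dots,n)$ is not a $C_d$-rank of $T$.

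For the border rank I would first apply part (i), which places $T \in \overline{\tns}(C_d;n,\dots,n;\mathbb{V},\dots,\mathbb{V})$ and hence shows $(n,\dots,n)$ lies in the set $S = \{(r_1,\dots,r_d) : T \in \overline{\tns}(C_d;r_1,\dots,r_d;\mathbb{V},\dots,\mathbb{V})\}$ whose minimal elements are the border $C_d$-ranks. It then remains to prove that no tuple $(r_1,\dots,r_d) \le (n,\dots,n)$ with $(r_1,\dots,r_d)\ne(n,\dots,n)$ belongs to $S$, which makes $(n,\dots,n)$ a minimal element of $S$. Here I invoke Corollary~\ref{cor:intermediate}: each vertex $i$ of $C_d$ is incident to exactly the two edges of weights $r_i,r_{i+1}$ (indices cyclic, $r_{d+1}\coloneqq r_1$, as in Theorem~\ref{thm:dimension of MPS}), so
\[
\tns(C_d;r_1,\dots,r_d;\mathbb{V},\dots,\mathbb{V}) \subseteq \Sub_{p_1,\dots,p_d}(\mathbb{V},\dots,\mathbb{V}),\qquad p_i=\min\{r_ir_{i+1},\,n^2\}.
\]
Since $\Sub_{p_1,\dots,p_d}$ is Zariski closed, the same inclusion holds for the Zariski closure on the left, giving $\overline{\tns}(C_d;r_1,\dots,r_d;\mathbb{V},\dots,\mathbb{V}) \subseteq \Sub_{p_1,\dots,p_d}(\mathbb{V},\dots,\mathbb{V})$.

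Now I would track the index bookkeeping: if $(r_1,\dots,r_d)\le(n,\dots,n)$ and some $r_j<n$, then every weight satisfies $r_k\le n$, so a vertex $i$ incident to edge $j$ has $r_ir_{i+1}\le (n-1)n < n^2$, whence $p_i=r_ir_{i+1}<n^2$ while all other $p_k\le n^2$. By part (iii) of Theorem~\ref{thm:boerder example}, $T\notin \Sub_{p_1,\dots,p_d}(\mathbb{V},\dots,\mathbb{V})$, and therefore $T\notin \overline{\tns}(C_d;r_1,\dots,r_d;\mathbb{V},\dots,\mathbb{V})$, i.e.\ $(r_1,\dots,r_d)\notin S$. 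Hence $(n,\dots,n)$ is minimal in $S$ and $\brank_{C_d}(T)=(n,\dots,n)$. The only genuine content beyond unwinding definitions is this reduction—converting the subspace obstruction (iii) into a statement about border $C_d$-rank via the inclusion into $\Sub_{p_1,\dots,p_d}$ and its closedness; I expect no real obstacle once Theorem~\ref{thm:boerder example} is granted, the only care being to keep the cyclic indexing of the products $r_ir_{i+1}$ straight and to use $r_k\le n$ for all $k$ so that a single $r_j<n$ forces a strict inequality $p_i<n^2$ at an adjacent vertex.
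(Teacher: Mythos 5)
Your proposal is correct and follows essentially the same route as the paper: the rank half comes directly from Theorem~\ref{thm:boerder example}(ii), and the border-rank half rules out any tuple strictly below $(n,\dots,n)$ by pushing the closure of the tensor network into a subspace variety with some $p_i = r_ir_{i+1} < n^2$ and invoking Theorem~\ref{thm:boerder example}(iii). The only difference is cosmetic --- you make explicit the inclusion $\overline{\tns}(C_d;r_1,\dots,r_d;\mathbb{V},\dots,\mathbb{V}) \subseteq \Sub_{p_1,\dots,p_d}(\mathbb{V},\dots,\mathbb{V})$ via Corollary~\ref{cor:intermediate} and the Zariski-closedness of the subspace variety, a step the paper leaves implicit.
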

\begin{proof}
By Theorem~\ref{thm:boerder example}, we have $\rank_{C_d}(T) \ne (n,\dots, n)$ and $\brank_{C_d}(T) \le (n,\dots, n)$. It remains to establish equality in the latter.
Suppose not, then $\brank_{C_d}(T)  = (r_1,\dots,r_d)$ where $r_i \le n$, $i=1,\dots, d$, with at least one strict inequality. Assume without loss of generality that $r_1 < n$. Then $r_1 r_2 < n^2$ and thus
\[
T \in \Sub_{n^2,r_1r_2,n^2,\dots, n^2} (\mathbb{V}, \dots, \mathbb{V}),
\]
contradicting Theorem~\ref{thm:boerder example}(iii).
\end{proof}

\begin{theorem}[Nonacyclic $G$-rank is not closed]\label{thm:brner}
Let $G = (V, E)$ be a connected graph with  $d$ vertices and $c$ edges that contains a cycle subgraph $C_{b}$ for some $b \le d$, i.e., there exist $b$ vertices $i_1,\dots, i_{b} \in V$ such that the $b$ edges $(i_1,i_2),\dots, (i_{b-1},i_{b}), (i_{b},i_1) \in E$. Then there exists $S\in \mathbb{V}^{\otimes d}$ such that
\[
\brank_{G}(S) = (s_1,\dots, s_c) \le  ( r_1,\dots, r_c) = \rank_{G}(S),
\]
with $s_i < r_i$ for at least one $i \in \{1,\dots, c\}$.
\end{theorem}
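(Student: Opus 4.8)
The plan is to reduce the general case to the cyclic case already settled in Theorem~\ref{thm:boerder example} and Corollary~\ref{cor:border example}. Choose any cycle $C_b\subseteq G$, say on vertices $1,\dots,b$ with cycle edges $e_1,\dots,e_b$, and call the remaining $c-b$ edges of $G$ \emph{auxiliary}. Let $\mathbb{V}=\mathbb{C}^{n\times n}$ and let $T\in\mathbb{V}^{\otimes b}$ be the border-deficient tensor of Theorem~\ref{thm:boerder example} for $C_b$, so that $T\in\overline{\tns}(C_b;n,\dots,n;\mathbb{V}^{\otimes b})\setminus\tns(C_b;n,\dots,n;\mathbb{V}^{\otimes b})$ and, by part~(iii), $T$ has full multilinear rank $(n^2,\dots,n^2)$. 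Fix $0\ne v\in\mathbb{V}$ and set $S\coloneqq T\otimes v^{\otimes(d-b)}\in\mathbb{V}^{\otimes d}$, placing the $v$'s at the $d-b$ non-cycle vertices. Then $\mrank(S)=(n^2,\dots,n^2,1,\dots,1)$, with the first $b$ entries equal to $n^2$ (since $v\ne0$). By the inheritance property (Theorem~\ref{thm:$G$-rank subspace}) I may compute every $G$-rank of $S$ inside $\mathbb{W}\coloneqq\mathbb{V}^{\otimes b}\otimes(\mathbb{C}v)^{\otimes(d-b)}$, i.e.\ with the one-dimensional space $\mathbb{C}v$ sitting at each non-cycle vertex.

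The central step is a correspondence valid for every tuple whose auxiliary entries are all equal to $1$: writing $\underline{\rho}=(\rho_1,\dots,\rho_b)$ for the weights on the cycle edges, I claim that
\[
S\in\tns(G;\underline{\rho},\underbrace{1,\dots,1}_{c-b};\mathbb{W})\iff T\in\tns(C_b;\underline{\rho};\mathbb{V}^{\otimes b}),
\]
and similarly with $\overline{\tns}$ in place of $\tns$. The direction `$\Leftarrow$' is built by hand: starting from a $C_b$-decomposition of $T$, attach $v$ (tensored with trivial scalars) at each non-cycle vertex and the one-dimensional space $\mathbb{C}$ to every auxiliary edge; since contracting a weight-one edge only multiplies scalars (cf.\ Proposition~\ref{prop:remove an edge}), this produces a $G$-decomposition of $S$. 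For `$\Rightarrow$' I contract the auxiliary part first: at a non-cycle vertex $j$ the factor lies in $\mathbb{I}_j\otimes\mathbb{C}v\otimes\mathbb{O}_j$ with $\mathbb{I}_j,\mathbb{O}_j$ tensor products of one-dimensional spaces, hence is a scalar multiple of $v$; contracting every auxiliary edge (each carrying $\mathbb{C}$) therefore only yields scalar factors and the vectors $v$, and crucially induces \emph{no} coupling between distinct cycle vertices. What remains is exactly a contraction $\kappa_{C_b}$ of the rescaled cycle factors, now lying in the cycle-edge spaces tensored with $\mathbb{V}_i$, whence $T\in\tns(C_b;\underline{\rho};\mathbb{V}^{\otimes b})$. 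Checking this backward contraction carefully — in particular that a non-cycle vertex adjacent to several cycle vertices through weight-one edges contributes only a scalar and never entangles them — is the main obstacle, and it is precisely what forces the auxiliary weights to stay at $1$.

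Granting the correspondence, I read off both ranks. For the border rank, Theorem~\ref{thm:boerder example}(i) gives $T\in\overline{\tns}(C_b;n,\dots,n)$, so $S\in\overline{\tns}(G;\underline{w})$ with $\underline{w}\coloneqq(n,\dots,n,1,\dots,1)$. To see $\underline{w}$ is minimal, note any tuple $\underline{\rho}'\le\underline{w}$ with $\underline{\rho}'\ne\underline{w}$ must drop some cycle edge below $n$ (the auxiliary entries are already $1$). By Corollary~\ref{cor:intermediate} and the closedness of subspace varieties, $\overline{\tns}(G;\underline{\rho}')\subseteq\Sub_{p_1,\dots,p_d}$ with $p_i=\min\{\prod_{j\text{ incident to }i}\rho'_j,\,n^2\}$; lowering a cycle edge at one of its endpoints $i$ makes the product of the weights incident to $i$ strictly less than $n\cdot n=n^2$, so $p_i<n^2$, contradicting the fact that the $i$-th multilinear rank of $S$ is $n^2$. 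Hence $S\notin\overline{\tns}(G;\underline{\rho}')$ and $\brank_G(S)=\underline{w}$. For the $G$-rank, pick a minimal $C_b$-rank $\underline{\rho}^{\ast}$ of $T$; the correspondence gives $S\in\tns(G;\underline{\rho}^{\ast},1,\dots,1)$, and any admissible tuple $\le(\underline{\rho}^{\ast},1,\dots,1)$ again has all auxiliary entries $1$, so its cycle part is a $C_b$-rank of $T$ dominated by $\underline{\rho}^{\ast}$, hence equal to $\underline{\rho}^{\ast}$ by minimality; thus $(\underline{\rho}^{\ast},1,\dots,1)$ is a $G$-rank of $S$.

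Combining the two computations yields
\[
\brank_G(S)=(\underbrace{n,\dots,n}_{b},\underbrace{1,\dots,1}_{c-b})\ \le\ (\underline{\rho}^{\ast},\underbrace{1,\dots,1}_{c-b})=\rank_G(S),
\]
and the inequality is strict in every cycle coordinate where $\underline{\rho}^{\ast}$ exceeds $n$. Here the required componentwise domination $\underline{\rho}^{\ast}\ge(n,\dots,n)$ together with $\underline{\rho}^{\ast}\ne(n,\dots,n)$ is inherited verbatim from the cycle case of Corollary~\ref{cor:border example}, so at least one such strict coordinate exists, giving the desired gap. The two points I expect to require the most care are the coupling-free backward contraction of the auxiliary subgraph in the correspondence, and the verification that the two exhibited tuples are genuinely minimal in $\mathbb{N}^c$ rather than merely admissible.
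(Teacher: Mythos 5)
Your proposal is correct and takes essentially the same route as the paper's own proof: the same witness $S = T \otimes v^{\otimes(d-b)}$ built from the $C_b$ tensor of Theorem~\ref{thm:boerder example}, the same multilinear-rank/subspace-variety obstruction to show $(n,\dots,n,1,\dots,1)$ is a minimal border $G$-rank, and the same transfer of a $C_b$-rank of $T$ (dominating $(n,\dots,n)$ strictly, via Corollary~\ref{cor:border example}) to a $G$-rank of $S$ with auxiliary entries equal to $1$. The only difference is one of exposition: what you isolate as the ``central step'' (the weight-one correspondence $S\in\tns(G;\underline{\rho},1,\dots,1)\iff T\in\tns(C_b;\underline{\rho})$, made rigorous through Theorem~\ref{thm:$G$-rank subspace} and the scalar nature of weight-one contractions) is compressed in the paper to the phrase ``by the way $S$ is constructed,'' so your write-up is a more detailed rendering of the identical argument.
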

\begin{proof}
Relabeling the vertices if necessary, we may assume that $i_1=1,\dots, i_b=b$, i.e., the first $b$ vertices of $G$ form the cycle subgraph $C_b$. Relabeling the edges if necessary, we may also assume that $r_1,\dots, r_{b}$ are the weights associated to $(1,2),\dots, (b,1)$, i.e., the edges of $C_b$. Let $r_1 = \dots = r_{b} = n$ and $r_{b+1} = \dots = r_c = 1$. Let $T\in \mathbb{V}^{\otimes b}$ be as defined in \eqref{eqn:border example} (with $b$ in place of $d$) and let $S = T \otimes v^{\otimes (d - b)} \in \mathbb{V}^d$  where $v\in \mathbb{V}$ is a nonzero vector. Then 
\[
S \in \overline{\tns}(G;\underbrace{n,\dots, n}_{b}, \underbrace{1,\dots, 1}_{c-b}; n^2,\dots,n^2).
\]
So $\brank_{G}(S) \le (n,\dots, n, 1,\dots, 1) \in \mathbb{N}^b \times \mathbb{N}^{c - b} =\mathbb{N}^c$.
On the other hand, if $(r_1,\dots,r_{b},1,\dots, 1)\in \mathbb{N}^c$ is a border $G$-rank of $S$ such that $r_i \le n$, $i=1,\dots, b$, with at least one strict inequality, then $S\in \Sub_{m_1,\dots, m_d} (\mathbb{V},\dots, \mathbb{V})$ where $m_i \le n^2$, $i=1,\dots, b$, with at least one strict inequality. But this contradicts Theorem~\ref{thm:boerder example}(iii). Hence $\brank_{G}(S) = (n,\dots, n, 1,\dots, 1) \in \mathbb{N}^b \times \mathbb{N}^{c-b} =\mathbb{N}^c$. Lastly, by the way $S$ is constructed,  if $(r_1,\dots,r_b) \in \mathbb{N}^{b}$ is a $C_{b}$-rank of $T$, then $(r_1,\dots,r_{b},1,\dots, 1)\in \mathbb{N}^c$ is a $G$-rank of $S$. Since by Corollary~\ref{cor:border example}, $\rank_{C_b}(T) = (r_1,\dots, r_{b})\in \mathbb{N}^{b}$ where $n \le r_i $, $ i=1,\dots, b$, with at least one strict inequality,  this completes the proof.
\end{proof}

\section{Tensor network ranks of common tensors}\label{sec:eg}

We will compute some $G$-ranks of some well-known tensors from different fields:
\begin{description}
\item[Algebra] $G$-rank of decomposable tensors and monomials, $S_n$-ranks of decomposable symmetric and skew-symmetric tensors (Section~\ref{sec:algebra}); 

\item[Physics] $P_d$-rank and $C_d$-rank of the $d$-qubit W and GHZ states (Section~\ref{sec:physics});

\item[Computing] $P_3$-rank and $C_3$-rank of the structure tensor for matrix-matrix product (Section~\ref{sec:computing}).
\end{description}

\subsection{Tensors in algebra}\label{sec:algebra}

The following shows that the term `rank-one' is unambiguous --- all rank-one tensors have $G$-rank one regardless of $G$, generalizing Example~\ref{example:TTd}. 
\begin{proposition}
Let $G$ be a connected graph with $d$ vertices and $c$ edges. Let $0 \ne v_1 \otimes \dots \otimes v_d\in \mathbb{V}_1 \otimes \cdots \otimes \mathbb{V}_d $ be a rank-one tensor. Then the $G$-rank of $v_1 \otimes \dots \otimes v_d$ is unique and equals $(1,\dots,1) \in \mathbb{N}^c$.
\end{proposition}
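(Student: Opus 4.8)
The plan is to prove the two defining halves of the claim separately: membership of $v_1 \otimes \dots \otimes v_d$ in the tensor network with all edge weights equal to one, and the observation that $(1,\dots,1)$ is forced to be the unique minimal such tuple.

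First I would establish that $v_1 \otimes \dots \otimes v_d \in \tns(G;1,\dots,1;\mathbb{V}_1,\dots,\mathbb{V}_d)$. This is immediate from Theorem~\ref{thm:every tensor is a TNS}: since $\rank(v_1 \otimes \dots \otimes v_d) = 1$, we may take $r_1 = \dots = r_c = \rank(v_1 \otimes \dots \otimes v_d) = 1$. Concretely, attach a one-dimensional space $\mathbb{E}_j \cong \mathbb{C}$ with basis vector $e^{(j)}$ and dual $e^{(j)\ast}$ to each edge $j$, and set $T_i = \bigl(\bigotimes_{j \in \In(i)} e^{(j)}\bigr) \otimes v_i \otimes \bigl(\bigotimes_{j \in \Out(i)} e^{(j)\ast}\bigr)$ for each vertex $i$. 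Each directed edge pairs exactly one $e^{(j)\ast}$ with one $e^{(j)}$, and since $e^{(j)\ast}(e^{(j)}) = 1$, contraction along every edge contributes the scalar $1$; hence $\kappa_G(T_1 \otimes \dots \otimes T_d) = v_1 \otimes \dots \otimes v_d$.

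Next I would argue minimality and uniqueness together. Because edge weights are required to lie in $\mathbb{N}$ (the positive integers), the tuple $(1,\dots,1)$ is the bottom element of the lattice $\mathbb{N}^c$: every $(r_1,\dots,r_c) \in \mathbb{N}^c$ satisfies $(1,\dots,1) \le (r_1,\dots,r_c)$. By the previous paragraph the set $\{(r_1,\dots,r_c) \in \mathbb{N}^c : v_1 \otimes \dots \otimes v_d \in \tns(G;r_1,\dots,r_c;\mathbb{V}_1,\dots,\mathbb{V}_d)\}$ contains $(1,\dots,1)$, and a subset of a poset that contains the global minimum has that minimum as its unique minimal element. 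Therefore $(1,\dots,1)$ is the one and only $G$-rank of $v_1 \otimes \dots \otimes v_d$, as required by Definition~\ref{def:tnr}.

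There is essentially no obstacle here: the content is entirely in recognizing that a rank-one tensor is already a tensor network state for the trivial edge weights and that $(1,\dots,1)$ sits below everything in $\mathbb{N}^c$. The only point requiring a word of care is that the tensor is nonzero, which is why no entry of the $G$-rank could be smaller; but this is automatic since $\mathbb{N}$ excludes $0$. Connectedness of $G$ enters only through its role in making $G$-rank well-defined via Theorem~\ref{thm:every tensor is a TNS}, since the explicit decomposition above works verbatim for any $G$.
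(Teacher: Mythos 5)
Your proof is correct and follows essentially the same route as the paper's: establish membership of the tensor in $\tns(G;1,\dots,1;\mathbb{V}_1,\dots,\mathbb{V}_d)$ and then observe that $(1,\dots,1)$ is the bottom element of $\mathbb{N}^c$, so minimality forces it to be the unique $G$-rank. The only cosmetic difference is that the paper phrases the membership step as the set identity $\tns(G;1,\dots, 1;n_1,\dots, n_d) = \Seg(\mathbb{V}_1, \dots,\mathbb{V}_d)$, whereas you verify the containment directly by the explicit weight-one contraction (i.e., Theorem~\ref{thm:every tensor is a TNS} specialized to rank one), which suffices equally well.
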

\begin{proof}
As usual, let $\dim \mathbb{V}_i=n_i$, $i =1,\dots, n_d$. 
It follows easily from Definition~\ref{def:tns} that
\begin{equation}\label{eq:G-rank of rank one tensors}
\tns(G;1,\dots, 1;n_1,\dots, n_d) = \Seg(\mathbb{V}_1, \dots,\mathbb{V}_d),
\end{equation}
and so $(1,\dots, 1)$ is a $G$-rank of $v_1 \otimes \dots \otimes v_d$. Conversely, if $ (r_1,\dots, r_c)$ is a $G$-rank of $v_1 \otimes \dots \otimes v_d$, then $ (r_1,\dots, r_c)$ is minimal in $\mathbb{N}^c$ such that  $v_1 \otimes \dots \otimes v_d \in \tns(G;r_1,\dots, r_c;n_1,\dots ,n_d)$.
However, by \eqref{eq:G-rank of rank one tensors}, $v_1 \otimes \dots \otimes v_d \in \tns(G;1,\dots, 1;n_1,\dots ,n_d)$,
and obviously $1\le r_i$, $i=1,\dots, c$, implying that $r_i = 1$, $i=1,\dots, c$.
\end{proof}

We next discuss \emph{decomposable symmetric tensors} and \emph{decomposable skew-symmetric tensors}. For those unfamiliar with these notions, they are defined respectively as
\begin{align*}
v_1 \circ \cdots \circ v_d  &\coloneqq \frac{1}{d!}  \sum_{\sigma\in \mathfrak{S}_d} v_{\sigma(1)} \otimes \dots \otimes v_{\sigma(d)}  \in \mathsf{S}^d(\mathbb{V}) \subseteq \mathbb{V}^d,\\
v_1\wedge \cdots \wedge v_d &\coloneqq  \frac{1}{d!} \sum_{\sigma\in \mathfrak{S}_d} \sgn(\sigma) v_{\sigma(1)} \otimes \dots \otimes v_{\sigma(d)}  \in \mathsf{\Lambda}^d(\mathbb{V}) \subseteq \mathbb{V}^d,
\end{align*}
where  $v_1,\dots, v_d \in \mathbb{V}$, an $n$-dimensional vector space, and 
where $\sgn(\sigma)$ denotes the sign of the permutation $\sigma\in \mathfrak{S}_d$.

\begin{theorem}[\textsc{stns}-rank of decomposable (skew-)symmetric tensors]\label{thm:Sn-rank}
Let $S_n$ be the star graph with vertices $1,\dots, n$ and with $1$ as the root vertex. If $d = n$ and $v_1,\dots, v_n \in \mathbb{V}$ are linearly independent, then
 $v_1 \circ \dots \circ v_n $ and $ v_1 \wedge \dots \wedge v_n $ both have $S_n$-rank $(n,\dots,n) \in \mathbb{N}^n$.
\end{theorem}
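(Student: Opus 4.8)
The plan is to exploit the fact that $S_n$ is a tree, so that by Theorem~\ref{thm:G-rank of ttns} its $G$-rank is computed edgewise as matrix ranks of flattenings, and then to reduce the problem to a single multilinear rank computation for the two standard tensors.

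First I would put the tensors into a convenient normal form. Since $v_1,\dots,v_n$ are linearly independent, set $\mathbb{W} \coloneqq \operatorname{span}(v_1,\dots,v_n)$, a subspace of $\mathbb{V}$ of dimension $n$; both $v_1\circ\dots\circ v_n$ and $v_1\wedge\dots\wedge v_n$ lie in $\mathbb{W}^{\otimes n}$. By the inheritance property (Theorem~\ref{thm:$G$-rank subspace}) the $S_n$-rank is unchanged if we regard these as tensors in $\mathbb{W}^{\otimes n}$, so we may assume $\dim\mathbb{V} = n$ and that $v_1,\dots,v_n$ is a basis. Applying the same change of basis $M \in \GL_n(\mathbb{C})$ with $v_i \mapsto e_i$ to every tensor factor preserves both symmetry and skew-symmetry, since $(M,\dots,M)\cdot(v_1\circ\dots\circ v_n) = (Mv_1)\circ\dots\circ(Mv_n)$ and likewise for the wedge; by the $\GL$-invariance of $G$-rank in \eqref{eq:mlm} this does not change the $S_n$-rank. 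Hence it suffices to treat $T_{\mathrm{sym}} = e_1\circ\dots\circ e_n$ and $T_{\mathrm{skew}} = e_1\wedge\dots\wedge e_n$ in $(\mathbb{C}^n)^{\otimes n}$.

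Next I would apply Theorem~\ref{thm:G-rank of ttns}. Labelling the center vertex $1$ and the leaves $2,\dots,n$, the edges of $S_n$ are exactly $\{1,i\}$ for $i=2,\dots,n$. Removing $\{1,i\}$ isolates the leaf $i$, so $V(i) = \{i\}$ and $V(1) = \{1,\dots,n\}\setminus\{i\}$; the flattening $\flat_{1i}$ is therefore the ordinary mode-$i$ flattening, and $\rank\bigl(\flat_{1i}(T)\bigr)$ equals the $i$-th entry of $\mrank(T)$. Thus $\rank_{S_n}(T) = \bigl(\rank(\flat_{12}(T)),\dots,\rank(\flat_{1n}(T))\bigr)$, and the claim reduces to showing that the multilinear rank of each of $T_{\mathrm{sym}}$ and $T_{\mathrm{skew}}$ equals $(n,\dots,n)$.

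Finally I would compute these flattening ranks directly in coordinates. The entry $(T_{\mathrm{sym}})_{j_1\cdots j_n}$ is nonzero exactly when $(j_1,\dots,j_n)$ is a permutation of $(1,\dots,n)$, while $(T_{\mathrm{skew}})_{j_1\cdots j_n} = \frac{1}{n!}\sgn(\sigma)$ when $(j_1,\dots,j_n)=(\sigma(1),\dots,\sigma(n))$ and is $0$ otherwise. Fixing a mode $i$ and letting the other $n-1$ indices run over the $n-1$ distinct values complementary to a missing value $m\in\{1,\dots,n\}$, the corresponding mode-$i$ fiber is a nonzero scalar multiple of $e_m$; as $m$ ranges over $1,\dots,n$ these fibers span all of $\mathbb{C}^n$, so the mode-$i$ column space is $\mathbb{C}^n$ and $\rank(\flat_{1i}(T)) = n$. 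Since this holds for every leaf $i$, both tensors have $S_n$-rank $(n,\dots,n)$. The only genuinely delicate point is this last step, pinning the flattening ranks down exactly: the upper bound $\rank(\flat_{1i}(T))\le \dim\mathbb{V} = n$ is automatic, so the content is the lower bound that the mode-$i$ fibers already span the whole $n$-dimensional space—this is precisely where the linear independence of $v_1,\dots,v_n$ (equivalently, that every $e_m$ genuinely appears in each mode) is used, and it is what distinguishes this case from a degenerate one where the multilinear ranks, and hence the $S_n$-rank, would drop.
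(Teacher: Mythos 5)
Your proof is correct, but it takes a genuinely different route from the paper's. The paper's own argument is a two-step, more elementary one: (i) the upper bound is immediate because every leaf of $S_n$ is critical ($r_i = n = \dim\mathbb{V}$), so $\tns(S_n;n,\dots,n;n,\dots,n)$ is all of $\mathbb{V}^{\otimes n}$ and in particular contains both tensors; (ii) for the lower bound it observes that $v_1\circ\dots\circ v_n$ and $v_1\wedge\dots\wedge v_n$ are \emph{nondegenerate}, so that by Proposition~\ref{prop:upper bound G-ranks} any $G$-rank $(r_1,\dots,r_c)$ must satisfy $\prod_{j\in\In(i)\cup\Out(i)} r_j \ge n$ at every vertex; since each edge of the star is incident to a degree-one vertex, this forces every single edge weight to be at least $n$, and minimality then pins the rank to $(n,\dots,n)$. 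You instead invoke the flattening-rank formula for trees (Theorem~\ref{thm:G-rank of ttns}), reduce to a multilinear-rank computation, and verify explicitly that every mode-$i$ fiber set spans $\mathbb{C}^n$. The two routes ultimately rest on the same fact --- that these tensors have full multilinear rank $(n,\dots,n)$, which is exactly what nondegeneracy means --- but the paper's proof needs only the lightweight Proposition~\ref{prop:upper bound G-ranks} while declaring nondegeneracy ``clear,'' whereas yours imports the heavier Theorem~\ref{thm:G-rank of ttns} and in exchange gets (a) the spanning computation written out (precisely the step the paper glosses over, and the only place linear independence is used), and (b) uniqueness of the $S_n$-rank for free, since the flattening formula produces \emph{the} rank rather than just exhibiting one minimal element. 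One small bookkeeping point in your favor: your count correctly yields a tuple of length $n-1$ (the number of edges of the $n$-vertex star), whereas the $(n,\dots,n)\in\mathbb{N}^n$ in the statement and the $(r_1,\dots,r_n)$ in the paper's proof appear to be an off-by-one slip.
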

\begin{proof}
Since $\tns(S_n;n,\dots, n; n,\dots ,n) = \mathbb{V}^{\otimes n}$, 
\begin{equation}\label{eq: S_n-rank of Lambda_n, Sigma_n}
v_1 \circ \dots \circ v_n , \; v_1 \wedge \dots \wedge v_n  \in \tns(S_n;n,\dots, n; n,\dots ,n).
\end{equation} 
It remains to show that there does not exist $(r_1,\dots ,r_n)$ such that $r_i\le n$, $i=1,\dots, n$, with at least one strict inequality, such that  $v_1 \circ \dots \circ v_n$ and $ v_1 \wedge \dots \wedge v_n   \in \tns(S_n;r_1,\dots, r_n;n,\dots ,n)$. But this is clear as $v_1 \circ \dots \circ v_n $ and $ v_1 \wedge \dots \wedge v_n $  are nondegenerate tensors in $\mathbb{V}^{\otimes n}$.
\end{proof}
It is easy to construct explicit $S_n$-decompositions of $v_1 \circ \dots \circ v_n $ and $ v_1 \wedge \dots \wedge v_n $ in Theorem~\ref{thm:Sn-rank}. Let $\mathbb{E}$ be $n$-dimensional with basis $e_1,\dots, e_n$ and dual basis $e_1^\ast,\dots, e_n^\ast$. Consider
\begin{align*}
\Sigma &= \frac{1}{n!} \sum_{\sigma\in \mathfrak{S}_n} v_{\sigma(1)} \otimes  e_{\sigma(2)} \otimes \dots \otimes e_{\sigma(n)} \in \mathbb{V} \otimes \mathbb{E}^{\otimes (n-1)},\\
\Lambda &= \frac{1}{n!} \sum_{\sigma\in \mathfrak{S}_n} \sgn(\sigma) v_{\sigma(1)} \otimes e_{\sigma(2)} \otimes \dots \otimes e_{\sigma(n)}  \in \mathbb{V} \otimes \mathbb{E}^{\otimes (n-1)},
\end{align*}
and $M = \sum_{i=1}^n e_j^\ast \otimes  v_j \in \mathbb{E}^\ast \otimes \mathbb{V}$.
Then
\[
\kappa_{S_n}(\Sigma \otimes M^{\otimes (n-1)}) = v_1 \circ \dots \circ v_n \qquad \text{and} \qquad \kappa_{S_n}(\Lambda \otimes M^{\otimes (n-1)}) =  v_1 \wedge \dots \wedge v_n.
\]

A \emph{monomial} of degree $d$ in $n$ variables $x_1,\dots, x_n$ may be regarded \cite{L} as a decomposable symmetric $d$-tensor over an $n$-dimensional vector space $\mathbb{V}$:
\[
x_1^{p_1} \cdots x_n^{p_n} = v_1^{\otimes p_1} \circ \cdots \circ v_n^{\otimes p_n}\in \mathsf{S}^d (\mathbb{V}),
\]
where $p_1,\dots, p_n$ are nonnegative integers such that  $p_1 + \cdots + p_n = d$.
If $d = n$ and $p_1=\dots = p_d = 1$, then $ v_1 \circ \dots \circ v_d = x_1\cdots x_d$, i.e., a decomposable symmetric tensor is a special case.

\begin{proposition}[$G$-rank of monomials]\label{prop: G-rank of monomials}
Let $G$ be a connected graph with $d$ vertices and $c$ edges. Let $n \le d$ and  $p_1 + \cdots + p_n = d$. Let $\mathbb{U}$ be a $d$-dimensional vector space with basis $u_1,\dots, u_d$ and $\mathbb{V}$ an $n$-dimensional vector space with basis $v_1,\dots, v_n$. If
$u_1 \circ \dots \circ u_d  \in \tns(G;r_1,\dots, r_c;d,\dots, d)$,
then
$v_1^{\otimes p_1} \circ \cdots \circ v_n^{\otimes p_n} \in \tns(G;r_1,\dots, r_c; n,\dots,n)$, i.e.,
\begin{equation}\label{eq:bound}
\rank_G(v_1^{\otimes p_1} \circ \cdots \circ v_n^{\otimes p_n} ) \le \rank_G(u_1 \circ \dots \circ u_d).
\end{equation}
\end{proposition}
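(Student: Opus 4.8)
The plan is to realize the monomial $v_1^{\otimes p_1}\circ\dots\circ v_n^{\otimes p_n}$ as the image of the ``generic'' product $u_1\circ\dots\circ u_d$ under a single linear map $\pi\colon\mathbb{U}\to\mathbb{V}$ that collapses the $d$ distinct variables onto the $n$ repeated ones with the prescribed multiplicities, and then to observe that applying such a map at every vertex turns a $G$-decomposition of $u_1\circ\dots\circ u_d$ into a $G$-decomposition of the monomial with the \emph{same} edge weights $(r_1,\dots,r_c)$. This is really a naturality statement for the contraction map $\kappa_G$ in its vertex slots, and once it is in place the rank bound \eqref{eq:bound} is immediate.

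First I would define $\pi\colon\mathbb{U}\to\mathbb{V}$ on the basis by $\pi(u_k)=v_j$ whenever $p_1+\dots+p_{j-1}<k\le p_1+\dots+p_j$, so that the first $p_1$ basis vectors map to $v_1$, the next $p_2$ to $v_2$, and so on. Since $\pi^{\otimes d}$ is $\mathfrak{S}_d$-equivariant for the permutation action on the tensor factors, it commutes with the symmetrization operator $\tfrac1{d!}\sum_{\sigma}\sigma$ defining $\circ$, whence
\[
\pi^{\otimes d}(u_1\circ\dots\circ u_d)=\pi(u_1)\circ\dots\circ\pi(u_d)=v_1^{\otimes p_1}\circ\dots\circ v_n^{\otimes p_n},
\]
which is exactly the target tensor.

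Next, assuming $u_1\circ\dots\circ u_d=\kappa_G(T_1\otimes\dots\otimes T_d)$ is a $G$-decomposition with $T_i\in\mathbb{I}_i\otimes\mathbb{U}\otimes\mathbb{O}_i$ and edge spaces $\mathbb{E}_1,\dots,\mathbb{E}_c$ of dimensions $r_1,\dots,r_c$, I would set $T_i'\coloneqq(\mathrm{id}_{\mathbb{I}_i}\otimes\pi\otimes\mathrm{id}_{\mathbb{O}_i})(T_i)\in\mathbb{I}_i\otimes\mathbb{V}\otimes\mathbb{O}_i$ and invoke the naturality of $\kappa_G$ in the vertex slots: because the contraction in \eqref{eq:con} pairs $\mathbb{E}_j$ with $\mathbb{E}_j^*$ and leaves the $\mathbb{V}_i$-factors untouched, applying $\pi$ at each vertex commutes with $\kappa_G$, so that
\[
\kappa_G(T_1'\otimes\dots\otimes T_d')=\pi^{\otimes d}\bigl(\kappa_G(T_1\otimes\dots\otimes T_d)\bigr)=v_1^{\otimes p_1}\circ\dots\circ v_n^{\otimes p_n}.
\]
As the $T_i'$ reuse the same edge spaces $\mathbb{E}_1,\dots,\mathbb{E}_c$, this places the monomial in $\tns(G;r_1,\dots,r_c;n,\dots,n)$, which is the membership claim; the inequality \eqref{eq:bound} then follows from \eqref{eq:set}.

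The only point needing care — the ``hard part,'' though it is mild — is the commutation of $\kappa_G$ with the vertex maps $\pi$, i.e.\ the naturality square. I would verify it directly from \eqref{eq:con} on elementary tensors of the form $\bigl(\bigotimes_{j}x_j\bigr)\otimes w_i\otimes\bigl(\bigotimes_j y_j^*\bigr)$ at each vertex: the contraction only ever evaluates the $y^*$'s on the $x$'s and multiplies the resulting scalars into the $w_i$-factors, so replacing $w_i$ by $\pi(w_i)$ before or after contracting gives the same result, and linearity extends this to all tensors. The degenerate case in which some $p_j=0$ (so $v_j$ does not appear) requires no separate treatment, since $\pi$ is defined by the same formula regardless and the displayed identities continue to hold.
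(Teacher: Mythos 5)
Your proof is correct, and it rests on the same key construction as the paper's: your $\pi$ is exactly the paper's map $\varphi\colon\mathbb{U}\to\mathbb{V}$ collapsing $u_1,\dots,u_d$ onto $v_1,\dots,v_n$ with multiplicities $p_1,\dots,p_n$, and both proofs begin with the observation that $\varphi^{\otimes d}$ sends $u_1\circ\dots\circ u_d$ to $v_1^{\otimes p_1}\circ\cdots\circ v_n^{\otimes p_n}$. Where the two arguments differ is in how they finish. The paper asserts the set-level inclusion $\varphi^{\otimes d}\bigl(\tns(G;r_1,\dots,r_c;d,\dots,d)\bigr)\subseteq\tns(G;r_1,\dots,r_c;d,\dots,d)$ --- whose justification is precisely your naturality square, left implicit there --- and then, regarding $\mathbb{V}$ as a subspace, intersects with $\mathbb{V}^{\otimes p_1}\otimes\cdots\otimes\mathbb{V}^{\otimes p_n}$ and invokes the restriction lemma (Lemma~\ref{lemma: subspace TNS}) to drop from tensor network states over the big space to those over $\mathbb{V}$. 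You instead push the vertex tensors themselves through $\pi$, obtaining $T_i'\in\mathbb{I}_i\otimes\mathbb{V}\otimes\mathbb{O}_i$ and hence landing directly in $\tns(G;r_1,\dots,r_c;n,\dots,n)$, so the restriction lemma is never needed; the commutation of $\kappa_G$ with maps applied in the vertex slots, which you verify on elementary tensors from \eqref{eq:con}, does all the work. Your route buys self-containedness (only Definition~\ref{def:tns} and \eqref{eq:con} are used) and sidesteps the identification of $\mathbb{V}$ with a subspace of $\mathbb{U}$ that the paper's intersection step tacitly requires; the paper's route buys brevity by reusing machinery (Lemma~\ref{lemma: subspace TNS}) already established for Theorem~\ref{thm:$G$-rank subspace}. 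Either way the inequality \eqref{eq:bound} follows as you say, by applying the membership statement to a minimal $(r_1,\dots,r_c)$ for $u_1\circ\dots\circ u_d$.
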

\begin{proof}
Let $\varphi : \mathbb{U} \to \mathbb{V}$ be the linear map  that sends $u_{p_i+1},\dots, u_{p_{i+1}}$ to $v_{i+1}$, $i=0,\dots, n-1$, where $p_0 \coloneqq 0$. Let $\varphi^{\otimes d}: \mathbb{U}^{\otimes d} \to \mathbb{V}^{\otimes d}$ be the linear map induced by $\varphi$. Observe that 
$\varphi^{\otimes d} (u_1 \circ \dots \circ u_d) =  v_1^{\otimes p_1} \circ \cdots \circ v_n^{\otimes p_n}$.
Also, $\varphi^{\otimes d} \bigl(\tns(G;r_1,\dots,r_c; d,\dots, d)\bigr) \subseteq \tns(G;r_1,\dots,r_c; d,\dots, d)$. Hence
$v_1^{\otimes p_1} \circ \cdots \circ v_n^{\otimes p_n}  \in \tns(G;r_1,\dots,r_c; d,\dots, d) \cap (\mathbb{V}^{\otimes p_1}\otimes \cdots \otimes \mathbb{V}^{\otimes p_n})
= \tns(G;r_1,\dots, r_c; n,\dots ,n)$.
\end{proof}

The case where number of variables is larger than degree, i.e., $n > d$, reduces to Proposition~\ref{prop: G-rank of monomials}. In this case, a monomial $x_1^{p_1} \cdots x_n^{p_n}$ of degree $d$ will not involve all variables $x_1,\dots, x_n$ and, as a tensor, $v_1^{\otimes p_1} \circ \cdots \circ v_n^{\otimes p_n} \in \mathsf{S}^d (\mathbb{W})$ where $\mathbb{W} \subseteq \mathbb{V}$ is an appropriate subspace of dimension $ \le d$.

For comparison, the \emph{Waring rank}\footnote{The Waring rank of a polynomial $f$ of degree $d$ is the smallest $r$ such that $f = \sum_{i=1}^r l_i^d$ for linear forms $l_1,\dots,l_r$. Its Waring border rank is the smallest $r$ such that $f$ is a limit of a sequence of polynomials of Waring rank $r$.} or \emph{symmetric tensor rank} of a monomial  $v_1^{\otimes p_1} \circ \cdots \circ v_n^{\otimes p_n} $ where $p_1 \ge p_2 \ge \dots \ge p_n > 0 $, is $\prod_{i=1}^{n-1} ( p_i + 1)$, whereas its \emph{Waring border rank} is $\prod_{i=2}^{n} ( p_i + 1)$ \cite{LT2010,Oeding2016}; its multilinear rank is easily seen\footnote{The first flattening $\flat_1( v_1^{\otimes p_1} \circ \cdots \circ v_n^{\otimes p_n} )$, as a linear map, sends $v_i$ to $v_1^{p_1} \circ \dots \circ v_{i}^{p_i - 1} \circ \cdots v_n^{p_n}$, $i=1,\dots,n$. It has full rank $n$ since $v_1^{p_1} \circ \dots \circ v_{i}^{p_i - 1} \circ \cdots v_n^{p_n}$, $i=1,\dots,n$, are linearly independent. Ditto for other flattenings.} to be $(n,\dots, n)$. The monomials include $v_1 \circ \dots \circ v_n $ as a special case. As for $v_1 \wedge \dots \wedge v_n $, tensor rank and border rank are still open but its multilinear rank is also easily seen to be $(n,\dots, n)$.

\subsection{Tensors in physics}\label{sec:physics}
Let $\mathbb{V}$ be two-dimensional and $v_1,v_2$ be a basis. For any $d \ge 3$, the $d$-tensors in $\mathbb{V}^d$ defined by
\[
\W_d  \coloneqq \sum_{i=1}^d v_1^{\otimes (i-1)} \otimes v_2 \otimes  v_1^{\otimes (d-i)} \qquad\text{and}\qquad
\GHZ_d \coloneqq v_1^{\otimes d} + v_2^{\otimes d}, 
\]
are called the $d$-qubit \emph{W state} and \emph{GHZ state} (for Werner and Greenberger--Horne--Zeilinger) respectively.

Observe that  $\W_d = v_1^{d-1} \circ v_2$ is a decomposable symmetric tensor corresponding to the monomial $x_1^{d-1}x_2$. By \eqref{eq: S_n-rank of Lambda_n, Sigma_n} and Proposition~\ref{prop: G-rank of monomials},  we obtain  $\W_d \in \tns(S_d; d,\dots, d; 2,\dots, 2)$ but this also trivially follows from $\tns(S_d; d,\dots, d; 2,\dots, 2) =\tns(S_d; 2,\dots, 2; 2,\dots, 2) = \mathbb{V}^{d}$, which shows that
 the inequality can be strict in \eqref{eq:bound}.

We start with the $P_d$-rank of $W_d$. Let the vertices of $P_d$ be  $1,\dots, d$ and edges be oriented $(1,2),(2,3),\dots,(d-1,d)$.
Let $\mathbb{E}_i \simeq \mathbb{E}$ be a two-dimensional vector space associated to $(i,i+1)$, $i=1,\dots, d-1$, with basis $e_1,e_2$ and dual basis $e_1^\ast,e_2^\ast $. Note that  $\W_d = \kappa_{P_d} (A\otimes B^{\otimes (d-2)} \otimes C) \ \in \tns(P_d;2,\dots,2;2,\dots,2)$ with
$A = v_1 \otimes e_1^\ast + v_2\otimes e_2^\ast \in \mathbb{V}\otimes \mathbb{E}^\ast$,
$B  = e_1 \otimes ( v_1 \otimes e_1^\ast + v_2 \otimes e_2^\ast) + e_2 \otimes v_1 \otimes  e_2^\ast\in \mathbb{V}\otimes \mathbb{E}^\ast \otimes \mathbb{E}$, $C=  v_1 \otimes e_2 + v_2\otimes e_1 \in  \mathbb{V} \otimes \mathbb{E}$. 
So if $\rank_{P_d}(\W_d) = (r_1,\dots, r_{d-1})$, then we must have $r_i  = 1$ or $2$ for $i=1,\dots, d-1$. Let  $r_0 = r_d = 1$. Suppose $r_{i-1} r_{i} = 1$ for some $i \in \{1,\dots,d\}$. Then $\W_d \in \mathbb{V}^{\otimes (i-1)} \otimes \mathbb{W} \otimes \mathbb{V}^{\otimes (d-i)}$ where $\mathbb{W}\subseteq \mathbb{V}$ is a one-dimensional subspace, which is impossible by the definition of $\W_d$. Thus we obtain the following.
\begin{lemma}\label{lem: P_d rank of W_d}
If $\rank_{P_d}(\W_d) = (r_1,\dots, r_{d-1})$, then $r_i \in \{1,2\}$ and $r_{i-1} r_{i} \ge 2$, $i=1,\dots, d$, where $r_0 = r_d = 1$.
\end{lemma}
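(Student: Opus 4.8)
The plan is to read off both halves of the statement from results already in hand: the explicit decomposition displayed just above controls the upper side $r_i\le 2$, while the general lower bound of Proposition~\ref{prop:upper bound G-ranks} forces the products $r_{i-1}r_i\ge 2$. First I would record the upper bound. The identity $\W_d=\kappa_{P_d}(A\otimes B^{\otimes(d-2)}\otimes C)$ exhibited above shows $\W_d\in\tns(P_d;2,\dots,2;2,\dots,2)$. By the characterization \eqref{eq:set} and the minimality built into Definition~\ref{def:tnr}, any $P_d$-rank $(r_1,\dots,r_{d-1})$ of $\W_d$ therefore satisfies $r_i\le 2$; since $G$-ranks take values in $\mathbb{N}^{c}$ (positive integers) we also have $r_i\ge 1$, so $r_i\in\{1,2\}$ for every $i$.

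For the product inequalities I would invoke Proposition~\ref{prop:upper bound G-ranks}, which first requires checking that $\W_d$ is nondegenerate in the sense of Definition~\ref{def:degen}. The key computation is that every single-mode flattening of $\W_d$ has rank $2$: in mode $i$ the unique summand carrying $v_2$ in position $i$ contributes the vector $v_2$, whereas each of the remaining $d-1$ summands contributes $v_1$, so the mode-$i$ image is $\operatorname{span}\{v_1,v_2\}=\mathbb{V}$. Hence $\mrank(\W_d)=(2,\dots,2)$, and there is no tuple of proper subspaces $\mathbb{W}_i\subsetneq\mathbb{V}$ with $\W_d\in\mathbb{W}_1\otimes\dots\otimes\mathbb{W}_d$, i.e.\ $\W_d$ is nondegenerate. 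Proposition~\ref{prop:upper bound G-ranks} then gives $\prod_{j\in\In(i)\cup\Out(i)}r_j\ge\dim\mathbb{V}=2$ for each vertex $i$ of $P_d$. With $P_d$ oriented $1\to 2\to\dots\to d$, the edges incident to vertex $i$ are exactly $(i-1,i)$ and $(i,i+1)$, so this product is precisely $r_{i-1}r_i$ under the boundary convention $r_0=r_d=1$, yielding $r_{i-1}r_i\ge 2$ for $i=1,\dots,d$. Equivalently, this is the contrapositive already sketched above: $r_{i-1}r_i=1$ would put $\W_d\in\mathbb{V}^{\otimes(i-1)}\otimes\mathbb{W}\otimes\mathbb{V}^{\otimes(d-i)}$ with $\dim\mathbb{W}=1$ by Corollary~\ref{cor:intermediate}, contradicting the mode-$i$ rank computation.

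The main obstacle, to the extent there is one, is purely the nondegeneracy verification, namely confirming that the W state genuinely uses both basis vectors in each mode so that every single-mode flattening has rank $2$; once that bookkeeping is in place, both the membership $r_i\in\{1,2\}$ and the product bound $r_{i-1}r_i\ge 2$ follow immediately from the cited general results, and I anticipate no further difficulty.
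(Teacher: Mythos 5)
Your proof is correct and takes essentially the same route as the paper: the explicit decomposition $\W_d=\kappa_{P_d}(A\otimes B^{\otimes(d-2)}\otimes C)$ supplies the bound $r_i\in\{1,2\}$, and the products $r_{i-1}r_i\ge 2$ follow because $\W_d$ cannot lie in $\mathbb{V}^{\otimes(i-1)}\otimes\mathbb{W}\otimes\mathbb{V}^{\otimes(d-i)}$ with $\dim\mathbb{W}=1$. The only difference is packaging: you route this last step through the mode-rank computation, nondegeneracy, and Proposition~\ref{prop:upper bound G-ranks}, while the paper argues the same contradiction directly (exactly your contrapositive remark), so the two arguments coincide.
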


\begin{theorem}[\textsc{tt}-rank of W state]\label{thm:P_d-rank of W_d}
Let $d \ge 3$. Then
$\rank_{P_d}(\W_d) = (2,\dots, 2)\in \mathbb{N}^{d-1}$.
\end{theorem}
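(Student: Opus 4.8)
The plan is to invoke Theorem~\ref{thm:G-rank of ttns}, which computes the $P_d$-rank of any tensor as the tuple of matrix ranks of its flattenings along the edges. Since $P_d$ has edges $\{i,i+1\}$ for $i=1,\dots,d-1$, and removing $\{i,i+1\}$ splits the vertices into $\{1,\dots,i\}$ and $\{i+1,\dots,d\}$, the theorem gives $\rank_{P_d}(\W_d)=(r_1,\dots,r_{d-1})$ with $r_i=\rank\bigl(\flat_{i,i+1}(\W_d)\bigr)$, the matrix rank of the flattening at the $i$-th cut. It therefore suffices to show that each of these $d-1$ flattenings has matrix rank exactly $2$.

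First I would split the defining sum of $\W_d$ according to the position of the single $v_2$ factor relative to the cut. Writing $W^{(i)}\coloneqq\sum_{j=1}^i v_1^{\otimes(j-1)}\otimes v_2\otimes v_1^{\otimes(i-j)}\in\mathbb{V}^{\otimes i}$ and $W^{[i]}\coloneqq\sum_{j=i+1}^d v_1^{\otimes(j-i-1)}\otimes v_2\otimes v_1^{\otimes(d-j)}\in\mathbb{V}^{\otimes(d-i)}$ (each a W state on the corresponding block, with $W^{(1)}=W^{[d-1]}=v_2$), one obtains
\[
\flat_{i,i+1}(\W_d)=W^{(i)}\otimes v_1^{\otimes(d-i)}+v_1^{\otimes i}\otimes W^{[i]},
\]
which exhibits the flattening as a sum of two rank-one matrices, so its rank is at most $2$.

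The key step is to verify that the rank is not $1$, i.e.\ that this sum genuinely has rank $2$. A sum $u_1\otimes w_1+u_2\otimes w_2$ has matrix rank $2$ precisely when $\{u_1,u_2\}$ and $\{w_1,w_2\}$ are each linearly independent. Here $u_2=v_1^{\otimes i}$ is the single product basis tensor carrying no $v_2$, whereas $u_1=W^{(i)}$ is a nonzero combination of the distinct product basis tensors each carrying exactly one $v_2$; in the product basis of $\mathbb{V}^{\otimes i}$ built from $v_1,v_2$ these two vectors have disjoint supports, so $\{W^{(i)},v_1^{\otimes i}\}$ is linearly independent for every $1\le i\le d-1$. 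The symmetric argument shows $\{v_1^{\otimes(d-i)},W^{[i]}\}$ is linearly independent. Hence $r_i=\rank\bigl(\flat_{i,i+1}(\W_d)\bigr)=2$ for all $i$, giving $\rank_{P_d}(\W_d)=(2,\dots,2)$.

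I do not anticipate a serious obstacle here: the only point requiring care is the bookkeeping that separates the two blocks of $\W_d$ at each cut and the observation that $v_1^{\otimes k}$ and the relevant block W state occupy complementary supports in the product basis. It is worth noting that this argument bypasses Lemma~\ref{lem: P_d rank of W_d}, which alone only constrains each $r_i$ to $\{1,2\}$ with no two consecutive $1$'s and would still permit patterns such as $(2,1,2,\dots)$; the exact value comes from the flattening computation. Alternatively, one could establish the lower bound $r_i\ge 2$ without Theorem~\ref{thm:G-rank of ttns} by observing that $r_i=1$ would, via Proposition~\ref{prop:remove an edge}, force $\W_d$ to factor as $A\otimes B$ across the cut, contradicting the rank-$2$ flattening computed above.
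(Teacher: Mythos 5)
Your proof is correct, and it takes a genuinely different route from the paper. The paper argues in two steps: it first writes down an explicit \textsc{tt} decomposition $\W_d = \kappa_{P_d}(A\otimes B^{\otimes(d-2)}\otimes C)$ with two-dimensional edge spaces, which (with Lemma~\ref{lem: P_d rank of W_d}) confines every entry of a $P_d$-rank to $\{1,2\}$, and then derives a contradiction from any $r_i=1$: a weight-one edge would force $\W_d$ to split as $X\otimes Y$ across that cut, i.e.\ to be a rank-one $2$-tensor in $\mathbb{V}^{\otimes i}\otimes\mathbb{V}^{\otimes(d-i)}$, which is impossible. Your proof instead invokes the flattening characterization of tree ranks (Theorem~\ref{thm:G-rank of ttns}) and computes each flattening rank directly, via the decomposition $\flat_{i,i+1}(\W_d)=W^{(i)}\otimes v_1^{\otimes(d-i)}+v_1^{\otimes i}\otimes W^{[i]}$ and the disjoint-support observation; this is shorter, needs neither the lemma nor any explicit \textsc{tt} decomposition, pins down the exact value in one stroke (your point that the lemma alone still permits patterns like $(2,1,2,\dots)$ is well taken --- the paper's claim that a non-$(2,\dots,2)$ rank has exactly one entry equal to $1$ is in fact a slight overstatement of what the lemma yields, though harmless since the contradiction applies to any single $r_i=1$), and gives uniqueness for free. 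What the paper's route buys in exchange is the explicit decomposition itself: the tensors $A$, $B$, $C$ are of independent interest and are adapted immediately afterwards to the cycle graph to prove the \textsc{mps}-rank statement (Theorem~\ref{thm:C_d-rank of W_d}), where no flattening theorem is available since $C_d$ is not a tree. Note also that the contradiction at the end of your ``alternative'' remark, via Proposition~\ref{prop:remove an edge}, is essentially the paper's own lower-bound argument, so the two proofs agree on that component; the genuine difference is in how the upper bound and exactness are obtained.
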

\begin{proof}
Suppose not. By Lemma~\ref{lem: P_d rank of W_d}, there exists $ i \in \{2, \dots, d-2\}$ such that  $r_i =1$, $r_j= 2$ for all $j \ne i$, and
\[
\W_d \in \smash{\tns(P_{d};\overbrace{2,\dots,2}^{i-1},1,\overbrace{2,\dots,2}^{d-i-1}; 2,\dots,2).}
\]
This implies that $\W_d  = X \otimes Y$ for some $X\in \mathbb{V}^{\otimes i}$ and $Y\in \mathbb{V}^{\otimes (d-i)}$, i.e., $\W_d$ is a rank-one $2$-tensor in $\mathbb{V}^{\otimes i} \otimes \mathbb{V}^{\otimes (d-i)} $, which is impossible by the definition of $\W_d$.
\end{proof}

We next deduce the $C_d$-ranks of $\W_d$.  
Let  the vertices of $C_d$ be $1,\dots, d$ and edges be oriented $(1,2),(2,3),\dots,(d,d+1)$, with $d+1 \coloneqq 1$.
Let $\mathbb{E}_i\simeq \mathbb{E}$ be a two-dimensional vector space associated to $(i,i+1)$, $i=1,\dots, d$, with basis $e_1,e_2$ and dual basis $e_1^\ast,e_2^\ast$.  Note that  $\W_d = \kappa_{C_d} (A\otimes B^{\otimes (d-2)} \otimes C) \in \tns(C_d;2,\dots,2;2,\dots,2)$ with
$A = e_1\otimes v_1 \otimes e_1^\ast + e_2 \otimes  v_2 \otimes  e_2^\ast$,
$B  = e_1 \otimes ( v_1 \otimes e_1^\ast + v_2 \otimes e_2^\ast) + e_2 \otimes  v_1 \otimes  e_2^\ast$, $C = e_2 \otimes v_1 \otimes  (e_1^\ast + e_2^\ast) +  e_1 \otimes v_2 \otimes  e_1^\ast$, all in $\mathbb{E} \otimes \mathbb{V} \otimes \mathbb{E}^\ast$.

\begin{theorem}[\textsc{mps}-rank of W state]\label{thm:C_d-rank of W_d}
Let $d \ge 3$. Then $\rank_{C_d}(\W_d) = (r_1,\dots, r_d) \in \mathbb{N}^d$ if and only if $r_i = 1$ for some $i\in \{1,\dots, d\}$ and all other $r_j = 2$, $ j\ne i$.
\end{theorem}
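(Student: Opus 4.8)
The plan is to describe the entire membership set $S = \{(r_1,\dots,r_d)\in\mathbb{N}^d : \W_d \in \tns(C_d; r_1,\dots,r_d; 2,\dots,2)\}$ and then read off its minimal elements, since by Definition~\ref{def:tnr} a $C_d$-rank of $\W_d$ is exactly a minimal element of $S$. Two facts are available at the outset: the explicit contraction $\W_d = \kappa_{C_d}(A \otimes B^{\otimes(d-2)} \otimes C)$ displayed just before the theorem shows $(2,\dots,2)\in S$, and $S$ is upward closed (enlarging any edge space $\mathbb{E}_j$ only enlarges the tensor network), so $S \ne \varnothing$ and every element of $S$ dominates a minimal element.

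The key step will be a reduction to the path graph through edge removal, yielding the characterization: \emph{if $r_i = 1$ for some edge $i$, then $(r_1,\dots,r_d)\in S$ if and only if $r_j \ge 2$ for all $j \ne i$.} I would argue this by using Proposition~\ref{prop:remove an edge} in the form \eqref{eq:tt=mps} to get $\tns(C_d; r_1,\dots,r_d; 2,\dots,2) = \tns(P_d; (r_j)_{j\ne i}; 2,\dots,2)$, where $P_d$ is the path obtained by cutting edge $i$. Because $\W_d = v_1^{d-1}\circ v_2$ is a symmetric tensor, it is invariant under every permutation of its factors, so its $P_d$-rank is unchanged by the relabeling that turns this cut-path into the standard one; by Theorem~\ref{thm:P_d-rank of W_d} that rank equals $(2,\dots,2)$. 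Since $P_d$ is a tree, Theorem~\ref{thm:uniqueness G-rank} together with \eqref{eq:set} gives $\W_d \in \tns(P_d; \underline{s}; 2,\dots,2)$ iff $(2,\dots,2)\le \underline{s}$, i.e.\ iff every remaining weight is at least $2$, which is exactly the claim.

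With this characterization the theorem follows quickly by bookkeeping in the poset $\mathbb{N}^d$. First it forces at most one weight to equal $1$: if edges $i\ne j$ both had weight $1$, applying the characterization at $i$ would force $r_j \ge 2$, a contradiction. A tuple of the claimed form (one weight $1$, the rest $2$) then lies in $S$ by the characterization, and it is minimal, since its single $1$ cannot be lowered and lowering any $2$ to $1$ would create a second weight-$1$ edge, leaving $S$. Conversely, any $\underline{r}\in S$ not of this form is nonminimal: if all $r_i\ge 2$, replacing $r_1$ by $1$ produces a strictly smaller element still in $S$ (the others remain $\ge 2$); and if $\underline{r}$ has its unique weight-$1$ edge $i$ but some $r_j>2$, lowering that $r_j$ to $2$ keeps $\underline{r}\in S$ while decreasing it. Hence the minimal elements of $S$ are precisely the tuples with one coordinate equal to $1$ and all others equal to $2$, which is the assertion.

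I expect the one genuinely delicate point to be the symmetry argument guaranteeing that cutting \emph{any} edge of $C_d$ leaves $\W_d$ with $P_d$-rank $(2,\dots,2)$: edge removal produces a path whose vertices appear in cyclic rather than standard order, and it is the full permutation-invariance of the symmetric tensor $\W_d$ (cyclic invariance already suffices) that lets us transport Theorem~\ref{thm:P_d-rank of W_d} to the relabeled path. Everything else reduces to the up-set structure of $S$ and the already-established tree results.
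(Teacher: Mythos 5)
Your proof is correct and takes essentially the same route as the paper's: both hinge on the explicit membership $\W_d \in \tns(C_d;2,\dots,2;2,\dots,2)$, the reduction of any weight-one tuple to a path graph via Proposition~\ref{prop:remove an edge}/\eqref{eq:tt=mps}, the computation $\rank_{P_d}(\W_d)=(2,\dots,2)$ from Theorem~\ref{thm:P_d-rank of W_d}, and minimality bookkeeping in $\mathbb{N}^d$. If anything, your write-up is more complete than the paper's terse proof, since you make explicit two points the paper leaves implicit --- the permutation invariance of $\W_d$ needed to transport Theorem~\ref{thm:P_d-rank of W_d} to the path obtained by cutting an \emph{arbitrary} edge of $C_d$, and the exclusion of tuples with two or more weight-one edges.
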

\begin{proof}
The ``if'' part follows from Theorem~\ref{thm:P_d-rank of W_d}. Since $\W_d \in \tns(C_d;2,\dots,2;2,\dots,2)$, if $(r_1,\dots,r_d)$ is a $C_d$-rank of $\W_d$ with $r_i \ge 2$ for all $i=1,\dots, d$, then $r_i =  2$ for all $i=1,\dots,d$. However, we also have  $\tns(C_d;r_1,\dots,r_d;2,\dots, 2) \subseteq \tns(C_d;2,\dots,2;2,\dots,2)$
for $1\le r_1,\dots, r_d \le 2$. This implies that $(2,\dots,2)$ cannot be a $C_d$-rank of $\W_d$, showing the ``only if'' part.
\end{proof}

We now proceed to the GHZ state. $\GHZ_2 = v_1^{\otimes 2} + v_2^{\otimes 2}\in \mathbb{V}\otimes \mathbb{V}$ is known as the \emph{Bell state}, a rank-two $2\times 2$ matrix. For the only connected graph with two vertices, $P_2$, and it is clear that $\rank_{P_2}(\GHZ_2) = 2$. For $d \ge 3$, the arguments for deducing the $P_d$-rank and $C_d$-rank of $\GHZ_d$ are very similar to those used for $W_d$ and we will be brief.  First observe that $\GHZ_d = \kappa_{P_d}(A\otimes B^{\otimes (d-2)} \otimes C)  \in \tns(P_d;2,\dots,2;2,\dots ,2)$ with
$A = v_1 \otimes e_1^\ast + v_2 \otimes e_2^\ast$, $B = e_1 \otimes v_1 \otimes e_1^\ast  + e_2 \otimes v_2 \otimes e_2^\ast,$ $C = e_1 \otimes v_1 + e_2 \otimes v_2$,
and we may obtain the following analogue of Theorem~\ref{thm:P_d-rank of W_d}.
\begin{theorem}[\textsc{tt}-rank of GHZ state]
Let $d \ge 3$. Then $\rank_{P_d}(\GHZ_d) = (2,\dots, 2)\in \mathbb{N}^{d-1}$.
\end{theorem}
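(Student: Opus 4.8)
The plan is to establish matching upper and lower bounds for $\rank_{P_d}(\GHZ_d)$. The upper bound $\rank_{P_d}(\GHZ_d) \le (2,\dots,2)$ is already supplied by the explicit decomposition $\GHZ_d = \kappa_{P_d}(A \otimes B^{\otimes (d-2)} \otimes C)$ exhibited immediately before the theorem, which places $\GHZ_d$ in $\tns(P_d; 2,\dots,2; 2,\dots,2)$. So the entire content of the proof is the lower bound, namely that no entry of the $P_d$-rank can drop to $1$.

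Since $P_d$ is a tree, I would invoke Theorem~\ref{thm:G-rank of ttns}, which computes the $G$-rank of any tensor as the tuple of matrix ranks of the flattenings associated to the edges. Label the edges of $P_d$ as $\{p,p+1\}$ for $p = 1,\dots,d-1$; removing edge $p$ splits the vertices into $\{1,\dots,p\}$ and $\{p+1,\dots,d\}$, so the corresponding flattening sends $\GHZ_d$ to
\[
\flat_{p,p+1}(\GHZ_d) = v_1^{\otimes p} \otimes v_1^{\otimes (d-p)} + v_2^{\otimes p} \otimes v_2^{\otimes (d-p)} \in \mathbb{V}^{\otimes p} \otimes \mathbb{V}^{\otimes (d-p)}.
\]
The key step is to verify that this $2$-tensor has matrix rank exactly $2$. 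I would argue that because $v_1,v_2$ form a basis of the two-dimensional space $\mathbb{V}$, the pure powers $v_1^{\otimes p}$ and $v_2^{\otimes p}$ are distinct tensor basis vectors, hence linearly independent in $\mathbb{V}^{\otimes p}$, and likewise $v_1^{\otimes (d-p)}, v_2^{\otimes (d-p)}$ are independent in $\mathbb{V}^{\otimes (d-p)}$. Thus the display is a sum of two rank-one matrices whose left factors are independent and whose right factors are independent, so it has matrix rank $2$. By Theorem~\ref{thm:G-rank of ttns} this gives $r_p = \rank\bigl(\flat_{p,p+1}(\GHZ_d)\bigr) = 2$ for every $p$, whence $\rank_{P_d}(\GHZ_d) = (2,\dots,2)$; uniqueness of this value is guaranteed by Theorem~\ref{thm:uniqueness G-rank}.

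There is essentially no obstacle here; the only point requiring (routine) care is the linear independence of the tensor powers $v_1^{\otimes p}, v_2^{\otimes p}$, which is exactly where one uses that $\GHZ_d$ fails to degenerate across every cut. Equivalently, I could mirror the proof of Theorem~\ref{thm:P_d-rank of W_d} verbatim: were some $r_p = 1$, then $\GHZ_d$ would factor as $X \otimes Y$ with $X \in \mathbb{V}^{\otimes p}$ and $Y \in \mathbb{V}^{\otimes (d-p)}$, i.e.\ a rank-one bipartite tensor across the cut $\{1,\dots,p\} \mid \{p+1,\dots,d\}$, which is impossible since $\flat_{p,p+1}(\GHZ_d)$ visibly has two independent columns. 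Either formulation reduces the statement to the same elementary linear-independence fact, so I would present the flattening version as the cleanest route.
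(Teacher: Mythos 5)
Your proof is correct, but your primary route is genuinely different from the paper's. The paper treats the GHZ theorem as an analogue of Theorem~\ref{thm:P_d-rank of W_d}: the explicit contraction $\GHZ_d = \kappa_{P_d}(A\otimes B^{\otimes (d-2)}\otimes C)$ supplies the upper bound $\rank_{P_d}(\GHZ_d)\le (2,\dots,2)$, and the lower bound comes from the observation that if some entry $r_p$ were $1$, then $\GHZ_d$ would factor as $X\otimes Y$ with $X\in\mathbb{V}^{\otimes p}$, $Y\in\mathbb{V}^{\otimes (d-p)}$, which is impossible. You instead invoke Theorem~\ref{thm:G-rank of ttns}, which for an acyclic $G$ identifies the $G$-rank exactly with the tuple of flattening ranks; since
\[
\flat_{p,p+1}(\GHZ_d) = v_1^{\otimes p}\otimes v_1^{\otimes (d-p)} + v_2^{\otimes p}\otimes v_2^{\otimes (d-p)}
\]
has matrix rank $2$ across every cut (the pure powers $v_1^{\otimes p}, v_2^{\otimes p}$ being linearly independent because $v_1,v_2$ are), you obtain $\rank_{P_d}(\GHZ_d)=(2,\dots,2)$ in one stroke. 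Your approach is cleaner and, strictly speaking, stronger: Theorem~\ref{thm:G-rank of ttns} delivers equality (and uniqueness) directly, so you do not even need the explicit decomposition for the upper bound, nor the separate appeal to Theorem~\ref{thm:uniqueness G-rank}. What the paper's route buys is an explicit TT representation of $\GHZ_d$ (of independent interest, and parallel to its treatment of $\W_d$ and of the $C_d$ case); what yours buys is brevity and a reduction to a single elementary linear-independence check. Your closing remark is also on target: both formulations ultimately rest on the same fact, namely that $\GHZ_d$ is not a rank-one tensor across any cut of the path, and your second formulation is essentially the paper's own proof.
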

Likewise, $\GHZ_d = \kappa_{C_d} (D^{\otimes d}) \in \tns(C_d;2,\dots, 2;2,\dots,2)$ with $D = e_1 \otimes v_1 \otimes e_1^\ast  + e_2 \otimes v_2 \otimes e_2^\ast$, and we obtain the following analogue of Theorem~\ref{thm:C_d-rank of W_d}
\begin{theorem}[\textsc{mps}-rank of GHZ state]
Let $d \ge 3$. Then $\rank_{C_d}(\GHZ_d) = (r_1,\dots, r_d) \in \mathbb{N}^d$ if and only if $r_i = 1$ for some $i\in \{1,\dots, d\}$ and all other $r_j = 2$, $ j\ne i$.
\end{theorem}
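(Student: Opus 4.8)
The plan is to mirror the proof of Theorem~\ref{thm:C_d-rank of W_d} (the \textsc{mps}-rank of the W state), exploiting the explicit decomposition $\GHZ_d = \kappa_{C_d}(D^{\otimes d})$ already exhibited, which shows $\GHZ_d \in \tns(C_d; 2,\dots,2; 2,\dots,2)$ and hence that every $C_d$-rank $(r_1,\dots,r_d)$ of $\GHZ_d$ satisfies $r_i \le 2$, i.e.\ $r_i \in \{1,2\}$. As with the W state, I would split the argument into the ``if'' and ``only if'' directions.

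For the ``if'' direction, suppose $r_i = 1$ for a single index $i$ and $r_j = 2$ for all $j \ne i$. The weight-one edge may be removed by Proposition~\ref{prop:remove an edge}; concretely, \eqref{eq:tt=mps} identifies $\tns(C_d; 2,\dots,2,1; 2,\dots,2)$ (with the weight-one bond placed at the edge $i$) with $\tns(P_d; 2,\dots,2; 2,\dots,2)$. Since the preceding theorem on the \textsc{tt}-rank of $\GHZ_d$ gives $\rank_{P_d}(\GHZ_d) = (2,\dots,2)$, the tensor $\GHZ_d$ lies in this set, and any coordinatewise-smaller tuple is excluded because lowering one of the weight-two bonds would create a second $1$ (ruled out below). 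The cyclic symmetry of $C_d$ and of $\GHZ_d = \kappa_{C_d}(D^{\otimes d})$ then yields the same conclusion for every position $i$.

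For the ``only if'' direction, let $(r_1,\dots,r_d)$ be any $C_d$-rank; we already know $r_i \in \{1,2\}$. First, $(2,\dots,2)$ is not minimal: by the ``if'' part a tuple with a single $1$ already presents $\GHZ_d$, so at least one $r_i = 1$. Second, at most one $r_i$ equals $1$. If instead $r_i = r_j = 1$ with $i \ne j$, then since $\GHZ_d$ is nondegenerate (each single-factor flattening has rank $2 = \dim \mathbb{V}$), Proposition~\ref{prop:upper bound G-ranks} forces the two weight-one edges to be non-adjacent, so deleting both (Proposition~\ref{prop:remove an edge} applied twice, never creating an isolated vertex) cuts $C_d$ into two vertex-disjoint paths with vertex sets $A$ and $B$. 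The only bonds crossing this cut are $\mathbb{E}_i$ and $\mathbb{E}_j$, of total dimension $r_i r_j = 1$, so by Lemma~\ref{lemma:G-rank of ttns} applied to the composite bond the flattening of any element of $\tns(C_d; r_1,\dots,r_d; 2,\dots,2)$ across the bipartition $A \mid B$ has rank at most $1$. But the $A \mid B$ flattening of $\GHZ_d$ equals $v_1^{\otimes \lvert A\rvert} \otimes v_1^{\otimes \lvert B\rvert} + v_2^{\otimes \lvert A\rvert} \otimes v_2^{\otimes \lvert B\rvert}$, which has rank $2$, a contradiction. Hence exactly one $r_i = 1$ and all other $r_j = 2$.

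The main obstacle is the cut argument ruling out two or more $1$'s: although flattening ranks do not compute $G$-rank once $G$ contains a cycle (as noted at the end of Section~\ref{sec:ttns}), the inequality I need is only the easy upper-bound direction, namely that the rank of a flattening across a cut is at most the product of the bond dimensions crossing it, which is precisely the content of Lemma~\ref{lemma:G-rank of ttns} with the two crossing edges grouped into the single bond $\mathbb{E}_i \otimes \mathbb{E}_j$. The one subtlety is ensuring the two weight-one edges are non-adjacent, so that the cut genuinely separates $C_d$ into two paths and Proposition~\ref{prop:remove an edge} applies without producing an isolated vertex; this is exactly what the nondegeneracy bound of Proposition~\ref{prop:upper bound G-ranks} guarantees, since adjacency would give a vertex whose two incident bond weights multiply to $1 < \dim \mathbb{V}$.
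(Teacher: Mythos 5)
There is a genuine gap at the step you rely on throughout: the claim that $\GHZ_d \in \tns(C_d;2,\dots,2;2,\dots,2)$ implies every $C_d$-rank $(r_1,\dots,r_d)$ of $\GHZ_d$ satisfies $r_i \le 2$. Since $\mathbb{N}^d$ is only partially ordered, membership of one tuple in the membership set does not bound that set's \emph{minimal} elements: a minimal tuple need not be comparable to $(2,\dots,2)$ at all. The paper itself furnishes a counterexample to the principle you invoke: $(2,2,2)$ is a $C_3$-rank of the structure tensor $\mu_{2,2,2}$ (Theorem~\ref{thm:C_3-rank of matrix multiplication}), so in particular $\mu_{2,2,2}\in\tns(C_3;2,2,2;4,4,4)$, and yet $(1,4,4)$, $(4,1,4)$, $(4,4,1)$ are also $C_3$-ranks of $\mu_{2,2,2}$, with entries exceeding $2$. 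This unjustified bound is not cosmetic: it is exactly what you use at the end to pass from ``exactly one $r_i=1$'' to ``all other $r_j=2$.'' Your cut argument only shows that at most one entry equals $1$, hence that the remaining entries are $\ge 2$; without the bound nothing yet excludes, say, $(1,3,2,\dots,2)$ from being minimal. (The same bound is silently used to deduce ``at least one $r_i=1$,'' but that step is easily recovered: if all $r_i\ge 2$, then minimality against the member $(2,\dots,2)$ forces $(r_1,\dots,r_d)=(2,\dots,2)$, which is not minimal.)

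The repair is short, and it is essentially the paper's route. Once some entry, say $r_d$, equals $1$, then by \eqref{eq:tt=mps} we have $\GHZ_d\in\tns(C_d;r_1,\dots,r_{d-1},1;2,\dots,2)=\tns(P_d;r_1,\dots,r_{d-1};2,\dots,2)$; minimality of $(r_1,\dots,r_{d-1},1)$ as a $C_d$-tuple transfers to minimality of $(r_1,\dots,r_{d-1})$ as a $P_d$-tuple, and uniqueness of \textsc{ttns}-rank (Theorem~\ref{thm:uniqueness G-rank}) together with $\rank_{P_d}(\GHZ_d)=(2,\dots,2)$ gives $(r_1,\dots,r_{d-1})=(2,\dots,2)$ --- this also subsumes your ``two $1$'s'' case. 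Alternatively, keep your cut argument and then observe that $(2,\dots,2,1)$ lies in the membership set (your ``if'' part) and is coordinatewise $\le(r_1,\dots,r_{d-1},1)$, so minimality forces equality. A secondary wrinkle: in the ``if'' direction you defer to the cut argument for tuples that are merely $\le(2,\dots,2,1)$, but as you set it up that argument needs Proposition~\ref{prop:upper bound G-ranks}, which applies only to tuples that \emph{are} $G$-ranks of a nondegenerate tensor, so the non-adjacency guarantee is unavailable for non-minimal tuples. Fortunately non-adjacency is never needed: if the two weight-one edges share a vertex $k$, the cut $\{k\}$ versus the remaining vertices still has only those two bonds crossing it, Lemma~\ref{lemma:G-rank of ttns} applied to the composite bond still bounds the flattening rank by $1$, and $\GHZ_d$ still flattens to rank $2$ across it. With these patches your proof is correct, and your two-edge-cut flattening argument is a nice, self-contained alternative to the paper's reduction-to-$P_d$ for excluding multiple $1$'s.
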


For comparison, note that  $\W_d$ and $\GHZ_d$ are respectively the monomial $x^{d-1}y$  and  the binary form $x^d + y^d$ regarded as symmetric tensors. By our discussion at the end of Section~\ref{sec:algebra}, the Waring rank of $\W_d$ is $d$ while that of $\GHZ_d$ is at most $2(d+1)$; the border rank and multilinear rank of both states are $2$ and  $(2,\dots,2)$ respectively.

\subsection{Tensors in computing}\label{sec:computing}

Let $\mathbb{U} = \mathbb{C}^{m\times n}$, $\mathbb{V}=\mathbb{C}^{n \times p}$, and $\mathbb{W} =\mathbb{C}^{m \times p}$.
Let $\mu_{m,n,p} \in \mathbb{U}^* \otimes \mathbb{V}^* \otimes\mathbb{W} \cong \mathbb{C}^{mn \times np \times mp} $ be the structure tensor for the product of $m\times n$ and $n \times p$ rectangular matrices \cite{YLstruct}, i.e.,
\begin{equation}\label{eq:strassen2}
\mu_{m,n,p}  =\sum_{i,j,k=1}^{n} u^*_{ik}\otimes v^*_{kj}\otimes w_{ij}
\end{equation}
where  $\{u_{ij} \in \mathbb{U}: i=1,\dots, m,\;  j=1,\dots, n \}$, $\{v_{jk}\in \mathbb{V} :  j = 1,\dots, n,\; k= 1,\dots, p \}$, $\{w_{ki} \in \mathbb{W} : k =1,\dots, p,\; i=1,\dots, p \}$ are the standard bases of the respective spaces (e.g., $u_{ij}$ is the $m \times n$ matrix with one in the $(i,j)$ entry and zeroes everywhere else). The reader might remember that we have encountered a special case of this tensor in \eqref{eq:strassen} --- that is the structure tensor for product of square matrices, i.e., $m = n = p$ and we wrote $\mu_n =\mu_{n,n,n}$.

The structure tensor for matrix-matrix product is widely regarded as the most important tensor in algebraic computational complexity theory; its tensor rank quantifies the optimum complexity for matrix-matrix product and has a current best-known bound of $O(n^{2.3728639})$ \cite{LeGall2014}. 
We will establish the $P_3$-rank and $C_3$-rank of $\mu_{m,n,p}$ in the following. For comparison, note that its multilinear rank is  $(mn,np,mp)$.
\begin{theorem}[\textsc{tt}-rank of Strassen tensor]\label{thm:P_3-rank of matrix multiplication}
Let $m, n, p \ge 2$. Then $\rank_{P_3}(\mu_{m,n,p}) =(mn,mp)$. 
\end{theorem}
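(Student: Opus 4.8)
The plan is to apply Theorem~\ref{thm:G-rank of ttns}, which tells us that because $P_3$ is a tree the $P_3$-rank of any tensor is computed edgewise as the matrix ranks of the two flattenings attached to its two edges. Label the vertices of $P_3$ as $1-2-3$, with $\mathbb{V}_1 = \mathbb{U}^*$, $\mathbb{V}_2 = \mathbb{V}^*$, $\mathbb{V}_3 = \mathbb{W}$, so that the edges are $e_1 = \{1,2\}$ and $e_2 = \{2,3\}$. Removing $e_1$ separates $\{1\}$ from $\{2,3\}$, so $\flat_{12}$ groups the first factor against the other two; removing $e_2$ separates $\{1,2\}$ from $\{3\}$, so $\flat_{23}$ groups the last factor against the first two. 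Thus it suffices to establish
\[
\rank\bigl(\flat_{12}(\mu_{m,n,p})\bigr) = mn, \qquad \rank\bigl(\flat_{23}(\mu_{m,n,p})\bigr) = mp,
\]
after which Theorem~\ref{thm:G-rank of ttns} yields $\rank_{P_3}(\mu_{m,n,p}) = (mn,mp)$ at once.

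For the first flattening I would read the slices of $\mu_{m,n,p}$ along $\mathbb{U}^*$ directly off \eqref{eq:strassen2}: collecting the summands carrying a fixed basis covector $u^*_{ik}$ shows that the $(i,k)$-slice of $\flat_{12}(\mu_{m,n,p})$ equals $\sum_{j=1}^p v^*_{kj} \otimes w_{ij} \in \mathbb{V}^* \otimes \mathbb{W}$, and the rank we want is the dimension of the span of these $mn$ slices. To see they are linearly independent I would set $\sum_{i,k} c_{ik}\sum_j v^*_{kj}\otimes w_{ij} = 0$ and note that the only term producing the basis vector $v^*_{kj}\otimes w_{ij}$ of $\mathbb{V}^*\otimes \mathbb{W}$ (the one whose $\mathbb{V}^*$- and $\mathbb{W}$-indices share the same second coordinate $j$) comes from the single slice $(i,k)$; reading off its coefficient forces $c_{ik}=0$ for all $i,k$, giving $\rank(\flat_{12}(\mu_{m,n,p})) = mn$. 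The computation for $\flat_{23}$ is entirely symmetric: the $(i,j)$-slice along $\mathbb{W}$ is $\sum_{k=1}^n u^*_{ik}\otimes v^*_{kj}$, and the same matched-index argument applied to the shared index $k$ shows the $mp$ slices are independent, so $\rank(\flat_{23}(\mu_{m,n,p})) = mp$.

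The only delicate point, and the step I expect to require the most care, is the linear-independence bookkeeping, since the contraction index $k$ (resp.\ $j$) appears simultaneously in two of the three tensor factors of each summand. Consequently the slices are \emph{not} supported on disjoint coordinate subspaces, so one cannot argue independence by a crude support count; one must instead exploit the precise index-matching in \eqref{eq:strassen2} to pick out a basis monomial of $\mathbb{V}^*\otimes\mathbb{W}$ (resp.\ $\mathbb{U}^*\otimes\mathbb{V}^*$) that is hit by exactly one slice. Once that observation is in hand the rest is routine, and the hypotheses $m,n,p\ge 2$ serve only to keep the index ranges nonempty so that each slice is genuinely nonzero.
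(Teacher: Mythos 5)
Your proof is correct, but it takes a different route from the paper's. You invoke the flattening-rank formula for trees (Theorem~\ref{thm:G-rank of ttns}) and then verify by hand that the two flattenings $\flat_{12}$ and $\flat_{23}$ have full ranks $mn$ and $mp$, via the matched-index slice argument --- which is sound: each basis monomial $v^*_{kj}\otimes w_{ij}$ (resp.\ $u^*_{ik}\otimes v^*_{kj}$) with matched contraction index occurs in exactly one slice, so the slices are independent. The paper argues differently and more tersely: for the upper bound it observes that $\tns(P_3;mn,mp;mn,np,mp)$ is the \emph{entire} ambient space $\mathbb{U}^*\otimes\mathbb{V}^*\otimes\mathbb{W}$ (by the degree-one vertex reduction, Proposition~\ref{prop:reduction valence one}), and for minimality it appeals to the nondegeneracy of $\mu_{m,n,p}$ (Definition~\ref{def:degen}): if $r_1<mn$ or $r_2<mp$, then by Example~\ref{example:TT3} the tensor would lie in a proper subspace variety, contradicting nondegeneracy. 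The two arguments are essentially dual faces of the same fact --- nondegeneracy of $\mu_{m,n,p}$ is precisely the statement that its outer flattenings have full rank, which is what you prove explicitly and the paper treats as clear. What your route buys is self-containedness and constructiveness: you actually establish the full multilinear rank claim rather than asserting it, and the method transfers verbatim to any tree $G$. What the paper's route buys is economy: it avoids routing through the comparatively heavy Theorem~\ref{thm:G-rank of ttns} and uses only the elementary structural facts about $P_3$-networks. One small caveat on your closing remark: the hypotheses $m,n,p\ge 2$ are not needed even to keep index ranges nonempty ($m,n,p\ge 1$ suffices for your computation), so they are best viewed as a convention of the theorem statement rather than something your argument consumes.
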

\begin{proof}
Clearly $\mu_{m,n,p} \in  \tns(P_3;mn,mp;mn,np,mp) = \mathbb{U}^* \otimes \mathbb{V}^* \otimes\mathbb{W}$.
As $\mu_{m,n,p}$ is nondegenerate,  $\mu_{m,n,p} \notin \tns(P_3;r_1,r_2;mn,np,mp)$ if $r_1<mn,  r_2 = mp$ or if $r_1 = mn,  r_2 < mp$. 
\end{proof}
\begin{theorem}[\textsc{mps}-rank of Strassen tensor]\label{thm:C_3-rank of matrix multiplication}
Let $m, n, p \ge 2$. Then $(m,n,p)$, $(mn,mp,1)$, $(mn,1,np)$, $(1,mp,np)$ are all $C_3$-ranks of $\mu_{m,n,p}$.
\end{theorem}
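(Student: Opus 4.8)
The plan is to handle the symmetric tuple $(m,n,p)$ separately from the three ``degenerate'' tuples that contain a $1$, since they come from different mechanisms. For $(m,n,p)$, I would first write out \eqref{eq:strassen2} and read off its contraction pattern: the index $k$ is shared by the factors sitting at $\mathbb{U}^*$ and $\mathbb{V}^*$, the index $j$ by those at $\mathbb{V}^*$ and $\mathbb{W}$, and the index $i$ by those at $\mathbb{U}^*$ and $\mathbb{W}$. This is exactly a $C_3$-contraction, so the natural ``trace'' decomposition gives $\mu_{m,n,p}\in \tns(C_3;m,n,p;\mathbb{U}^*,\mathbb{V}^*,\mathbb{W})$, with edge weights $n$ on $\{\mathbb{U}^*,\mathbb{V}^*\}$, $p$ on $\{\mathbb{V}^*,\mathbb{W}\}$, and $m$ on $\{\mathbb{U}^*,\mathbb{W}\}$, just as in the square case $m=n=p$ treated in the proof of Theorem~\ref{thm:compare}.

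For minimality of $(m,n,p)$ I would use that $\mu_{m,n,p}$ is nondegenerate, which is immediate from its multilinear rank being the full $(mn,np,mp)$. The product bound of Proposition~\ref{prop:upper bound G-ranks} then applies; note that via Proposition~\ref{prop:reduction supercritical} this bound holds for \emph{every} tuple $(r_1,r_2,r_3)$ for which $\mu_{m,n,p}$ is a tensor network state, not merely for the $C_3$-rank itself, because a violation of $\prod_{j\in\In(i)\cup\Out(i)} r_j \ge \dim\mathbb{V}_i$ at a vertex $i$ would exhibit $\mu_{m,n,p}$ as degenerate. Since each vertex of $C_3$ has exactly two incident edges, this yields $r_{\mathbb{U}^*\mathbb{V}^*}\,r_{\mathbb{U}^*\mathbb{W}}\ge mn$, $r_{\mathbb{U}^*\mathbb{V}^*}\,r_{\mathbb{V}^*\mathbb{W}}\ge np$, and $r_{\mathbb{U}^*\mathbb{W}}\,r_{\mathbb{V}^*\mathbb{W}}\ge mp$. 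Any tuple below $(m,n,p)$ in the partial order must meet all three inequalities, forcing equality in each and hence the tuple to equal $(m,n,p)$; so $(m,n,p)$ is minimal in the set of achievable rank tuples.

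For the three tuples containing a $1$, I would set the corresponding edge weight equal to $1$ and invoke Proposition~\ref{prop:remove an edge} to collapse $C_3$ into the path graph $P_3$ whose middle vertex is the one opposite the deleted edge. Theorem~\ref{thm:G-rank of ttns} then computes the resulting $P_3$-rank as the pair of flattening ranks at the two endpoints, each of which is one of the multilinear-rank components $mn$, $np$, $mp$. Deleting $\{\mathbb{U}^*,\mathbb{V}^*\}$, $\{\mathbb{U}^*,\mathbb{W}\}$, $\{\mathbb{V}^*,\mathbb{W}\}$ in turn yields the endpoint pairs $\{mn,np\}$, $\{mn,mp\}$, $\{np,mp\}$, which are exactly the nonunit entries of the three listed tuples. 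This shows membership in the corresponding $\tns(C_3;\dots;\mathbb{U}^*,\mathbb{V}^*,\mathbb{W})$. Minimality then follows because a unit weight cannot be lowered further in $\mathbb{N}$, while any attempt to lower one of the two remaining weights would, after the same edge removal, contradict the uniqueness and minimality of the $P_3$-rank guaranteed by Theorem~\ref{thm:uniqueness G-rank}; one also checks these four tuples are pairwise incomparable (using $m,n,p\ge 2$), consistent with all being minimal.

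I expect the main obstacle to be the careful bookkeeping that matches each contraction index to the correct edge, and thence each deleted edge to the correct pair of flattening ranks, so that the three reduced $P_3$ computations land on the three stated tuples in the stated order. The one genuinely delicate point is the justification that the product bound constrains \emph{all} admissible tuples rather than only the rank, which is where nondegeneracy and Proposition~\ref{prop:reduction supercritical} must be used; everything else is a direct application of the edge-removal and flattening-rank machinery already established.
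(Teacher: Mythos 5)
Your proposal is correct and follows essentially the same route as the paper's own proof: the three tuples containing a $1$ come from weight-one edge removal (Proposition~\ref{prop:remove an edge}) together with the $P_3$-rank of $\mu_{m,n,p}$ computed as flattening ranks, and $(m,n,p)$ comes from the explicit cyclic ``trace'' decomposition with minimality forced by nondegeneracy and the vertex product bound. The only cosmetic difference is that you route the argument through Theorems~\ref{thm:G-rank of ttns} and \ref{thm:uniqueness G-rank} and Propositions~\ref{prop:upper bound G-ranks} and \ref{prop:reduction supercritical}, whereas the paper cites Theorem~\ref{thm:P_3-rank of matrix multiplication} and argues the dimension contradiction directly.
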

\begin{proof}
By Proposition~\ref{prop:remove an edge} and Theorem~\ref{thm:P_3-rank of matrix multiplication}, we see that $(mn,mp,1)$, $(mn,1,np)$, $(1,mp,np)$ are $C_3$-ranks of $\mu_{m,n,p}$. It remains to show that $(m,n,p)$ is also a $C_3$-rank of $\mu_{m,n,p}$. 
Let $\{e_1, \dots, e_m\}$,  $\{f_1, \dots, f_n\}$, $\{g_1,\dots, g_p\}$ be any bases of vector spaces $\mathbb{E} $, $\mathbb{F}$, $\mathbb{G}$ respectively. Then $\mu_{m,n,p} = \kappa_{C_3}(A\otimes B \otimes C)  \in \tns(C_3;m,n,p;mn,np,mp)$ with
\begin{gather*}
A= \sum_{i,j=1}^{m,n} e_i \otimes u_{ij} \otimes f_{j}^\ast \in \mathbb{E} \otimes \mathbb{U} \otimes \mathbb{F}^\ast, \quad
B = \sum_{j,k=1}^{n,p} f_j \otimes v_{jk} \otimes g_{k}^\ast \in \mathbb{F} \otimes \mathbb{V} \otimes \mathbb{G}^\ast, \\
C = \sum_{k,i=1}^{p,m} g_k \otimes w_{ki} \otimes e_{i}^\ast \in \mathbb{G} \otimes \mathbb{W} \otimes \mathbb{E}^\ast.
\end{gather*}
Now let $(r_1,r_2,r_3)\le (m,n,p)$ such that  $\mu_{m,n,p} \in \tns(C_3; r_1,r_2,r_3;mn,np,mp)$. If, for example, $r_1 < m$, then $r_1 r_2 < mn$ and thus  $\mu_{m,n,p} \in \tns(C_3; r_1,r_2,r_3;r_1r_2,np,mp) \subseteq 
\mathbb{C}^{r_1r_2} \otimes \mathbb{C}^{np} \otimes \mathbb{C}^{mp}$, which is impossible by the definition of $\mu_{m,n,p}$. Similarly, we may exclude other cases, concluding that  $(r_1,r_2,r_3) = (m,n,p)$.
\end{proof}
In general, we do not know if  there might be other $C_3$-ranks of  $\mu_{m,n,p}$ aside from the four in Theorem~\ref{thm:C_3-rank of matrix multiplication} although for the case $m=n=p=2$, we do have 
\[
\rank_{C_3}(\mu_{2,2,2}) = \{(2,2,2), (1,4,4),(4,1,4),(4,4,1)\}.
\]
To see this, note that if $(r_1,r_2,r_3)$ is a $C_3$-rank of $\mu_{2,2,2}$ with $r_i \ge 2$, $i=1,2,3$, then $r_1=r_2=r_3=2$ by minimality of $C_3$-ranks; whereas if $r_i = 1$ for some $i$, say $r_1 = 1$, then $\mu_{2,2,2} \in \tns(C_3;1,r_2,r_3;2,2,2) = \tns(P_3;r_2,r_3;2,2,2)$, and as $\rank_{P_3}(\mu_{2,2,2}) =(4,4)$, we get $r_2 = r_3 = 4$. 

One may wonder if proofs of Theorems~\ref{thm:P_3-rank of matrix multiplication} and \ref{thm:C_3-rank of matrix multiplication} could perhaps give a new algorithm for matrix-matrix product along the lines of Strassen's famous algorithm. The answer is no: the proofs in fact only rely on the decomposition of $\mu_{m,n,p}$ given by the standard algorithm for matrix-matrix product.

\section{Tensor network ranks versus tensor rank and multilinear rank}\label{sec:compare}

In Section~\ref{sec:Grank}, we saw that $G$-ranks may be regarded as `interpolants' between tensor rank and multilinear rank. We will conclude this article by showing that they are nevertheless  distinct notions, i.e., tensor and multilinear ranks cannot be obtained as $G$-ranks. This is already evident in $3$-tensors and we may limit our discussions to this case. Since $d =3$ and there are only two connected graphs  with three vertices, we have only two choices for $G$ --- either $C_3$ or $P_3$.
\begin{proposition}\label{prop:ranknotGrank}
Let $\mathbb{V}_1,\mathbb{V}_2$, $\mathbb{V}_3$ be of dimensions $\ge 4$ and let the following sets be in  $\mathbb{V}_1 \otimes \mathbb{V}_2 \otimes \mathbb{V}_3$. There exists $r\in \mathbb{N}$ such that
\begin{equation}\label{eq:secant}
\overline{\{T : \rank(T) \le r\}}
\end{equation}
is not equal to
\begin{equation}\label{eq:C3P3}
\overline{\{T :  \rank_{P_3} \le (r_1,r_2)\}}
\quad \text{or} \quad
\overline{\{T :  \rank_{C_3}(T)\le (r_1,r_2,r_3)\}} 
\end{equation}
for any $r_1,r_2,r_3 \in \mathbb{N}$.
\end{proposition}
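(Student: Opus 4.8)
The plan is to take $r = 2$ throughout and to distinguish the three varieties by a single birational invariant: the \emph{generic multilinear rank}. First I would record what the three closures are. The set \eqref{eq:secant} is the secant variety $\sigma_2\bigl(\Seg(\mathbb{V}_1,\mathbb{V}_2,\mathbb{V}_3)\bigr)$, an irreducible variety whose generic point is a sum of two generic rank-one tensors; for $n_i \ge 4$ a direct check on $v_1\otimes v_2\otimes v_3 + w_1\otimes w_2\otimes w_3$ with generic factors shows its generic multilinear rank is $(2,2,2)$, and it is proper since $2(n_1+n_2+n_3)-4 < n_1 n_2 n_3$. By Example~\ref{example:TT3} and Corollary~\ref{cor:ideal of ttns}, the first set in \eqref{eq:C3P3} equals the subspace variety $\Sub_{\min(r_1,n_1),\min(r_1 r_2,n_2),\min(r_2,n_3)}(\mathbb{V}_1,\mathbb{V}_2,\mathbb{V}_3)$, which is already Zariski closed and irreducible with generic multilinear rank $\bigl(\min(r_1,n_1),\min(r_1 r_2,n_2),\min(r_2,n_3)\bigr)$. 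By Corollary~\ref{cor:intermediate} and Proposition~\ref{prop:reduction supercritical}, the second set in \eqref{eq:C3P3} is the closure of the \textsc{mps} variety $\tns(C_3;r_1,r_2,r_3;\mathbb{V}_1,\mathbb{V}_2,\mathbb{V}_3)$, irreducible with generic multilinear rank $(p_1,p_2,p_3)$, where $p_i = \min(\rho_i, n_i)$ and $\rho_1,\rho_2,\rho_3$ are the three pairwise products $r_1 r_2,\, r_2 r_3,\, r_3 r_1$ of the edge weights (each vertex of $C_3$ meeting exactly two edges). Since generic multilinear rank is constant on a dense open subset of an irreducible variety and is unchanged by passing to the Zariski closure, equality of any two of these varieties forces equality of their generic multilinear ranks.

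Next I would show that no member of either family has generic multilinear rank equal to $(2,2,2)$ when every $n_i \ge 4$. For the $P_3$ family I must solve $\bigl(\min(r_1,n_1),\min(r_1 r_2,n_2),\min(r_2,n_3)\bigr) = (2,2,2)$. Since $n_i \ge 4$, the two outer equalities force $r_1 = r_2 = 2$, whence the middle entry is $\min(4,n_2) = 4 \ne 2$, so the system is inconsistent. For the $C_3$ family I must solve $\min(r_1 r_2,n_2) = \min(r_2 r_3,n_3) = \min(r_3 r_1,n_1) = 2$; again $n_i \ge 4$ forces all three pairwise products to equal $2$, i.e. $r_1 r_2 = r_2 r_3 = r_3 r_1 = 2$. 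The first two give $r_1 = r_3$, and then $r_3 r_1 = r_1^2 = 2$ has no solution in $\mathbb{N}$. Hence neither family attains generic multilinear rank $(2,2,2)$, while $\sigma_2$ does, proving that \eqref{eq:secant} is distinct from both sets in \eqref{eq:C3P3} for every $r_1,r_2,r_3 \in \mathbb{N}$.

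The main technical point to nail down is that the generic multilinear rank of the \textsc{mps} variety is exactly $(p_1,p_2,p_3)$. I would obtain this from the surjective birational map of Proposition~\ref{prop:reduction supercritical}, whose source is the tensor network bundle over the full product $\Gr(p_1,\mathbb{V}_1)\times\Gr(p_2,\mathbb{V}_2)\times\Gr(p_3,\mathbb{V}_3)$: a generic point uses a generic $p_i$-dimensional subspace in each factor, so its $i$-th flattening has rank exactly $p_i$, and the inclusion into $\Sub_{p_1,p_2,p_3}$ from Corollary~\ref{cor:intermediate} supplies the matching upper bound. A parallel minor point is that the dense locus where the multilinear rank is maximal survives under Zariski closure, which follows from lower semicontinuity of matrix rank applied to each flattening. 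Everything else is a finite bookkeeping check over the candidate weight vectors; notably, the argument needs neither the exact dimension of $\sigma_2$ nor any non-defectivity input, only that $\sigma_2$ is proper and has generic multilinear rank $(2,2,2)$.
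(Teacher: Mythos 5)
Your $P_3$ half is sound and is essentially the paper's own argument: both reduce via Example~\ref{example:TT3} to a subspace variety and compare generic multilinear ranks, so nothing to flag there. Your $C_3$ half, by contrast, takes a genuinely different route from the paper --- the paper puts the structure tensor $\mu_2$ inside $\overline{\tns}(C_3;r_1,r_2,r_3;\mathbb{V}_1,\mathbb{V}_2,\mathbb{V}_3)$ for all $r_1,r_2,r_3\ge 2$ and invokes $\brank(\mu_2)=7$ to exclude it from $\sigma_2$, whereas you propose to separate the varieties purely by generic multilinear rank, avoiding that border-rank input altogether. The strategy is viable, but it rests entirely on the lemma you yourself single out as the main technical point, and your justification of that lemma has a genuine gap.

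The gap is this: from Proposition~\ref{prop:reduction supercritical} you infer that ``a generic point uses a generic $p_i$-dimensional subspace in each factor, so its $i$-th flattening has rank exactly $p_i$.'' That inference is a non sequitur. Lying in $\mathbb{W}_1\otimes\mathbb{W}_2\otimes\mathbb{W}_3$ with $\dim\mathbb{W}_i=p_i$ generic gives only the upper bound $\le p_i$ on the flattening rank --- which you already have from Corollary~\ref{cor:intermediate}; the lower bound, that generic \textsc{mps} states actually fill up these subspaces, is exactly what needs proof. One can try to repair this via birationality: if generic $T$ had $i$-th flattening rank $q_i<p_i$, then by Lemma~\ref{lemma: subspace TNS} the fiber $\pi^{-1}(T)$ would contain every $[\mathbb{W}_i]\in\Gr(p_i,\mathbb{V}_i)$ containing the $i$-th support of $T$, a positive-dimensional family, contradicting generic injectivity of $\pi$. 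But this repair only works at supercritical vertices ($p_i<n_i$); when $r_jr_k\ge n_i$ one has $\Gr(p_i,\mathbb{V}_i)=\Gr(n_i,\mathbb{V}_i)$, a single point, $\pi$ carries no information at that vertex, and the argument is vacuous --- and such weight vectors (e.g.\ $r_1=r_2=r_3$ large) are included in your quantification over all $(r_1,r_2,r_3)\in\mathbb{N}^3$. The lemma itself is true, and your proof can be completed in an elementary way: whenever $r_jr_k\ge 3$ it suffices to exhibit one element of $\tns(C_3;r_1,r_2,r_3;\mathbb{V}_1,\mathbb{V}_2,\mathbb{V}_3)$ whose flattening at the corresponding vertex has rank $\ge 3$ (lower semicontinuity of matrix rank then transfers this to generic elements, and with your upper bound it rules out generic multilinear rank $(2,2,2)$); explicit choices are easy, e.g.\ $\sum_{i=1}^{3}u_i\otimes v\otimes w_i$, $\sum_{i,j=1}^{2}u_{ij}\otimes v_j\otimes w_i$, or the paper's own tensors $\mu_{m,n,p}$, whose membership in the $C_3$ network and multilinear rank $(mn,np,mp)$ are recorded in Theorem~\ref{thm:C_3-rank of matrix multiplication}. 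The leftover weight vectors with all $r_jr_k\le 2$, namely $(1,1,1)$ and permutations of $(2,1,1)$, collapse to $P_3$ networks by \eqref{eq:tt=mps} and are excluded by your $P_3$ analysis. Until some such construction is supplied, your central claim about the generic multilinear rank of the \textsc{mps} variety is asserted rather than proved.
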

\begin{proof}
Note that the set on the left of \eqref{eq:C3P3} is  $\overline{\tns}(P_3;r_1,r_2;\mathbb{V}_1,\mathbb{V}_2,\mathbb{V}_3) \eqqcolon X_{r_1, r_2} $, the one on the right is $\overline{\tns}(C_3;r_1,r_2,r_3;\mathbb{V}_1,\mathbb{V}_2,\mathbb{V}_3) \eqqcolon Y_{r_1,r_2,r_3}$, and the set  in \eqref{eq:secant} is  $\sigma\bigl(\Seg(\mathbb{V}_1,\mathbb{V}_2,\mathbb{V}_3)\bigr) \eqqcolon \Sigma_r$. 

It suffices to take $r=2$.  Suppose $\Sigma_2 = X_{r_1, r_2}$ for some positive integers $r_1,r_2$. Then a generic element in $X_{r_1, r_2}$ must have rank $2$, which implies that $r_1, r_2 \le 2$. By Example~\ref{example:TT3}, we see that
\[
X_{r_1,r_2} = \Sub_{2,4,2}(\mathbb{V}_1,\mathbb{V}_2,\mathbb{V}_3),
\]
which implies that a generic $T\in X_{r_1,r_2}$ has $\mrank(T) = (2,4,2)$; but this gives a contradiction as any $T\in \Sigma_2$ must have $\mrank(T) \le (2,2,2)$.

Next we show that $ \Sigma_2 \ne Y_{r_1,r_2,r_3}$. We may assume that $r_1,r_2,r_3\ge 2$ or otherwise $Y_{r_1,r_2,r_3}$ becomes $X_{r_1, r_2}$, $X_{r_1, r_3}$, or $X_{r_2, r_3}$  by \eqref{eq:tt=mps}. So we have
\[
Y_{2,2,2} \subseteq Y_{r_1,r_2, r_3},
\]
which implies that the structure tensor $\mu_2$ for $2\times 2$ matrix-matrix product (cf.\  \eqref{eq:strassen} and \eqref{eq:strassen2}) is contained in $Y_{r_1,r_2, r_3}$. It is well-known \cite{Landsberg2006} that $\brank (T) = 7$  and thus $T\not\in \Sigma_2 $ (since that would mean $\brank(T) \le 2$). Hence $\Sigma_2 \ne Y_{r_1,r_2,r_3}$.
\end{proof}

\begin{proposition}\label{prop:mranknotGrank}
Let $\mathbb{V}$ be of dimension $n\ge 4$ and let the following sets be in  $\mathbb{V} \otimes \mathbb{V} \otimes \mathbb{V}$. There exist  $s_1,s_2,s_3 \in \mathbb{N}$ such that
\begin{equation}\label{eq:subsp}
\{T: \mrank(T) \le (s_1, s_2, s_3)\}
\end{equation}
is not equal to
\[
\overline{\{T :  \rank_{P_3} \le (r_1,r_2)\}}
\quad \text{or} \quad
\overline{\{T :  \rank_{C_3}(T)\le (r_1,r_2,r_3)\}} 
\]
for any $r_1,r_2,r_3 \in \mathbb{N}$.
\end{proposition}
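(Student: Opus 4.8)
The plan is to exhibit a single multilinear rank, namely $(s_1,s_2,s_3) = (2,2,2)$, whose subspace variety $\Sub_{2,2,2}(\mathbb{V},\mathbb{V},\mathbb{V}) = \{T : \mrank(T) \le (2,2,2)\}$ cannot coincide with any $P_3$- or $C_3$-rank set. Writing $X_{r_1,r_2} \coloneqq \overline{\tns}(P_3;r_1,r_2;\mathbb{V},\mathbb{V},\mathbb{V})$ and $Y_{r_1,r_2,r_3} \coloneqq \overline{\tns}(C_3;r_1,r_2,r_3;\mathbb{V},\mathbb{V},\mathbb{V})$ as in the proof of Proposition~\ref{prop:ranknotGrank}, and noting that $\Sub_{2,2,2}$ is already Zariski closed, it suffices to show $\Sub_{2,2,2} \ne X_{r_1,r_2}$ and $\Sub_{2,2,2} \ne Y_{r_1,r_2,r_3}$ for all $r_1,r_2,r_3 \in \mathbb{N}$. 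The guiding principle is that the two families fail for structurally different reasons: the $P_3$-sets are forced into a rigid product pattern on their multilinear ranks, while the $C_3$-sets are too large, since they contain the matrix-multiplication tensor.

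For the $P_3$ case I would first invoke Corollary~\ref{cor:dimension of TT} (or Example~\ref{example:TT3}) together with Theorem~\ref{thm:tns of trees are closed} to write $X_{r_1,r_2} = \tns(P_3;r_1,r_2;\mathbb{V},\mathbb{V},\mathbb{V}) = \Sub_{r_1,r_1r_2,r_2}(\mathbb{V},\mathbb{V},\mathbb{V})$. A generic point of $\Sub_{r_1,r_1r_2,r_2}$ has first-, second-, and third-mode flattening ranks equal to $\min(r_1,n)$, $\min(r_1r_2,n)$, $\min(r_2,n)$ respectively, whereas a generic point of $\Sub_{2,2,2}$ has all three equal to $2$. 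Matching the two outer modes and using $n \ge 4$ forces $r_1 = r_2 = 2$, but then the middle mode of $X_{r_1,r_2}$ is $\min(4,n) = 4 \ne 2$, a contradiction; hence $\Sub_{2,2,2} \ne X_{r_1,r_2}$ for every $r_1,r_2$.

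For the $C_3$ case I would dispose first of the degenerate weights: if any of $r_1,r_2,r_3$ equals $1$, then by \eqref{eq:tt=mps} the set $Y_{r_1,r_2,r_3}$ collapses to some $X_{\cdot,\cdot}$, which is already handled. So assume $r_1,r_2,r_3 \ge 2$. By monotonicity of $\tns$ in the edge weights (immediate from \eqref{eq:set}) we have $\tns(C_3;2,2,2;\mathbb{V},\mathbb{V},\mathbb{V}) \subseteq \tns(C_3;r_1,r_2,r_3;\mathbb{V},\mathbb{V},\mathbb{V}) \subseteq Y_{r_1,r_2,r_3}$. The structure tensor $\mu_2$ of $2\times 2$ matrix multiplication satisfies $\rank_{C_3}(\mu_2) = (2,2,2)$ by Theorem~\ref{thm:C_3-rank of matrix multiplication}, so $\mu_2 \in Y_{r_1,r_2,r_3}$; on the other hand $\mrank(\mu_2) = (4,4,4)$ by \eqref{eq:mrankT}, so $\mu_2 \notin \Sub_{2,2,2}$. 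Therefore $\Sub_{2,2,2} \ne Y_{r_1,r_2,r_3}$, completing the argument. The only real subtlety — the step I expect to require the most care — is the $P_3$ case, where one must pin down the generic multilinear rank of $\Sub_{r_1,r_1r_2,r_2}$ precisely enough to exclude all $(r_1,r_2)$ simultaneously; the $C_3$ case, by contrast, is immediate once the weight-$1$ reductions are separated off and the containment $\mu_2 \in Y_{r_1,r_2,r_3}$ is in hand.
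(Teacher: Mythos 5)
Your proof is correct, and while your $P_3$ half is essentially the paper's argument (the paper identifies $X_{r_1,r_2}$ with a subspace variety via Example~\ref{example:TT3} and then declares the inequality of subspace varieties ``obvious''; you supply the mode-matching details that make it obvious), your $C_3$ half takes a genuinely different route. The paper stays entirely within multilinear rank: from the explicit form $T = \sum_{i,j,k} u_{ij}\otimes v_{jk}\otimes w_{ki}$ it reads off that a generic element of $Y_{r_1,r_2,r_3}$ has $\mrank(T) = (\min\{r_1r_2,n\},\min\{r_2r_3,n\},\min\{r_1r_3,n\})$, so equality with $\Sub_{2,2,2}$ would force $r_1r_2 = r_2r_3 = r_3r_1 = 2$, which has no solution in positive integers; this is uniform in $(r_1,r_2,r_3)$ and needs no case split on weight-one edges. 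You instead recycle the witness-tensor device that the paper deploys one proposition earlier, in Proposition~\ref{prop:ranknotGrank}: after reducing the $r_i=1$ cases to the $P_3$ case via \eqref{eq:tt=mps}, monotonicity in edge weights together with Theorem~\ref{thm:C_3-rank of matrix multiplication} puts $\mu_2$ inside every $Y_{r_1,r_2,r_3}$ with $r_1,r_2,r_3\ge 2$, while \eqref{eq:mrankT} keeps it out of $\Sub_{2,2,2}$. What your route buys: the only genericity assertion you need concerns subspace varieties, not $C_3$ tensor networks, and your obstruction is a single explicit tensor with known invariants, making the argument arguably more robust. What it costs: a case split and reliance on two further results (the $C_3$-rank of $\mu_2$ and monotonicity/inheritance), where the paper's dimension-free count handles all weights at once. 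One point worth making explicit in your write-up: $\mu_2$ naturally lives in $(\mathbb{C}^4)^{\otimes 3}$, so for $n>4$ you should invoke the inheritance property (Lemma~\ref{lemma: subspace TNS}, inclusion \eqref{eq:subsp2}) to place it in $\tns(C_3;2,2,2;\mathbb{V},\mathbb{V},\mathbb{V})$; you do this implicitly and it is harmless, but it is the step that uses $n\ge 4$ on the $C_3$ side.
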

\begin{proof}
We adopt the shorthands in the proof of Proposition~\ref{prop:ranknotGrank}. In addition, note that   the set in \eqref{eq:subsp} is $\Sub_{s_1,s_2,s_3}(\mathbb{V},\mathbb{V},\mathbb{V})  \eqqcolon Z_{s_1,s_2,s_3}$. It suffices to take $(s_1,s_2,s_3) = (2,2,2)$. It is obvious that for any $r_1,r_2 \in \mathbb{N}$,
\[
Z_{2,2,2} \ne Z_{r_1,r_1r_2,r_2} = X_{r_1, r_2}
\]
where the second equality follows from Example~\ref{example:TT3}. Next, suppose 
\[
Z_{2,2,2} = Y_{r_1,r_2,r_3},
\]
then a generic $T$ in $Y_{r_1,r_2,r_3}$ has $\mrank(T) = (2,2,2)$. However since $T \in Y_{r_1,r_2,r_3}$ has the form
\[
T = \sum_{i,j,k=1}^{r_1,r_2,r_3} u_{ij} \otimes v_{jk} \otimes w_{ki},
\]
we have $\mrank(T) = (\min\{r_1r_2, n\}, \min\{r_2r_3,n\},\min \{r_1r_3,n\})$ and therefore
\[
r_1r_2 = r_2 r_3 = r_3r_1 = 2,
\]
which cannot hold for any positive integers $r_1,r_2,r_3$. Hence $Z_{2,2,2} \ne Y_{r_1, r_2, r_3}$.
\end{proof}

\section{Conclusion}

We hope this article provides a convincing explanation as to why $G$-rank can be a better alternative to rank under many circumstances, and how it underlies the efficacy of tensor networks in computational physics and other applications. We also hope that the formalism introduced in this article would help establish a mathematical foundation for tensor networks, the study of which has thus far relied more on physical intuition, computational heuristics, and numerical experiments; but suffers from a lack of mathematically precise results built upon unambiguous definitions and rigorous proofs.

\subsection*{A word about MERA} A notable omission from this article is the \emph{multiscale entanglement renormalization ansatz} or \textsc{mera}, often also regarded as a tensor network state in physics literature. From a mathematical perspective, \textsc{mera}  differs in important ways from other known tensor networks like \textsc{tt}, \textsc{mps}, \textsc{peps}, and every other example discussed in our article ---  these can all be defined purely using tensor contractions but \textsc{mera} will require additional operations known as `isometries' and `disentanglers' in physics \cite{V1,V2,CLJM2008,EG2011}. From a physics perspective, the discussion in \cite[Section~III]{EG2011} also highlights a critical difference: while other tensor network states are derived from the \emph{physical geometry}, \textsc{mera} is derived from the \emph{holographic geometry} of the quantum system.

Although our definition of a tensor network state can be readily adapted to allow for more operations and thereby also include \textsc{mera}, in this article we restricted ourselves to tensor network states that can be constructed out of the three standard operations on tensors --- sums, outer products, contractions --- in multilinear algebra and leave \textsc{mera} to furture work.

\subsection*{Acknowledgments}

The authors would like to thank David Gross and Anna Seigal for helpful pointers. We owe special thanks to Miles Stoudenmire for his careful reading and enlightening feedback.

The work in this article is generously supported by AFOSR FA9550-13-1-0133, DARPA D15AP00109, NSF IIS 1546413, DMS 1209136, and DMS 1057064. In addition, LHL's work is  supported by a DARPA Director's Fellowship and KY's work is supported by the Hundred Talents Program of the Chinese Academy of Science and the Thousand Talents Program.

\end{document}